\newtheorem{theorem}{Theorem}[section]
\newtheorem{proposition}[theorem]{Proposition}
\newtheorem{corollary}[theorem]{Corollary}
\newtheorem{lemma}[theorem]{Lemma}
\theoremstyle{remark}
\newtheorem{remark}[theorem]{Remark}
\theoremstyle{definition}
\newtheorem{definition}[theorem]{Definition}
\renewcommand{\tocsection}[3]{\indentlabel{\@ifnotempty{#2}{\ignorespaces #1 #2.\,}}#3}
\def\l@subsection{\@tocline{2}{0pt}{2em}{}{}}
\def\l@subsection{\@tocline{2}{0pt}{2em}{}{}}
\newcommand{\dx}{\,{\rm d}x}
\newcommand{\dy}{\,{\rm d}y}
\newcommand{\dt}{\,{\rm d}t}
\newcommand{\dmu}{\,{\rm d}\mu}
\newcommand{\dr}{\,{\rm d}r}
\newcommand{\dnu}{\,{\rm d}\nu}
\newcommand{\dmuo}{\,{\rm d}\mu_{0}}
\newcommand{\Rn}{\mathbb{R}^d}
\newcommand{\RN}{\mathbb{R}^d}
\newcommand{\intr}{\int_{\Rn}}
\newcommand{\ints}{\int_{\{|y|\leq 1\}}}
\newcommand{\intb}{\int_{\{|y|\geq 1\}}}
\newcommand{\Lk}{\mathcal{L}}
\newcommand{\too}{t_{0}}
\newcommand{\xt}{(x,t)}
\newcommand{\xoo}{x_{0}}
\newcommand{\Ls}{(-\Delta)^{\alpha/2}}
\newcommand{\Lnu}{\mathcal{L}_{\nu}}
\newcommand{\intoii}{\int_{0}^{\infty}}
\newcommand{\Bd}{\mathcal{B}_{\textup{b}}(\Rn)}
\newcommand{\DA}{\mathcal{D}(\mathcal{A})}
\newcommand{\PP}{\mathcal{P}}
\newcommand{\lp}{L^{1}_{P_1}}
\newcommand{\luloc}{L^{1}_{\rm loc}((0,T);\lp)}
\newcommand{\li}{L^{\infty}((0,T);\lp)}
\newcounter{taggedeq}
\pretocmd{\equation}{\stepcounter{taggedeq}}{}{}
\begin{document}

\title[Widder-type results for nonlocal heat equations]{On the nonlocal heat equation for certain L\'evy operators and the uniqueness of positive solutions}

\author[I.~Gonz\'alvez]{Irene Gonz\'alvez}
\address[I.~Gonz\'alvez]{Departamento de Matem\'aticas, Universidad Aut\'onoma de Madrid \& Instituto de Ciencias Matem\'aticas ICMAT (CSIC-UAM-UCM-UC3M). Campus de Cantoblanco, 28049 Madrid, Spain}
\email[]{irene.gonzalvez\@@{}uam.es}

\author[F.~Quir\'{o}s]{Fernando Quir\'{o}s}
\address[F.~Quir\'{o}s]{Departamento de Matem\'aticas, Universidad Aut\'onoma de Madrid \& Instituto de Ciencias Matem\'aticas ICMAT (CSIC-UAM-UCM-UC3M). Campus de Cantoblanco, 28049 Madrid, Spain}
\email[]{fernando.quiros\@@{}uam.es}
\urladdr{https://matematicas.uam.es/~fernando.quiros}

\author[F.~Soria]{Fernando Soria}
\address[F.~Soria]{Departamento de Matem\'aticas, Universidad Aut\'onoma de Madrid. Campus de Cantoblanco, 28049 Madrid, Spain}
\email[]{fernando.soria\@@{}uam.es}
\urladdr{https://matematicas.uam.es/~fernando.soria}

\author[Z.~Vondra\v{c}ek]{Zoran Vondra\v{c}ek}
\address[Z.~Vondra\v{c}ek]{\textsc{``Dr.~Franjo Tu}\scalebox{0.75}{\DJ}\textsc{man" Defense and Security University, Ilica 256b, 10000 Zagreb, Croatia}, and \textsc{Department of Mathematics, Faculty of Science, University of Zagreb, Bijeni\v{c}ka
		cesta 30, 10000 Zagreb, Croatia}}
\email[]{vondra\@@{}math.hr}
\urladdr{http://web.math.hr/~vondra/}

\begin{abstract}
We develop a Widder-type theory for nonlocal heat equations involving quite general L\'evy operators. Thus, we consider \emph{nonnegative} solutions and look for conditions on the operator that ensure: (i) uniqueness of nonnegative classical and very weak solutions with a given initial trace; (ii) the existence of an initial trace, belonging to certain admissibility class; and (iii) the existence of a solution, given by a representation formula, for any admissible initial trace. Such results are obtained first for purely nonlocal L\'evy operators defined through  positive symmetric L\'evy kernels comparable to radial functions with mixed polynomial growth, and then extended to more general operators, including anisotropic ones and operators that have both a local and a nonlocal part.
\end{abstract}


\keywords{Nonlocal operators; nonlocal heat equation; representation formula; L\'evy kernel;
 uniqueness; nonnegative classical solutions; Widder's theorem.}

\subjclass[2020]{
	35A02, 
	35C15, 
	35K08, 
    35A01, 
    35S05, 
    60J60. 
}

\maketitle
\begin{center}\begin{minipage}{12cm}{\tableofcontents}\end{minipage}\end{center}

\section{Introduction}
\setcounter{equation}{0}

\noindent \emph{Goal.}
Our aim is to develop a Widder-type theory for nonlocal heat equations of the form
\begin{equation}\tag{NLHE}\label{a.nonlocalheatequation}	
    \partial_t u+\Lk u=0\quad\textrm{in }\RN\times (0,T),
\end{equation}
where $\mathcal{L}$ is a L\'evy-type operator as general as possible and $T\in (0,+\infty)$. Thus, we consider \emph{nonnegative} solutions to the problem and look for conditions on $\mathcal{L}$ that guarantee: (i) uniqueness of nonnegative classical and very weak solutions with a given initial trace; (ii) the existence of an initial trace, belonging to certain admissibility class; and (iii) the existence of a solution, given by a representation formula, for any admissible initial trace.

We start by considering the case of purely nonlocal L\'evy operators of the form
\begin{equation}\label{a.Lk}	
    \mathcal{L}_K u(x)=\int_{\mathbb{R}^d}\Big(u(x)-\frac{u(x+y)+u(x-y)}{2}\Big)K(y)\,\textup{d}y,
\end{equation}
where $K:\RN\to \mathbb{R}$ is a positive symmetric L\'evy kernel, that is, a measurable function such that
\begin{equation}\tag{A0}\label{A0}
    K(x)>0,\quad K(x)=K(-x)\quad\textrm{for a.e. }x\in\mathbb{R}^d,\qquad	\intr (1\wedge|y|^{2})K(y)\dy<\infty,
\end{equation}
with $a\wedge b:=\min\{a,b\}$. We will give additional growth conditions on $K$ allowing us to answer in the positive to all three questions: uniqueness, existence of a trace, and existence of a solution with a given (admissible) initial trace.

A careful analysis of the proofs will allow us to extend the results to some more general purely nonlocal operators of the form
\begin{equation}\label{a.Lnu}	
    \mathcal{L}_\nu u(x)=\int_{\mathbb{R}^d}\Big(u(x)-\frac{u(x+y)+u(x-y)}{2}\Big)\,\textup{d}\nu(y),
\end{equation}
where $\nu$ is a symmetric L\'evy measure, that is, a nonnegative Radon measure on $\mathbb{R}^d\setminus\{0\}$ satisfying
\begin{equation}\label{eq:Levy.condition}\tag{$\textup{A}_\nu$}
    \nu(B)=\nu(-B)\quad\text{for all }B\in\mathbb{B}(\mathbb{R}^d), \qquad \int_{\mathbb{R}^d}(1\wedge|y|^2)\,\textup{d}\nu(y)<\infty,
\end{equation}
with $\nu$ not necessarily absolutely continuous with respect to the Lebesgue measure. Here $\mathbb{B}(\mathbb{R}^d)$ denotes the $\sigma$-algebra of Borel sets in $\mathbb{R}^d$.  Our results cover, for instance, the family of anisotropic operators
$\Lk_{\nu}=\sum_{j=1}^\ell\Lk_{K_j}$, $\ell\in\{1,\dots,d\}$, where each $\mathcal{L}_{K_j}$ is of the form~\eqref{a.Lk} and acts only on the subspace $\mathbb{R}^{d_j}$, with $\mathbb{R}^d=\mathbb{R}^{d_1}\times\cdots\times\mathbb{R}^{d_\ell}$.

We will finally consider L\'evy operators having both a local and a nonlocal part, namely operators of the form $\mathcal{L}_{\Delta,K}:=-\Delta+\mathcal{L}_K$, with $\mathcal{L}_K$ as in~\eqref{a.Lk}. We will also obtain positive answers to the three questions if the kernel $K$ satisfies the same growth conditions as in the purely nonlocal case.

\noindent\emph{Widder's theory for the local heat equation.} The Cauchy problem for the local heat equation
\begin{equation}\label{eq:local.heat equation}	
    \partial_t u-\Delta u=0\quad\textrm{in }\mathbb{R}^d\times(0,T)
\end{equation}
is not well-posed in general: it has infinitely many classical solutions with zero initial datum, for instance, any multiple of Tychonov's counterexample~\cite{Tychonoff}. In order to get uniqueness, one has to restrict the class of functions within which one looks for solutions. One possibility, that may not seem natural from a physical point of view, is to consider only solutions that do not grow too rapidly as $|x|\to\infty$; see~\cite{Tychonoff}. A second common option is to require some integrability in space for almost every time, regarding solutions as curves in some $L^p$ space, as is done in the Theory of Semigroups. This approach matches well with the conservation of energy when $p=1$.

D.\,V.\,Widder considered a third approach: since the Third Principle of Thermodynamics asserts that temperatures remain always above zero Kelvin degrees, it is sensible to restrict the analysis to \emph{nonnegative} classical solutions.  In his celebrated paper~\cite{Widder} he proved two important results in this framework, both in the one-dimensional case $d=1$. The first one is a uniqueness result for nonnegative classical solutions with trivial initial data. Unfortunately, this does not give uniqueness for other initial data, since the difference of nonnegative solutions is not necessarily nonnegative. The second one is a representation theorem: if $u$ is a nonnegative classical solution of~\eqref{eq:local.heat equation}, there exists a nonnegative Borel measure $\mu$ such that $u(\cdot,t)= G_t*\mu$ for all $t>0$, where
\begin{equation}\label{p12}
    G_t(x)=G(x,t):=(4\pi t)^{-d/2}e^{-\frac{|x|^{2}}{4t}},\quad x\in\Rn,\ t>0,
\end{equation}
is the heat kernel (or fundamental solution) of the heat equation. Besides, $\mu$ cannot have too much mass near infinity,
\begin{equation}\label{eq:size.condition.measure.local.heat.equation}
    \int_{\mathbb{R}^d} e^{-|y|^2/(4t)}\,\textup{d}\mu(y)<\infty,\quad t\in(0,T).
\end{equation}
Some years later, Krzy\.{z}a\'{n}ski extended Widder's representation theorem to higher dimensions~\cite{Krzyzanski-1964}; see also~\cite{Aronson-1968}, where the uniqueness of the representing measure $\mu$ is proved. Note that~\eqref{eq:size.condition.measure.local.heat.equation} implies that $\mu$ is locally finite, hence a Radon measure, since $\mathbb{R}^d$ is a locally-compact Hausdorff space.

The converse of the representation theorem is also true: if $\mu$ is a Radon measure satisfying~\eqref{eq:size.condition.measure.local.heat.equation}, then $u(\cdot,t)= G_t*\mu$, $t>0$, is a solution to~\eqref{eq:local.heat equation};~see~\cite{Widder} for $d=1$ and~\cite{Aronson-1971} for higher dimensions.	

Widder's representation theorem did not settle the question of uniqueness for the Cauchy problem. For this we have to wait until Aronson's paper~\cite{Aronson-1971}, where he proves that if $\mu$ is the measure representing a nonnegative solution $u$ to the heat equation~\eqref{eq:local.heat equation}, then
\begin{equation}\label{eq:initial.data.Cauchy.problem}
    \lim_{t\to0^+}\intr\psi(x)u(x,t)\dx=\intr \psi\,\textup{d}\mu\quad\textrm{for all }\psi\in C_{\textup{c}}(\RN);
\end{equation}
that is, $u(\cdot,t)\to\mu$ as $t\to0^+$ as Radon measures in the weak* topology. This gives a positive answer to the uniqueness question for nonnegative solutions of the Cauchy problem having a Radon measure as initial trace, if we understand that $u$ has $\mu$ as initial datum when ~\eqref{eq:initial.data.Cauchy.problem} holds. In this paper the initial data is meant to be taken in this sense.

\begin{remark}
    Let $d=1$. If $\mu$ is the measure representing a nonnegative solution $u$ to~\eqref{eq:local.heat equation}, then
    \begin{equation*}
        \lim_{t\to0^+}\int_a^bu(x,t)\,\textup{d}x=\mu(a,b) + \frac{1}{2} \mu(a) + \frac{1}{2} \mu(b)\quad\text{for all }a, b\in\mathbb{R}\text{ with }a < b;
    \end{equation*}
    see~\cite{Widder-book}. Hence, if we understand that $u$ has $\mu$ as initial trace when
    \begin{equation*}
        \lim_{t\to0^+}\int_a^bu(x,t)\,\textup{d}x=\mu(a,b)\quad \text{for all }a < b\text{ such that } \mu\{a\} = \mu\{b\} = 0,
    \end{equation*}
    we would have uniqueness for nonnegative solutions of the Cauchy problem; see~\cite{Wilcox-1980}. A similar result in higher dimensions is given in~\cite{Aronson-1981}.
\end{remark}

\noindent\emph{Precedents for nonlocal operators.} As already announced, our purpose is to extend Widder's theory to the nonlocal heat  equation~\eqref{a.nonlocalheatequation} for the widest possible class of nonlocal operators~$\mathcal{L}$. We have two direct nonlocal precedents that have been a source of inspiration for us,~\cite{Soria1} and~\cite{Matteo}, both of them dealing with the special case in which $\Lk$ is the fractional Laplacian $(-\Delta)^{\alpha/2}$, $\alpha\in (0,2)$, an operator of the form~\eqref{a.Lk} with $K_\alpha(y)=C_{d,\alpha} |y|^{-(d+\alpha)}$ for some normalization constant $C_{d,\alpha}$; {see also~\cite[Chapter 12]{Peral-book}.} In contrast with the local heat equation, the definition of very weak solution for this problem requires some integrability, at least $u\in L^1_{\textup{loc}}((0,T); L_{\alpha/2})$, where
\begin{equation*}
    L_{\alpha/2}(\mathbb{R}^d) := \left\{ u : \mathbb{R}^d \to \mathbb{R} \text{ measurable} : \int_{\mathbb{R}^d} \frac{|u(x)|}{1 + |x|^{d+\alpha}} \, \textup{d}x < \infty \right\}.
\end{equation*}
We will have an analogous situation when dealing with more general operators, with integrability spaces adapted to $\mathcal{L}$.

The first of these papers,~\cite{Soria1}, gives a representation formula in terms of the initial datum for nonnegative classical solutions to the equation that are continuous down to $t=0$. This implies uniqueness. The strategy is to prove uniqueness for very weak solutions within $L^{1}_{\textup{loc}}([0,T);L_{\alpha/2})$, and then show that nonnegative classical solutions are very weak solutions in this class. This last step is not as straightforward as in the local case, due to the nonlocal character of the fractional Laplacian. This approach can be adapted easily to cope with more general initial data. The second paper,~\cite{Matteo}, develops a complete Widder-type theory, though only for nonnegative very weak solutions, including the existence of an initial trace and considering the possibility of having (admissible) nonnegative Radon measures as initial data. The existence of a trace had already been proved in the previous paper~\cite{Bonforte-Vazquez-2014}, where the authors analyze a nonlinear variant of the equation.

Let us remark that the notions of very weak solution used in~\cite{Soria1} and~\cite{Matteo} do not coincide. We will make use of both of them (adapted to general operators), and of their equivalence in certain cases.

As for Widder-type results for more general linear nonlocal operators, we are only aware of the work~\cite{Quiros1}, where the authors prove uniqueness of very weak solutions to~\eqref{a.nonlocalheatequation} when $\Lk$ is an $\alpha$-stable diffusion operators given by an anisotropic symmetric L\'evy measure. Such operators have motivated us to develop the general theory in Section~\ref{section4}.

Operators of the form $\mathcal{L}_{\Delta,K}$ have attracted some attention in recent years; see, for instance,~\cite{Chen3}. However, we are not aware of any previous work dealing with Widder's theory for them.

\noindent \emph{Concepts of solution and initial trace.} As we have just mentioned, there are in the literature different notions of solution to~\eqref{a.nonlocalheatequation}. Hence, before going further, let us make precise what we will mean by a very weak solution and a classical solution to this equation.

\begin{definition}\label{defWS}
	A \emph{very weak solution} of~\eqref{a.nonlocalheatequation} is a function $u\in L^1_{\textrm{loc}}(\Rn\times (0,T))$ such that:
	\begin{align}
        \label{a.2}
        &\int_{0}^{T}\intr|u(x,t)||\Lk \theta(x,t)|\dx\dt<\infty\quad\textrm{for all }\theta\in C^{\infty}_{\textup{c}}(\RN\times (0,T));\\
        \label{a.3}
        &\int_{0}^{T}\intr u(x,t)(\partial_t\theta(x,t)-\Lk\theta(x,t))\dx\dt=0\quad\textrm{for all }\theta\in C^{\infty}_{\textup{c}}(\RN\times (0,T)).
	\end{align}
\end{definition}
We will give, and use, an alternative definition of very weak solution in Section~\ref{section3};~see~\eqref{eq:space.alternative.definition}--\eqref{v3.10}.

\begin{definition}\label{a.def classical}
    A \emph{classical solution} of~\eqref{a.nonlocalheatequation} is a function $u\in C(\Rn\times(0,T))$ such that $\partial_t u$ and $\Lk u$ belong to $ C(\Rn\times(0,T))$ if $\mathcal{L}$ is purely nonlocal, and moreover $u\in C^{2,1}_{x,t}(\RN\times (0,T))$ if $\mathcal{L}$ has a local part, and
	\begin{equation}\label{eq:classical.equation}
        (\partial_t u+\mathcal{L}u)(x,t)=0\quad \textrm{for all }\xt\in\Rn\times (0,T).
	\end{equation}
\end{definition}

Since we are dealing with the Cauchy problem, it is also convenient to make precise what we mean by the \lq\lq initial datum'' of the problem.
Let us recall that the space of Radon measures on $\mathbb{R}^d$ is the dual space of $C_{\textup{c}}(\mathbb{R}^d)$. We will ask the solution to converge to the initial datum as $t\to0^+$ as Radon measures in the weak* topology (also known as vague topology).

\begin{definition}\label{deftrace}
	Let $\mu_{0}$ a Radon measure. We say that $u\in L^1_{\textrm{loc}}(\Rn\times (0,T))$ has (initial) trace or initial datum $\mu_{0}$ if
	\begin{equation}\label{a.1}
        \operatornamewithlimits{ess\,lim}_{t\to0^+}\intr\psi(x)u(x,t)\dx=\intr \psi\dmuo\quad\textrm{for all }\psi\in C_{\textup{c}}(\RN).
	\end{equation}
\end{definition}

\noindent \emph{Mixed polynomial growth.} Our main goal is to show that, under certain conditions, very weak and classical solutions to~\eqref{a.nonlocalheatequation} with initial datum $\mu_0$ are given by the representation formula
\begin{equation}\label{representationformula}\tag{RF}	
    u\xt=\intr P_{t}(x-y)\dmu_{0}(y)\quad\textrm{in }\RN\times (0,T).
\end{equation}
Here, $P_t(x)$ is the heat kernel of \eqref{a.nonlocalheatequation}, given by $P_{t}=\mathcal{F}(e^{-tm})$, $t>0$, where $\mathcal{F}$ denotes Fourier transform, and~$m$ is the Fourier symbol of $\Lk$. In order to accomplish this task when $\mathcal{L}=(-\Delta)^{\alpha/2}$, the papers~\cite{Soria1} and~\cite{Matteo} use in an essential way that the heat kernel associated to that operator satisfies
\begin{equation*}
	P_{t}(x)\asymp\frac{t}{\big(t^{1/s}+|x|^{2}\big)^{(d+2s)/2}}\quad\textrm{for }\xt\in\Rn\times (0,\infty).
\end{equation*}
(Here and below we denote $f\asymp g$ if the quotient $f/g$ remains bounded between two positive constants.) In particular, $P_t$ is comparable to the L\'evy kernel $K_\alpha$ of the fractional Laplacian away from the origin for every $t>0$.  This comparability between the heat and the L\'evy kernel will also be essential for our arguments in Section~\ref{section3}, when dealing with more general operators $\mathcal{L}_K$. Fortunately, there is a wide class of L\'evy kernels $K$ for which it holds: those comparable to radial functions with~\emph{mixed polynomial growth}; see~\cite{Panki}. Thus, besides~\eqref{A0}, we will also assume that
\begin{equation}\tag{A1}\label{A1}
    \frac{C^{-1}}{|x|^d\tilde{\varphi}(|x|)}\leq K(x)\leq \frac{C}{|x|^d\tilde{\varphi}(|x|)}\quad\text{for all }x\not =0
\end{equation}
for some constant $C>1$ and some nondecreasing function $\tilde{\varphi}:(0,+\infty)\to(0,+\infty)$, known as the \emph{scale function}, satisfying the global (lower and upper) \emph{weak scaling conditions}
\begin{equation}\tag{A2}\label{A2}
    \lambda_1\Big(\frac{R}{r}\Big)^{\beta_{1}}\leq\frac{\tilde{\varphi}(R)}{\tilde{\varphi}(r)}\leq \lambda_2\Big(\frac{R}{r}\Big)^{\beta_{2}}\quad\text{for all }0<r\leq R<\infty
\end{equation}
for constants $\lambda_1,\,\lambda_2>0$ and exponents $\beta_{2}\ge\beta_{1}>0$.

\begin{remark}
    Since the L\'evy kernel $K$ is assumed to satisfy~\eqref{A0}, then necessarily $\beta_2\le 2$.  Note, however, that we are admitting the borderline case $\beta_2=2$, which is not common in the literature.
\end{remark}

For some results we will also require the scale function $\tilde\varphi$ to satisfy
\begin{equation}\tag{A3}\label{A3}
    \tilde\varphi\in C^{2}((1,\infty)),\qquad |\tilde{\varphi}'/\tilde{\varphi}|\leq c,\quad|\tilde{\varphi}''/\tilde{\varphi}|\leq c\quad\textrm{in } [2,\infty)\quad\text{for some constant }c>0.
\end{equation}

Here is a table of examples satisfying assumptions \eqref{A0}--\eqref{A3}, with its lower and upper exponents.

\medskip

\begin{center}
\begin{tabular}{|c l|c |c|}
	\hline
	\multicolumn{2}{|c|}{ Examples of operators $\Lk$ or L\'evy kernels $K$}& $\beta_{1}$&$\beta_{2}$\\ \hline
	\multirow{2}{*}{$\displaystyle\Lk=(-\Delta)^{\alpha/2}$,}&\multirow{2}{*}{$0<\alpha<2$}& \multirow{2}{*}{$\alpha$}&\multirow{2}{*}{$\alpha$}\\
	\multirow{2}{*}{}&\multirow{2}{*}{}&\multirow{2}{*}{}&\multirow{2}{*}{}\\ \hline
\multirow{2}{*}{$ \displaystyle\Lk=(-\Delta)^{\alpha_{1}/2}+(-\Delta)^{\alpha_{2}/2}$},&\multirow{2}{*}{$0<\alpha_{1}<\alpha_{2}<1$}&\multirow{2}{*}{$\alpha_{1}$}&\multirow{2}{*}{$\alpha_{2}$}\\
	\multirow{2}{*}{}&\multirow{2}{*}{}&\multirow{2}{*}{}&\multirow{2}{*}{}\\ \hline
\multirow{3}{*}{$ \displaystyle K(x)=\frac{1}{|x|^{d+\alpha}(2+\cos\langle\theta,x\rangle)}$,}&\multirow{3}{2cm}{$\theta\in \mathbb{S}^{d-1}$, $0<\alpha<2$}& \multirow{3}{*}{$\alpha$}&\multirow{3}{*}{$\alpha$}\\
\multirow{2}{*}{}&\multirow{2}{*}{}&\multirow{2}{*}{}&\multirow{2}{*}{}\\ \multirow{2}{*}{}&\multirow{2}{*}{}&\multirow{2}{*}{}&\multirow{2}{*}{}\\  \hline
\multirow{3}{*}{$\displaystyle K(x)=\frac{\log^{\varepsilon}(1+|x|)}{|x|^{d+2}}$,}& \multirow{3}{*}{ $0<\varepsilon<2$}&\multirow{3}{*}{$2-\varepsilon$}&\multirow{3}{*}{ $2$}\\
\multirow{2}{*}{}&\multirow{2}{*}{}&\multirow{2}{*}{}&\multirow{2}{*}{}\\
	\multirow{2}{*}{}&\multirow{2}{*}{}&\multirow{2}{*}{}&\multirow{2}{*}{}\\ \hline	
	\multirow{4}{*}{$\displaystyle K(x)=\frac{1}{|x|^{d+2}(1+ \log^{\alpha}_{+}(\frac{1}{|x|}))}$,} &\multirow{4}{*}{$1<\alpha<2$}&\multirow{4}{2 cm}{$\;\;\;\;\;2-\varepsilon\;\;\;\;\;$ $\;\;\;\;$for all  $\;\;\varepsilon\in(0,2)$}&\multirow{4}{*}{$2$}\\
	\multirow{2}{*}{}&\multirow{2}{*}{}&\multirow{2}{*}{}&\multirow{2}{*}{}\\\multirow{2}{*}{}&\multirow{2}{*}{}&\multirow{2}{*}{}&\multirow{2}{*}{}\\ \multirow{2}{*}{}&\multirow{2}{*}{}&\multirow{2}{*}{}&\multirow{2}{*}{}\\  \hline
\end{tabular}
\end{center}

\medskip

\noindent \emph{Main results and organization of the paper.} In Section~\ref{section2}, we first review some known background  about the relationship between L\'evy processes and heat kernels. We then obtain  some fundamental  properties of the  heat kernel when the L\'evy operator is of the form~$\mathcal{L}_K$, with $K$ satisfying assumptions~\eqref{A0}--\eqref{A2}. These properties will be useful in proving our main results in~Section~\ref{section3}. Finally, by means of Bernstein's theorem for completely monotone functions, we establish additional properties for the heat kernel when $\mathcal{L}=(-\Delta)^{\alpha/2}$, $\alpha\in(0,2)$, that will be used to prove the existence of classical solutions in this case. For a complete description of  Bernstein's theorem see the monograph~\cite{Zoran}.

The core of the paper is Section~\ref{section3}, where we analyze operators of the form~\eqref{a.Lk}, assuming that $K$ satisfies~\eqref{A0}--\eqref{A2} (plus~\eqref{A3} for some results). Let $P_t$ be the associated heat kernel. Under these assumptions, $P_{t_1}\asymp P_{t_2}$ for every $t_1,t_2>0$, an important fact in our proofs. This is a significant difference with respect to the local heat equation, since $G_{t_1}$ is never comparable to $G_{t_2}$ if $t_1\neq t_2$.

In the first part of the section we focus on very weak solutions. It turns out that the integrability condition~\eqref{a.2} can be substituted by an integrability condition involving the heat kernel; see Proposition~\ref{prop1}. Thus, a natural space for very weak solutions is $\luloc$, where
\begin{equation*}
    L^{1}_{P_1}:=\big\{v\in L^{1}_{\textrm{loc}}(\Rn):\, \intr |v(x)|P_{1}(x)\dx<\infty\big\}.
\end{equation*}
There is nothing special about $P_1$, since all the functions $P_t$, $t>0$, are comparable. We could have picked any other reference time instead of $t=1$.

We start by proving uniqueness for very weak solutions to the Cauchy problem in the sense of the alternative definition~\eqref{eq:space.alternative.definition}--\eqref{v3.10}; this is Theorem~\ref{UniquenessSoria}. But very weak solutions in the sense of Definition~\ref{defWS} having $\mu_0$ as initial trace satisfy~\eqref{eq:space.alternative.definition}--\eqref{v3.10} if they belong to $\li$, see~Proposition~\ref{equivDefvwProp}, and uniqueness for very weak solutions in that class follows; see Corollary~\ref{UniquenessAcotadas}.

We next prove that nonnegative very weak solutions belong to $\li$. As a consequence we obtain a uniqueness result for them; see Corollary~\ref{UniquenessNonnegative}.

The third step, Theorem~\ref{theoremtraces}, is to show that every nonnegative very weak solution has an initial trace which is a Radon measure satisfying the integral growth condition
\begin{equation}\label{eq:growth.initial.trace}\tag{GC}
    \intr P_{1}\dmuo < \infty.
\end{equation}

The final step is to prove that if $\mu_0$ is a (signed) Radon measure satisfying
\begin{equation}\label{q4}\tag{GCS}	
    \intr P_{1}\,{\rm d}|\mu_{0}|<\infty,
\end{equation}
then, the function
\begin{equation}\label{q5}\tag{$\textup{S}_{\mu_0}$}
        U(x,t):=\intr P_{t}(x-y)\dmuo(y)\quad\textrm{for all }\xt\in\Rn\times(0,\infty)
	\end{equation}
is a very weak solution to~\eqref{a.nonlocalheatequation} in $\li$ \emph{for every} $T>0$, having $\mu_0$ as initial trace; see Theorem~\ref{prop2}. Here comes a second important difference with the local heat equation (intimately related to the first one): the integrability of the initial trace against the heat kernel at any time gives existence \emph{for all times}. In contrast, in the local framework, if the initial datum satisfies~\eqref{eq:size.condition.measure.local.heat.equation}, we can only guarantee existence for $t\in (0,T)$.

\begin{remark}
    Our existence result admits Radon measures with sign changes as initial data, though this is not necessary for Widder's theory.
\end{remark}

The four steps that we have just described imply in particular that a very weak solution to~\eqref{a.nonlocalheatequation} in $\li$ with initial trace $\mu_0$ is given by the representation formula~\eqref{representationformula}. Since nonnegative very weak solutions belong to $\li$, we have completed Widder's theory for this kind of solutions; see Corollary~\ref{CorRepresentationFormulaVW}.

In the second part of Section~\ref{section3} we consider classical solutions to~\eqref{a.nonlocalheatequation}, still for operators of the form~\eqref{a.Lk}. We give two conditions that together are sufficient for a classical solution to be a very weak solution; see Proposition~\ref{cprop1}. The first one, that concerns the regularity of the function, is
\begin{equation}\tag{H}\label{energyH}	
    \ints|\Lambda u(x,y,t)|K(y)\dy\in L^{1}_{\textup{loc}}(\Rn\times (0,T)),
\end{equation}
where $\Lambda u(x,y,t):=u(x,t)-\frac12\big(u(x+y,t)+u(x-y,t)\big)$. This would correspond to the regularity requirement $u\in C^{2,1}_{x,t}$ in Widder's theory for the classical heat equation, and is used to ensure that $\Lk u$ is integrable against any test function. The second one, $u\in\luloc$, is required by the definition of very weak solution for a nonlocal operator. Nonnegative classical solutions always satisfy the second condition;~see Corollary~\ref{a.cor2}.

Thanks to Proposition~\ref{cprop1}, we can take profit of the results for very weak solutions to prove results for classical solutions, always under the assumption~\eqref{energyH}: uniqueness and a representation formula in the class $\li$, Theorem~\ref{CorRepresentativeFormulaClassicalAcotadas}; and uniqueness, the existence of an initial trace and a representation formula for nonnegative solutions, Theorem~\ref{thm:main.K}.

What about the existence of classical solutions with a given Radon measure $\mu_{0}$ as initial trace? If $\mu_0$ satisfies the integral growth condition~\eqref{q4}, we have a natural candidate: the function $U$ given by~\eqref{q5}. However, proving that this very weak solution is a classical solution is not a trivial task, and requires some assumptions, that we write in terms of the heat kernel associated to the operator;~see Theorem~\ref{Uclassical}. These assumptions are satisfied, for instance, when the operator is the fractional Laplacian.

An inspection of the proofs in Section~\ref{section3} shows that they do not use assumptions~\eqref{A0}--\eqref{A3} directly, but their implications for the heat kernel, that could be satisfied by more general operators.  In Section~\ref{section4} we rewrite the theorems in Section~\ref{section3} in terms of such conditions for the heat kernel, so that they may encompass a wider class of purely nonlocal operators $\mathcal{L}_\nu$ of the form~\eqref{a.Lnu} with $\nu$ satisfying~\eqref{eq:Levy.condition}. To show the usefulness of these results, we check that they may be applied to the already mentioned family of anisotropic operators $\Lk_{\nu}=\sum_{j=1}^\ell\Lk_{K_j}$, $\ell\in\{1,\dots,d\}$, where each $\mathcal{L}_{K_j}$ is of the form~\eqref{a.Lk} and acts only on the subspace $\mathbb{R}^{d_j}$, with $\mathbb{R}^d=\mathbb{R}^{d_1}\times\cdots\times\mathbb{R}^{d_\ell}$. The key here, besides the special form of the operator, is that the heat kernel associated to $\mathcal{L}_\nu$ is the product of the heat kernels associated to each of the operators $\Lk_{K_j}$.

Finally, in Section~\ref{section5}, we consider operators having both a local and a nonlocal part, namely $\mathcal{L}_{\Delta,K}:=-\Delta+\mathcal{L}_K$, with $\mathcal{L}_K$ as in~\eqref{a.Lk}, and $K$ satisfying~\eqref{A0}--\eqref{A2} (and~\eqref{A3} in some cases). The results, and the proofs, go in parallel to those in Section~\ref{section3}. The clue to deal with this kind of operators is that the heat kernel is the convolution of the heat kernels of $-\Delta$ and $\mathcal{L}_K$, which helps in several ways, and even allows to remove some of the restrictions that we had in Section~\ref{section3}.

\noindent\emph{Some notations.} Throughout the paper we will often use the following notations:
\begin{itemize}
    \item Fourier transform: $\displaystyle\mathcal{F}(f)(\xi)=\frac{1}{(2\pi)^{d}}\intr f(x)e^{-i\langle x,\xi\rangle}\,{\rm d}\xi$.
    \item Inverse Fourier transform: $\displaystyle\mathcal{F}^{-1}(f)(x)=\intr f(\xi)e^{i\langle x,\xi\rangle}\,{\rm d}\xi$.
    \item Second order incremental quotients:
    \begin{equation}
    \label{SecondOrderIncrementalQuotiens}\Lambda v(x,y):=v(x)-\frac{1}{2}(v (x+y)+v(x-y)).
    \end{equation}
    \item Spatial cutoff functions: $\{\psi_R\}_{R\ge R_0}$, $R_0>0$, where
        \begin{equation}\label{cutoffspace}
            \psi_{R}(x)=\psi(x/R),\quad x\in\Rn,\quad\text{for some }\psi\in C^{\infty}_{\textrm{c}}(\Rn)\text{ such that } \mathcal{X}_{B_{1}(0)}\leq\psi\leq\mathcal{X}_{B_{2}(0)}.
        \end{equation}
    \item Set of Lebesgue points of $h\in L^{1}_{\textrm{loc}}((0,T))$ with respect to a summability kernel $\{\eta_{k}\}_{k\in\mathbb{N}}$:
        $L(h;\{\eta_k\}_{k\in\mathbb{N}}):=\{t\in (0,T):\,|h(t)|<\infty, \, \lim\limits_{k\to\infty}(\eta_{k}*h)(t) =h(t)\}$.
\end{itemize}

\section{The heat kernel}\label{section2}
\setcounter{equation}{0}

After reviewing some known background concerning the relation between L\'evy processes and heat kernels in subsection~\ref{2.1}, we obtain in subsection~\ref{2.2} several fundamental  properties of the  heat kernel associated to an operator $\mathcal{L}_K$ with $K$ satisfying~\eqref{A0}--\eqref{A2} that will be later used to prove our main results. When the operator is the fractional Laplacian we obtain further properties in subsection~\ref{2.3}, using the theory of completely monotone functions.

\subsection{L\'evy processes and heat kernels}\label{2.1}

In this paragraph we explain the relation between L\'evy processes and heat kernels, that allows us to use well-known results for the former field to deal with the latter. Our main references for this preliminary material are~\cite{Apple,CRM}.

\noindent\emph{L\'evy process.}
A stochastic process on $\Rn$ is said to be a L\'evy process if it  starts at zero (i.e., $X_{0}=0$~a.s.), has stationary and independent increments, and is continuous in probability.

A L\'evy process $\big(X_{t}\big)_{t\geq 0}$  has a characteristic function, that determines its distribution, given by
\begin{equation}\label{n2.3}	
    \mathbb{E}[e^{i\xi X_{t}}]=e^{-t\psi(\xi)}\quad\textrm{for all }\xi\in\Rn,\,t\geq 0,
\end{equation}
where $\psi:\Rn\to\mathbb{C}$ is the so-called characteristic exponent, \cite[Corollary 2.5]{CRM}. By~L\'evy-Khintchine's theorem~\cite[Theorem 6.8]{CRM}, $\psi$ is a continuous function given by L\'evy-Khintchine's formula
\begin{equation}\label{n2.L-Kformula}	
    \psi(\xi)=\frac{1}{2}\langle \xi,Q\xi\rangle-\langle b,\xi\rangle+\intr \big(1-e^{i \langle \xi,y\rangle}+i\langle \xi,y\rangle\mathcal{X}_{B_{1}(0)}(y)\big)\dnu(y).
\end{equation}
Here $Q$ is a positive semi-definite  $d\times d$ matrix with real coefficients, $b\in\Rn$, and $\nu$ is a L\'evy measure, that is, a nonnegative Radon (that is, Borel regular and locally finite) measure  on $\Rn\setminus\{0\}$ such that
\begin{equation*}
	\intr (1\wedge |y|^{2})\dnu(y)<\infty.
\end{equation*}
A triplet $(Q,b,\nu)$ that satisfies the above properties is known as a L\'evy triplet.

Conversely, by~L\'evy-Khintchine's theorem~\cite[Theorem 7.5]{CRM}, a L\'evy triplet $(Q,b,\nu)$ has univocally  associated a L\'evy process whose characteristic exponent is given by \eqref{n2.L-Kformula}.

\noindent \emph{Markov transition density.} By \cite[Lemma 4.4]{CRM}, if $\big(X_{t}\big)_{t\geq 0} $  is a L\'evy process, then
\begin{equation}\label{n2.2}
    p_{t}(x,B)=\int_{B}p_t(x,\textup{d}y):=\mathbb{P}(x+X_{t}\in B),\quad t\geq0,\ x\in\Rn,\ B\in\mathbb{B}(\Rn),
\end{equation}
is a Markov transition density:  (i) $B\mapsto p_{t}(x,B)$ is a probability for every $t\geq 0$ and $x\in \Rn$; (ii) $(x,t)\mapsto p_{t}(x,B)$ is a Borel measurable function for every $B\in\mathbb{B}(\Rn)$;  and (iii) the Chapman-Kolmogorov equations hold,
\begin{equation*}
	p_{t+s}(x,B)=\intr p_{t}(y,B)p_{s}(x,\textup{d}y)\quad\textrm{for all }t,s\geq0,\ x\in\Rn,\ B\in\mathbb{B}(\Rn).
\end{equation*}
Besides, by definition, this Markov transition density is invariant by translations,
\begin{equation*}\label{v3.18}
    p_{t}(x,B)=p_{t}(0,B-x)\quad \textrm{for all }t\geq0,\quad x\in\Rn\quad \textrm{and }B\in\mathbb{B}(\Rn).
\end{equation*}
In slight abuse of notation, we write $p_{t}(B-x):=p_{t}(0,B-x)$.

\noindent\emph{Feller semigroup.} Associated with the above Markov transition density, we define a family of linear operators indexed by $t\geq0$ acting on the set of bounded Borel functions $\Bd$,
\begin{equation}\label{n2.Pt}	
    \mathcal{P}_{t}:\Bd\to\Bd,\quad \mathcal{P}_{t}f(x):=\mathbb{E}[f(x+X_{t})]=\intr f(y)p_{t}(x,\textup{d}y),\quad f\in\Bd,\ x\in\Rn.
\end{equation}
Since $p_t$ is invariant under translations, it follows that $\mathcal{P}_{t}$ is a convolution operator for all $t\geq 0$,
\begin{equation}\label{n2.5}	
    \mathcal{P}_{t}f(x)=\intr f(x+y)p_{t}(\textup{d}y)=f*P^{K}_{t}(x), \quad f\in\Bd,\ x\in\Rn,
\end{equation}
where $P^{K}_{t}(B):=p_{t}(-B)$ for all Borel sets $B$ and $t\geq 0$. Moreover, the family of operators $\big(\mathcal{P}_{t}\big)_{t\geq 0}$ is a Feller semigroup; that is, $\big(\mathcal{P}_{t}\big)_{t\geq 0}$:
\begin{itemize}
	\item is a semigroup: $\mathcal{P}_{t+s}=\mathcal{P}_{t}\circ\mathcal{P}_{s}$ for all $t,s\geq 0$ and $\mathcal{P}_{0}=\textrm{Id}$;
	\item is  sub-Markovian: if $0\leq f\leq 1$ then $0\leq \mathcal{P}_{t}f\leq 1$;
	\item is contractive: $\|\mathcal{P}_{t}f\|_{\infty}\leq \|f\|_{\infty}$;
	\item has the  Feller property: $\mathcal{P}_{t}(C_{0}(\Rn))\subset C_{0}(\Rn)$;
	\item is strongly continuous on $C_{0}(\Rn)$: $	\lim\limits_{t\to0^{+}}\|\mathcal{P}_{t}f-f\|_{\infty}=0$ for all $C_{0}(\Rn)$.
\end{itemize}
The strong continuity on $C_{0}(\Rn)$ implies in particular that
\begin{equation}\label{eq:strong.continuity.initial.trace}
    p_t\to \delta_0\quad\text{as }t\to0^+\quad\text{in the weak* topology in }(C_{0}(\Rn))'.
\end{equation}
Let us recall that $(C_{0}(\Rn))'$ is the set of finite Radon measures.

\noindent\emph{Infinitesimal generator.}
Since $\big(\mathcal{P}_{t}\big)_{t\geq 0}$ is a Feller semigroup, it has an associated infinitesimal generator $\mathcal{A}$, given by
\begin{gather*}
	\mathcal{A}f:=\lim\limits_{t\to0^{+}}\frac{\mathcal{P}_tf-f}{t}\quad\textrm{(uniform limit) for all $f$ in the domain of $\mathcal{A}$},\\
	\mathcal{D}(\mathcal{A}):=\Big\{f\in C_{0}(\Rn):\exists\,g\in C_{0}(\Rn)\textrm{ such that }
    \lim\limits_{t\to 0^+}\Big\|\frac{\mathcal{P}_{t}f-f}{t}-g\Big\|_{\infty}=0\Big\}.
\end{gather*}
It holds that $\PP_{t}(\DA)\subset\DA$ for all $t\geq 0$, and
\begin{equation}\label{n2.1}
    \frac{d}{dt}\PP_{t}f=\mathcal{A}(\PP_{t}f)\quad\textrm{for all }f\in \DA,
\end{equation}
so that  $u(t)=\PP_t  f$ solves the initial value problem $u'= \mathcal{A}u$, $u(0)=f$ in the Banach space $C_{0}(\Rn)$. In addition,
\begin{equation*}
	\PP_{t}f-f=\mathcal{A}\int_{0}^{t}\PP_{s}f\,{\rm d}s\quad\textrm{if } f\in C_{0}(\Rn),\qquad
	\PP_{t}f-f=\int_{0}^{t}\PP_{s}\mathcal{A}f\,{\rm d}s\quad\textrm{if } f\in \DA.
\end{equation*}
As a consequence of the above properties, $\DA$ is dense in $C_{0}(\Rn)$, $\mathcal{A}$ is a closed operator and $\mathcal{A}$ determines $(\PP_t)_{t\geq 0}$ uniquely; see~\cite[Lemma 5.4, Remark  5.5]{CRM}. Moreover, since $(\PP_{t})_{t\geq 0}$ comes from a L\'evy process, $\mathcal{S}(\Rn)\subset\DA$, where $\mathcal{S}(\Rn)$ is the Schwartz class in $\Rn$.

\noindent\emph{Fourier symbol.} By \cite[Theorem 3.3.3]{Apple}, the infinitesimal generator $\mathcal{A}$ of a  Feller semigroup $(\PP_t)_{t\geq 0}$ associated with a L\'evy process $(X_t)_{t\geq 0}$ can be expressed in terms of Fourier transform as
\begin{equation*}
	\mathcal{A}f(x)=-\intr \mathcal{F}(f)(\xi)\psi(\xi)e^{i\langle \xi,y\rangle}\,{\rm d}\xi,\quad f\in \mathcal{S}(\Rn),\,x\in\Rn,
\end{equation*}
where $\psi$ is the characteristic exponent of $(X_t)_{t\geq 0}$.

The operators in the Feller semigroup $(\PP_{t})_{t>0}$  can also be rewritten via Fourier transform. Indeed,   $\PP_{t}(\mathcal{S}(\Rn))\subset\mathcal{S}(\Rn)$ and, due to \eqref{n2.Pt} and  \eqref{n2.3},
\begin{align*}
    \PP_{t}f&=\PP_{t}\intr \mathcal{F}(f)(\xi)e^{i\langle \xi,\cdot\rangle}\,{\rm d}\xi=\intr \mathcal{F}(f)(\xi)\PP_{t}e^{i\langle \xi,\cdot\rangle}\,{\rm d}\xi=\intr \mathcal{F}(f)(\xi)e^{i\langle \xi,\cdot\rangle}e^{-t\psi(\xi)}\,{\rm d}\xi\\
    &=\langle \mathcal{F}^{-1}(f(\cdot-\xi)),e^{-t\psi(\xi)}\,\textup{d}\xi\rangle_{\mathcal{S}(\mathbb{R}^d)\times\mathcal{S}'(\mathbb{R}^d)}
    \quad\text{for all }f\in\mathcal{S}(\Rn),\ t\geq 0.
\end{align*}
As a consequence, using also~\eqref{n2.5}, we have that
\begin{equation}\label{b1}
    p_t(\textup{d}y)=\mathcal{F}^{-1}(e^{-t\psi(-y)}\textup{d}y)\quad\text{in }\mathcal{S}'(\mathbb{R}^d).
\end{equation}

\noindent\emph{Heat kernel.} A Feller semigroup $(\PP_{t})_{t\geq 0}$ is a strong Feller semigroup if  $\PP_{t}(B_{\textup{b}}(\Rn))\subset C_{\textup{b}}(\Rn)$. We have the following important result due to Hawkes, \cite[Lemma 4.9]{CRM}. The semigroup $(\PP_{t})_{t\geq 0}$ defined by~\eqref{n2.Pt} associated to a L\'evy process $(X_{t})_{t\geq 0}$ is a strong Feller semigroup if and only if its Markov transition density $p_{t}$, $t\geq 0$ (defined in~\eqref{n2.2}) is absolutely continuous with respect to the Lebesgue measure for all $t>0$; that is, if
\begin{equation*}
	p_{t}(\textup{d}y)=P_{t}(y)\dy\quad\text{for all }t>0
\end{equation*}
for some $P_{t}\in L^{1}(\Rn)$, known as the heat kernel of $(X_{t})_{t\geq 0}$. Note that $\|P_t\|_{L^{1}(\Rn)}=1$, since $p_t$ is a probability measure. In what follows we will use several times the alternative notation
\begin{equation*}
	P(y,t)=P_{t}(y),\quad(y,t)\in\Rn\times(0,\infty).
\end{equation*}

If the characteristic exponent of a L\'evy process $(X_{t})_{t\geq 0}$ satisfies $e^{-t\psi}\in L^{1}(\Rn)$ for all  $t>0$,  then, by  Riemann-Lebesgue's  Theorem, $\mathcal{F}^{-1}(e^{-t\psi})\in C_{0}(\Rn)$. Moreover, if $\psi$ is symmetric, by~\eqref{b1},  $(X_{t})_{t\geq 0}$ has a heat kernel given by
\begin{equation*}
	P_{t}=\mathcal{F}^{-1}(e^{-t\psi}),\quad t>0.
\end{equation*}
Notice that for each $t>0$ we have $\displaystyle|P_t(x)|\le \int_{\mathbb{R}^d}e^{-t\psi(\xi)}\,\textup{d}\xi=P_t(0)$.

Thanks to~\eqref{eq:strong.continuity.initial.trace} we know that the heat kernel has an initial trace:
\begin{equation*}
    P_t\to \delta_0\quad\text{as }t\to0^+\text{ as finite Radon measures in the weak* topology}.
\end{equation*}
Besides, by Fourier transform, $(P_{t})_{t\geq 0}$ satisfies the semigroup property
\begin{equation}\label{eq:semigroup.property}
    P_{t}* P_{s}=P_{s}*P_{t}=P_{t+s}\quad\text{for all }t,s\geq0.
\end{equation}

\subsection{Heat kernel properties for operators \texorpdfstring{$\mathcal{L}_K$}{L\_K}}\label{2.2}

Our next goal is to obtain further properties of the heat kernel when the L\'evy operator $\mathcal{L}_K$ has the form~\eqref{a.Lk} with $K$ satisfying~\eqref{A0}--\eqref{A2}.

\noindent\emph{Fourier symbol of the operator.} By Fourier transform,
\begin{equation*}\label{g2}	
    \mathcal{L}_K f(x)=\intr m(\xi)\mathcal{F}(f)(\xi)e^{i\langle x,\xi\rangle}\,{\rm d}\xi,\quad f\in C_{\textup{c}}^{\infty}(\Rn),\ x\in\Rn,
\end{equation*}
where  $m$ is the Fourier symbol of $\mathcal{L}_K$, given by
\begin{equation}\label{p6}
	m(\xi)=\intr(1-\cos\langle\xi,y\rangle)K(y)\dy,\quad\xi\in\RN.
\end{equation}
We know from the properties of the cosine function that there is a constant $C>0$ such that
\begin{equation}\label{p7}
	0\leq 1-\cos\langle\xi,y\rangle\leq C(1\wedge |\xi|^{2}|y|^{2})\leq C(1+|\xi|^{2})(1\wedge|y|^{2}).
\end{equation}
Then, since $K$ is a L\'evy kernel,
\begin{equation}\label{q9}
	0\leq m(\xi)\leq C(1+|\xi|^{2}),\quad \xi\in\Rn;
\end{equation}
see~\eqref{A0}. Moreover, because $K$ is symmetric and $\langle \xi,\cdot \rangle\mathcal{X}_{B_{1}(0)}$ is an odd function,
\begin{equation*}
    m(\xi)=\textrm{P.V.}\intr (1-e^{i\langle \xi,y\rangle})K(y)\dy= \intr (1-e^{i\langle \xi,y\rangle}+i\langle \xi,y\rangle\mathcal{X}_{B_{1}(0)}(y))K(y)\dy.
\end{equation*}
Since $|1-e^{i \langle \xi,y\rangle}+i\langle \xi,y\rangle\mathcal{X}_{B_{1}(0)}(y)|\leq C(1+|\xi|^{2})(1\wedge|y|^{2})$, the last integral is finite, thanks to the L\'evy condition~\eqref{A0}. Therefore, $m$ is the characteristic exponent of the L\'evy process $(Y_{t})_{t\geq 0}$ that has $(0,0,K(y)\dy)$ as L\'evy triplet; see L\'evy-Khintchine's formula~\eqref{n2.L-Kformula}. As a consequence,
\begin{equation}\label{g5}
    -\mathcal{L}_K\big|_{\mathcal{S}(\Rn)}=\mathcal{A}\big|_{\mathcal{S}(\Rn)},
\end{equation}
where $\mathcal{A}$ is the infinitesimal generator associated to the process.

We now prove the continuity of the Fourier symbol and estimate its size.
\begin{proposition} \label{Pprop2}
Let $m$ be the Fourier symbol of $\mathcal{L}_K$, with $K$ satisfying assumptions~\eqref{A0}--\eqref{A2}. Then, $m\in C(\mathbb{R}^d)$ and there exist constants $0<C_{1}\leq C_{2}$ such that
\begin{equation}\label{eq:size.m}
	C_{1}|\xi|^{\beta_{1}}\leq m(\xi)\leq C_{2}|\xi|^{\beta_{2}}\quad\textrm{for all }|\xi|\geq 1,
\end{equation}
where  $0<\beta_{1}\le\beta_{2}$ are the exponents in the global weak scaling conditions~\eqref{A2}.
\end{proposition}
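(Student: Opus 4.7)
The plan is to treat the three assertions (continuity, upper bound, lower bound) in order, using only \eqref{A0}--\eqref{A2} and polar coordinates.

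For the continuity of $m$, the formula~\eqref{p6} together with~\eqref{p7} and the L\'evy integrability in~\eqref{A0} produces the integrable majorant $C(1+|\xi_0|^2+1)(1\wedge|y|^2)K(y)$ on any bounded neighborhood of a base point $\xi_0$, so the dominated convergence theorem applies.

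For the upper bound I would start from $1-\cos\langle\xi,y\rangle\leq C(1\wedge|\xi|^2|y|^2)$, use the upper bound on $K$ from~\eqref{A1}, and pass to polar coordinates to obtain
\[
m(\xi)\leq C'\left\{|\xi|^2\int_0^{1/|\xi|}\frac{r\,dr}{\tilde\varphi(r)}+\int_{1/|\xi|}^\infty\frac{dr}{r\tilde\varphi(r)}\right\}.
\]
On the outer piece the lower scaling in~\eqref{A2} between $1/|\xi|$ and $r$ produces a bound by a constant multiple of $1/\tilde\varphi(1/|\xi|)$. On the inner piece, when $\beta_2<2$ the upper scaling in~\eqref{A2} between $r$ and $1/|\xi|$ yields the same bound; in the borderline case $\beta_2=2$, I would instead invoke the L\'evy integrability $\int_0^1 r/\tilde\varphi(r)\,dr<\infty$ (guaranteed by \eqref{A0} combined with~\eqref{A1}) to bound the inner piece by $C|\xi|^2$, which already matches $|\xi|^{\beta_2}$. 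Finally, the upper scaling applied between $1/|\xi|$ and $1$ converts $1/\tilde\varphi(1/|\xi|)$ into an estimate by $\lambda_2|\xi|^{\beta_2}/\tilde\varphi(1)$.

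For the lower bound, fix $\varepsilon\in(0,1)$ so that $1-\cos\theta\geq\theta^2/4$ whenever $|\theta|\leq\varepsilon$. Restricting the integral in~\eqref{p6} to $|y|\leq\varepsilon/|\xi|$, using the lower bound on $K$ from~\eqref{A1}, and evaluating $\int_{S^{d-1}}\langle\xi,\omega\rangle^2\,d\sigma(\omega)=c_d|\xi|^2$, one arrives at
\[
m(\xi)\geq c|\xi|^2\int_0^{\varepsilon/|\xi|}\frac{r\,dr}{\tilde\varphi(r)}.
\]
The lower scaling in~\eqref{A2} applied between $r\leq\varepsilon/|\xi|$ and $\varepsilon/|\xi|$ bounds $\tilde\varphi(r)$ from above by $\lambda_1^{-1}(r|\xi|/\varepsilon)^{\beta_1}\tilde\varphi(\varepsilon/|\xi|)$; integrating (which is legitimate since $\beta_1<2$ is forced by~\eqref{A0}), absorbing $\varepsilon$ into constants via the scaling, and using the lower scaling between $1/|\xi|$ and $1$ finally delivers $m(\xi)\geq c'|\xi|^{\beta_1}$ for $|\xi|\geq 1$.

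The main obstacle is precisely the borderline case $\beta_2=2$ in the upper bound, where the natural scaling integral $\int_0^{1/|\xi|}r^{1-\beta_2}\,dr$ diverges and must be replaced by the L\'evy integrability of $r/\tilde\varphi(r)$ near the origin; once this is noticed, the argument runs uniformly across the admissible range $\beta_1\le\beta_2\le 2$.
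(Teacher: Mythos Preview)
Your argument is correct, but the paper takes a noticeably cleaner route that sidesteps the borderline case you single out. Instead of passing to polar coordinates and splitting the integral at $1/|\xi|$, the paper performs the change of variables $y\mapsto y/|\xi|$ in~\eqref{p6} to obtain
\[
m(\xi)=\frac{1}{|\xi|^d}\int_{\mathbb{R}^d}\big(1-\cos\langle \xi/|\xi|,y\rangle\big)\,K(y/|\xi|)\,dy,
\]
and then uses~\eqref{A1} together with the lower (resp.\ upper) scaling in~\eqref{A2} to compare $K(y/|\xi|)$ with $|\xi|^{d+\beta_1}K(y)$ (resp.\ $|\xi|^{d+\beta_2}K(y)$). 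This yields $m(\xi)\ge c|\xi|^{\beta_1}m(\xi/|\xi|)$ and $m(\xi)\le C|\xi|^{\beta_2}m(\xi/|\xi|)$, and the proof concludes by noting that $m$ is continuous and strictly positive on the unit sphere. The estimate thus reduces to a pointwise kernel comparison plus compactness of $\mathbb{S}^{d-1}$, with no case distinction at $\beta_2=2$ and no explicit radial integrals.

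Your approach has the merit of producing slightly more quantitative information (it effectively shows $m(\xi)\asymp 1/\tilde\varphi(1/|\xi|)$, a sharper statement than~\eqref{eq:size.m}), at the price of the extra bookkeeping you identified. The paper's approach buys uniformity and brevity: once one realizes that $m$ restricted to $\mathbb{S}^{d-1}$ is a positive continuous function, both bounds follow from a single scaling manipulation, and the endpoint $\beta_2=2$ requires no separate treatment.
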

\begin{proof}
Thanks to \eqref{p7}, we can apply the Dominated  Convergence Theorem, (DCT from now on), to obtain the desired continuity of $m$,
\begin{equation*}
	\lim\limits_{\xi_{1}\to \xi_{2}}(m(\xi_1)-m(\xi_2))= \intr  \lim\limits_{\xi_{1}\to \xi_{2}}
    (\cos\langle y,\xi_2\rangle-\cos\langle y,\xi_1\rangle)K(y)\dy=0.
\end{equation*}

As for the lower bound for $m$ when $|\xi|\ge1$, performing the change of variables $y\mapsto y/|\xi|$ in~\eqref{p6},
\begin{equation*}
	m(\xi)=\frac{1}{|\xi|^d}\intr (1-\cos\langle \xi/|\xi|,y\rangle)K(y/|\xi|)\dy.
\end{equation*}
Using hypothesis \eqref{A1} and the global lower weak scaling condition in~\eqref{A2},
\begin{equation*}
	K(y/|\xi|)\geq\frac{|\xi|^d}{C|y|^d\tilde{\varphi}(y/|\xi|)}=\frac{|\xi|^d}{C|y|^d\tilde{\varphi}(y)}
    \frac{\tilde{\varphi}(y)}{\tilde{\varphi}(y/|\xi|)}
	\geq \frac{\lambda}{C}\frac{|\xi|^{d+\beta_{1}}}{|y|^d\tilde{\varphi}(y)}\geq \frac{\lambda}{C^{2}}|\xi|^{d+\beta_{1}}K(y),
\end{equation*}
where $C$ is the same constant as in \eqref{A1} and $\lambda$ and $\beta_{1}$ come from \eqref{A2}. Therefore,
\begin{equation*}
m(\xi)\geq\frac{\lambda}{C^{2}}|\xi|^{\beta_1} \intr (1-\cos\langle\xi/|\xi|,y\rangle)K(y)\dy=\frac{\lambda}{C^{2}}|\xi|^{\beta_1}m(\xi/|\xi|).
\end{equation*}
Since $K$ is assumed to be  positive, then $m(\xi)\geq 0$, with equality  if and only if $\xi=0$; see~\eqref{p6}. Therefore, the continuity of $m$ implies that
\begin{equation*}
    m(\xi)\geq C_1|\xi|^{\beta_1}, \quad\text{where }  C_{1}=\frac{\lambda}{C^{2}}\min_{\theta\in\mathbb{S}^{n}}m(\theta)>0.
\end{equation*}

The proof of the upper bound is completely analogous.
\end{proof}

\noindent\emph{The heat kernel and its regularity.} Let $(Y_{t})_{t\geq 0}$ be the L\'evy process with L\'evy triplet $(0,0,K(y)\dy)$. We have just seen that its characteristic exponent is $m$. The lower bound for $m$ in~\eqref{eq:size.m} implies that  $e^{-tm}\in L^{1}(\Rn)$ for all $t>0$, whence, since $m$ is symmetric, $(Y_{t})_{t\geq 0}$ has a heat kernel given by
\begin{equation}\label{Aheatkernel}	
    P_{t}=\mathcal{F}^{-1}(e^{-tm}),\quad t>0;
\end{equation}
see~\eqref{b1}. The lower bound also allows us to prove that the heat kernel is smooth.

\begin{corollary}\label{pcor1}
     Let $K$ satisfying~\eqref{A0}--\eqref{A2} and let $P$ be the heat kernel associated to~$\mathcal{L}_K$. Then:
     \begin{itemize}
         \item[\textup{(i)}] $\partial_{t}^{k}P(\cdot ,t), D^{\alpha}P_t\in C_{0}(\Rn)$ for all $t>0$, $k\in \mathbb{N}\cup \{0\}$, and $\alpha=(\alpha_{1},\dots,\alpha_{d})\in (\mathbb{N}\cup\{0\})^{d}$;
         \item[\textup{(ii)}] $\mathcal{L}_K P_t\in C_0(\mathbb{R}^d)$ for all $t>0$;
         \item[\textup{(iii)}] $\partial_{t}^{k}P\in C(\Rn\times (0,\infty))$ for all $k\in \mathbb{N}\cup \{0\}$.
     \end{itemize}
\end{corollary}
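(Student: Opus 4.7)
The plan is to derive all three statements from the Fourier representation $P_{t}(x) = \int_{\mathbb{R}^d} e^{i\langle x,\xi\rangle} e^{-tm(\xi)}\,\mathrm{d}\xi$ provided by \eqref{Aheatkernel}, combined with the two-sided bound on $m$ from Proposition~\ref{Pprop2}. The key ingredient is the sub-exponential decay $e^{-tm(\xi)} \le e^{-tC_{1}|\xi|^{\beta_{1}}}$ for $|\xi|\ge 1$ and any $t>0$, which dominates every polynomial factor that will arise.

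For (i), I would differentiate formally under the integral sign to obtain
\begin{equation*}
    D^{\alpha}\partial_{t}^{k}P(x,t) = \int_{\mathbb{R}^d} e^{i\langle x,\xi\rangle}(i\xi)^{\alpha}(-m(\xi))^{k}e^{-tm(\xi)}\,\mathrm{d}\xi.
\end{equation*}
The upper bound $m(\xi)\le C(1+|\xi|^{2})$ from~\eqref{q9} together with the exponential decay shows the integrand is dominated by an $L^{1}(\mathbb{R}^d)$ function, uniformly for $(x,t)$ in any compact subset of $\mathbb{R}^d\times (0,\infty)$. Dominated convergence legitimizes passing each derivative inside, and the Riemann--Lebesgue lemma promotes the resulting bounded function to $C_{0}(\mathbb{R}^d)$.

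For (ii), the pointwise integral defining $\mathcal{L}_{K}P_{t}(x)$ in~\eqref{a.Lk} is absolutely convergent: on $\{|y|\le 1\}$ we use $|\Lambda P_{t}(x,y)|\le \tfrac12\|D^{2}P_{t}\|_{\infty}|y|^{2}$ (from (i)), on $\{|y|\ge 1\}$ we use $|\Lambda P_{t}(x,y)|\le \tfrac32\|P_{t}\|_{\infty}$, and we invoke the Lévy condition~\eqref{A0}. Substituting the Fourier representation of $P_{t}$ inside $\Lambda P_{t}(x,y)$ produces the factor $(1-\cos\langle\xi,y\rangle)$; an application of Fubini yields
\begin{equation*}
    \mathcal{L}_{K}P_{t}(x) = \int_{\mathbb{R}^d} e^{i\langle x,\xi\rangle}m(\xi)e^{-tm(\xi)}\,\mathrm{d}\xi,
\end{equation*}
which lies in $C_{0}(\mathbb{R}^d)$ by Riemann--Lebesgue since $\xi\mapsto m(\xi)e^{-tm(\xi)}\in L^{1}(\mathbb{R}^d)$. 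Finally, for (iii), the same Fourier representation as in (i) has pointwise continuous integrand in $(x,t)$, majorized on any compact subset of $\mathbb{R}^d\times [t_{0},\infty)$ with $t_{0}>0$ by the $L^{1}(\mathbb{R}^d)$ function $C(1+|\xi|^{2})^{k}e^{-t_{0}C_{1}|\xi|^{\beta_{1}}}$. A single application of dominated convergence gives the claim.

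The main obstacle is the Fubini interchange in (ii), since we are swapping a Lévy-type singular integral with a Fourier integral whose inner integrand is not Schwartz. The remedy is the splitting $|y|\lessgtr 1$ combined with the elementary bound $|1-\cos\langle\xi,y\rangle|\le C(1\wedge|\xi|^{2}|y|^{2})$ from~\eqref{p7}: on $\{|y|\le 1\}$ the factor $|\xi|^{2}$ is absorbed by $e^{-tm(\xi)}$, while on $\{|y|\ge 1\}$ the factor $1$ is integrated against $K(y)\,\mathrm{d}y$ using~\eqref{A0}. This absolute integrability in $(\xi,y)$ is the only nontrivial analytic input beyond Proposition~\ref{Pprop2}.
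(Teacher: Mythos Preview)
Your argument is correct, and parts (i) and (iii) are essentially the paper's proof. The only genuine difference is in (ii): the paper does \emph{not} pass through the Fourier representation of $\mathcal{L}_{K}P_{t}$. Instead, having established $P_{t}\in C^{2}_{0}(\mathbb{R}^d)$ in (i), it uses the uniform bound $|\Lambda P_{t}(x,y)|\le (\tfrac12\|D^{2}P_{t}\|_{\infty}+2\|P_{t}\|_{\infty})(1\wedge|y|^{2})$ together with the L\'evy condition~\eqref{A0} to apply the DCT directly to the defining integral $\int_{\mathbb{R}^d}\Lambda P_{t}(x,y)K(y)\,\mathrm{d}y$, obtaining both continuity in $x$ and the vanishing at infinity from the corresponding properties of $\Lambda P_{t}(x,y)$.

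Your route---substituting the Fourier formula for $P_{t}$ into $\Lambda P_{t}$, applying Fubini to arrive at $\mathcal{L}_{K}P_{t}(x)=\int e^{i\langle x,\xi\rangle}m(\xi)e^{-tm(\xi)}\,\mathrm{d}\xi$, and then invoking Riemann--Lebesgue---is equally valid; the Fubini justification you sketch via the split $|y|\lessgtr 1$ and the bound~\eqref{p7} goes through. Your version has the advantage of yielding the explicit Fourier representation of $\mathcal{L}_{K}P_{t}$, at the mild cost of that extra Fubini step; the paper's version stays entirely on the $x$-side and avoids any interchange of integrals.
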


\begin{proof}
(i) By the definition~\eqref{Aheatkernel} of the heat kernel via Fourier transform,
\begin{align*}\label{p10}
    &\partial_{t}^{k}P(\cdot,t)=\partial_{t}^{k}\mathcal{F}^{-1}(e^{-tm})=\mathcal{F}^{-1}\big(\partial_{t}^{k}e^{-tm}\big)
    =\mathcal{F}^{-1}\big((-1)^k m^ke^{-tm}\big), \\
    &D^{\alpha}P_{t}=D^{\alpha}\mathcal{F}^{-1}(e^{-tm})=\mathcal{F}^{-1}\big(pe^{-tm}\big)\quad\text{for some $\mathbb{C}$-polynomial $p$}.
\end{align*}
Hence, the result will follow from Riemann-Lebesgue's Lemma if we prove that
\begin{equation}\label{eq:smoothness.P}
	m^{k}e^{-tm},\, pe^{-tm}\in L^{1}(\Rn)\quad\textrm{for all }t>0
\end{equation}
for any $k\in\mathbb{N}\cup\{0\}$ and any $\mathbb{C}$-polynomial $p=p(\xi_1,\dots,\xi_d)$.

By the  lower bound for $m$ in~\eqref{eq:size.m}, for any $k\in \mathbb{N}\cup \{0\}$,
\begin{equation}\label{p14}	
    \begin{aligned}
    	\intr (1+|\xi|^{k})e^{-tm(\xi)}{\rm d}\xi&=\int_{\{|\xi|\leq 1\}} (1+|\xi|^k)e^{-tm(\xi)}{\rm d}\xi+\int_{\{|\xi|\geq 1\}}
        (1+|\xi|^{k})e^{-tm(\xi)}{\rm d}\xi\\
    	&\leq 2|B_1(0)|+2\int_{\{|\xi|\geq 1\}}|\xi|^{k}e^{-tC_{1}|\xi|^{\beta_{1}}}{\rm d}\xi<\infty\quad\text{for all }t>0,
    \end{aligned}
\end{equation}
from where~\eqref{eq:smoothness.P} follows, since $0\leq m^k(\xi)\leq C(1+|\xi|^{2k})$ for some constant $C>0$ for all $\xi\in\Rn$,
by~\eqref{q9}, and $|p(\xi_1,\dots,\xi_d)|\leq C(1+|\xi|^{k})$ for all $\xi\in\Rn$ for some constants $C>0$ and $k\in \mathbb{N}\cup\{0\}$.

\noindent(ii) We have already proved in (i) that $P_{t}\in C^{2}_{0}(\Rn)$. Hence, by Taylor's expansion,
\begin{equation}\label{z10}	
    \begin{aligned}
        |\Lambda P_{t}(x,y)|&\leq \frac12 \|D^{2}P_{t}\|_{\infty}|y|^{2}\mathcal{X}_{B_{1}(0)}(y) +2\|P_{t}\|_{\infty}\mathcal{X}_{B^{\textup{c}}_{1}(0)}(y)\\
        &\le (\frac12\|D^{2}P_t\|_{\infty}+2\|P_t\|_{\infty})(1\wedge|y|^{2})
    \end{aligned}
\end{equation}
where $\Lambda$ denotes the second order incremental quotients (see~\eqref{SecondOrderIncrementalQuotiens}).
Hence, since $K$ satisfies the L\'evy property~\eqref{A0}, we can apply the DCT to obtain
\begin{equation*}
    |\Lk P_{t}(x_{1})-\Lk P_{t}(x_{2})|\to 0\quad\textrm{as }|x_{1}-x_{2}|\to 0,\quad	
    |\Lk P_{t}(x)|\leq \intr |\Lambda P_{t}(x,y)|K(y)\dy\to 0\quad\textrm{as }|x|\to\infty.
\end{equation*}

\noindent(iii) It is enough to check that $\partial_{t}^{k}P(x,\cdot)\in C((0,\infty))$ for all $x\in\Rn$, since we already now that $\partial_{t}^{k}P(\cdot,t)\in C_{0}(\Rn)$ for all $t>0$.  Consider $x\in\Rn$ and $t_{1},t_{2}>0$. By \eqref{p14}, using the DCT, we have
\begin{equation*}
    |\partial_{t}^{k}P(x,t_{1})-\partial_{t}^{k}P(x,t_{2})|\leq\intr m^k(\xi)\big|e^{-t_{1}m(\xi)}-e^{-t_{2}m(\xi)}\big|\,\textup{d}\xi\to0
    \quad\textrm{as }|t_{1}-t_{2}|\to 0.\qedhere
\end{equation*}
\end{proof}

\noindent\emph{Size estimates.} If $K$ satisfies~\eqref{A0}--\eqref{A2},~\cite[Theorem 1.2]{Panki} implies that the tails of the heat kernel and the L\'evy kernel are comparable uniformly in bounded times away from zero: for each $\varepsilon>0$ and $T>\varepsilon$ there exists a constant $C_{\varepsilon,T}\geq1$ depending only on $\varepsilon$ and $T$ such that
\begin{equation}\label{Atotal}\tag{$\textup{C}_{PK}$}
	C_{\varepsilon,T}^{-1}K(x)\leq P_{t}(x)\leq C_{\varepsilon,T}K(x)\quad\textrm{for all }|x|\geq 1\textrm{ and }t\in[\varepsilon,T].
\end{equation}
Remember that $P_t\in C_0(\mathbb{R}^d)$ for all $t>0$. In particular, $P_t$ is bounded. On the other hand, $P_t>0$ for all $t>0$; see~\cite[Theorem 1.2]{Panki}. Combining these two facts with~\eqref{Atotal}, we have that there is some constant $C_{\varepsilon,T}>1$ such that
\begin{equation}\label{Atotal2}\tag{$\textup{C}_{P_t}$}
	 C_{\varepsilon,T}^{-1 }P_{t}(x)\leq P_{s}(x)\leq C_{\varepsilon,T}P_{t}(x)\quad\textrm{for all }x\in\Rn\textrm{ and } t,s\in[\varepsilon,T].
\end{equation}
Besides, since $P_{1}>0$ and continuous, for each $\tilde{\theta}\in C_{\textup{c}}(\RN\times[0,T])$ there is $C_{\tilde{\theta}}>0$ such that
\begin{equation}\label{a.4}	
    |\tilde{\theta}(x,t)|\leq C_{\tilde{\theta}}P_{1}(x)\quad\textrm{for all }(x,t)\in\RN\times[0, T],
\end{equation}
which together with the semigroup property~\eqref{eq:semigroup.property} and~\eqref{Atotal2} yields that there is $C_{\tilde{\theta}, T}>0$ such that
\begin{equation}\label{a.5}
    |\big(P_{t}*\tilde{\theta}(\cdot,t)\big)(x)|\leq C_{\tilde{\theta}} P_{t}*P_{1}(x)\leq C_{\tilde{\theta}, T}P_{1}(x)\quad\text{for all }\xt\in\RN\times[0,T].
\end{equation}

Thanks to the positivity and continuity of $P_1$, we can control the image of test functions under the action of $\mathcal{L}_K$.

\begin{lemma}\label{a.17}
     Let $\mathcal{L}=\mathcal{L}_K$ with $K$ satisfying assumptions~\eqref{A0}--\eqref{A2} and let $P$ be the corresponding heat kernel. Let $0\leq \delta<\tau<\infty$. For each $\tilde{\theta}\in C^{2}_{\textup{c}}(\RN\times[\delta,\tau])$ there is a constant $C_{\tilde{\theta}}>0$ such that
	\begin{equation*}        	
        |\mathcal{L}_K\tilde{\theta}(x,t)|\leq\intr |\Lambda\tilde{\theta}(x,y,t)|K(y)\dy\leq C_{\tilde{\theta}}P_{1}(x)\quad \text{for all }\xt\in\RN\times[\delta,\tau].
	\end{equation*}
\end{lemma}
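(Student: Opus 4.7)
The first inequality is immediate from the definition of $\mathcal{L}_K\tilde\theta$ and the triangle inequality. For the second inequality, fix $R>0$ with $\operatorname{supp}\tilde\theta\subset B_R(0)\times[\delta,\tau]$ and split
\[
\int_{\mathbb{R}^d}|\Lambda\tilde\theta(x,y,t)|K(y)\,dy
=\underbrace{\int_{\{|y|\le 1\}}}_{=:I_1}+\underbrace{\int_{\{|y|\ge 1\}}}_{=:I_2}.
\]

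For $I_1$, I would observe that when $|x|>R+1$ each of $x,x\pm y$ lies outside $B_R(0)$ and so $\Lambda\tilde\theta(x,y,t)=0$. When $|x|\le R+1$ a second order Taylor expansion of the $C^2$ function $\tilde\theta(\cdot,t)$ yields $|\Lambda\tilde\theta(x,y,t)|\le\tfrac12\|D^2_x\tilde\theta\|_\infty|y|^2$, and since $\int_{\{|y|\le 1\}}|y|^2K(y)\,dy<\infty$ by the L\'evy condition~\eqref{A0}, I obtain a uniform bound $I_1(x,t)\le C_1\mathcal{X}_{B_{R+1}(0)}(x)$ on $[\delta,\tau]$. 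Because $P_1$ is continuous and strictly positive (Corollary~\ref{pcor1} and the positivity cited after~\eqref{Atotal}), it is bounded below by a positive constant on $\overline{B_{R+1}(0)}$, so this contribution is absorbed into $C_{\tilde\theta}P_1(x)$.

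For $I_2$, I would use $|\Lambda\tilde\theta(x,y,t)|\le|\tilde\theta(x,t)|+\tfrac12(|\tilde\theta(x+y,t)|+|\tilde\theta(x-y,t)|)$. The $|\tilde\theta(x,t)|$ piece is bounded by $\|\tilde\theta\|_\infty\mathcal{X}_{B_R(0)}(x)\int_{\{|y|\ge 1\}}K(y)\,dy$, a finite quantity supported in a ball, hence again absorbed into $C_{\tilde\theta}P_1(x)$. For the remaining two pieces, the symmetry of $K$ and the substitutions $z=x\pm y$ give
\[
\int_{\{|y|\ge 1\}}|\tilde\theta(x\pm y,t)|K(y)\,dy=\int_{\{|z-x|\ge 1,\ |z|\le R\}}|\tilde\theta(z,t)|K(z-x)\,dz.
\]
When $|x|\le R+1$ this is uniformly bounded and handled as before. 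When $|x|>R+1$, every $z\in B_R(0)$ satisfies $|z-x|\ge|x|-R\ge 1$, so the kernel comparability~\eqref{Atotal} gives $K(z-x)\le CP_1(z-x)$. Using that $1\le cP_1(z)$ on $\overline{B_R(0)}$ (again by positivity and continuity of $P_1$), the integral is controlled by
\[
C\|\tilde\theta\|_\infty\int_{\mathbb{R}^d}P_1(z-x)P_1(z)\,dz=C\|\tilde\theta\|_\infty P_2(x)\le C'_{\tilde\theta}P_1(x),
\]
where I used the semigroup property~\eqref{eq:semigroup.property} and the time-comparability~\eqref{Atotal2} for $s,t\in[1,2]$.

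The part I expect to require the most care is the far region of $I_2$: since the integrand is not compactly supported in $x$, I cannot reduce the estimate to a bounded set, and the decay of the bound must be exhibited by transferring the $K$-bound into a $P_1$-bound via~\eqref{Atotal} and then exploiting the semigroup structure of $P$ to collapse the resulting convolution to $P_1$ itself. Everything else is a routine combination of Taylor expansion, the L\'evy integrability of $K$, and the fact that $P_1$ is bounded below on any compact set.
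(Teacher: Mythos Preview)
Your proof is correct and follows essentially the same approach as the paper: the near-origin contribution is handled by Taylor expansion plus the L\'evy integrability of $K$ and the positivity of $P_1$ on compact sets, while the far contribution is treated by bounding $|\tilde\theta|\le C_{\tilde\theta}P_1$, replacing $K$ by $P_1$ on $\{|y|\ge1\}$ via~\eqref{Atotal}, and collapsing the resulting convolution with the semigroup property and~\eqref{Atotal2}. The only cosmetic difference is that the paper invokes the pointwise bound $|\tilde\theta|\le C_{\tilde\theta}P_1$ (equation~\eqref{a.4}) directly rather than splitting into the regions $|x|\le R+1$ and $|x|>R+1$, but the underlying ideas are identical.
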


\begin{proof}
The first inequality is immediate. As for the second, we split the integration domain in two sets, to take profit of the different properties of $K$ in different sets,
\begin{equation*}
	\intr |\Lambda\tilde{\theta}(x,y,t)|K(y)\dy=\underbrace{\ints|\Lambda\tilde{\theta}(x,y,t)|K(y)\dy}_{\textup{I}(x,t)}
    +\underbrace{\intb|\Lambda\tilde{\theta}(x,y,t)|K(y)\dy}_{\textup{II}(x,t)}.
\end{equation*}

By Taylor's expansion, for each $x,y\in\mathbb{R}^d$ and $t\in[0,\tau]$ there is some $\lambda$, $|\lambda|\le 1$, such that
\begin{equation*}
 |\Lambda\tilde{\theta}(x,y,t)|\le \frac{|y|^2}2\sum_{j,k=1}^{d}|\partial_j\partial_k\tilde\theta(x+\lambda y,t)|.
\end{equation*}
Let $R_{\tilde\theta}$ be such that $\operatorname{supp}(\tilde\theta(\cdot,t))\subset B_{R_{\tilde\theta}}(0)$ for all $t\in[0,\tau]$. If
$|\lambda|,|y|\le1$, then
\begin{equation*}
    \operatorname{supp}(\partial_j\partial_k\tilde\theta(\cdot+\lambda y,t))\subset B_{1+R_{\tilde\theta}}(0),\quad j,k\in\{1,\dots,d\},
\end{equation*}
whence, since $P_1$ is continuous and positive, there exists a constant $C_{\tilde\theta}$ such that
\begin{equation*}
    \sum_{j,k=1}^d|\partial_j\partial_k\tilde\theta (x+\lambda y,t)|\le C_{\tilde\theta} P_1(x).
\end{equation*}
The L\'evy condition \eqref{A0} implies then that $\displaystyle \textup{I}(x,t)\leq C'_{\tilde{\theta}}P_{1}(x)$ for some constant $C'_{\tilde{\theta}}>0$.

As for $\textup{II}$, since $K$ is symmetric,
\begin{equation*}
	\textup{II}(x,t)\leq|\tilde{\theta}(x,t)|\int_{\{|y|\geq1\}} K(y)\dy+\int_{\{|y|\geq1\}}
    |\tilde{\theta}(x-y,t)|K(y)\dy.
\end{equation*}
By \eqref{a.4},  there is a constant $C_{\tilde{\theta}}>0$ such that $|\tilde{\theta}(\cdot,t)|\leq C_{\tilde\theta}P_{1}$ for all $t\in[0,\tau]$. Then,
\begin{equation*}
	\textup{II}(x,t)\leq C_{\tilde{\theta}}P_{1}(x)\int_{\{|y|\geq1\}}
    K(y)\dy+C_{\tilde{\theta}}\int_{\{|y|\geq1\}}P_{1} (x-y)K(y)\dy.
\end{equation*}
The integral in the first term of the right-hand side is finite, thanks to the L\'evy condition~\eqref{A0}. To handle the second term we use that we can control  the tail of $K$ by the tail of $P_1$, see  \eqref{Atotal}. Thus,
\begin{equation*}
	\textup{II}(x,t)\leq C_{\tilde{\theta}}\Big(P_{1}(x)+ \int_{\{|y|\geq 1\}}P_{1} (x-y)P_{1}(y)\dy\Big)\leq C_{\tilde{\theta}}\Big(P_{1}(x)+\intr P_{1} (x-y)P_{1}(y)\dy\Big).
\end{equation*}
The integral on the rightmost term is equal to $P_{2}(x)$, due to the semigroup property~\eqref{eq:semigroup.property}. Moreover, thanks to \eqref{Atotal2}, there exists $C>0$ such that $P_{2}\leq CP_{1}$. We conclude that
\begin{equation*}
    \textup{II}(x,t)\leq C_{\tilde{\theta}}(P_{1}(x)+ P_{2}(x))\leq C_{\tilde{\theta}}P_{1}(x).\qedhere
\end{equation*}
\end{proof}

Our next result shows that we can compare $P_1$ with its translations.

\begin{lemma}\label{v3.19}
    Let $\mathcal{L}=\mathcal{L}_K$ with $K$ satisfying assumptions~\eqref{A0}--\eqref{A2} and let $P$ be the corresponding heat kernel. Given $R>0$, there exists a constant $C_R>0$ such that
	\begin{equation}\label{eq:comparison.translations}
			P_{1}(x-y)\leq C_R P_{1}(y)\quad\textrm{for all }y\in\Rn\text{ and }x\in B_R(0).
	\end{equation}
\end{lemma}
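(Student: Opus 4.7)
The plan is to split the argument according to the size of $|y|$, isolating a compact regime where continuity and positivity of $P_1$ do the work, and a far-field regime where the two-sided bound \eqref{Atotal} reduces the estimate to a comparison of the L\'evy kernel $K$ with its translates.

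Fix $R>0$ and, without loss of generality, assume $R\ge 1$ (otherwise enlarge $R$). First consider the compact regime $|y|\le 2R$. Then $|x-y|\le |x|+|y|\le 3R$, so that $x-y$ lies in the closed ball $\overline{B_{3R}(0)}$ and $y$ lies in $\overline{B_{2R}(0)}$. Since $P_1\in C_0(\mathbb{R}^d)$ with $P_1>0$ everywhere (both facts recorded just before \eqref{Atotal2}), the quantities
\begin{equation*}
M_R:=\max_{z\in\overline{B_{3R}(0)}}P_1(z)<\infty,\qquad m_R:=\min_{z\in\overline{B_{2R}(0)}}P_1(z)>0
\end{equation*}
are well defined, which immediately yields $P_1(x-y)\le (M_R/m_R)\,P_1(y)$ in this regime.

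For the far-field regime $|y|\ge 2R$, one has $|y|/2\le |x-y|\le 3|y|/2$ and in particular $|x-y|\ge R\ge 1$ together with $|y|\ge 1$, so the kernel comparison \eqref{Atotal} with $\varepsilon=T=1$ gives
\begin{equation*}
P_1(x-y)\le C_{1,1}\,K(x-y),\qquad K(y)\le C_{1,1}\,P_1(y).
\end{equation*}
It therefore remains to show $K(x-y)\le C\,K(y)$ for some constant $C=C_R>0$ in this regime. By \eqref{A1} and the lower bound in \eqref{A2} applied with the ratio $|y|/|x-y|\le 2$,
\begin{equation*}
\frac{K(x-y)}{K(y)}\le C^2\,\frac{|y|^d\tilde\varphi(|y|)}{|x-y|^d\tilde\varphi(|x-y|)}\le C^2\cdot 2^d\cdot \lambda_1^{-1}\cdot 2^{\beta_1},
\end{equation*}
using $|y|\le 2|x-y|$ in the polynomial factor and $\tilde\varphi(|y|)/\tilde\varphi(|x-y|)\le \lambda_1^{-1}(|y|/|x-y|)^{\beta_1}\le \lambda_1^{-1}2^{\beta_1}$ when $|y|\ge |x-y|$ (and the bound is trivial when $|y|<|x-y|$ since $\tilde\varphi$ is nondecreasing). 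Combining the three estimates gives the desired inequality with a constant depending only on $R$, $C$, $\lambda_1$, $\beta_1$ and $C_{1,1}$.

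Taking $C_R$ to be the maximum of the constants obtained in the two regimes completes the proof. No step is particularly delicate: the only thing to watch is that the comparison \eqref{Atotal} requires both $|x-y|\ge 1$ and $|y|\ge 1$, which is precisely why we isolate the bounded regime first and handle it by compactness.
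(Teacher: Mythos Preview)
Your approach is essentially the same as the paper's: split into a bounded regime handled by continuity and strict positivity of $P_1$, and a far-field regime handled via \eqref{Atotal}, \eqref{A1} and \eqref{A2}. Your decomposition $\{|y|\le 2R\}$ versus $\{|y|\ge 2R\}$ is in fact a bit cleaner than the paper's choice of the set $A_x=\{|y|\le 2\}\cup\{|x-y|\le |y|/2,\ |y|\ge 2\}$, but the substance is identical.

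There is one slip in the far-field step: to bound $\tilde\varphi(|y|)/\tilde\varphi(|x-y|)$ from above when $|y|\ge |x-y|$ you must use the \emph{upper} weak scaling condition in \eqref{A2}, which yields
\[
\frac{\tilde\varphi(|y|)}{\tilde\varphi(|x-y|)}\le \lambda_2\Big(\frac{|y|}{|x-y|}\Big)^{\beta_2}\le \lambda_2\,2^{\beta_2}.
\]
The lower bound in \eqref{A2} only gives $\tilde\varphi(|y|)/\tilde\varphi(|x-y|)\ge \lambda_1(|y|/|x-y|)^{\beta_1}$, which is the wrong direction; your stated bound $\lambda_1^{-1}(|y|/|x-y|)^{\beta_1}$ does not follow. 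Replace $\lambda_1^{-1},\beta_1$ by $\lambda_2,\beta_2$ and the argument is complete. (This is exactly how the paper proceeds: it invokes the monotonicity of $\tilde\varphi$ together with the upper weak scaling.)
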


\begin{proof}
Let $x\in B_R(0)$ and $y\in A_x:=\{y:|y|\leq 2\}\cup\{y:|x-y|\leq |y|/2,\, |y|\geq 2\}$. Thus we have $A_x\subset\{y: |y|\leq 2(R+1)\}$. Since $P_1\in C(\mathbb{R}^d)$ and $0<P_1\leq P_1(0)$, then $ P_1(x-y)\leq  P_1(0)\le C_R P_1(y)$, where $C_R=P_1(0)/\tilde{C}_R$ with $\tilde{C}_R=\min\{P_1(y):|y|\leq 2(R+1)\}>0$.

On the other hand, if $y\in (A_{x})^{\textup{c}}=\{y:|x-y|\geq |y|/2\geq 1\}$, using the estimates for $P_1$ following from~\eqref{Atotal} and~\eqref{A1}, and the fact that $\tilde{\varphi}$ is a nondecreasing function that satisfies the upper weak scaling condition in~\eqref{A2}, we have, for constants not depending on $x$,
\begin{equation*}
    P_{1}(x-y)\leq \frac{C}{|x-y|^{d}\tilde{\varphi}(|x-y|)}\leq \frac{C}{|y|^{d}\tilde{\varphi}(|y|/2)}\leq \frac{C}{|y|^{d}\tilde{\varphi}(|y|)}    \leq CP_{1}(y).\qedhere
\end{equation*}
\end{proof}

\noindent {\emph{Rate of change.} Our next aim is to prove that $P_1$ is essentially constant on dyadic annuli, that is, on sets of the form $\{x: r\le|x|\le 2r\}$, with a rate of change that is  independent of $r$. To be more precise, we formulate the following.
\begin{definition}\label{defSlowly}
    We will say that a  function $f:\Rn\to(0,\infty)$ has a \emph{bounded rate of change} (or simply, that $f$ is \emph{changing slowly}) if for all $0<\rho_{1}\leq \rho_{2}$ there exists a constant $C_{\rho_{1},\rho_{2}} \geq 1$ such that
    \begin{equation*}\label{eq:slowly.varying}
    	C^{-1}_{\rho_{1},\rho_{2}}\leq \frac{f(x)}{f(y)}\leq C_{\rho_{1},\rho_{2}}\quad\textrm{if }\rho_{1}\leq \frac{|y|}{|x|}\leq \rho_{2}.
    \end{equation*}
\end{definition}
A function with this property is one that does not exhibit  rapid oscillations nor a rapid decay or growth. A typical example is that where $f$ is monotone decreasing with a certain polynomial decay. On the contrary, the Gaussian represents a function which does  not satisfy the condition. As we will see, the above property  will be essential throughout this work.}

The {slow change} of $P_1$ {near zero} stems from its continuity and positivity. Away from the origin it is inherited directly from $K$, which has this property thanks to assumptions \eqref{A1} and \eqref{A2}. In fact, we know that the tails of $P_1$ and $K$ are comparable. {We are now ready to prove the following.}

\begin{proposition}\label{Pslowly}
    Let $\mathcal{L}=\mathcal{L}_K$ with $K$ satisfying assumptions~\eqref{A0}--\eqref{A2} and let $P$ be the corresponding heat kernel. Then, $P_{1}$ is a {slowly changing} function.
\end{proposition}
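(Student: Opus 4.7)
The plan is to exploit a dichotomy in the behavior of $P_1$: near the origin, slow change will follow from the continuity and strict positivity of $P_1$; far from the origin, the tail comparison~\eqref{Atotal} will transfer the problem to the L\'evy kernel $K$, whose slow change is a direct consequence of hypotheses~\eqref{A1}--\eqref{A2}. The constraint $\rho_1\le|y|/|x|\le\rho_2$ forces $|x|$ and $|y|$ to be comparable, so no intermediate regime will be needed: setting $M:=\max(\rho_2,1/\rho_1)\ge 1$, any admissible pair $(x,y)$ (necessarily with $x,y\ne 0$) satisfies $|y|\le M|x|$ and $|x|\le M|y|$.

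First, if $|x|\le 2M$, then also $|y|\le 2M^2$, so both points lie in the compact set $\overline{B_{2M^2}(0)}$. By Corollary~\ref{pcor1}(i), $P_1\in C_0(\mathbb{R}^d)$, and by~\cite[Theorem 1.2]{Panki}, $P_1>0$ everywhere. Hence $P_1$ attains a strictly positive minimum and a finite maximum on this compact set, which immediately gives a two-sided bound on $P_1(x)/P_1(y)$ by constants depending only on $\rho_1,\rho_2$.

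If instead $|x|\ge 2M$, then $|y|\ge|x|/M\ge 2$, so both $|x|,|y|\ge 1$. Applying~\eqref{Atotal} at $t=1$ (with, say, $\varepsilon=1/2$ and $T=3/2$) yields $P_1\asymp K$ on $\{|z|\ge 1\}$, and therefore $P_1(x)/P_1(y)\asymp K(x)/K(y)$. By~\eqref{A1},
\[
\frac{K(x)}{K(y)}\asymp\left(\frac{|y|}{|x|}\right)^{d}\frac{\tilde\varphi(|y|)}{\tilde\varphi(|x|)},
\]
and the first factor is trivially controlled since $M^{-1}\le|y|/|x|\le M$, while the second factor is controlled by the upper weak scaling condition in~\eqref{A2} applied with $r=\min(|x|,|y|)$ and $R=\max(|x|,|y|)$, since $R/r\le M$. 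This delivers the required two-sided bound in the far-field regime.

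The only delicate point is the choice of threshold between the two regimes. Taking $2M$ rather than just $M$ ensures that whenever $(x,y)$ leaves the compact regime, both points have modulus at least $2$, placing them safely within the range of applicability of~\eqref{Atotal} and~\eqref{A1}; no matching argument is needed at the boundary, and the two separate bounds combine to yield the constant $C_{\rho_1,\rho_2}$.
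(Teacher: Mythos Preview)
Your proof is correct and follows essentially the same route as the paper's: split into a compact regime (continuity and strict positivity of $P_1$) and a far-field regime (the comparison~\eqref{Atotal} reduces to $K$, then~\eqref{A1}--\eqref{A2}). The only cosmetic differences are the choice of threshold ($2M$ versus the paper's $1\vee\rho_1^{-1}$) and that the paper invokes both scaling bounds in~\eqref{A2} explicitly, whereas your appeal to ``the upper weak scaling condition'' alone tacitly also uses that $\tilde\varphi$ is nondecreasing to get the reverse inequality $\tilde\varphi(R)/\tilde\varphi(r)\ge 1$; you might state that explicitly.
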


\begin{proof}
Let $0<\rho_1\le \rho_2$. We consider two cases, depending on the size of $|x|$.

Let $|x|\leq 1\vee \rho_{1}^{-1}$, where $a\vee b:=\max\{a,b\}$. If $\rho_1\le |y|/|x|\leq \rho_{2}$, then  $|y|\leq \rho_{2}(1\vee \rho_{1}^{-1})$. Since $P_{1}$ is positive and continuous,
\begin{equation*}
	C^{-1}\leq \frac{P_{1}(x)}{P_{1}(y)}\leq C\quad\textrm{for all }|x|\leq 1\vee \rho_{1}^{-1}\textrm{ and }|y|\leq \rho_{2}(1\vee \rho_{1}^{-1}),
\end{equation*}
for some constant $C\ge 1$ depending only on $\rho_1$ and $\rho_2$, and we are done.

Suppose now that $|x|\geq 1\vee \rho_{1}^{-1}$.  Then $|x|,|y|\ge1$ whenever $\rho_1\le |y|/|x|\leq \rho_{2}$. By \eqref{Atotal},  $K$ and $P_1$ are comparable outside the ball of radius 1, whence,  by hypothesis \eqref{A1},
\begin{equation*}
	\frac{C^{-1}}{|x|^{d}\tilde{\varphi}(|x|)}\leq P_{1}(x)\leq \frac{C}{|x|^{d}\tilde{\varphi}(|x|)}\quad\text{for all }|x|\geq 1
\end{equation*}
for some $C>1$. As a consequence,
\begin{equation*}
	C^{-2}\rho_{1}^{d}\frac{\tilde{\varphi}(|y|)}{\tilde{\varphi}(|x|)}\leq \frac{P_{1}(x)}{P_{1}(y)}\leq C^{2}\rho_{2}^d\frac{\tilde{\varphi}(|y|)}{\tilde{\varphi}(|x|)}\quad\textrm{whenever }\rho_{1}\leq\frac{|y|}{|x|}\leq \rho_{2}\textrm{ and }|x|\geq1\vee \rho_{1}^{-1},
\end{equation*}
from where the result will follow if  there is a constant  $C\ge1$ depending only on $\rho_{1}$ and $\rho_{2}$ such that
\begin{equation}\label{eq:phitilde.sv}
	C^{-1}\leq \frac{\tilde{\varphi}(|y|)}{\tilde{\varphi}(|x|)}\leq C \quad\textrm{whenever }\rho_{1}\leq\frac{|y|}{|x|}\leq \rho_{2}.
\end{equation}

Assume that $\rho_1\le |y|/|x|\leq \rho_{2}$. To prove~\eqref{eq:phitilde.sv} we consider two cases, depending on the size of $|y|$ compared to $|x|$. If $|y|\geq |x|$, then $(1\vee\rho_{1})\leq|y|/|x|\leq \rho_{2}$. Hence,  by~\eqref{A2},
\begin{equation*}
	\lambda_1 (1\vee\rho_{1})^{\beta_{1} }\leq \frac{\tilde{\varphi}(|y|)}{\tilde{\varphi}(|x|)}\leq \lambda_2 \rho_{2}^{\beta_{2}}.
\end{equation*}
On the other hand, if $|y|\leq |x|$, then $ \rho_{1}|\leq|y|/|x|\leq (1\wedge\rho_{2})$. Therefore,  again by \eqref{A2},
\begin{equation*}
	\lambda_2 ^{-1}\rho_{1}^{\beta_{2} }\leq \frac{\tilde{\varphi}(|y|)}{\tilde{\varphi}(|x|)}\leq \lambda_1^{-1}(1\wedge\rho_{2})^{\beta_{1}}.\qedhere
\end{equation*}
\end{proof}

As a corollary, we get the following result that will show to be useful in next section.

\begin{lemma}
     Let $\mathcal{L}=\mathcal{L}_K$ with $K$ satisfying assumptions~\eqref{A0}--\eqref{A2} and let $P$ be the corresponding heat kernel. There is a constant $C>0$ such that
    \begin{equation}\label{eq:Pz.Px}
        P_1(z)\le C P_1(x)\quad\text{for all }x\in \mathbb{R}^d\text{ and }z\in B_1(x).
    \end{equation}
\end{lemma}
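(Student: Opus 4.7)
The plan is to derive the bound from the slow change property of $P_1$ (Proposition~\ref{Pslowly}) together with the continuity and strict positivity of $P_1$ (Corollary~\ref{pcor1} plus the positivity statement from~\cite{Panki}). The natural move is to split the argument into two regimes depending on whether $x$ is close to the origin or far from it, since in the first regime the comparison is controlled by compactness, while in the second it is governed by the comparability of $|x|$ and $|z|$.

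First, I would treat the regime $|x|\le 2$. Then $z\in B_1(x)$ forces $|z|\le 3$, so both $x$ and $z$ lie in the compact set $\overline{B_3(0)}$. Since $P_1\in C_0(\mathbb{R}^d)$ and $P_1>0$ on $\mathbb{R}^d$, the function $P_1$ attains a positive minimum $m$ and a finite maximum $M$ on $\overline{B_3(0)}$, and hence $P_1(z)\le M\le (M/m)P_1(x)$ in this regime.

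Next, I would handle $|x|\ge 2$. Here the triangle inequality gives $\bigl||z|-|x|\bigr|\le 1\le |x|/2$, so $|z|/|x|\in[1/2,3/2]$. Applying Proposition~\ref{Pslowly} with $\rho_1=1/2$ and $\rho_2=3/2$ yields $P_1(z)\le C_{1/2,3/2}\,P_1(x)$. Taking $C$ to be the maximum of the constants obtained in the two regimes produces the desired uniform bound~\eqref{eq:Pz.Px}.

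There is no real obstacle here; the only point to be careful about is that when $|x|\le 2$ we cannot invoke the slow change property directly (since then $|y|/|x|$ may be arbitrarily large when $x$ is very close to $0$), which is precisely why the compactness argument is used instead. Once the split is made, the statement reduces to two one-line computations and is essentially a corollary of Proposition~\ref{Pslowly}.
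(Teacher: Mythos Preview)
Your proposal is correct and follows essentially the same approach as the paper: split into the regimes $|x|\le 2$ and $|x|\ge 2$, handling the first by continuity and positivity of $P_1$ on a compact set and the second via the slow-change property (Proposition~\ref{Pslowly}) with $|z|/|x|\in[1/2,3/2]$. The only cosmetic difference is that the paper uses the global bound $P_1(z)\le P_1(0)$ rather than $\max_{\overline{B_3(0)}}P_1$ in the compact regime.
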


\begin{proof}
If $|x|\ge2$, then $|x|/ 2\le  |z|\le 3/2 |x|$, whence, since $P_1$ is {slowly changing}, $P_1(z)\le C P_1(x)$ for some constant $C>0$ independent of $x$ and $z$. Besides, since $P_1\in C(\mathbb{R}^d)$ and $0<P_1\le P_1(0)$, then
\begin{equation*}
    P_1(z)\le \frac{P_1(0)}{\min_{B_2(0)}P_1}\,P_1(x)\quad\text{for all }|x|\le 2.\qedhere
\end{equation*}
\end{proof}

The function $P_1$ is also \emph{almost decreasing}.

\begin{proposition}\label{prop:P1.almost.decreasing}
    Let $\mathcal{L}=\mathcal{L}_K$ with $K$ satisfying assumptions~\eqref{A0}--\eqref{A2} and let $P$ be the corresponding heat kernel. Then, $P_{1}$ is almost decreasing: there exists a constant $C>0$ such that
    \begin{equation*}
        P_1(z)\le CP_1(x)\quad\text{for all }x,z\in\mathbb{R}^d\text{ such that }|x|\le |z|.
    \end{equation*}
\end{proposition}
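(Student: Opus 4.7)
The plan is to establish the almost-decreasing property by splitting into three cases according to where $x$ and $z$ lie relative to a fixed threshold, say $|\cdot|=1$, and combining the near-origin bounds coming from continuity with the tail bounds coming from \eqref{Atotal} and \eqref{A1}.

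\textbf{Case 1: $|x|\le |z|\le 1$.} Here I would use that $P_1\in C(\mathbb{R}^d)$ with $P_1>0$ (both established in Corollary~\ref{pcor1} and via \cite{Panki}). Consequently $P_1$ attains a strictly positive minimum on the compact ball $\overline{B_1(0)}$, and $P_1(z)\le P_1(0)\le\bigl(P_1(0)/\min_{\overline{B_1(0)}}P_1\bigr)P_1(x)$.

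\textbf{Case 2: $|x|\le 1\le |z|$.} The preliminaries already observed that $|P_t(x)|\le P_t(0)$ for every $t>0$, so $P_1(z)\le P_1(0)$, while $P_1(x)\ge \min_{\overline{B_1(0)}}P_1>0$ as in Case~1. The ratio is again uniformly bounded.

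\textbf{Case 3: $1\le |x|\le |z|$.} This is the heart of the argument. By the comparison \eqref{Atotal} applied with $t=1$ (choosing, say, $\varepsilon=1/2$, $T=2$), there exists $C\ge 1$ such that $C^{-1}K(w)\le P_1(w)\le C K(w)$ for all $|w|\ge 1$. Combining with the two-sided bound \eqref{A1}, I get some $C'\ge 1$ with
\begin{equation*}
    \frac{(C')^{-1}}{|w|^d\tilde{\varphi}(|w|)}\le P_1(w)\le \frac{C'}{|w|^d\tilde{\varphi}(|w|)}\quad\text{for all }|w|\ge 1.
\end{equation*}
Therefore
\begin{equation*}
    \frac{P_1(z)}{P_1(x)}\le (C')^2\,\frac{|x|^d\,\tilde{\varphi}(|x|)}{|z|^d\,\tilde{\varphi}(|z|)}\le (C')^2,
\end{equation*}
where the last inequality uses $|x|\le |z|$ together with the fact that $r\mapsto r^d$ is increasing and $\tilde{\varphi}$ is nondecreasing (as in \eqref{A2}).

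Taking $C$ to be the maximum of the constants arising in the three cases yields the claim. I do not foresee any real obstacle here: the only mildly delicate point is noticing that the monotonicity of $\tilde{\varphi}$ built into \eqref{A1}--\eqref{A2}, rather than the finer weak scaling, is what forces the large-scale decay. No weak-scaling exponents, and not even the slow-change property of Proposition~\ref{Pslowly}, are required for this estimate.
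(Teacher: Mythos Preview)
Your proof is correct and follows essentially the same three-case split as the paper, using continuity/positivity of $P_1$ near the origin and the comparability $P_1\asymp K\asymp |x|^{-d}\tilde\varphi(|x|)^{-1}$ in the tail. Your Case~2 is slightly more direct than the paper's (which routes through an auxiliary point $\xi$ on the unit sphere), and your Case~3 spells out explicitly why $K$ is almost decreasing via the monotonicity of $r\mapsto r^d\tilde\varphi(r)$; note that the nondecreasing property of $\tilde\varphi$ is part of the setup preceding \eqref{A2} rather than a consequence of the weak scaling itself.
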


\begin{proof}
If $1\le|x|\le |z|$ the result follows easily, since in this region $P_1$ is comparable to $K$, see~\eqref{Atotal}, and $K$ is almost decreasing, see~\eqref{A1}. Besides, since $P_1\in C(\mathbb{R}^d)$ and $0<P_1\le P_1(0)$,
\begin{equation*}
        P_1(z)\le \frac{P_1(0)}{\min_{B_1(0)}P_1}\,P_1(x)\quad\text{if }|x|\le |z|\le1.
\end{equation*}
Finally, if $|x|\le 1\le |z|$, we consider $\xi$ such that $|\xi|=1$, and, using the two previous cases we conclude that
$P_1(z)\le C P_1(\xi)\le CP_1(x)$.
\end{proof}

\begin{remark}\label{rk:Pt.sv.and.ad}
    Since, by~\eqref{Atotal2}, $P_t$ is comparable to $P_1$ if $t>0$, and the latter is {slowly changing} and almost decreasing, so is the former.
\end{remark}

\noindent\emph{The heat kernel solves the equation.} By assumption~\eqref{A0}, $K$ is symmetric, whence the associated Markov transition density is also symmetric. Then,  by \eqref{n2.5},
\begin{equation}\label{n2.6}	
    \mathcal{P}_{t}f=P_{t}*f\quad \textrm{for all } f\in \mathcal{S}(\Rn)\textrm{ and } t>0.
\end{equation}
Since $\mathcal{S}(\Rn)\subset \DA$ and $\PP_{t} (\mathcal{S}(\Rn))\subset\mathcal{S}(\Rn)$, using~\eqref{n2.1},~\eqref{g5} and~\eqref{n2.6} we get that for every $f\in \mathcal{S}(\Rn)$ the function $u(t)=P_t*f$ is a (classical) solution of $\partial_{t}u+\mathcal{L}_K u=0$ with initial data $f$.

The operators $\mathcal{L}_K$ and  $-\mathcal{A}$ in principle only coincide in $\mathcal{S}(\Rn)$, see \eqref{g5}, and the heat kernel at time $t$, $P_t$, does not belong in general to this class (see for instance the case of the fractional Laplacian). However, we are able to prove that $P$ does not only solve the  heat equation associated to $-\mathcal{A}$, but also the one associated to $\mathcal{L}_K$.

\begin{proposition}\label{pprop2}
    Let $\mathcal{L}=\mathcal{L}_K$ with $K$ satisfying assumptions~\eqref{A0}--\eqref{A2}. The corresponding heat kernel $P$ is a classical solution of equation~\eqref{a.nonlocalheatequation} for every $T>0$.
\end{proposition}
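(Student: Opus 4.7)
The continuity requirements in Definition~\ref{a.def classical} are already handed to us by Corollary~\ref{pcor1}: $P_t$, $\partial_t P_t$, and $\mathcal{L}_K P_t$ all lie in $C_0(\mathbb{R}^d)$ for every $t>0$, and $\partial_t P \in C(\mathbb{R}^d \times (0,\infty))$. So the only real content is the pointwise identity $\partial_t P(x,t) + \mathcal{L}_K P(x,t) = 0$ for every $(x,t) \in \mathbb{R}^d \times (0,\infty)$. The plan is to verify this directly from the Fourier representation $P_t = \mathcal{F}^{-1}(e^{-tm})$, bypassing the abstract semigroup theory (since $P_t$ need not belong to $\mathcal{S}(\mathbb{R}^d)$, the identification $\mathcal{L}_K = -\mathcal{A}$ from~\eqref{g5} does not apply directly).

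First I would differentiate under the Fourier integral. Since $|m^k(\xi)e^{-tm(\xi)}|$ is integrable for every $k$ and every $t>0$ (by~\eqref{p14}, which uses the lower bound $m(\xi) \geq C_1|\xi|^{\beta_1}$ for $|\xi|\geq 1$), the dominated convergence theorem gives
\begin{equation*}
    \partial_t P_t(x) = -\intr e^{i\langle x,\xi\rangle} m(\xi) e^{-tm(\xi)} \,\textup{d}\xi = -\mathcal{F}^{-1}(m e^{-tm})(x).
\end{equation*}

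Next I would compute $\mathcal{L}_K P_t(x)$ using the same Fourier representation. Writing
\begin{equation*}
    \Lambda P_t(x,y) = \intr e^{i\langle x,\xi\rangle}\bigl(1-\cos\langle y,\xi\rangle\bigr) e^{-tm(\xi)}\,\textup{d}\xi,
\end{equation*}
(valid since $\tfrac12(e^{i\langle y,\xi\rangle}+e^{-i\langle y,\xi\rangle}) = \cos\langle y,\xi\rangle$), I would swap the order of integration via Fubini:
\begin{equation*}
    \mathcal{L}_K P_t(x) = \intr K(y) \intr e^{i\langle x,\xi\rangle}\bigl(1-\cos\langle y,\xi\rangle\bigr) e^{-tm(\xi)}\,\textup{d}\xi\,\textup{d}y = \intr e^{i\langle x,\xi\rangle} m(\xi) e^{-tm(\xi)}\,\textup{d}\xi,
\end{equation*}
where the inner integral in $y$ reproduces the symbol $m(\xi)$ by its definition~\eqref{p6}. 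Combining the two displays gives $\partial_t P_t(x) + \mathcal{L}_K P_t(x) = 0$ pointwise.

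The one step requiring care is the Fubini swap, which is the main (but mild) obstacle. Its justification is the estimate $0 \leq 1-\cos\langle y,\xi\rangle \leq C(1+|\xi|^2)(1 \wedge |y|^2)$ from~\eqref{p7}, combined with $\int (1+|\xi|^2)e^{-tm(\xi)}\,\textup{d}\xi <\infty$ (a byproduct of~\eqref{p14} with $k=2$) and the L\'evy condition $\int (1\wedge |y|^2)K(y)\,\textup{d}y<\infty$ from~\eqref{A0}; together they bound the absolute double integral by
\begin{equation*}
    C\Bigl(\intr (1+|\xi|^2) e^{-tm(\xi)}\,\textup{d}\xi\Bigr)\Bigl(\intr (1\wedge |y|^2) K(y)\,\textup{d}y\Bigr) < \infty,
\end{equation*}
so Fubini's theorem applies. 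This closes the proof.
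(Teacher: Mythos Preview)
Your proof is correct, and it takes a genuinely different route from the paper's. The paper argues via abstract semigroup theory: it approximates $P_t$ by a sequence $\{f_n\}\subset C^\infty_{\textup{c}}(\mathbb{R}^d)$ in $W^{2,\infty}_0(\mathbb{R}^d)$, uses the Taylor/L\'evy estimate to show $\mathcal{L}_K f_n\to\mathcal{L}_K P_t$ uniformly, invokes~\eqref{g5} to identify $\mathcal{L}_K f_n=-\mathcal{A}f_n$, and then appeals to the closedness of the infinitesimal generator $\mathcal{A}$ to conclude $P_t\in\mathcal{D}(\mathcal{A})$ with $-\mathcal{A}P_t=\mathcal{L}_K P_t$; finally the semigroup identity $\frac{d}{dt}\mathcal{P}_t P_h=\mathcal{A}\mathcal{P}_t P_h$ together with $\mathcal{P}_t P_h=P_{t+h}$ gives the equation. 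Your approach instead computes both $\partial_t P_t$ and $\mathcal{L}_K P_t$ directly from the Fourier representation, the Fubini swap being justified by the product bound $(1-\cos\langle y,\xi\rangle)\le C(1+|\xi|^2)(1\wedge|y|^2)$ and the integrability estimates~\eqref{p14} and~\eqref{A0}. Your argument is shorter and more elementary, avoiding the approximation step and the closedness of $\mathcal{A}$ entirely; the paper's argument, on the other hand, yields the additional structural fact that $P_t\in\mathcal{D}(\mathcal{A})$ and that the two realizations of the operator agree there, which is conceptually cleaner from the semigroup viewpoint. One very minor point: the joint continuity of $\mathcal{L}_K P$ in $(x,t)$ required by Definition~\ref{a.def classical} is not listed in Corollary~\ref{pcor1}, but follows immediately from your identity $\mathcal{L}_K P=-\partial_t P$ and Corollary~\ref{pcor1}(iii).
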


\begin{proof}
By Corollary~\ref{pcor1}, we already know that $P,\partial_tP,\mathcal{L}P_t\in C(\Rn\times (0,\infty))$. Hence it is enough to check that $P_t$ satisfies~\eqref{eq:classical.equation} for all $T>0$.

Recall that $P_t\in C^2_0(\mathbb{R}^d)$; see~Corollary~\ref{pcor1}. Hence, there is a sequence $\{f_n\}\subset C^{\infty}_{\textup{c}}(\Rn)$ such that $f_n\to P_t$ in $W^{2,\infty}_{0}(\Rn)$ as $n\to\infty$.  Then, since $K$ satisfies the L\'evy condition~\eqref{A0},
\begin{align*}
    \sup_{x\in\Rn}&\big\{|\Lk f_n(x)-\Lk P_t(x)|\big\}\leq \sup_{x\in\Rn}\big\{\intr |\Lambda(f_n-P_t)(x,y)|K(y)\dy\big\}\\
    &\leq \frac12\|D^{2}(f_n-P_t)\|_{\infty}\ints |y|^2K(y)\dy+2\|f_n-P_t\|_{\infty}\intb K(y)\dy\to 0 \quad\textrm{as }n\to\infty.
\end{align*}
That is, $\Lk f_n\to \Lk P_t$ uniformly  as $n\to \infty$. But, since $f_n\in C^{\infty}_{\textup{c}}(\Rn)\subset\mathcal{S}(\mathbb{R}^d)$, then $\mathcal{L}f_n=-\mathcal{A}f_n$ for all $n\in\mathbb{N}$; see~\eqref{g5}. Thus, we have proved that $-\mathcal{A}f_n\to \Lk P_t$ uniformly  as $n\to \infty$ converges uniformly to $ \Lk P_{t}$ as $n\to\infty$. Since $\mathcal{A}$ is a closed operator (see \cite[Remark 5.5]{CRM}), we conclude that $P_{t}\in\DA$ and $-\mathcal{A}P_{t}=\Lk P_{t}$.

By~\eqref{n2.1}, since $P_{h}\in\DA$ for all $h>0$, then $\frac{d}{dt}\PP_{t}P_{h}=\mathcal{A}\PP_{t}P_{h}$ for all $h>0$. But, thanks to~\eqref{n2.6} and the semigroup property~\eqref{eq:semigroup.property}, we have that $\PP_{t}P_{h}=P_{t+h}$ for all $t\geq 0$ and $h>0$, whence we conclude that $P_t$ satisfies~\eqref{eq:classical.equation}.
\end{proof}

\begin{corollary}\label{PisaVeryWeaksolution.cor}
    Let $\mathcal{L}=\mathcal{L}_K$ with $K$ satisfying assumptions~\eqref{A0}--\eqref{A2}. The corresponding heat kernel $P$ is a very weak solution to equation~\eqref{a.nonlocalheatequation} for every $T>0$.
\end{corollary}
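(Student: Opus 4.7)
The plan is to deduce this corollary from Proposition~\ref{pprop2}, which already establishes that $P$ is a classical solution of \eqref{a.nonlocalheatequation}, by performing the standard integration by parts that converts the classical formulation into the very weak one. Fix $T>0$ and a test function $\theta\in C^{\infty}_{\textup{c}}(\Rn\times(0,T))$, and choose $0<\delta<\tau<T$ with $\operatorname{supp}\theta\subset \Rn\times[\delta,\tau]$. The requirement $P\in L^{1}_{\textup{loc}}(\Rn\times(0,T))$ is immediate because each $P_t$ is a probability density with $\|P_t\|_{L^1(\Rn)}=1$.

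For the integrability condition~\eqref{a.2}, I would invoke Lemma~\ref{a.17} to obtain a constant $C_\theta>0$ with $|\Lk\theta(x,t)|\le C_\theta P_1(x)$ on $\Rn\times[\delta,\tau]$. Since $P_1\le P_1(0)<\infty$ (by the Fourier representation~\eqref{Aheatkernel}), this yields
\begin{equation*}
    \int_0^T\intr P(x,t)|\Lk\theta(x,t)|\dx\dt\le C_\theta P_1(0)(\tau-\delta)<\infty.
\end{equation*}

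For the identity~\eqref{a.3}, I would combine two integrations by parts. First, since $\theta$ has compact support in time inside $(0,T)$ and $\partial_t P\in C(\Rn\times(0,\infty))$ by Corollary~\ref{pcor1}, Fubini gives $\int_0^T\!\!\intr P\,\partial_t\theta\dx\dt=-\int_0^T\!\!\intr\theta\,\partial_t P\dx\dt$. Second, for the nonlocal part, I would transfer $\Lk$ from $\theta$ onto $P_t$ via Fubini: writing $\Lk\theta$ through the symmetric incremental quotient $\Lambda\theta$, swapping the order of integration in $x$ and $y$, and performing the changes of variables $z=x+y$ and $z=x-y$ in the two shifted terms leads, after relabeling and using the evenness of $\Lambda P_t(z,y)$ in $y$, to
\begin{equation*}
    \intr P_t(x)\Lk\theta(x,t)\dx=\intr\theta(z,t)\Lk P_t(z)\,{\rm d}z\quad\textrm{for every }t\in[\delta,\tau].
\end{equation*}
Combining both identities with the classical equation $\partial_t P+\Lk P=0$ from Proposition~\ref{pprop2} finally yields
\begin{equation*}
    \int_0^T\intr P(\partial_t\theta-\Lk\theta)\dx\dt=-\int_0^T\intr\theta(\partial_t P+\Lk P)\dx\dt=0.
\end{equation*}

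The only delicate point in this plan is justifying Fubini in the self-adjointness step, which requires integrability of $P_t(x)|\Lambda\theta(x,y,t)|K(y)$ over $\Rn\times\Rn\times[\delta,\tau]$. This is handled by splitting the inner $y$-integral into $\{|y|\le 1\}$, where Taylor's formula and the compact $x$-support of $D^2\theta$ give $|\Lambda\theta(x,y,t)|\le C|y|^2\mathcal{X}_{B_{R}(0)}(x)$ for some $R>0$, and $\{|y|\ge 1\}$, where $\Lambda\theta$ is controlled by translations of $\theta$ bounded by $P_1$ via~\eqref{a.4}; exactly as in the proof of Lemma~\ref{a.17}. Integrability then follows from the L\'evy condition~\eqref{A0}, the tail comparison~\eqref{Atotal}, the semigroup property~\eqref{eq:semigroup.property}, and $\|P_t\|_{L^1(\Rn)}=1$.
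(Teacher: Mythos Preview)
Your proof is correct and follows essentially the same route as the paper's: verify \eqref{a.2} via Lemma~\ref{a.17}, then pass $\Lk$ from $\theta$ to $P_t$ by Fubini and a change of variables, and finish using that $P$ is a classical solution (Proposition~\ref{pprop2}). Two small remarks. First, your bound for \eqref{a.2} is slightly simpler than the paper's: you use $P_1\le P_1(0)$ and $\|P_t\|_{L^1}=1$, whereas the paper uses the semigroup identity $\int P_t P_1=P_{t+1}(0)$ together with~\eqref{Atotal2}; both work. Second, the self-adjointness step actually needs \emph{two} Fubini justifications: one to swap $x,y$ in $\int P_t\,\Lk\theta$ (which you address), and another to swap back after the change of variables, i.e.\ to identify $\int K(y)\int\theta(z,t)\Lambda P_t(z,y)\,{\rm d}z\,{\rm d}y$ with $\int\theta\,\Lk P_t$. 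The paper handles the second one separately via the uniform bound $|\Lambda P_t(z,y)|\le C_{\varepsilon,\tau}(1\wedge|y|^2)$ coming from $P_t\in C^2_0(\Rn)$ and~\eqref{z10}. You should mention this, though it is routine since $\theta$ has compact support in $z$.
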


\begin{proof}
By Proposition~\ref{pcor1}, $P\in C(\Rn\times (0,\infty))$, whence $P\in L^1_{\textrm{loc}}(\Rn\times (0,T))$.

Let us check that $P$ satisfies~\eqref{a.2}. Given $\theta\in C^{\infty}_{\textup{c}}(\Rn\times(0,T))$, let $0<\varepsilon<\tau<T$ such that $\operatorname{supp}(\theta)\subset\Rn\times(\varepsilon,\tau)$. Using Lemma~\ref{a.17}, the symmetry of $P_t$, the semigroup property~\eqref{eq:semigroup.property}, and~\eqref{Atotal2},
\begin{equation}\label{z11}
    \begin{aligned}
        \int_0^T\intr P_t(x)|\Lk\theta\xt|\dx\dt&\leq C_\theta\int_\varepsilon^\tau\intr P_t(x)P_1(x)\dx\dt\\
        &\leq C_\theta\int_\varepsilon^\tau P_{1+t}(0)\dt\leq C_{\theta,\tau}\tau P_1(0)<\infty.
    \end{aligned}
\end{equation}

Let us prove now that $P$ satisfies~\eqref{a.3}. By~\eqref{z11}, using first Fubini-Tonelli's Theorem and then changing variables, we obtain that
\begin{equation}\label{z4}
    \begin{aligned}	
        \int_0^T\intr P(x,t)\Lk\theta\xt\dx\dt&=\int_\varepsilon^\tau\intr\intr P(x,t)\Lambda\theta(x,y,t)K(y)\dy\dx\dt\\ &=\int_\varepsilon^\tau\intr K(y)\intr  P(x,t)\Lambda\theta(x,y,t)\dx\dy\dt\\
        &=\int_\varepsilon^\tau\intr K(y)\intr\Lambda P(x,y,t)\theta(x,t)\dx\dy\dt.
    \end{aligned}
\end{equation}

On the other hand, since $0<P_t(x)\le P_t(0)$ for all $x\in\mathbb{R}^d$, $t>0$, and $P\in C(\mathbb{R}^d\times(0,\infty))$, see Corollary~\ref{pcor1} (iii), then $\|P_t\|_{\infty}\le \max_{t\in[\varepsilon,\tau]} P_t(0)=c_{\varepsilon,\tau}<\infty$ if $t\in[\varepsilon,\tau]$. Besides, using Fourier transform and the estimate from below for the symbol $m$ in~\eqref{eq:size.m},
\begin{equation*}
    |\partial_i\partial_j P_t(x)|\le\int_{\mathbb{R}^d}|\xi_i||\xi_j|e^{-tm(\xi)}\,\textup{d}\xi
    \le\int_{\mathbb{R}^d}|\xi|^2e^{-\varepsilon|\xi|^{\beta_1}}\,\textup{d}\xi=c_\varepsilon<\infty,\quad x\in\mathbb{R}^d,\ t\in[\varepsilon,\tau],
\end{equation*}
for all $i,j\in\{1,\dots,d\}$. These estimates together with~\eqref{z10} yield $|\Lambda P(x,y,t)|\leq C_{\varepsilon,\tau} (1\wedge|y|^{2})$ for some constant $C_{\varepsilon,\tau}>0$ depending only on $\varepsilon$ and $\tau$. Therefore,
\begin{align*}
    \int_{\varepsilon}^{\tau}\!\intr\! \intr &|\theta(x,t)| |\Lambda P(x,y,t)|K(y)\dy\dx\dt\leq C_{\varepsilon,\tau}\!\int_{\varepsilon}^{\tau}\!\intr\!\intr|\theta(x,t)|(1\wedge|y|^{2})K(y)\dy\dx\dt<\infty,
\end{align*}
since $\theta$ is a test function. As a consequence, using Fubini-Tonelli's Theorem again, we have that 	
\begin{equation}\label{z5}
    \begin{aligned}
        \int_0^T\intr\theta(x,t)\Lk P(x,t)\dx\dt&=\int_\varepsilon^\tau\intr \intr \theta(x,t) \Lambda P(x,y,t)K(y)\dy\dx\dt\\ &=\int_\varepsilon^\tau\intr K(y)\intr\theta(x,t) \Lambda P(x,y,t)\dx\dy\dt.
    \end{aligned}
\end{equation}
Combining~\eqref{z4} and~\eqref{z5},  we get that
\begin{equation*}
    \int_0^T\intr P(x,t)\Lk \theta\xt\dx\dt=\int_0^T\intr\theta\xt \Lk P(x,t)t\dx\dt.
\end{equation*}
Thus, since $P$ is a classical solution to~\eqref{a.nonlocalheatequation}, (see Proposition~\ref{pprop2}), and integrating by parts in time,  we conclude that
\begin{align*}
    \int_0^T\intr P(x,t)\Lk \theta\xt\dx\dt&=-\int_0^T\intr\theta\xt\partial_t P(x,t)\dx\dt=\int_0^T\intr P(x,t)\partial_t\theta\xt \dx\dt.\qedhere
\end{align*}
\end{proof}

\subsection{Completely monotone functions and  fractional heat kernels}\label{2.3}

We now devote our attention to the fractional Laplacian of order $\alpha$, $\alpha\in (0,2)$, denoted by $(-\Delta)^{\alpha/2}$. It is defined as the operator $\mathcal{L}_{K_\alpha}$ with L\'evy kernel, satisfying hypotheses \eqref{A0}--\eqref{A3},
\begin{equation}\label{eq:definition.fractional.laplacian}	
    K_{\alpha}(y)=\frac{C_{d,\alpha}}{|y|^{d+\alpha}}\quad\textrm{and }C_{d,\alpha}=\left(\intr \frac{1-\cos(\xi_{1})}{|\xi|^{d+\alpha}}\,{\rm d}\xi\right)^{-1}.
\end{equation}
The Fourier multiplier of this operator is $m(\xi)=|\xi|^\alpha$, whence the corresponding heat kernel, that we will denote by $P^{\alpha}_{t}(x)$ or $P^{\alpha}(x,t)$, is given by $\mathcal{F}(P_{t}^{\alpha})(\xi)=e^{-t|\xi|^{\alpha}}$ for all $t>0$. Since the multiplier is homogeneous, the heat kernel has a self-similar structure,
\begin{equation}\label{p5}
    P^{\alpha}(x,t)=t^{-d/\alpha}P^{\alpha}(xt^{-\alpha},1).
\end{equation}

We will obtain estimates for~$P^\alpha$ by means of Bernstein's Theorem for \emph{completely monotone} functions;  for a reference see, for instance,~\cite[Theorem 1.3.23]{Apple} {and \cite[Theorem 1.4]{Zoran}}.

\begin{definition}
    A function $g\in C^{\infty}([0,\infty))$ is \emph{completely monotone} if~$(-1)^{n}g^{(n)}\geq 0$ for all $n\in\mathbb{N}\cup \{0\}$.
\end{definition}

\noindent\emph{Example. } The function $g_{\alpha}(y)=e^{-y^{\alpha/2}}$,  $y\in[0,\infty)$,  is completely monotone for any $\alpha\in(0,2]$.
		
\begin{theorem}[Bernstein's Theorem]\label{BerTheorem}
    If $g$ is a completely monotone function, it is the Laplace transform of some nonnegative Borel measure $\mu$,
	\begin{equation*}
		g(y)=\int_{0}^{\infty}e^{-sy}\dmu(s),\quad y\in[0,\infty).
	\end{equation*}
\end{theorem}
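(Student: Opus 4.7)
The plan is to construct candidate approximating measures via the Post--Widder inversion formula and extract a subsequential limit by Helly's selection principle. The proof splits into four steps.

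First, I would record the elementary structural consequences of complete monotonicity: $g\ge 0$, $g$ is nonincreasing, convex, and bounded on $[0,\infty)$, with $g(0)<\infty$ since $g\in C^{\infty}([0,\infty))$. Further iteration shows that each $(-1)^{k}g^{(k)}$ tends to $0$ at infinity and, more quantitatively, that $r^{k-1}g^{(k-1)}(r)\to 0$ as $r\to\infty$ for $k\ge 1$, a fact that will justify the vanishing of boundary terms in the integration-by-parts computations below.

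Second, motivated by the Post--Widder inversion formula, for $n\ge 1$ set
\begin{equation*}
\rho_{n}(s):=\frac{(-1)^{n}}{n!}\Big(\frac{n}{s}\Big)^{n+1}g^{(n)}\!\Big(\frac{n}{s}\Big),\qquad s>0,
\end{equation*}
and $d\mu_{n}:=\rho_{n}(s)\,ds$. Complete monotonicity gives $\rho_{n}\ge 0$. The change of variables $r=n/s$ followed by iterated integration by parts (using the boundary decay from step one) shows that the total mass $\mu_{n}((0,\infty))$ is uniformly bounded, in fact by $g(0)$.

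Third, by the vague compactness of uniformly bounded nonnegative Radon measures, a subsequence $\mu_{n_{k}}$ converges vaguely on $[0,\infty)$ to some nonnegative Radon measure $\mu$. The analytic heart of the proof is then to verify that for each fixed $y>0$,
\begin{equation*}
I_{n}(y):=\int_{0}^{\infty}e^{-sy}\rho_{n}(s)\,ds\longrightarrow g(y)\quad\text{as }n\to\infty.
\end{equation*}
The substitution $r=n/s$ rewrites $I_{n}(y)$ as an integral against the Gamma density $\frac{n^{n+1}}{n!}r^{n}e^{-nr}$, which concentrates sharply around $r=1$ as $n\to\infty$; its interaction with $g^{(n)}$ reconstructs $g(y)$ through the Post--Widder inversion principle.

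Finally, to identify $\int_{0}^{\infty}e^{-sy}\,d\mu(s)$ with $g(y)$, one must upgrade the vague convergence of $\mu_{n_{k}}$ to convergence against the bounded continuous function $s\mapsto e^{-sy}$. This tightness question is the main obstacle: one has to rule out escape of mass of $\mu_{n}$ to $s=\infty$. I would handle it by truncating at a large $R$; on $[0,R]$ vague convergence applied to a compactly supported continuous approximation of $e^{-sy}$ controls the limit, while on $[R,\infty)$ the estimate
\begin{equation*}
\int_{R}^{\infty}e^{-sy}\rho_{n}(s)\,ds\le e^{-Ry}\mu_{n}([R,\infty))\le e^{-Ry}g(0)
\end{equation*}
is uniform in $n$ and small for $R$ large. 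Once the representation holds for every $y>0$, continuity of $g$ at $0$ together with monotone convergence on the right extend the equality to $y=0$, completing the proof.
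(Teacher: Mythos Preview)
The paper does not prove Bernstein's Theorem at all: it is stated as Theorem~\ref{BerTheorem} and simply cited from \cite[Theorem~1.3.23]{Apple} and \cite[Theorem~1.4]{Zoran}, then used as a black box to derive the subordination formula~\eqref{p1} for the fractional heat kernel. So there is no ``paper's own proof'' to compare against.

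Your outline follows a legitimate classical route (Post--Widder densities plus Helly selection), and Steps~1, 2 and~4 are fine; in particular your boundary-decay claim $r^{k}|g^{(k)}(r)|\to 0$ and the integration-by-parts computation giving $\mu_n((0,\infty))=g(0)-g(\infty)\le g(0)$ are correct. The weak point is Step~3. Your justification that $I_n(y)\to g(y)$ ``through the Post--Widder inversion principle'' is dangerously close to circular: the Post--Widder inversion theorem is normally stated for functions already known to be Laplace transforms, which is exactly what you are trying to establish. After your substitution $r=n/s$ one gets
\[
I_n(y)=\frac{n}{n!}\int_0^\infty e^{-ny/r}\,r^{n-1}\,(-1)^n g^{(n)}(r)\,dr,
\]
which is not literally an integral of $g$ against the Gamma density $\frac{n^{n+1}}{n!}r^n e^{-nr}$; the Gamma kernel only appears after $n$ integrations by parts that shift all derivatives off $g^{(n)}$ and onto the exponential-polynomial factor. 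That computation, together with a Laplace-method estimate showing the resulting kernel concentrates at $r=y$, is the genuine analytic content of Step~3, and it must be carried out using only the complete monotonicity of $g$. If you fill this in (or alternatively run the more elementary finite-difference / Hausdorff-moment argument), the proof is complete.
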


\begin{remark}\label{remarkp1}
    (i) The Borel measure $\textup{d}\mu_{\alpha}$ associated to the function $g_{\alpha}$ in the above example is a probability measure since, by Bernstein's Theorem,
	\begin{equation*}\label{p11}	
        g_{\alpha}(y)=e^{-y^{\alpha/2}}=\int_{0}^{\infty}e^{-sy}\dmu_{\alpha}(s),\quad y\in[0,\infty),
	\end{equation*}
	which implies that $\displaystyle 1=g_{\alpha}(0)=\int_{0}^{\infty}\dmu_{\alpha}(s)$.
		
	\noindent (ii) If $\alpha=2$, then $\mu_{2}=\delta_{1}$, and if $\alpha=1$,
    $\textup{d}\mu_{1}(s)=(4\pi)^{-1/2}s^{-3/2}e^{-1/4s}\,\textup{d}s$.
\end{remark}

\begin{corollary}
	Let $P^{\alpha}$ be the heat kernel associated to the operator $(-\Delta)^{\alpha/2}$, $\alpha\in(0,2)$. Then,
	\begin{equation}\label{p1}
        P^{\alpha}(x,1)=\int_{0}^{\infty}G(x,s)\dmu_{\alpha}(s),\quad x\in\Rn,
	\end{equation}
    where $\mu_{\alpha}$ is the Borel measure given by Bernstein's Theorem and $G$ is the classical heat kernel associated to $-\Delta$, given by \eqref{p12}.
\end{corollary}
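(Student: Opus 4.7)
The plan is to combine the subordination identity supplied by Bernstein's theorem with Fourier inversion. Since $g_\alpha(y)=e^{-y^{\alpha/2}}$ is completely monotone, Remark~\ref{remarkp1}(i) (applied at $y=|\xi|^2$) gives the pointwise identity
\begin{equation*}
    e^{-|\xi|^{\alpha}}=g_\alpha(|\xi|^{2})=\int_0^\infty e^{-s|\xi|^2}\,\textup{d}\mu_\alpha(s)\quad\textrm{for every }\xi\in\RN.
\end{equation*}
Recalling that $P^{\alpha}(\cdot,1)=\mathcal{F}^{-1}(e^{-|\cdot|^{\alpha}})$, and that a standard Gaussian Fourier inversion yields $\mathcal{F}^{-1}(e^{-s|\cdot|^2})(x)=G(x,s)$ for every $s>0$, the identity~\eqref{p1} will follow once we justify that the inverse Fourier transform can be moved inside the integration against $\textup{d}\mu_\alpha$:
\begin{equation*}
    P^{\alpha}(x,1)=\mathcal{F}^{-1}\Big(\int_0^\infty e^{-s|\cdot|^2}\,\textup{d}\mu_\alpha(s)\Big)(x)=\int_0^\infty\mathcal{F}^{-1}(e^{-s|\cdot|^2})(x)\,\textup{d}\mu_\alpha(s)=\int_0^\infty G(x,s)\,\textup{d}\mu_\alpha(s).
\end{equation*}

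To justify the interchange I would invoke Fubini's theorem on the product space $\RN\times(0,\infty)$ endowed with $\textup{d}\xi\otimes\textup{d}\mu_\alpha$. The integrand $(\xi,s)\mapsto e^{-s|\xi|^2}e^{i\langle x,\xi\rangle}$ has modulus $e^{-s|\xi|^2}$, and Tonelli applied to this nonnegative function, together with the subordination identity in the reverse order, gives
\begin{equation*}
    \int_{\RN}\int_0^\infty e^{-s|\xi|^2}\,\textup{d}\mu_\alpha(s)\,\textup{d}\xi=\int_{\RN}e^{-|\xi|^{\alpha}}\,\textup{d}\xi<\infty,
\end{equation*}
where finiteness is immediate from the rapid decay of $e^{-|\xi|^\alpha}$ (also a consequence of the lower bound $m(\xi)\geq C_1|\xi|^\alpha$ for $|\xi|\geq1$ obtained in Proposition~\ref{Pprop2}). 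Thus $(\xi,s)\mapsto e^{-s|\xi|^2}e^{i\langle x,\xi\rangle}$ is integrable with respect to $\textup{d}\xi\otimes\textup{d}\mu_\alpha$, and Fubini's theorem legitimizes the commutation.

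There is essentially no obstacle here: the proof is a one-line subordination manipulation, and the only substantive verification is the standard integrability check above. The content of the statement is not in the difficulty of its proof but in its consequences, since representing $P^\alpha(\cdot,1)$ as a weighted superposition of Gaussians allows one to transfer regularity, positivity, and quantitative pointwise estimates from $G$ to $P^\alpha(\cdot,1)$, and then, via the self-similar scaling~\eqref{p5}, to $P^\alpha$ for every $t>0$.
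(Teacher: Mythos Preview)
Your proof is correct and follows essentially the same approach as the paper: both use the Bernstein subordination identity $e^{-|\xi|^\alpha}=\int_0^\infty e^{-s|\xi|^2}\,\textup{d}\mu_\alpha(s)$ and then justify interchanging the $\textup{d}\xi$ and $\textup{d}\mu_\alpha$ integrations via Tonelli/Fubini using the finiteness of $\int_{\RN}e^{-|\xi|^\alpha}\,\textup{d}\xi$. The paper phrases the integrability check as $\int_0^\infty G(0,s)\,\textup{d}\mu_\alpha(s)=P^\alpha(0,1)<\infty$, which is the same quantity written in the other order.
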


\begin{proof}
By Bernstein's Theorem, $\displaystyle e^{-|\xi|^{\alpha}}=g_{\alpha}(|\xi|^{2})=\int_{0}^{\infty}e^{-s|\xi|^{2}}\dmu_{\alpha}(s)$, whence
\begin{equation}\label {p2}	
    P^{\alpha}(x,1)=\frac{1}{(2\pi)^{d}}\intr e^{-|\xi|^{\alpha}}e^{-i\langle x,\xi\rangle}\,{\rm d}\xi
    =\frac{1}{(2\pi)^{d}}\intr\int_{0}^{\infty}e^{-s|\xi|^{2}}e^{-i\langle x,\xi\rangle}\dmu_{\alpha}(s)\,{\rm d}\xi.
\end{equation}
By  Tonelli's Theorem
\begin{equation*}
    \int_{0}^{\infty} G(0,s)\dmu_{\alpha}(s)=\int_{0}^{\infty}\intr\frac{e^{-s|\xi|^{2}}}{(2\pi)^{d}}\,{\rm d}\xi\dmu_{\alpha}(s)=
    \frac{1}{(2\pi)^{d}}\intr\int_{0}^{\infty}e^{-s|\xi|^{2}}\dmu_{\alpha}(s)\,{\rm d}\xi= P^{\alpha}(0,1)<\infty.
\end{equation*}
Therefore, since $G(x,s)\leq G(0,s)$ for all $x\in\Rn$, $s>0$,
\begin{equation*}
	\int_0^\infty G(x,s)\dmu_{\alpha}(s)<\infty\quad\textrm{for all }x\in\Rn.
\end{equation*}
As a consequence, by Fubini's Theorem, we can interchange the integrals in \eqref{p2} to get
\begin{equation*}
	P^{\alpha}(x,1)=\int_{0}^{\infty}\intr\frac{e^{-s|\xi|^{2}}}{(2\pi)^{d}}e^{-i\langle x,\xi\rangle}
    \,{\rm d}\xi\dmu_{\alpha}(s)=\int_{0}^{\infty}G(x,s)\dmu_{\alpha}(s)\quad\text{for all }x\in\Rn.\qedhere
\end{equation*}
\end{proof}

The representation \eqref{p1} combined with the self-similar structure~\eqref{p5} of the kernel $P^{\alpha}$ allows us to obtain estimates for its derivatives. Let us state some of them explicitly.

\begin{proposition}\label{pprop1}
    Let $P^{\alpha}$ be the heat kernel associated to the operator $(-\Delta)^{\alpha/2}$, $\alpha\in(0,2)$. There exists a positive constant $C=C(\alpha,d)$ such that for all $x\in\Rn$:
    \begin{itemize}
        \item[\rm (i)] $\displaystyle|\partial_{j}P^{\alpha}_1(x)|\leq  C\Big(1\wedge\frac{1}{|x|}\Big)P_1^{\alpha}(x)$  for all $j=1,\dots,d$;
        \item[\rm (ii)] $\displaystyle|\partial_{i}\partial_{j}P^{\alpha}_1(x)|\leq  C\Big(1\wedge\frac{1}{|x|^{2}}\Big)P^{\alpha}_1(x)$ for all $i,j=1,\dots,d$;
        \item[\rm (iii)] $|\langle x,\nabla P^{\alpha}_1(x)\rangle|\leq C(|x|\wedge1)P^{\alpha}_1(x)$;
        \item[\rm (iv)] $\displaystyle|\partial_{t}P^{\alpha}(x,t)|=|\Lk P^{\alpha}_t(x)|\leq \frac{C}{t}P^{\alpha}_t(x)$ for all $t\in (0,\infty)$;
        \item[\rm (v)] $\displaystyle|\partial_j P^{\alpha}_t(x)|\leq C\Big(\frac1{t^\alpha}\wedge\frac{1}{|x|}\Big) P^{\alpha}_1(x)$  for all $j=1,\dots,d$ and $t\in (0,\infty)$;
        \item[\rm (vi)] $\displaystyle|\partial_{i}\partial_{j} P^{\alpha}_{t}(x)|\leq C\Big(\frac1{t^{2\alpha}}\wedge\frac{1}{|x|}\Big)P_{1}^{\alpha}(x)$ for all $i,j=1,\dots,d$ and $t\in (0,\infty)$.
    \end{itemize}
\end{proposition}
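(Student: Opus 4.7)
The plan is to differentiate the Bernstein representation
\begin{equation*}
    P_1^\alpha(x)=\int_0^\infty G(x,s)\,{\rm d}\mu_\alpha(s)
\end{equation*}
from~\eqref{p1} under the integral sign and reduce every bound to pointwise estimates on the Gaussian~$G$. The key workhorse is the elementary inequality $r^k e^{-r/2}\le C_k$ for $r\ge 0$, $k\in\mathbb{N}\cup\{0\}$, which, upon setting $r=|x|^2/(4s)$, rewrites as
\begin{equation*}
    \Bigl(\frac{|x|^2}{s}\Bigr)^{k} G(x,s)\le C'_k\, G(x,2s)\quad\text{for all }x\in\Rn\text{ and }s>0.
\end{equation*}
Combined with the scaling identity $G(x,2s)=2^{-d/2}G(x/\sqrt{2},s)$, Bernstein's representation gives
\begin{equation*}
    \int_0^\infty\Bigl(\frac{|x|^2}{s}\Bigr)^{k}G(x,s)\,{\rm d}\mu_\alpha(s)\le C\,P_1^\alpha(x/\sqrt 2),
\end{equation*}
which, by the slowly-changing property of $P_1^\alpha$ (Proposition~\ref{Pslowly}, applicable since $K_\alpha$ satisfies~\eqref{A0}--\eqref{A2}), is in turn bounded by $C\,P_1^\alpha(x)$.

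For~(i) I would differentiate under the integral to obtain $\partial_j P_1^\alpha(x)=-\int_0^\infty \frac{x_j}{2s}G(x,s)\,{\rm d}\mu_\alpha(s)$, with dominated convergence justified by the workhorse with $k=1$. Writing $\bigl|\frac{x_j}{2s}G(x,s)\bigr|\le \frac{1}{2|x|}\cdot\frac{|x|^2}{s}G(x,s)$ and inserting the integral identity above yields $|\partial_j P_1^\alpha(x)|\le\frac{C}{|x|}P_1^\alpha(x)$ whenever $|x|\ge 1$. For $|x|\le 1$ I would instead use that $\partial_j P_1^\alpha\in C_0(\Rn)$ (Corollary~\ref{pcor1}) together with the positivity and continuity of $P_1^\alpha$ on $\overline{B_1(0)}$ to conclude $|\partial_j P_1^\alpha(x)|\le CP_1^\alpha(x)$; the two regimes combine into the stated estimate $(1\wedge 1/|x|)P_1^\alpha(x)$. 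Part~(ii) is handled identically after writing
\begin{equation*}
    \partial_i\partial_j G(x,s)=\Bigl(-\frac{\delta_{ij}}{2s}+\frac{x_ix_j}{4s^2}\Bigr)G(x,s)
\end{equation*}
and estimating each term by $\frac{C}{|x|^2}G(x,2s)$ via the workhorse with $k=1,2$.

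The remaining items are formal consequences. Item~(iii) is just $|\langle x,\nabla P_1^\alpha(x)\rangle|\le |x|\,|\nabla P_1^\alpha(x)|\le C(|x|\wedge 1)P_1^\alpha(x)$ from~(i). For~(iv) I would combine the equation $\partial_t P^\alpha+(-\Delta)^{\alpha/2}P^\alpha=0$ (Proposition~\ref{pprop2}) with the self-similarity~\eqref{p5}: differentiating $P_t^\alpha(x)=t^{-d/\alpha}P_1^\alpha(xt^{-1/\alpha})$ in~$t$ yields $-\frac{d}{\alpha t}P_t^\alpha(x)-\frac{1}{\alpha t}t^{-d/\alpha}\langle y,\nabla P_1^\alpha(y)\rangle$ with $y=xt^{-1/\alpha}$, and~(iii) bounds the inner product by $CP_1^\alpha(y)$, producing the factor~$1/t$. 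Items~(v)--(vi) are obtained by applying~(i)--(ii) after the rescaling $\partial_j P_t^\alpha(x)=t^{-(d+1)/\alpha}(\partial_j P_1^\alpha)(xt^{-1/\alpha})$ (and analogously for second derivatives), followed by a last invocation of the slowly-changing property of $P_1^\alpha$ to pass from evaluations at $xt^{-1/\alpha}$ back to values at $x$. The only nontrivial ingredient throughout is the Gaussian absorption trick $e^{-|x|^2/(4s)}=e^{-|x|^2/(8s)}e^{-|x|^2/(8s)}$, which lets one swallow polynomial factors $(|x|^2/s)^k$ into a Gaussian at time~$2s$; everything else is bookkeeping.
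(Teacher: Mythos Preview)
Your approach is essentially the same as the paper's: differentiate the Bernstein representation~\eqref{p1}, absorb the polynomial factors coming from $\partial_j G$ and $\partial_i\partial_j G$ back into a Gaussian (the paper writes this as $|x|^k|\partial^k G(x,s)|\le C\,G(x/2,s)$, you as $(|x|^2/s)^k G(x,s)\le C\,G(x,2s)=C'\,G(x/\sqrt 2,s)$ --- cosmetically different, same trick), integrate against $\mu_\alpha$ to land on $P_1^\alpha$ at a dilated argument, and finish with the slowly-changing property; then derive (iii)--(vi) from (i)--(ii) via the self-similar form~\eqref{p5}. One small point: in (v)--(vi) the natural output of rescaling is a bound by $P_t^\alpha(x)$ rather than $P_1^\alpha(x)$, so your phrase ``pass from evaluations at $xt^{-1/\alpha}$ back to values at $x$ by slowly changing'' is not quite the right mechanism (the ratio $t^{-1/\alpha}$ is unbounded); the paper is equally terse here, simply recognizing $t^{-d/\alpha}P_1^\alpha(xt^{-1/\alpha})=P_t^\alpha(x)$.
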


\begin{proof}
(i)--(ii)  These properties are clearly satisfied for any bounded domain, since $P^{\alpha}_1>0$ and $P^{\alpha}_1\in C^{\infty}(\Rn)$; see Corollary \ref{pcor1}.  Therefore, it is enough to prove them for $|x|\geq 2$.
	
A direct computation of the spatial derivatives of the classical heat kernel $G$ yields
\begin{equation*}
    \begin{array}{ll}
    \displaystyle\partial_{j}G(x,s)=-\frac{x_{j}}{2s}G(x,s),\qquad	&\displaystyle\partial_{i}\partial_{j}G(x,s)=\Big(\frac{-\delta_{ij}}{2s}+\frac{x_{i}x_{j}}{8s^2}\Big)G(x,s),
    \\[6pt]
    \displaystyle|\partial_{j}G(x,s)|\le\frac{|x|}{2s}G(x,s),\qquad&\displaystyle|\partial_{i}\partial_{j}G(x,s)|\le\Big(\frac{1}{2s}
    +\frac{|x|^{2}}{8s^2}\Big)G(x,s)
	\end{array}
\end{equation*}
for all $(x,s)\in\Rn\times (0,\infty)$.  Then, since $a^{k}\leq k!e^{a}$ for all $k\in\mathbb{N}$ and $a\geq 0$, we have that
\begin{align}
	\label{p3} &|x||\partial_{j}G(x,s)|\leq 4e^{\frac{|x|^{2}}{8s}}G(x,s)=4G(x/2,s),\\
	\label{p4} &|x|^{2}|\partial_{i}\partial_{j}G(x,s)|
    \leq 20e^{\frac{|x|^{2}}{8s}}G(x,s)=20G(x/2,s).
\end{align}
Since $G(x,s)\leq G(0,s)$ for all $(x,s)\in\Rn\times (0,\infty)$, using the MVT and these estimates we get
\begin{align*}
	&\Big|\frac{G(x+he_{j},s)-G(x,s)}{h}\Big|=|\partial_{j}G(x'(x,h,s),s)|\leq \frac{4}{|x'|}G(0,s)\leq\frac{C}{|x|}G(0,s), \\
	&\Big|\frac{\partial_{j}G(x+he_{i},s)-\partial_{j}G(x,s)}{h}\Big|=|\partial_{i}\partial_{j}G(x''(x,h,s),s)|\leq \frac{20}{|x''|^{2}}G(0,s)
    \leq \frac{C}{|x|^{2}}G(0,s),
\end{align*}
if $|h|\leq 1$,	because $x',x''\in B_{1}(x)$ for all $(x,s)\in\Rn \times (0,\infty)$.
	
Since $G(0,\cdot)\in L^{1}((0,\infty),\textup{d}\mu_{\alpha})$, see~\eqref{p1}, we are allowed to interchange the derivatives and the integral in the representation formula~\eqref{p1} for the heat kernel via the DCT and, by \eqref{p3} and~\eqref{p4}, we obtain
\begin{align*}
	&|\partial_{j}P^{\alpha}_1(x)|=\Big|\intoii \partial_{j} G(x,s)\dmu_{\alpha}(s)\Big|
    \leq \frac{C}{|x|}\intoii G(x/2,s)\dmu_{\alpha}(s)=\frac{C}{|x|}P^{\alpha}_1(x/2),\\
	&|\partial_{i}\partial_{j}P^{\alpha}_1(x)|=\Big|\intoii \partial_{i} \partial_{j} G(x,s)\dmu_{\alpha}(s)\Big|
    \leq \frac{C}{|x|^{2}}\intoii G(x/2,s)\dmu_{\alpha}(s)=\frac{C}{|x|^{2}}P^{\alpha}_1(x/2).
\end{align*}
Finally, since $P^{\alpha}$ is slowly changing, see Proposition \ref{Pslowly}, there is $C>0$ such that $P^{\alpha}_1(x/2)\leq CP^{\alpha}_1(x)$ for all $x\in \Rn$, and the result follows.

\noindent (iii) It is a direct consequence of (i).

\noindent (iv) Using the self-similar form~\eqref{p5} of $P^\alpha$ and (iii), we get
\begin{equation*}
    |\partial_{t}P^{\alpha}(x,t)|=t^{-(1+\frac d\alpha)}\Big|\frac{d}{\alpha}P^{\alpha}_1(xt^{-\alpha})+\frac{1}{\alpha}\langle xt^{-\alpha}, \nabla P^{\alpha}_1(xt^{-\alpha})\rangle\Big|\leq\frac{C}{t}P^{\alpha}_1(x)
\end{equation*}
for all $t>0$. The estimate for $|\Lk P^{\alpha}_t|$ follows immediately, since $P^\alpha$ solves the fractional heat equation; see~Proposition~\ref{pprop2}.

\noindent(v) Using again the self-similarity of $P^{\alpha}$, now in combination with (i), we obtain
\begin{equation*}
    |\partial_j P^\alpha_t(x)|=t^{-(\alpha+\frac d\alpha)}|\partial_j P^\alpha_1(xt^{-\alpha})|\le C
    \Big(\frac1{t^\alpha}\wedge\frac{1}{|x|}\Big) P^{\alpha}_1(x).
\end{equation*}

\noindent(vi) Using once more the self-similarity of $P^{\alpha}$, now together with (ii),
\begin{equation*}
    |\partial_i\partial_jP^\alpha_t(x)|= t^{-(2\alpha+\frac d\alpha)}|\partial_i\partial_jP^\alpha_1(xt^{-\alpha})|\leq C\Big(\frac1{t^{2\alpha}}\wedge\frac{1}{|x|}\Big)P_{1}^{\alpha}(x).\qedhere
\end{equation*}
\end{proof}

\section{Operators \texorpdfstring{$\mathcal{L}_K$}{L\_K}}\label{section3}
\setcounter{equation}{0}

This section constitutes the main body of the paper. It is devoted to develop a complete Widder-type theory for~\eqref{a.nonlocalheatequation} when $\mathcal{L}=\mathcal{L}_K$, with $K$ satisfying~\eqref{A0}--\eqref{A2}, plus~\eqref{A3} for some results. It is divided into two parts. In the first one we focus on very weak solutions, addressing three key topics: the  existence of an  initial trace when the solutions are nonnegative, uniqueness under various conditions, and existence. In the second part, we turn our attention to classical solutions. We begin by exploring uniqueness. Later, we give conditions on the heat kernel guaranteing the existence of classical solutions. Such conditions are satisfied by the heat kernel of the fractional Laplacian.

\subsection{Very weak solutions}

We now give conditions ensuring uniqueness for very weak solutions. In addition, we prove, on the one hand, that nonnegative very weak solutions have an initial trace which is a Radon measure satisfying certain integral growth condition and, on the other hand, the existence of a solution for any initial data satisfying a growth condition that coincides with the above mentioned one when the data are nonnegative.  As a corollary, we obtain a representation formula. As a first step, we discuss a useful alternative definition of very weak solution.

\subsubsection{An alternative definition of very weak solution}

Our notion of very weak solution requires $u$ to be integrable against $\Lk\theta$ for any test function $\theta$. We will see now that this is equivalent to requiring $u$ to be integrable against the heat kernel at any positive time, say $t=1$. This equivalence relies on the L\'evy kernel $K$ and $P_{1}$ being comparable far from the origin (see~\eqref{Atotal}) and on $P_{1}$ being {slowly changing} by Proposition~\ref{Pslowly}.

\begin{proposition}\label{prop1}
    Let $\mathcal{L}=\mathcal{L}_K$, with $K$ satisfying~\eqref{A0}--\eqref{A2}.

    \noindent{\rm (i)} Let  $0\leq\delta<\tau\leq T$. Then $u\in L^{1}((\delta,\tau);\lp)$ if and only if $u\in L^{1}_{\rm{loc}}(\Rn\times[\delta,\tau])$ and
    \begin{equation}\label{v3.9}	
        \int_{\delta}^{\tau}\intr|u(x,t)||\Lk \theta(x,t)|\dx\dt<\infty\quad\textrm{for all }\theta\in C^{\infty}_{\textup{c}}(\RN\times [\delta,\tau]).
    \end{equation}

    \noindent{\rm (ii)}  $u\in\luloc$ if and only if  $u\in L^{1}_{\rm{loc}}(\Rn\times(0,T))$ and~\eqref{a.2} holds.
\end{proposition}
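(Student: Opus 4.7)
The plan is to establish part~(i) first and derive part~(ii) by restricting to compact subintervals. The idea is to pair condition~\eqref{v3.9} with two-sided bounds of the form $|\mathcal{L}_K\theta(x,t)|\lesssim P_1(x)$ valid for all test functions, and $|\mathcal{L}_K\theta_0(x,t)|\gtrsim P_1(x)$ for large $|x|$ for one carefully chosen test function~$\theta_0$.

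For the forward direction of~(i), the upper bound $|\mathcal{L}_K\theta(x,t)|\leq C_\theta P_1(x)$ is exactly Lemma~\ref{a.17}, so~\eqref{v3.9} follows at once from $u\in L^{1}((\delta,\tau);\lp)$. The local integrability $u\in L^{1}_{\textrm{loc}}(\Rn\times[\delta,\tau])$ then follows from the weighted integrability because, by positivity and continuity of $P_1$, each compact set $A\subset\Rn$ satisfies $\inf_A P_1>0$.

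For the reverse direction, I would test~\eqref{v3.9} against $\theta(x,t)=\psi(x)$, where $\psi\in C^{\infty}_{\textup{c}}(\Rn)$ is nonnegative with $\psi\equiv 1$ on $B_1(0)$ and $\operatorname{supp}\psi\subset B_2(0)$; this belongs to $C^{\infty}_{\textup{c}}(\Rn\times[\delta,\tau])$ by compactness of $[\delta,\tau]$. For $|x|\geq 4$ one has $\psi(x)=0$, and the symmetry of $K$ together with the change of variable $z=x+y$ gives
\[
  |\mathcal{L}_K\psi(x)|=\int_{B_2(0)}\psi(z)K(z-x)\,\textup{d}z.
\]
For such $x$ and $z\in B_2(0)$ we have $|z-x|/|x|\in[1/2,3/2]$, so the slowly-changing property of $P_1$ (Proposition~\ref{Pslowly}), combined with the comparability $P_1\asymp K$ at infinity~\eqref{Atotal} (equivalently, the structure of $K$ from~\eqref{A1} and the scaling~\eqref{A2} applied to $\tilde{\varphi}$), yields $K(z-x)\asymp P_1(x)$ uniformly in $z\in B_2(0)$. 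Hence $|\mathcal{L}_K\psi(x)|\geq c_1 P_1(x)$ for all $|x|\geq 4$. Condition~\eqref{v3.9} then bounds $\int_\delta^\tau\!\int_{|x|\geq 4}|u|P_1\dx\dt$, while $u\in L^{1}_{\textrm{loc}}(\Rn\times[\delta,\tau])$ together with boundedness of $P_1$ on $B_4(0)$ handles the remaining region; summing both pieces gives $u\in L^{1}((\delta,\tau);\lp)$.

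Part~(ii) reduces to part~(i) upon testing with $\theta(x,t)=\psi(x)\eta(t)$, where $\eta\in C^{\infty}_{\textup{c}}((0,T))$ is identically one on any prescribed compact subinterval $[\delta,\tau]\subset(0,T)$. Since $\mathcal{L}_K$ acts only on the spatial variable, both the upper and lower bounds for $|\mathcal{L}_K\theta|$ carry over, so~\eqref{a.2} is equivalent to the validity of~\eqref{v3.9} on every such $[\delta,\tau]$, and therefore to $u\in\luloc$. The only substantive step in the whole argument is the lower estimate for $|\mathcal{L}_K\psi|$ far from the origin, which is where the slowly-changing character of $P_1$ and the $P_1\asymp K$ comparability genuinely enter; everything else is a direct consequence of Lemma~\ref{a.17} and elementary properties of $P_1$.
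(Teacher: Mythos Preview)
Your proposal is correct and follows essentially the same approach as the paper: both directions of (i) hinge on Lemma~\ref{a.17} for the upper bound and on the lower bound $|\mathcal{L}_K\psi(x)|\geq cP_1(x)$ for $|x|\geq 4$, obtained from the comparability $K\asymp P_1$ at infinity together with the slowly-changing property of $P_1$; part (ii) is then handled by inserting a smooth time cutoff, exactly as you describe. The only cosmetic difference is that the paper parametrizes the integral over $B_2(x)$ via $K(y)$ rather than over $B_2(0)$ via $K(z-x)$, but this is the same computation up to a change of variable.
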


\begin{proof}
(i) $\Rightarrow$) Since $u\in L^{1}((\delta,\tau);\lp)$, the local integrability in $\Rn\times[\delta,\tau]$ follows immediately from the positivity of $P_1$. On the other hand, by Lemma~\ref{a.17}, for a given $\theta\in C^{\infty}_{\textup{c}}(\RN\times[\delta,\tau])$,
\begin{equation*}\label{v3.20}
    \int_{\delta}^{\tau}\intr|u(x,t)||\Lk \theta(x,t)|\dx\leq C_{\theta} \int_{\delta}^{\tau}\intr|u(x,t)|P_{1}(x)\dx\dt<\infty.
\end{equation*}

\noindent $\Leftarrow$) Consider $\psi\in C^{\infty}_{\textup{c}}(\Rn)$ such that $\mathcal{X}_{B_{1}(0)}\leq \psi\leq \mathcal{X}_{B_{2}(0)}$. Since $\psi\ge0$,  $K$ is symmetric and $\textrm{supp}(\psi)\subset B_{2}(0)$, if $|x|\geq4$, then  $	|\Lk\psi(x)|=\int_{B_{2}(x)}\psi(x-y)K(y)\dy$. Notice that we are avoiding the singularity of $K$, since $|y|\ge 2$ in the domain of integration. On the other hand, if $|x|\geq 4$ and $|y-x|\leq 2$, then $|y|\leq3 |x|/2$, whence
\begin{equation*}
    K(y)\geq\frac{C}{|y|^d\tilde{\varphi}(|y|)}\geq  \frac{C}{\big(\frac{3^d|x|^d}{2^{n}}\big)\tilde{\varphi}\big(\frac{3|x|}{2}\big)}\geq CK\big(\frac{3x}{2}\big)\geq  CP_{1}\big(\frac{3x}{2}\big)\geq CP_{1}(x)\quad\textrm{whenever }y\in B_{2}(x),
\end{equation*}
due to \eqref{A1}, the fact that $\tilde\varphi$ is nondecreasing, \eqref{Atotal}, and the fact that $P_{1}$ is {slowly changing}, see Proposition~\ref{Pslowly}. Then,
\begin{equation}\label{v3.24}
	|\Lk\psi(x)|=\int_{B_{2}(x)}\psi(x-y)K(y)\dy\geq CP_{1}(x)\int_{B_{2}(0)}\psi(y)\dy = CP_{1}(x)\quad\textrm{for all }|x|\geq 4.
\end{equation}
As a consequence, using also that $P_1\le P_1(0)$, $u\in L^{1}_{\textrm{loc}}(\Rn\times[\delta, \tau])$ and \eqref{v3.9},
\begin{equation*}\label{v3.22}	
    \int_{\delta}^{\tau}\intr |u\xt|P_{1}(x)\dx\dt\leq P_{1}(0) \int_{\delta}^{\tau}\int_{B_{4}(0)} |u\xt|\dx\dt+C\int_{\delta}^{\tau}\intr |u\xt||\Lk\psi|(x)\dx\dt <\infty.
\end{equation*}

\noindent (ii) $\Rightarrow$) This follows directly from (i) $\Rightarrow$).

\noindent $\Leftarrow$) Let $0<\varepsilon<T'<T$. We consider $\psi$ as in the proof of (i) $\Leftarrow$) and  $\varphi:[0,T]\to\mathbb{R}$ such that $\mathcal{X}_{[\varepsilon,T']}\leq \varphi\leq \mathcal{X}_{[\varepsilon/2, T'+(T-T')/2]}$. Then, $\varphi\psi\in C^{\infty}_{\textup{c}}(\Rn\times(0,T))$. Hence, since $P_1\le P_1(0)$, by~\eqref{a.2} and~\eqref{v3.24}, with $\varphi\psi$ replacing $\psi$, we have
\begin{align*}
    \int_{\varepsilon}^{T'}&\intr |u\xt|P_{1}(x)\dx\dt\leq P_{1}(0)\int_{\varepsilon}^{T'}\int_{B_{4}(0)}|u\xt|\dx\dt+C\int_{\varepsilon}^{T'}\intr |u\xt||\Lk\psi|(x)\dx\dt\\
    &\leq  P_{1}(0) \int_{\varepsilon}^{T'}\int_{B_{4}(0)} |u\xt|\dx\dt+C\int_{\varepsilon}^{T'}\intr |u\xt||\Lk(\varphi\psi)|\xt\dx\dt
	<\infty.\qedhere
\end{align*}
\end{proof}

\begin{remark}\label{remark3}
	Thanks to Proposition~\ref{prop1} (ii), if $u$ is a very weak solution of equation \eqref{a.nonlocalheatequation}, then $u\in\luloc$.
\end{remark}

When dealing with very weak solutions to the Cauchy Problem for~\eqref{a.nonlocalheatequation}, a common choice is asking $u$ to be a very weak solution of the equation in the sense of Definition~\ref{defWS} and to take the data (in general a Radon  measure $\mu_{0}$) in the sense of Definition~\ref{deftrace}; see for instance~\cite{Matteo}. This is our choice in the present paper. Another option is asking $u$ to satisfy
\begin{align}
    \label{eq:space.alternative.definition}
    &u\in L^1_{\rm loc}([0,T);\lp);\\
    \label{v3.10}
    &\int_{0}^{T}\!\intr u\xt(\partial_{t}\tilde{\theta}\xt-\Lk\tilde{\theta}\xt)\dx\dt=-\intr\tilde{\theta}(x,0)\dmu_{0}(x)\quad\textrm{for all }\tilde{\theta}\in C^{\infty}_{\textup{c}}(\Rn\times [0,T));
\end{align}
see for instance~\cite{Soria1} for a definition in this spirit. Both choices have advantages and disadvantages. Formulation \eqref{eq:space.alternative.definition}--\eqref{v3.10} facilitates obtaining uniqueness without any requirement of positivity or integrability, excepting~\eqref{eq:space.alternative.definition}, although it requires such conditions to establish existence. On the contrary, proving existence is more straightforward with our choice, but you pay the price of needing additional assumptions to achieve uniqueness.

If $\mathcal{L}=\mathcal{L}_K$ with $K$ satisfying~\eqref{A0}--\eqref{A2} and $u\in\li$, both notions are equivalent. We will take profit of this equivalence later.

\begin{proposition}\label{equivDefvwProp}
    Let $\mathcal{L}=\mathcal{L}_K$ with $K$ satisfying~\eqref{A0}--\eqref{A2}. Let $\mu_{0}$ be a Radon measure and $u\in\li$.
    	
    \noindent{\rm (i)} If $u$ satisfies~\eqref{v3.10}, then it is a very weak solution of~\eqref{a.nonlocalheatequation} with initial trace~$\mu_0$.
    	
    \noindent{\rm(ii)} If $u$ is a very weak solution to~\eqref{a.nonlocalheatequation} with initial trace $\mu_{0}$, then it  satisfies~\eqref{eq:space.alternative.definition}--\eqref{v3.10}.
\end{proposition}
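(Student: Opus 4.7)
The plan for part (i) is first to extract~\eqref{a.3} by observing that every $\theta \in C^{\infty}_{\textup{c}}(\RN\times (0,T))$ is also admissible in~\eqref{v3.10} (with $\theta(\cdot,0)\equiv 0$), and to get the integrability~\eqref{a.2} directly from $u\in\li$ via Proposition~\ref{prop1}~(ii). To identify the initial trace, I would test~\eqref{v3.10} against separable functions $\tilde{\theta}(x,t)=\psi(x)\varphi(t)$, with $\psi\in C^{\infty}_{\textup{c}}(\RN)$ and $\varphi\in C^{\infty}_{\textup{c}}([0,T))$, which transforms~\eqref{v3.10} into the ODE-type identity
\[
\int_{0}^{T}g\varphi'\dt - \int_{0}^{T}h\varphi\dt = -\varphi(0)\intr\psi\dmuo,
\]
with $g(t):=\intr u\xt\psi(x)\dx$ and $h(t):=\intr u\xt\Lk\psi(x)\dx$. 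Lemma~\ref{a.17} together with $u\in\li$ gives $g,h\in L^{\infty}(0,T)$, so the identity forces $g'=-h$ in the distributional sense on $(0,T)$ and promotes $g$ to an absolutely continuous function $\tilde g$ on $[0,T)$; integration by parts against $\varphi$ with $\varphi(0)\neq 0$ then identifies $\tilde g(0) = \intr\psi\dmuo$. Uniform approximation, combined with the bound $\int_{K}|u\xt|\dx\leq(\min_{K}P_1)^{-1}\|u\|_{\li}$ on each compact $K$, extends the trace condition from $\psi\in C^{\infty}_{\textup{c}}$ to $\psi\in C_{\textup{c}}(\RN)$, yielding Definition~\ref{deftrace}.

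For part (ii), my plan is to reverse the above idea: given $\tilde{\theta}\in C^{\infty}_{\textup{c}}(\RN\times [0,T))$, introduce a smooth time cutoff $\eta_{\varepsilon}:[0,\infty)\to[0,1]$ equal to $0$ on $[0,\varepsilon/2]$ and to $1$ on $[\varepsilon,\infty)$, so that $\tilde{\theta}\eta_{\varepsilon}\in C^{\infty}_{\textup{c}}(\RN\times(0,T))$ is admissible in~\eqref{a.3}. Expanding $\partial_{t}(\tilde{\theta}\eta_{\varepsilon})=\eta_{\varepsilon}\partial_{t}\tilde{\theta}+\tilde{\theta}\eta_{\varepsilon}'$ and passing to the limit $\varepsilon\to 0^+$ should produce~\eqref{v3.10}: the piece $\int_{0}^{T}\!\intr u\eta_{\varepsilon}(\partial_{t}\tilde{\theta}-\Lk\tilde{\theta})\dx\dt$ converges by DCT (the integrand is dominated by $C|u|P_1\in L^{1}(\RN\times(0,T))$, thanks to $u\in\li$ and Lemma~\ref{a.17}), while the piece $\int_{0}^{T}\eta_{\varepsilon}'(t)F(t)\dt$, with $F(t):=\intr u\xt\tilde{\theta}\xt\dx$, is a summability-kernel action on $F$ that converges to $\intr\tilde{\theta}(x,0)\dmuo$; to see this I would split $\tilde{\theta}\xt=\tilde{\theta}(x,0)+[\tilde{\theta}\xt-\tilde{\theta}(x,0)]$, use the trace hypothesis on the first summand, and exploit the uniform smallness $\|\tilde{\theta}(\cdot,t)-\tilde{\theta}(\cdot,0)\|_{\infty}\to 0$ together with the $\li$ bound on $u$ for the second.

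The delicate point throughout is that the trace in Definition~\ref{deftrace} is an essential limit, not a pointwise one, so one cannot simply evaluate $g(t)$ or $F(t)$ at $t=0$. In (i) this is handled by using the $L^{\infty}$ bound on $h$ to upgrade $g$ to an absolutely continuous representative, which has a bona fide value at $t=0$; in (ii) it is handled by combining summability-kernel arguments with the uniform continuity in $t$ of $\tilde{\theta}$, in order to convert the essential-limit hypothesis on $u$ into the desired pointwise-at-zero identity.
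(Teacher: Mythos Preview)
Your proposal is correct. Part~(ii) is essentially the paper's argument: both test~\eqref{a.3} against $\tilde\theta$ times a smooth time cutoff rising from $0$ to $1$ near the origin, split $F(t)=\intr u\tilde\theta\,\textup{d}x$ via $\tilde\theta(x,t)=\tilde\theta(x,0)+[\tilde\theta(x,t)-\tilde\theta(x,0)]$, and use the trace hypothesis on the first piece and the $\li$ bound on the second; the only cosmetic difference is that the paper employs a two-step limit (a summability kernel centred at $\tau$, then $\tau\to0^+$) which you collapse into a single $\varepsilon\to0$.

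Part~(i), however, takes a genuinely different and cleaner route. The paper approximates the desired test $H(\tau-\cdot)\psi$ by admissible $\varphi_k^\tau\psi_j$, plugs these into~\eqref{v3.10}, and performs a triple limit $k\to\infty$, $\tau\to0^+$, $j\to\infty$, tracking Lebesgue points of $g_j$ to handle the essential-limit issue. You instead test~\eqref{v3.10} with separable $\psi(x)\varphi(t)$ to obtain the distributional ODE $g'=-h$ on $(0,T)$; since $h\in L^\infty$ by Lemma~\ref{a.17} and the $\li$ bound, $g$ has a Lipschitz representative $\tilde g$ on $[0,T)$, and the boundary term from integration by parts against a single $\varphi$ with $\varphi(0)\neq0$ reads off $\tilde g(0)=\intr\psi\,\textup{d}\mu_0$. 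This replaces the paper's summability-kernel bookkeeping by a regularity upgrade for $g$, which is what gives meaning to the value at $t=0$; the density passage from $C^\infty_{\textup{c}}$ to $C_{\textup{c}}$ is then the same in both approaches.
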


\begin{proof}
(i) Condition~\eqref{a.2} follows from Proposition~\ref{prop1}, since $L^\infty((0,T);L^1_{P_1})\subset L^1_{\rm loc}((0,T);L^1_{P_1})$.
Moreover, since $C_{\textup{c}}^{\infty}(\Rn\times(0,T))\subset C_{\textup{c}}^{\infty}(\Rn\times[0,T))$, \eqref{v3.10} implies~\eqref{a.3}.

It remains to prove that $\mu_{0}$ is the initial trace of $u$. To this aim, given $\psi\in C_{\textup{c}}(\mathbb{R}^d)$, we would like to take $H(\tau-\cdot)\psi$, $\tau\in(0,T)$, where $H$ is the Heaviside function, as test function in~\eqref{v3.10}, and then pass to the limit $\tau\to0^+$. However, this test is not admissible, since it does not have the required smoothness; hence, we have to proceed through an approximation argument.

Approximating $\psi$ is easy: we take a sequence $\{\psi_j\}_{j\in\mathbb{N}}\subset C^\infty_{\textup{c}}(\mathbb{R}^d)$ converging to $\psi$ uniformly in~$\mathbb{R}^d$ such that $|\psi_j|\le M \mathcal{X}_{B_R(0)}$ for some constants $M>0$, $R>0$.

In order to approximate the time factor, we consider $\displaystyle\varphi_{k}(t)= \int_{2^kt}^{\infty}\eta(s) \,{\rm d}s$, $k\in\mathbb{N}$, $t\in\mathbb{R}$, where
\begin{equation}\label{etaParaCutOff}
		\eta(s):=
		\begin{cases}\displaystyle
			C_{1}\textup{e}^{-\frac{1}{1-s^{2}}}&\text{if }|s|<1,\\
			0&\textrm{otherwise},
		\end{cases}
        \qquad C_{1}^{-1}:=\int_{-1}^{1}\textup{e}^{-\frac{1}{1-s^{2}}}\,{\rm d}s.
\end{equation}
Then $\{(\varphi_{k})'\}_{k\in\mathbb{N}}=\{-\eta_{k}\}_{k\in\mathbb{N}}$, with $\eta_{k}(s):=2^{k}\eta(2^{k}s)$, $s\in\mathbb{R}$. Notice that $\{\eta_{k}\}_{k\in\mathbb{N}}$ is a summability kernel. The desired approximating sequence is then $\{\varphi_k^\tau\}_{k\in\mathbb{N}}$, where $\varphi_k^{\tau}(t)=\varphi_k(t-\tau)$ for $\tau\in (0,T)$, $k\geq k_0>-\ln (T-\tau)/\ln 2$ and $t\in\mathbb{R}$.

Let $\displaystyle g_j(t)=\intr \psi_j(x)u(x,t)\dx$. Take $\varphi^{\tau}_{k}\psi_j\in C_{\textup{c}}^{\infty}(\Rn\times[0,T))$ as test function in~\eqref{v3.10}. Then,
\begin{equation}\label{eq:weak.for.trace}	
    \int_{0}^{T}(\varphi_{k}^{\tau})'(t)g_j(t)\dt-\int_{0}^{T}\intr\varphi_{k}^{\tau}(t)u(x,t)\Lk\psi_j(x)\dx\dt
    =-\intr\varphi^{\tau}_{k}(0)\psi_j\dmu_{0}.
\end{equation}
Since $|\psi_j|\le M \mathcal{X}_{B_R(0)}$ for all $j\in\mathbb{N}$ and $P_1>0$, then $|\psi_j|\le C P_1$ for some $C>0$. Hence, $g_j\in L^1((0,T))$,  as $u\in L^\infty((0,T);L^1_{P_1})\subset L^1((0,T);L^1_{P_1})$. Therefore, $(\varphi_{k})'*g_j(\tau)\to -g_j(\tau)$ if $\tau\in (0,T)\cap L(g_j;\{\eta_k\}_{k\in\mathbb{N}})$, whence
\begin{equation*}\label{v3.12}	
    \lim_{k\to\infty}\int_0^T(\varphi^\tau_{k})'(t)g_j(t)\dt=-g_j(\tau)\quad\textrm{for all }\tau\in(0,T)\cap L(g_j;\{\eta_k\}_{k\in\mathbb{N}}).
\end{equation*}
Note that, by Lebesgue's Differentiation Theorem, $|(0,T)\setminus L(g_j;\{\eta_k\}_{k\in\mathbb{N}})|=0$.

Since $\mathcal{X}_{(-\infty,-2^{-k}]}\leq \varphi_{k}\leq \mathcal{X}_{(-\infty,2^{-k})}$, then $0\le\varphi_k^\tau\le 1$. Besides, $|\Lk\psi_j|\leq C_jP_{1}$ (see Lemma~\ref{a.17}). Hence,
\begin{equation*}
    \begin{aligned}
        \limsup_{k\to\infty}\Big| \int_0^T\intr\varphi_k^\tau(t)u\xt\Lk\psi_j(x)\dx\dt\Big|&\leq C_j\lim_{k\to\infty}\int_0^{\tau+2^{-k}}\int_{\mathbb{R}^d}u(x,t)P_1(x)\dx\dt\\
        &=C_j\int_0^\tau\int_{\mathbb{R}^d}u(x,t)P_1(x)\dx\dt.
    \end{aligned}
\end{equation*}

On the other hand, $\varphi^\tau_k(0)=\varphi_k(-\tau)=1$ if $\tau\in (0,T)$ and $k\ge -\log\tau/\log 2$, whence
\begin{equation*}
    \lim_{k\to\infty}\intr\varphi^\tau_k(0)\psi_j\dmu_{0}=\intr\psi_j\dmu_0\quad\textrm{for all }\tau\in(0,T)\cap L(g_j;\{\eta_k\}_{k\in\mathbb{N}}).
\end{equation*}
Thus, letting $k\to\infty$ in~\eqref{eq:weak.for.trace}, we obtain
\begin{equation*}
    \Big|\intr\psi_j(x)u(x,\tau)\dx-\intr\psi_j\dmu_0\Big|\le C_j\int_0^\tau\int_{\mathbb{R}^d}u(x,t)P_1(x)\dx\dt\quad\textrm{for all }\tau\in(0,T)\cap L(g_j;\{\eta_k\}_{k\in\mathbb{N}}),
\end{equation*}
whence
\begin{equation}\label{eq:trace.approximates}
    \operatornamewithlimits{ess\, lim}_{\tau\to0^+}\Big|\intr\psi_j(x)u(x,\tau)\dx-\intr\psi_j\dmu_0\Big|=0.
\end{equation}
This will be enough to complete the approximation argument. Indeed, on the one hand, since $|\psi_j-\psi|=|\psi_j-\psi|\mathcal{X}_{B_R(0)}$, $j\in\mathbb{N}$, then
\begin{equation*}
    \intr|\psi_j-\psi|\,\textup{d}|\mu_0|\le \|\psi-\psi_j\|_{L^\infty(\mathbb{R}^d)} |\mu_0|(B_R(0)).
\end{equation*}
On the other hand, as $P_1>0$, there is a constant $C>0$ such that $\mathcal{X}_{B_R(0)}\le CP_1$, so that
\begin{align*}
    \intr|\psi-\psi_j|(x)u(x,\tau)\dx&\le C\|\psi-\psi_j\|_{L^\infty(\mathbb{R}^d)}\intr u(x,\tau)P_1(x) \dx\\
    &\le C\|\psi-\psi_j\|_{L^\infty(\mathbb{R}^d)}\|u\|_{\li}.
\end{align*}
Therefore,
\begin{align*}
    \Big|\intr\psi(x)u(x,\tau)\dx-\intr\psi\dmu_0\Big|&\le \Big|\intr\psi_j(x)u(x,\tau)\dx-\intr\psi_j\dmu_0\Big|\\
    &\quad+\|\psi-\psi_j\|_{L^\infty(\mathbb{R}^d)}\big(|\mu_0|(B_R(0))+C\|u\|_{\li}\big).
\end{align*}
Since $|(0,T)\setminus\cap_{j=1}^{\infty}L(g_j;\{\eta_k\}_{k\in\mathbb{N}})|=0$, and thanks to~\eqref{eq:trace.approximates}, letting first $\tau\to0^+$ and then $j\to\infty$, we arrive at~\eqref{a.1}, as desired.

\noindent (ii) Condition~\eqref{eq:space.alternative.definition} follows directly from $u\in\li$.

Let us check that $u$ satisfies~\eqref{v3.10}. We define
\begin{equation*}
	\tilde\varphi_k(t)=\int^{2^kt}_{-\infty}\eta(s)\,{\rm d}s,\quad k\in\mathbb{N},\ t\in\mathbb{R}.
\end{equation*}
It satisfies that $\mathcal{X}_{[\frac{1}{2^{k}},\infty)}\leq \tilde{\varphi}_{k}\le\mathcal{X}_{[-\frac{1}{2^{k}},\infty)}$ for all $k\in\mathbb{N}$ and $(\tilde{\varphi}_{k})'=\eta_{k}$. For fixed $\tau\in (0,T)$, let $\tilde{\varphi}^{\tau}_{k}(t)=\tilde{\varphi}_{k}(t-\tau)$. Given  $\tilde{\theta}\in C^{\infty}_{\textup{c}}(\Rn\times [0,T))$, the function $\tilde{\varphi}^{\tau}_{k}\tilde{\theta}\in C^{\infty}_{\textup{c}}(\Rn\times (0,T))$ for all $k>-\ln \tau/\ln 2$. Hence, because $u$ is a very weak solution,
\begin{equation}\label{eq:approximate.weak}
    \underbrace{\int_{0}^{T}(\tilde{\varphi}_{k}^{\tau})'(t)\intr u\xt\tilde{\theta}\xt\dx\dt}_{\textup{I}_k(\tau)}
    +\underbrace{\int_{0}^{T}\tilde{\varphi}_{k}^{\tau}(t)
    \intr u\xt(\partial_{t}\tilde{\theta}\xt-\Lk\tilde{\theta}\xt)\dx\dt}_{\textup{II}_k(\tau)}=0.
\end{equation}
Let us analyze the double limit, first a $k\to\infty$ and then as $\tau\to0^+$, of $\textup{I}_k(\tau)$ and $\textup{II}_k(\tau)$.

As $P_1$ is strictly positive, we can make the decomposition $\textup{I}_k(\tau)=\textup{I}_{1,k}(\tau)+\textup{I}_{2,k}(\tau)$, where
\begin{align*}	
    \textup{I}_{1,k}(\tau)&:=\int_0^T(\tilde{\varphi}_k^\tau)'(t)\intr u\xt\tilde{\theta}(x,0)\dx\dt,\\ \textup{I}_{2,k}(\tau)&:=\int_0^T(\tilde{\varphi}_k^\tau)'(t)\intr u\xt\frac{\tilde{\theta}(x,t)-\tilde{\theta}(x,0)}{P_1(x)}P_1(x)\dx\dt.
\end{align*}
Since $|\tilde{\theta}(\cdot,0)|\leq CP_{1}$ and $u\in \li$, then $\intr u(x,\cdot)\tilde{\theta}(x,0)\dx\in L^1((0,T))$. Therefore, because $\{\eta_k\}_{k\in\mathbb{N}}$ is a summability kernel,
\begin{equation*}
    \lim_{k\to\infty}\textup{I}_{1,k}(\tau)=\intr u(x,\tau)\tilde{\theta}(x,0)\dx\quad\textrm{for a.e. }\tau\in (0,T),
\end{equation*}
whence, as $\mu_{0}$ is the initial trace of $u$,
\begin{equation*}
    \operatornamewithlimits{ess\, lim}_{\tau\to0^+} \lim_{k\to\infty}\textup{I}_{1,k}(\tau)= \operatornamewithlimits{ess\, lim}_{\tau\to0^+} \intr u(x,\tau)\tilde{\theta}(x,0)\dx=\intr \tilde{\theta}(x,0)\dmuo(x).
\end{equation*}

On the other hand, since $P_{1}$ is strictly positive and $\tilde{\theta}\in C_{\textup{c}}(\Rn\times[0,T))$, by the MVT,
\begin{equation*}
    \frac{|\tilde{\theta}(x,s)-\tilde{\theta}(x,0)|}{P_{1}(x)}\leq \frac{\|\partial_{t}\theta\|_{L^{\infty}(\Rn\times(0,T))}}{\underset{(y,s) \in\textrm{supp}(\tilde{\theta})}{\min}\{P_{1}(y)\}}t=C_{\tilde{\theta},P_1}t\quad\textrm{for all }\xt\times\Rn\times[0,T).
\end{equation*}
Besides, $(\tilde{\varphi}_{k}^{\tau})'=\eta_{k}(\cdot-\tau)\geq 0$ and $\operatorname{supp}(\eta_{k}(\cdot-\tau))\subset[\tau-2^{-k},\tau+2^{-k}]$, so that
\begin{align*}
	|\textup{I}_{2,k}(\tau)|&\leq C_{\tilde{\theta},P_1}\int_{\tau-2^{-k}}^{\tau+2^{-k}} t\eta_{k}(t-\tau)\intr|u(x,t)|P_{1}(x)\dx\dt\\
    &\leq C_{\tilde{\theta},P_1}\|u\|_{L^\infty((0,T);L^{1}_{P_1})}\int_{\tau-2^{-k}}^{\tau+2^{-k}} t\eta_{k}(t-\tau)\dt.
\end{align*}
Since $\varphi^{\tau}_{k}(\tau-2^{-k})=0$ and $\varphi^{\tau}_{k}(\tau+2^{-k})=1$, integrating by parts we get
\begin{equation*}
    0\le \int_{\tau-2^{-k}}^{\tau+2^{-k}} t\eta_{k}(t-\tau)\dt=\tau+2^{-k}- \int_{\tau-2^{-k}}^{\tau+2^{-k}}\varphi_{k}^{\tau}(t)\dt\le\tau+ 2^{1-k},
\end{equation*}
because  $\varphi_k^\tau\ge0$. Thus,
\begin{equation*}
    \operatornamewithlimits{ess\, lim}_{\tau\to0^+} \limsup_{k\to\infty}|\textup{I}_{2,k}(\tau)|\le C_{\tilde{\theta},P_1}\|u\|_{L^\infty((0,T);L^{1}_{P_1})}\operatornamewithlimits{ess\, lim}_{\tau\to0^+}\limsup_{k\to\infty}(\tau+ 2^{1-k})=0.
\end{equation*}

Concerning $\textup{II}_k(\tau)$, since $\li\subset L^{1}((0,T); L^{1}_{P_1})$, Proposition~\ref{prop1} (i) implies that
\begin{equation*}
    \int^T_0\intr |u\xt|(|\partial_{t}\tilde{\theta}\xt|+|\Lk\tilde{\theta}\xt|)\dx\dt<\infty.
\end{equation*}
As a consequence, since $|\tilde{\varphi}_k^\tau|\le 1$, we may apply the DCT twice to obtain
\begin{align*}
    \operatornamewithlimits{ess\, lim}_{\tau\to0^+} \lim_{k\to\infty}\textup{II}_k(\tau)&=\operatornamewithlimits{ess\, lim}_{\tau\to0^+}\int_\tau^T\intr u\xt(\partial_{t}\tilde{\theta}\xt-\Lk\tilde{\theta}\xt)\dx\dt\\
    &=\int_0^T\intr u\xt(\partial_{t}\tilde{\theta}\xt-\Lk\tilde{\theta}\xt)\dx\dt.
\end{align*}

Summarizing: by letting first $k\to\infty$ and then $\tau\to0^+$ in \eqref{eq:approximate.weak} we obtain~\eqref{v3.10}.
\end{proof}
\begin{remark}
	{An equivalence result similar to Proposition~\ref{equivDefvwProp} is given in~\cite{Felix} for a family of nonlocal operators wider than the one considered here, although within a class of solutions more restrictive than~$\li$.}
\end{remark}

\subsubsection{Uniqueness of very weak solutions}

We will prove uniqueness of very weak solutions within  two different classes: $\li$ and the class of the ones that are nonnegative.

The strategy is as follows. We first  prove uniqueness for solutions in the sense of the alternative definition~\eqref{eq:space.alternative.definition}--\eqref{v3.10}, whence, thanks to Proposition~\ref{equivDefvwProp}, we obtain uniqueness for very weak solutions within $\li$. Next, we prove a  smoothing result for nonnegative very weak solutions, Lemma~\ref{lemmatrace} below, showing that they belong to $\li$, from where uniqueness follows.

We start with the uniqueness result for solutions in the sense of~\eqref{eq:space.alternative.definition}--\eqref{v3.10}.

\begin{theorem} \label{UniquenessSoria}
    Let $\mathcal{L}=\mathcal{L}_K$ with $K$ satisfying~\eqref{A0}--\eqref{A2}. Let $\mu_{0}$ be a Radon measure. Let $u_1$ and $u_2$ satisfy~\eqref{eq:space.alternative.definition}--\eqref{v3.10}. Then, $u_1=u_2$ a.e.\,in $\mathbb{R}^d\times(0,T)$.
\end{theorem}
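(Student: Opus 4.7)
By linearity it suffices to show that if $u\in L^1_{\rm loc}([0,T);L^1_{P_1})$ satisfies $\int_0^T\!\!\int u(\partial_t\tilde\theta-\mathcal{L}_K\tilde\theta)\,{\rm d}x\,{\rm d}t=0$ for every $\tilde\theta\in C^\infty_{\rm c}(\mathbb{R}^d\times[0,T))$, then $u\equiv 0$ a.e. The strategy is duality with the backward adjoint Cauchy problem. Given $\phi\in C^\infty_{\rm c}(\mathbb{R}^d\times(0,T))$, I would construct the Duhamel-type solution
\begin{equation*}
    \tilde\theta^\phi(x,t):=-\int_t^T\bigl(P_{s-t}*\phi(\cdot,s)\bigr)(x)\,{\rm d}s,\quad (x,t)\in\mathbb{R}^d\times[0,T],
\end{equation*}
and show that it is the \emph{ideal} test function: smooth in $(x,t)$ by Corollary~\ref{pcor1}, identically zero for $t$ larger than $\sup_t\operatorname{supp}\phi$ (hence with compact time support in $[0,T)$), and satisfying
$\partial_t\tilde\theta^\phi-\mathcal{L}_K\tilde\theta^\phi=\phi$ pointwise. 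This identity follows from Proposition~\ref{pprop2} ($\partial_t P_{s-t}=\mathcal{L}_K P_{s-t}$) and the strong continuity of the semigroup at $t=0^+$; integration under the integral sign is justified by the size estimates of Subsection~\ref{2.2}.

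The obstruction is that $\tilde\theta^\phi$ is not compactly supported in $x$, so it is not an admissible test. First I would establish the crucial decay bounds
\begin{equation*}
    |\tilde\theta^\phi(x,t)|+|\mathcal{L}_K\tilde\theta^\phi(x,t)|\le C_\phi P_1(x),\quad (x,t)\in\mathbb{R}^d\times[0,T],
\end{equation*}
using that $\phi$ is compactly supported together with the translation bound \eqref{eq:comparison.translations}, the comparability of $P_t$'s \eqref{Atotal2}, and the \emph{slowly changing} and \emph{almost decreasing} character of $P_1$ (Propositions~\ref{Pslowly} and \ref{prop:P1.almost.decreasing}). I would then introduce the spatial truncation $\tilde\theta^\phi_R(x,t):=\psi_R(x)\tilde\theta^\phi(x,t)$, which now lies in $C^\infty_{\rm c}(\mathbb{R}^d\times[0,T))$. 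Since $\psi_R$ is time-independent, testing $u$ against $\tilde\theta^\phi_R$ gives
\begin{equation*}
    \int_0^T\!\!\int u(x,t)\psi_R(x)\phi(x,t)\,{\rm d}x\,{\rm d}t=\int_0^T\!\!\int u(x,t)\,\mathcal{C}_R(x,t)\,{\rm d}x\,{\rm d}t,
\end{equation*}
where $\mathcal{C}_R:=\mathcal{L}_K(\psi_R\tilde\theta^\phi)-\psi_R\mathcal{L}_K\tilde\theta^\phi$ is the commutator, which, after expanding $\Lambda$, equals
\begin{equation*}
    \mathcal{C}_R(x,t)=\tilde\theta^\phi(x,t)\mathcal{L}_K\psi_R(x)-\int_{\mathbb{R}^d}\bigl(\psi_R(x+y)-\psi_R(x)\bigr)\bigl(\tilde\theta^\phi(x+y,t)-\tilde\theta^\phi(x,t)\bigr)K(y)\,{\rm d}y,
\end{equation*}
after symmetrizing the $y\mapsto -y$ contributions.

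The main technical step, and the hardest part, is to prove the uniform bound $|\mathcal{C}_R(x,t)|\le CP_1(x)$ together with the pointwise limit $\mathcal{C}_R(x,t)\to 0$ as $R\to\infty$, so that dominated convergence (permissible because $u\in L^1((0,\tau);L^1_{P_1})$ for any $\tau<T$ slightly larger than $\sup_t\operatorname{supp}\phi$) yields
$\int_0^T\!\!\int u\,\mathcal{C}_R\to 0$. For $|y|\le 1$, one uses $|\Lambda\psi_R(x,y)|\le C\|D^2\psi\|_\infty|y|^2/R^2$ and the smoothness of $\tilde\theta^\phi$, invoking the L\'evy moment condition in \eqref{A0}. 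For $|y|\ge 1$, one splits according to whether $x$ or $x+y$ lies in the transition annulus $\{R\le|\cdot|\le 2R\}$ and uses the tail comparability $K(y)\le CP_1(y)$ from \eqref{Atotal}, then converts the resulting $P_1(x+y)P_1(y)$ terms into a bound by $P_1(x)$ using the slow change and almost decrease of $P_1$ (essentially as in Lemma~\ref{a.17}). The pointwise vanishing follows since $\psi_R(x)-\psi_R(x\pm y)\to 0$ for each $(x,y)$.

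Once this is done, the left-hand side converges to $\int_0^T\!\!\int u\phi\,{\rm d}x\,{\rm d}t$ (eventually $\psi_R\equiv 1$ on $\operatorname{supp}\phi$), while the right-hand side tends to zero. Therefore $\int_0^T\!\!\int u\phi=0$ for every $\phi\in C^\infty_{\rm c}(\mathbb{R}^d\times(0,T))$, and by the fundamental lemma of the calculus of variations $u=0$ a.e.\ in $\mathbb{R}^d\times(0,T)$, which is the claim.
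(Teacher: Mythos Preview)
Your proposal is correct and follows essentially the same duality argument as the paper: solve the backward problem by Duhamel, localize with $\psi_R$, and control the commutator $\mathcal{L}_K(\psi_R\tilde\theta^\phi)-\psi_R\mathcal{L}_K\tilde\theta^\phi$ via the nonlocal Leibniz formula~\eqref{leibniz}, splitting the bilinear piece into $|y|\le 1$ and $|y|\ge 1$. The one point to make explicit in the $|y|\le 1$ part of the bilinear term is the gradient estimate $|\nabla\tilde\theta^\phi(z,t)|\le C_\phi P_1(z)$ (obtained by differentiating under the Duhamel integral and arguing as in~\eqref{a.23}) combined with~\eqref{eq:Pz.Px}, which is what the paper uses there; your mention of $\Lambda\psi_R$ and ``smoothness of $\tilde\theta^\phi$'' is the right idea but the bilinear factor involves first differences, so it is this gradient bound rather than a second-order one that closes the estimate.
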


\begin{proof}
Let $u=u_1-u_2$. Then $u\in L^{1}_{\rm loc}([0,T);\lp)$ and
\begin{equation}\label{v3.15}	
    \int_{0}^{T}\intr u(x,t)(\partial_{t}\tilde{\theta}\xt-\Lk\tilde{\theta}\xt)\dx\dt=0\quad\textrm{for all }\tilde{\theta}\in C^\infty_{\textup{c}}(\Rn\times [0,T)).
\end{equation}
We have to prove that $u=0$ for a.e.~in $\Rn\times(0,T)$. For that purpose, it is enough to show that
\begin{equation}\label{a.22}	
    \int_0^T\intr u(x,t)\tilde{\theta}(x,t)\dx\dt=0\quad\textrm{for all }\tilde{\theta}\in C^\infty_{\textup{c}}(\RN\times[0,T)).
\end{equation}
To this aim we will use Hilbert's duality method.  We consider a solution $\varphi$ for the backward problem
\begin{equation}\label{a.6}
	\partial_{t}\varphi-\Lk\varphi=\tilde{\theta}\quad\textrm{in }\Rn\times(0,T).
\end{equation}
If $\varphi$ belonged to $C^\infty_{\textup{c}}(\Rn\times [0,T))$, we could use it as a test function in~\eqref{v3.15} to obtain
\begin{equation*}
	0=\int_{0}^{T}\intr u\xt(\partial_t\varphi\xt-\Lk\varphi\xt)\dx\dt=\int_{0}^{T}\intr u(x,t)\tilde{\theta}(x,t)\dx\dt,
\end{equation*}
and we would be done. Let $0<\too<T$ and $R_0>0$ be such that $\operatorname{supp}(\tilde{\theta})\subset B_{R_0}\times[0,\too]$. By Duhamel's formula,
\begin{equation}\label{eq:solution.backward.problem}
	\varphi\xt=\begin{cases}
		\displaystyle
		\int_{0}^{\too-t}\intr P_{\too-t-s}(x-y)\tilde{\theta}(y,s)\dy\,{\rm d}s,\quad&\xt\in\Rn\times[0,\too),\\
		\varphi=0,& \xt\in\Rn\times[\too,T)
	\end{cases}
\end{equation}
is a solution to~\eqref{a.6}. However, though it has the required smoothness, it is not compactly supported in space. Despite this fact, we will show that it is anyway an admissible test function for \eqref{v3.15}.

Let $\{\psi_R\}_{R\ge R_0}$ be the family of cutoff functions given by~\eqref{cutoffspace}. Since $\psi_{R}\varphi\in C_{\textup{c}}^\infty(\Rn\times[0,T))$, we can use it as a test function in~\eqref{v3.15}, and we have
\begin{equation}\label{a.20}
    \int_{0}^{T}\intr u\xt(\psi_{R}\partial_t\varphi-\Lk(\psi_{R}\varphi))\xt \dx\dt=0.
\end{equation}
The operator $\Lk$ satisfies the nonlocal Leibniz's formula
\begin{equation}\label{leibniz}	
    \Lk(uv)=u\Lk v +v\Lk u -B(u,v),
\end{equation}
where  $B(u,v)$ is the bilinear form given by
\begin{equation}\label{51.2}	
    B(u,v)(x)=\intr(u(x)-u(x-y))(v(x)-v(x-y))K(y)\dy.
\end{equation}
Then, because of~\eqref{a.6}, if $R\ge R_0$ we have  that
\begin{equation*}\label{a.21}
    \psi_{R}\partial_t\varphi-\Lk(\psi_{R}\varphi)=\tilde{\theta}-\varphi\Lk\psi_{R}+B(\psi_{R},\varphi),
\end{equation*}
since  $\psi_{R}\tilde{\theta}=\tilde{\theta}$ for all $R\geq R_{0}$, whence we may rewrite \eqref{a.20} as
\begin{align}\label{eq:with.R}
	0&=\int_{0}^{T}\intr u(x,t)\tilde{\theta}(x,t)\dx\dt-\int_{0}^{T}\intr u\xt\varphi\xt\Lk\psi_{R}\xt\dx\dt\\
	\notag&\quad+\int_{0}^{T}\intr u\xt B(\phi_{R},\varphi)\xt\dt.
\end{align}
If the integrals
\begin{gather*}
    I^1_R=\int_0^{\too}\intr|u||\varphi\Lk\psi_{R}|,\quad I^2_R=\int_0^{\too}\intr|u|B^1_R,\quad I^3_R=\int_0^{\too}\intr|u|B^2_R,\quad\text{where}\\
    B^{1}_{R}(x,t):=\ints |\psi_{R}(x)-\psi_{R}(x-y)||\varphi\xt-\varphi(x-y,t)|K(y)\dy,\\
    B^{2}_{R}\xt:=\intb |\psi_{R}(x)-\psi_{R}(x-y)||\varphi\xt-\varphi(x-y,t)|K(y)\dy,
\end{gather*}
go to zero as $R\to\infty$, the desired result \eqref{a.22} will follow immediately from \eqref{eq:with.R}.

As for $I_R^1$, since $\psi_R(x)=\psi(x/R)$, then  $\Lambda \psi_{R}(x,y)\to 0$ for all $x,y\in\mathbb{R}^d$ as $R\to \infty$. Moreover, there exists $C>0$ depending on the uniform norm  of $\psi$ and $D^{2}\psi$, but not on $R$,  such that
\begin{equation}\label{z16}	
    |\Lambda\psi_{R}(x,\cdot)|K\leq C (1\wedge|\cdot|^{2})K\in L^{1}(\Rn)\quad\textrm{ for all }x\in\Rn,\ R\geq R_{0},
\end{equation}
since $K$ is a L\'evy kernel. Then, by the DCT, $\Lk\psi_{R}\to 0$ as $R\to0$. On the other hand, by \eqref{a.4},  the semigroup property~\eqref{eq:semigroup.property} and \eqref{Atotal2}, we have
\begin{equation}\label{a.23}
	\begin{aligned}
		|\varphi\xt|&\leq C_{\tilde{\theta}}\int_{0}^{\too-t}\intr P_{\too-t-s}(x-y)P_{1}(y)\dy{\rm d}s\\
		&=C_{\tilde{\theta}}\int_{0}^{\too-t}P_{1+\too-t-s}(x)\dx\leq  C_{\tilde{\theta}, T}\too P_{1}(x).
	\end{aligned}	
\end{equation}
Hence,  since $u\in L^{1}_{\rm loc}([0,T);\lp)$, we may use the DCT to conclude that $I^{1}_{R}\to 0$ as $R\to\infty$.
	
Let us proceed with the analysis of $I_R^2$. By the MVT,
\begin{equation*}
    |\psi_{R}(x)-\psi_{R}(x-y)|\leq\frac{1}{R}\|\nabla\psi\|_{\infty}|y|,\quad |\varphi\xt-\varphi(x-y,t)|\leq|\nabla\varphi(z,t)||y|,
\end{equation*}
where $z\in B_{1}(x)$. Since
\begin{equation*}
	\partial_{i}\varphi(\cdot,t)=\int_{0}^{\too-t}\intr P_{\too-t-s}(\cdot-y)\partial_{i}\theta(y,t)\dy\,{\rm d}s\quad\textrm{for all }i=1,\dots,d,
\end{equation*}
arguing as when we obtained~\eqref{a.23}, we get $|\partial_{i}\varphi|\le C_{\partial_{i}\tilde{\theta},T}P_{1}$. Hence,  $|\nabla\varphi(z,t)|\leq C_{\tilde{\theta},T}P_{1}(z)$ for some $z\in B_{1}(x)$. Using now~\eqref{eq:Pz.Px} we conclude that
\begin{equation*}
	|\varphi\xt-\varphi(x-y,t)|\leq C_{\tilde{\theta},T}|y|P_{1}(x).
\end{equation*}
Thus, since $K$ is a L\'evy kernel,
\begin{equation*}\label{g1}	
    B^{1}_{R}(x,t) \leq \frac{1}{R}CP_{1}(x)\|\nabla \psi\|_{\infty}\ints |y|^{2}K(y)\dy\leq \frac{1}{R}C P_{1}(x)\quad \textrm{for all }\xt\in\Rn\times[0, \too)
\end{equation*}
for some constant $C>0$. Therefore, since $u\in L^{1}([0,T);\lp)$, by the DCT, $I^{2}_{R}\to0$ as $R\to\infty$.
	
Finally, we deal with $I^{3}_{R}$. Since $\|\psi_{R}\|_\infty=1$, $\|\varphi\|_\infty\le \|\tilde\theta\|_\infty$ and $K$ is a L\'evy kernel, then
\begin{equation*}
	|\psi_{R}(x)-\psi_{R}(x-y)||\varphi\xt-\varphi(x-y,t)|K(y)\leq 4\|\tilde{\theta}\|_{\infty}K(y)\in L^{1}(\RN\setminus B_{1}(0)).
\end{equation*}
Besides, $|\psi_{R}(x)-\psi_{R}(x-y)|\to 0$ a.e.~as $R\to\infty$, whence, by the DCT,  $B^{2}_{R}(x,t)\to 0$  for all $x\in\Rn$ and $0\leq t\leq \too$ as $R\to\infty$. On the other hand, $\|\psi_{R}\|_{\infty}=1$,
\begin{equation*}
	B^{2}_{R}\xt\leq2|\varphi\xt|\intb K(y)\dy+2\intb|\varphi(x-y,t)|K(y)\dy.
\end{equation*}
The first term on the right-hand side is easily seen to be less or equal than $CP_1(x)$,  using again that $K$ is a L\'evy kernel and~\eqref{a.23}.
Besides, by \eqref{a.23}, \eqref{Atotal}, the semigroup property~\eqref{eq:semigroup.property}, and~\eqref{Atotal2},
\begin{align*}\label{z14}	
    \intb|\varphi(x-y,t)|K(y)\dy&\leq C_{\tilde{\theta}}\intb P_{1}(x-y)K(y)\dy\leq C_{\tilde{\theta}}\intb P_{1}(x-y)P_{1}(y)\dy\\
    &\leq C_{\tilde{\theta}}\intr P_{1}(x-y)P_{1}(y)\dy =C_{\tilde{\theta}}P_{2}(x)\leq C_{\tilde{\theta},T}P_{1}(x).
\end{align*}
Thus, $B^2_R(x,t)\le C_{\tilde{\theta},T}P_{1}(x)$. Therefore, since $u\in L^{1}_{\rm loc}([0,T);\lp)$, we may use the DCT and we get $I^{3}_{R}\to 0$ as $R\to\infty$.
\end{proof}
As an immediate consequence of Theorem~\ref{UniquenessSoria} and Proposition~\ref{equivDefvwProp}, we obtain uniqueness for very weak solutions within the class $\li$.

\begin{corollary}[Uniqueness for  very weak solutions within $\li$] \label{UniquenessAcotadas}
    Let $\mathcal{L}=\mathcal{L}_K$ with $K$ satisfying~\eqref{A0}--\eqref{A2}. There is  at most one very weak solution of \eqref{a.nonlocalheatequation} belonging to $\li$ with a given initial trace.
\end{corollary}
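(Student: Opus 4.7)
The plan is to deduce this from the two preceding results by a short chain of implications, with no additional analysis required. Suppose $u_1, u_2 \in \li$ are two very weak solutions of \eqref{a.nonlocalheatequation} (in the sense of Definition~\ref{defWS}) that share the same initial trace $\mu_0$ (in the sense of Definition~\ref{deftrace}). The goal is to conclude $u_1 = u_2$ a.e.\ in $\mathbb{R}^d \times (0,T)$.

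First, I would invoke Proposition~\ref{equivDefvwProp}(ii) on each $u_i$: because $u_i \in \li$ is a very weak solution with initial trace $\mu_0$, it satisfies the alternative formulation \eqref{eq:space.alternative.definition}--\eqref{v3.10} with the same measure $\mu_0$ on the right-hand side. Note that the membership \eqref{eq:space.alternative.definition} is automatic from $u_i \in \li \subset L^1_{\rm loc}([0,T); L^1_{P_1})$, so there is nothing extra to verify.

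Finally, having placed both $u_1$ and $u_2$ in the hypotheses of Theorem~\ref{UniquenessSoria}, I would apply it directly to obtain $u_1 = u_2$ a.e.\ in $\mathbb{R}^d \times (0,T)$. There is no serious obstacle here: all the analytic effort — the Hilbert duality argument against the backward problem \eqref{a.6}, the cutoff estimates on $\Lk\psi_R$ and $B(\psi_R,\varphi)$, and the bridge between the two notions of solution via the summability kernel $\{\eta_k\}$ — has already been carried out in Theorem~\ref{UniquenessSoria} and Proposition~\ref{equivDefvwProp}. The corollary is genuinely an immediate combination of these two results, and the proof is essentially a one-line deduction packaged as three sentences.
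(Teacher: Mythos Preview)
Your proposal is correct and matches the paper's approach exactly: the paper states this corollary without a separate proof, presenting it explicitly as an immediate consequence of Theorem~\ref{UniquenessSoria} and Proposition~\ref{equivDefvwProp}. Your deduction via Proposition~\ref{equivDefvwProp}(ii) followed by Theorem~\ref{UniquenessSoria} is precisely the intended one-line argument.
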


Now we will obtain uniqueness in the class of nonnegative very weak solutions. If $u$ is a very weak solution, then $u\in L^{1}_{\rm loc}((0,T);\lp)$; see~Remark~\ref{remark3}. Hence, if we are able to prove that nonnegative solutions in that class belong to  $u\in L^{\infty}((0,T);\lp)$,  uniqueness will be inherited from Corollary~\ref{UniquenessAcotadas}. In order to prove this smoothing result we follow  ideas from \cite{Matteo}.

Let us start with a heuristic proof. Let $u$ be a nonnegative very weak solution to~\eqref{a.nonlocalheatequation} and $f(t):=\intr u\xt P_1(x)\dx$.  Then, formally ($f$ is not necessarily smooth; it may even be infinite for some times, so we will have to take some extra care when performing the rigorous proof),
\begin{equation*}\label{v3.6}	
    |f'(t)|=\Big|\intr \partial_t u\xt P_1(x)\dx\Big|=\Big|\intr \mathcal{L}u\xt P_1(x)\dx\Big|=\Big|\intr u\xt\Lk P_1(x)\dx\Big|.
\end{equation*}
Therefore, if $|\Lk P_{1}|\le c P_{1}$ for some constant $c>0$, then $|f'(t)|\leq cf(t)$, since $u$ is nonnegative, which yields, upon integration,
\begin{equation}\label{z20}
     e^{-c|t-\tau|}f(t)\leq f(\tau)\leq e^{c|t-\tau|}f(t)\quad\text{for almost every }0<t,\tau<T.
\end{equation}
By Remark~\ref{remark3}, $u\in L^{1}_{\rm loc}((0,T);\lp)$; hence $f(\too)<\infty$ for some time $\too\in (0,T)$. Therefore, by~\eqref{z20}, $u\in\li$.

Besides the already mentioned regularity issues, we find a second difficulty when trying to follow the above programme: up to now, the inequality $|\Lk P_{1}|\le c P_{1}$ is only known to hold when $\Lk$ is the fractional Laplacian; see Proposition~\ref{pprop1} (iv).  To overcome this hitch, the strategy  is to construct a positive $\phi\in C^{2}(\Rn)$ comparable to $P_1$ such that $|\Lk \phi|\leq C\phi$ for some $C>0$  and use it instead of~$P_1$, multiplied by a convenient smooth cutoff function of time, as a test function  in the definition of very weak solution. This will yield an integrated version of the ODE inequality $|g'|\le Cg$, with $g(t):=\intr u(x,t)\phi(x)\dx$, that will imply that
\begin{equation}\label{va.34}
    e^{-C|t-\tau|}\|u(\cdot,\tau)\|_{L^{1}(\RN,\,\phi\dx)}\leq\|u(\cdot,t)\|_{L^{1}(\RN,\,\phi\dx)}\leq e^{C|t-\tau|}\|u(\cdot,\tau)\|_{L^{1}(\RN,\,\phi\dx)}
\end{equation}
for almost every $0<t,\tau<T$. Since $P_{1}$ and $\phi$ are comparable, this is equivalent to~\eqref{z20}.

The first step is then to construct the desired function $\phi$. It is here that we need assumption~\eqref{A3}.

\begin{lemma}\label{lemmaGettingPhi}
    Let $\mathcal{L}=\mathcal{L}_K$ with $K$ satisfying~\eqref{A0}--\eqref{A3}, and $B$ the bilinear form defined in~\eqref{51.2}. There exist $\phi\in C^{2}(\Rn)$ and a constant $c>1$ such that
	\begin{gather}
		\label{a.46} c^{-1}P_{1}\leq \phi\leq cP_{1},\\
        \label{Lphi}|\Lk\phi|\leq c\phi,\qquad|B(\psi,\phi)|\leq c\max\{\|\psi\|_{\infty},\|\nabla\psi\|_{\infty}\}\phi\quad\text{for all } \psi\in C^{\infty}_{\textup{c}}(\Rn).
	\end{gather}
\end{lemma}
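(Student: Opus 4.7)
The plan is to construct $\phi$ explicitly as a radial $C^2$ function that mimics the large-$x$ behaviour of $P_1$ (via the comparison with $K$ in \eqref{Atotal}) while being smooth and essentially constant near the origin, and then to estimate $\mathcal{L}_K\phi$ and $B(\psi,\phi)$ by splitting the $y$-integral into the near-origin and far-from-origin parts.

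\textbf{Step 1 (construction of $\phi$).} I would set $\phi(x)=F(|x|)$, where $F\in C^2([0,\infty),(0,\infty))$ satisfies $F(r)=F(2)$ for $r\in[0,2]$, $F(r)=r^{-d}\tilde\varphi(r)^{-1}$ for $r\geq 3$, and is smoothly interpolated on $[2,3]$. Hypothesis~\eqref{A3} ensures $\tilde\varphi\in C^2((1,\infty))$ with $|\tilde\varphi'/\tilde\varphi|,|\tilde\varphi''/\tilde\varphi|\leq c$ on $[2,\infty)$, so differentiating directly gives
\begin{equation*}
|\nabla\phi(x)|\leq \frac{C\phi(x)}{1+|x|},\qquad |D^{2}\phi(x)|\leq \frac{C\phi(x)}{(1+|x|)^{2}}\quad\text{for all }x\in\mathbb{R}^d.
\end{equation*}
For \eqref{a.46}, the comparability $\phi\asymp P_1$ follows on $\{|x|\geq 3\}$ from \eqref{Atotal} combined with~\eqref{A1}, and on $\{|x|\leq 3\}$ from the continuity and strict positivity of $P_1$ there; matching constants completes the estimate.

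\textbf{Step 2 (bound for $\mathcal{L}_K\phi$).} Split $\mathcal{L}_K\phi(x)=\int_{|y|\leq 1}+\int_{|y|>1}$. In the near part, Taylor expansion gives $|\Lambda\phi(x,y)|\leq\tfrac12 |y|^{2}\sup_{|z|\leq 1}|D^{2}\phi(x+z)|$; since $P_1$ is slowly changing (Proposition~\ref{Pslowly}) and $\phi\asymp P_1$, we infer $|D^{2}\phi(x+z)|\leq C\phi(x)/(1+|x|)^2\leq C\phi(x)$, and then \eqref{A0} yields $\int_{|y|\leq1}|\Lambda\phi(x,y)|K(y)\dy\leq C\phi(x)$. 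In the far part,
\begin{equation*}
|\Lambda\phi(x,y)|\leq\phi(x)+\tfrac12\bigl(\phi(x+y)+\phi(x-y)\bigr),
\end{equation*}
the first term contributes $C\phi(x)$ because $\int_{|y|>1}K(y)\dy<\infty$, and for the remaining terms \eqref{Atotal} and $\phi\asymp P_1$ give $\phi(x\pm y)K(y)\leq CP_1(x\pm y)P_1(y)$, so integrating over all of $\mathbb{R}^d$ and using \eqref{eq:semigroup.property} plus \eqref{Atotal2} yields $\int_{|y|>1}\phi(x\pm y)K(y)\dy\leq CP_2(x)\leq C'P_1(x)\leq C''\phi(x)$. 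Combining both parts gives $|\mathcal{L}_K\phi|\leq c\phi$.

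\textbf{Step 3 (bound for $B(\psi,\phi)$).} Again split. For $|y|\leq 1$, the mean value theorem gives $|\psi(x)-\psi(x-y)|\leq\|\nabla\psi\|_\infty|y|$ and $|\phi(x)-\phi(x-y)|\leq C|y|\phi(x)/(1+|x|)$ (using the gradient bound from Step~1 together with the slow change of $\phi$, i.e.\ of $P_1$, to control $\phi$ at nearby points by $\phi(x)$). The L\'evy condition \eqref{A0} then handles the $|y|^{2}K(y)$-integral. For $|y|>1$, $|\psi(x)-\psi(x-y)|\leq 2\|\psi\|_\infty$ and $|\phi(x)-\phi(x-y)|\leq\phi(x)+\phi(x-y)$, and the integral is treated exactly as in Step~2, producing a bound $C\|\psi\|_\infty\phi(x)$. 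Adding the two contributions yields the second inequality in \eqref{Lphi}.

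\textbf{Main obstacle.} The delicate point is obtaining the pointwise derivative bounds $|\nabla\phi|\lesssim \phi/(1+|x|)$, $|D^2\phi|\lesssim \phi/(1+|x|)^2$: this is where \eqref{A3} is genuinely used, as without control on $\tilde\varphi'/\tilde\varphi$ and $\tilde\varphi''/\tilde\varphi$ one cannot relate derivatives of $\phi$ back to $\phi$ itself, and the Taylor estimate for $\int_{|y|\leq 1}|\Lambda\phi(x,y)|K(y)\dy$ would fail to close. Everything else is a careful bookkeeping of the far-field estimate using the semigroup property and the comparability $K\asymp P_1$ away from the origin.
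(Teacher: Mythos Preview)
Your proposal is correct and follows essentially the same strategy as the paper: the same explicit construction of $\phi$ (up to the location of the transition annulus), the same near/far splitting of the $y$-integral, and the same use of \eqref{Atotal}, the semigroup property \eqref{eq:semigroup.property}, and \eqref{Atotal2} to handle the far part. One small overstatement: under \eqref{A3} you only obtain $|\nabla\phi|\leq C\phi$ and $|D^2\phi|\leq C\phi$ for large $|x|$ (this is the paper's \eqref{v3.26}), not the stronger decay $|\nabla\phi|\leq C\phi/(1+|x|)$, $|D^2\phi|\leq C\phi/(1+|x|)^2$ that you claim---the hypothesis $|\tilde\varphi'/\tilde\varphi|\leq c$ does not force any decay of this ratio---but since you immediately discard the extra $(1+|x|)^{-1}$ factors in Steps~2 and~3, your argument is unaffected.
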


\begin{proof} Let $\tilde{\varphi}$  as  in \eqref{A1}--\eqref{A3}. Let $\phi\in C^2(\Rn)$ such that $\phi>0$ and
\begin{equation}\label{a.33}	
    \phi(x) =
    \begin{cases}
		1,&|x|\leq1,\\
		\frac{1}{|x|^d\tilde{\varphi}(|x|)},&|x|\geq 2.
	\end{cases}
\end{equation}
Note that $\phi$ satisfies \eqref{a.46}, due to the way in which it is defined.

Let us start by proving that $\phi$ satisfies
\begin{equation}\label{a.47}
	\ints|\Lambda\phi(x,y)|K(y)\dy\leq c\phi(x),\quad \ints|y||\phi(x)-\phi(x-y)|K(y)\dy\leq c\phi(x),\quad x\in\Rn.
\end{equation}
A direct calculation shows that there is a constant $C>0$ such that
\begin{equation*}
	\begin{array}{l}
        \displaystyle|\nabla \phi(x)|\leq C\phi (x)\big(1+\big|\frac{\tilde{\varphi}'(x)}{\tilde{\varphi}(x)}\big|\big),\\
        \displaystyle|D^{2}\phi(x)|\leq C\phi (x)\big(1+\big|\frac{\tilde{\varphi}'(x)}{\tilde{\varphi}(x)}\big|+\big(\frac{\tilde{\varphi}'(x)}{\tilde{\varphi}(x)}\big)^{2}+ \big|\frac{\tilde{\varphi}''(x)}{\tilde{\varphi}(x)}\big| \big),
    \end{array}
	\qquad\textrm{for all }|x|\geq 3,
\end{equation*}
whence, thanks to~\eqref{A3}, there is a constant $C>0$ such that
\begin{equation}\label{v3.26}	
    |\nabla \phi(x)|\leq C\phi (x),\quad |D^{2}\phi(x)|\leq C\phi (x)\quad\textrm{for all }|x|\geq 3.
\end{equation}
Then, applying the MVT once or twice as appropriate, if $|x|\geq 3$ and $|y|\leq 1$,
\begin{gather*}
    |\Lambda\phi(x,y)|K(y)\leq |D^{2}\phi(z)||y|^{2}K(y)\leq C\phi(z)|y|^{2}K(y),\\
    |y||\phi(x)-\phi(x-y)|K(y)\leq|\nabla\phi(z)||y|^{2}K(y)\leq C\phi(z)|y|^{2}K(y),
\end{gather*}
where $z=z(x,y)\in B_{1}(x)$. {Moreover, since $P_{1}$ is {slowly changing} (see Proposition~\ref{Pslowly}) and  $\phi$  satisfies~\eqref{a.46}, we have that $\phi$ is  also {slowly changing}.} Then, since $z\in B_1(x)$,
\begin{equation}\label{z26}
    \begin{array}{l}
        |\Lambda\phi(x,y)|K(y)\leq C\phi(x)|y|^{2}K(y),\\[6pt]
        |y||\phi(x)-\phi(x-y)|K(y)\leq C\phi(x)|y|^{2}K(y)
    \end{array}
    \quad\text{if } |x|\geq 3,\ |y|\leq 1,
\end{equation}
and~\eqref{a.47} follows for $|x|\geq 3$, since $K$ is a L\'evy kernel. Besides, since $\phi>0$, applying the MVT,
\begin{equation}\label{z25}
    \begin{array}{l}
        |\Lambda\phi(x,y)|K(y)\leq \|D^{2}\phi\|_{L^{\infty}(B_{4}(0))}|y|^{2}K(y) \leq C\phi(x)|y|^{2}K(y),\\[6pt] |y||\phi(x)-\phi(x-y)|K(y)\leq\|\nabla\phi\|_{L^{\infty}(B_{4}(0))}|y|^{2}K(y) \leq C\phi(x)|y|^{2}K(y),	
    \end{array}
    \qquad\text{if }|x|\leq 3,
\end{equation}
whence~\eqref{a.47}  also holds for all $|x|\leq 3$, because of $K$ being a L\'evy kernel.

On the other hand, since $\phi$ is positive, $K$ is a  symmetric L\'evy kernel,  \eqref{Atotal}, the semigroup property~\eqref{eq:semigroup.property}, ~\eqref{Atotal2} and ~\eqref{a.46}, there is $C>1$ such that
\begin{align*}
    \max&\Big\{	\intb |\Lambda \phi(x,y)|K(y)\dy,\intb |\phi(x)-\phi(x-y)|K(y)\dy\Big\}\\
    &\leq \phi(x)\intb K(y)\dy+\intb\phi(x-y)K(y)\dy\\
    &\leq C\Big(\phi(x)+\intb P_{1}(x-y)P_{1}(y)\dy\Big)\leq  C(\phi(x)+P_{2}(x))\leq C(\phi(x)+P_{1}(x))\leq C\phi(x),
\end{align*}
 for all $x\in\Rn$. Then,
\begin{equation}\label{z.17}
	\intb|\Lambda\phi(x,y)|K(y)\dy\leq c\phi(x),\quad \intb|\phi(x)-\phi(x-y)|K(y)\dy\leq c\phi(x),\quad x\in\Rn.
\end{equation}
Thus, combining \eqref{a.47} and \eqref{z.17}, we obtain $	|\Lk\phi|\leq c\phi$, and by the MVT, for a given $\psi\in C^{\infty}_{\textup{c}}(\Rn)$,
\begin{align*}
	|B(\psi,\phi)(x)|&\leq \|\nabla\psi\|_{\infty}\ints|y||\phi(x)-\phi(x-y)|K(y)\dy\\
    &\quad+2\|\psi\|_{\infty}\intb|\phi(x)-\phi(x-y)|K(y)\dy\\
    &\le c\max\{\|\psi\|_{\infty},\|\nabla\psi\|_{\infty}\}\phi(x),\quad x\in\Rn.\qedhere
\end{align*}
\end{proof}

\begin{remark}
    The key points to obtain~\eqref{Lphi} are inequalities~\eqref{a.47} and~\eqref{z.17}. The latter is guaranteed if $\phi$ has a tail comparable to $P_1$. The former  may be understood as a weak requirement of boundedness for the Hessian and the gradient of $\phi$  respectively, in relationship with the L\'evy kernel~$K$. It follows from~\eqref{A3}. We could have used the existence of a function $\phi$ satisfying~\eqref{a.47}--\eqref{z.17} as an assumption instead of~\eqref{A3}. However, we prefer~\eqref{A3}, since it is more easily verified.
\end{remark}

We can proceed now with a rigorous proof.

\begin{lemma} \label{lemmatrace}
    Let $\mathcal{L}=\mathcal{L}_K$ with $K$ satisfying~\eqref{A0}--\eqref{A3}.

    \noindent\textup{(i)} If $u$ is a nonnegative very weak solution to~\eqref{a.nonlocalheatequation}, there is a constant $c>0$ such that
	\begin{equation}\label{lematecnicodesigualdad}
        e^{-c|t-\tau|}\|u(\cdot,\tau)\|_{L^{1}_{P_1}}\leq\|u(\cdot,t)\|_{L^{1}_{P_1}}\leq e^{c|t-\tau|}\|u(\cdot,\tau)\|_{L^{1}_{P_1}}\quad\text{for a.e. }t,\tau\in(0,T).
	\end{equation}

    \noindent\textup{(ii)} If $u$ is a nonnegative very weak solution to~\eqref{a.nonlocalheatequation}, then $u\in\li$.
\end{lemma}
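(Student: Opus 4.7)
The strategy follows the heuristic sketched immediately before the statement, with the auxiliary function $\phi$ from Lemma~\ref{lemmaGettingPhi} playing the role of $P_{1}$. Set $g(t):=\intr u(x,t)\phi(x)\dx$. Since $\phi\asymp P_{1}$ by~\eqref{a.46} and $u\in\luloc$ by Remark~\ref{remark3}, $g\in L^{1}_{\textrm{loc}}((0,T))$ and~\eqref{lematecnicodesigualdad} is equivalent, by the same comparability, to
\begin{equation}\label{planeq:bound}
    e^{-c|t-\tau|}g(\tau)\le g(t)\le e^{c|t-\tau|}g(\tau)\quad\text{for a.e. }t,\tau\in(0,T).
\end{equation}
The plan has three steps: (a) derive the distributional identity $g'(s)=-\intr u(x,s)\Lk\phi(x)\dx$ on $(0,T)$; (b) combine it with~\eqref{Lphi} and $u\ge 0$ to obtain $|g'|\le c\,g$ in integrated form; (c) conclude~\eqref{planeq:bound} via a Gr\"onwall argument, whence part~(ii) follows at once.

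For (a), we plug $\theta(x,s)=\phi(x)\psi_{R}(x)\varphi(s)$ into the very weak formulation~\eqref{a.3}, with $\varphi\in C_{\textup{c}}^{\infty}((0,T))$ and $\psi_{R}$ the cutoffs~\eqref{cutoffspace}. (Since $\phi\in C^{2}$, a harmless mollification $\phi\leadsto \phi*\rho_{1/n}$ ensures $\theta\in C_{\textup{c}}^{\infty}$; one then passes $n\to\infty$.) The nonlocal Leibniz formula~\eqref{leibniz} yields $\Lk(\phi\psi_{R})=\psi_{R}\Lk\phi+\phi\Lk\psi_{R}-B(\psi_{R},\phi)$, and we let $R\to\infty$. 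The $\phi\Lk\psi_{R}$ contribution vanishes: $\Lk\psi_{R}(x)\to 0$ by DCT in $y$ with dominator $C(1\wedge|y|^{2})K(y)$, while the $R$-uniform bound $\phi\,|\Lk\psi_{R}|\le C\phi$ enables a further DCT in $(x,s)$ against the integrable $u\varphi\phi$. The $B(\psi_{R},\phi)$ contribution vanishes analogously: the pointwise limit is obtained by splitting $|y|\le 1$ and $|y|\ge 1$ exactly as in the proof of Theorem~\ref{UniquenessSoria}, while the second inequality in~\eqref{Lphi} with $\psi=\psi_{R}$ furnishes the uniform domination $|B(\psi_{R},\phi)|\le c\phi$. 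What survives is
\begin{equation}\label{planeq:distributional}
    \int_0^T g(s)\varphi'(s)\dt=\int_0^T\varphi(s)\intr u(x,s)\Lk\phi(x)\dx\dt,
\end{equation}
and then $|\Lk\phi|\le c\phi$ together with $u\ge 0$ yields $\bigl|\int_0^T g\varphi'\bigr|\le c\int_0^T|\varphi|g$.

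For (b), choose $\varphi=\varphi_{k}$ with $\varphi_{k}'(s)=\eta_{k}(s-t)-\eta_{k}(s-\tau)$, where $\{\eta_{k}\}$ is the summability kernel built from~\eqref{etaParaCutOff}, and let $k\to\infty$. At every pair of Lebesgue points $(t,\tau)$ of $g$ — a set of full measure in $(0,T)^{2}$ — the left side of~\eqref{planeq:distributional} tends to $g(t)-g(\tau)$ and the right side is bounded in modulus by $c\bigl|\int_{\tau}^{t}g(s)\dt\bigr|$. For (c), mollify $g$ to $g_{\varepsilon}=g*\rho_{\varepsilon}\in C^{\infty}$; the same integral inequality is inherited, so $|g_{\varepsilon}'|\le c\,g_{\varepsilon}$ pointwise, and since $g_{\varepsilon}\ge 0$ this means $g_{\varepsilon}(s)e^{-cs}$ is non-increasing and $g_{\varepsilon}(s)e^{cs}$ is non-decreasing in $s$; letting $\varepsilon\to 0$ at Lebesgue points of $g$ yields~\eqref{planeq:bound}. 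For part~(ii), Remark~\ref{remark3} supplies some $t_{0}\in(0,T)$ with $\|u(\cdot,t_{0})\|_{\lp}<\infty$, and~\eqref{lematecnicodesigualdad} then forces $\|u(\cdot,t)\|_{\lp}\le e^{cT}\|u(\cdot,t_{0})\|_{\lp}$ for a.e.~$t\in(0,T)$, i.e.\,$u\in\li$.

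The main technical obstacle is the uniform-in-$R$ control needed to pass to the limit in the Leibniz decomposition: the pointwise decays $\Lk\psi_{R}(x)\to 0$ and $B(\psi_{R},\phi)(x)\to 0$ must be matched with $R$-independent dominants of size $C\phi(x)$, so that the outer DCT in $(x,s)$ can be closed using only the single integrability hypothesis $u\varphi\phi\in L^{1}$. The bounds assembled in Lemma~\ref{lemmaGettingPhi}, together with the slow change of $\phi$ inherited from Proposition~\ref{Pslowly}, furnish exactly these dominations.
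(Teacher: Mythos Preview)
Your proof is correct and follows essentially the same approach as the paper: test the very weak formulation with $\varphi(s)\psi_{R}(x)\phi(x)$, use the bounds of Lemma~\ref{lemmaGettingPhi} to control the spatial cutoff and arrive at the integral inequality $|g(t)-g(\tau)|\le c\int_{\tau\wedge t}^{\tau\vee t}g$, then conclude via a Gr\"onwall argument. The only cosmetic differences are the order of limits (you send $R\to\infty$ first with arbitrary $\varphi$, the paper sends $k\to\infty$ first with the specific time cutoff), the handling of $\Lk(\psi_{R}\phi)$ (you split by Leibniz and show the extra terms vanish, the paper bounds the whole expression uniformly by $C\phi$), and the Gr\"onwall step (you mollify to $g_{\varepsilon}$ and use $|g_{\varepsilon}'|\le c g_{\varepsilon}$, the paper does an explicit iteration).
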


\begin{proof}
(i) As we have just explained, it is enough to prove~\eqref{va.34}.

Since $u$ is a very weak solution, if $g(t):=\intr u\xt\phi(x)\dx$ were well defined for all $t\in (0,T)$ and $\phi\mathcal{X}_{[t_0,t_1]}$ were an admissible test function, we would have
\begin{gather*}
	\intr u(x,t_0)\phi(x)\dx-	\intr u(x,t_1)\phi(x)\dx=\int_{0}^{T}\intr u\xt\partial_{t}(\phi\mathcal{X}_{[t_0,t_1]})\xt\dx\dt\\
	=\int_0^T\intr u\xt\Lk(\phi\mathcal{X}_{[t_0,t_1]})\xt\dx\dt=\int_{t_0}^{t_1}\intr u\xt\Lk\phi(x)\dx\dt
\end{gather*}
 for all $0<t_0<t_1<T$.  Then,  by the first inequality in~\eqref{Lphi}, since $u$ is nonnegative, there would be $C'>0$ such that
\begin{equation}
	\label{z21}	\Big|\intr u(x,t_0)\phi(x)\dx-	\intr u(x,t_1)\phi(x)\dx\Big|\leq C'\int_{t_0}^{t_1}\intr u\xt\phi(x)\dx\dt, \quad 0<t_0<t_1<T,
\end{equation}
whence, using a Gronwall type argument,  $e^{-C'|t_0-t_1|}g(t_0)\leq g(t_1)\leq e^{C'|t_0-t_1|}g(t_0)$,  $t_0,t_1\in (0,T)$.

However, $\phi\mathcal{X}_{[t_0,t_1]}$ is not an admissible test function, and we do not know if $g$ is well defined for all $t\in (0,T)$, only   a.e.~in $(0,T)$, since $u\in\luloc$  (see Remark~\ref{remark3}), and $P_1$ and $\phi$ are comparable; see~\eqref{a.46}. Therefore, we will need to construct suitable approximations of $\phi$ and $\mathcal{X}_{[t_0,t_1]}$ such that the product is a suitable test function that in the limit yields~\eqref{z21} for a.e.\,$t_0,t_1\in (0,T)$.
	
We approximate $\phi$ by $\phi\psi_n$, $n\in\mathbb{N}$ with $\psi_n$ a cutoff function of the form~\eqref{cutoffspace} with $R=n$.

As for $\mathcal{X}_{[t_0,t_1]}$, let $\eta:\mathbb{R}\to\mathbb{R}$ as in~\eqref{etaParaCutOff}. The family $\{\eta_{k}\}_{k\in\mathbb{N}}$ given by  \mbox{$\eta_{k}(t):=2^{k}\eta(2^{k}t)$} is a summability kernel. For $0<t_{0}<t_{1}<T$ and  \mbox{$k_{0}>-\max\{\ln(t_{1}-t_{0}),\ln t_{0}, \ln (T-t_{1})\}/\ln 2$}, let
\begin{equation*}
	\varphi_{k}^{t_{0},t_{1}}(t)=\int_{2^k(t_0-t)}^{2^k(t_1-t)}\eta(s)\,{\rm d}s,\quad k\geq k_{0},\; t>0.
\end{equation*}
Then, since $\eta$ is symmetric,
\begin{equation*}
    \mathcal{X}_{[t_{0}+\frac{1}{2^{k}}, t_{1}-\frac{1}{2^{k}}]}\leq \varphi^{t_{0},t_{1}}_{k}\leq\mathcal{X}_{[t_{0}-\frac{1}{2^{k}}, t_{1}+\frac{1}{2^{k}}]},\quad (\varphi^{t_{0},t_{1}}_{k})'(t)=\eta_{k}(t-t_{0})-\eta_{k}(t-t_{1})
\end{equation*}
for all $k\geq k_{0}$ and $t>0$. In particular, $(\varphi^{t_{0},t_{1}}_{k})'\to \delta_{t_0}-\delta_{t_1}$ as $k\to \infty$ in the sense of distributions.
	
Let $g_n(t):=\intr  u(x,t)\psi_n(x)\phi(x)\dx$, $n\in\mathbb{N}$. For notational convenience, let $g_{\infty}(t)=g(t)$ for \mbox{$t>0$}.
Since $u$ is a very weak solution, then  $u\in\luloc$ (see Remark~\ref{remark3}). On the other hand,  as $\phi$ and $P_{1}$ are comparable, see~\eqref{a.46}, then $g_\infty\in L^{1}_{\textrm{loc}}((0,T))$ and  $g_n\in L^{1}_{\textrm{loc}}((0,T))$ for all $n\in\mathbb{N}$, whence, by Lebesgue's Differentiation Theorem, $|(0,T)\setminus L(g_\infty;\{\eta_k\}_{k\in\mathbb{N}})|=0$ and $|(0,T)\setminus L(g_n;\{\eta_k\}_{k\in\mathbb{N}})|=0$ for all $n\in\mathbb{N}$. As a straightforward consequence,
\begin{equation}\label{v3.2}	
    |(0,T)\setminus \bigcap\limits_{n=1}^{\infty}L(g_{n};\{\eta_k\}_{k\in\mathbb{N}})|=0.
\end{equation}
Finally, consider two times $0<\tau_{0}<\tau_{1}<T$ such that $\tau_{0},\tau_{1}\in \cap_{n=1}^{\infty}L(g_{n};\{\eta_k\}_{k\in\mathbb{N}})$.
	
Since $u$ is a very weak solution and $\varphi^{\tau_{0},\tau_{1}}_{k} \psi _{n}\phi\in C_{\textup{c}}^{\infty}(\Rn\times(0,T))$ for all $k>k_{0}$ and $n\in\mathbb{N}$,
\begin{equation}\label{v3.1}	
    \int_0^T(\varphi^{\tau_{0},\tau_{1}}_{k})'(t)\intr \psi_{n}(x)\phi(x)u(x,t)\dx\dt
	=\int_0^T\varphi^{\tau_{0},\tau_{1}}_{k}(t)\intr u(x,t)\Lk(\psi_{n}\phi)(x)\dx\dt.
\end{equation}
Besides, thanks to~\eqref{z16}, there is $C>0$ depending on  $\|\psi\|_{\infty}$ and $\|D^2\psi\|_{\infty}$ such that
$|\Lk\psi_{n}|\leq C$ for all $n\in\mathbb{N}$.  Moreover, by~\eqref{Lphi} there exists $C>$ such that $|\Lk\phi|\leq C\phi$ and $|B(\psi_{n},\phi)|\leq C\phi$ for all $n\in\mathbb{N}$. Therefore, by Leibniz's formula~\eqref{leibniz}, there is $C_{2}>0$ such that
\begin{equation*}\label{claimLemaTecnico}	
    |\Lk(\psi_{n}\phi)(x)|\leq C_{2}\phi(x)\quad\textrm{for all }x\in \RN\textrm{ and }n\in\mathbb{N}.
\end{equation*}
Then, since $u$ and  $\varphi^{\tau_0,\tau_1}_k$ are nonnegative,~\eqref{v3.1} yields
\begin{equation*}
    \big|\int_0^T(\varphi^{\tau_0,\tau_1}_k)'(t)g_n(t)\dt\big|\leq C_2\int_0^T\varphi^{\tau_0,\tau_1}_k(t)g_\infty(t)\dt\quad\textrm{for all } k>k_0\text{ and }n\in\mathbb{N}.
\end{equation*}
Then, since $\tau_0,\tau_1\in \cap_{n=1}^{\infty}L(g_{n};\{\eta_k\}_{k\in\mathbb{N}})$, taking $k\to\infty$,
\begin{equation*}\label{v3.3}
    |g_{n}(\tau_{1})-g_{n}(\tau_0)|\leq C_{2}\int_{\tau_{0}}^{\tau_1}g_{\infty}(t)\dt \quad\textrm{for all }n\in\mathbb{N}.
\end{equation*}
Since $\psi_{n}$ converges monotonically to 1 as $n\to\infty$ and $u$ is nonnegative, the Monotone Convergence Theorem (MCT from now on) yields $g_{n}(\tau_i)\to g_{\infty}(\tau_i)$ as $n\to\infty$ for $i=0,1$. Therefore,
\begin{equation}\label{eq:Gronwall.absolute.value}
	|g_{\infty}(\tau_1)- g_{\infty}(\tau_0)|\le C_{2}\int_{\tau_0}^{\tau_1} g_{\infty}(t)\dt.
\end{equation}
Since $C_{2}$  does not depend on $\tau_{0}$ and $\tau_{1}$, we have
\begin{equation}\label{eq:Gronwall}
    g_{\infty}(t)\leq g_{\infty}(\tau_{1})+C_{2}\int_{t}^{\tau_1} g_{\infty}(s)\,{\rm d}s \quad\textrm{for all }t\in [\tau_0,\tau_1)\cap\bigcap\limits_{n=1}^{\infty}L(g_{n};\{\eta_k\}_{k\in\mathbb{N}}).
\end{equation}
We will prove now, using an induction argument, a Gronwall-type lemma: if $g_\infty$ satisfies the integral estimate~\eqref{eq:Gronwall}, it satisfies a pointwise estimate.

For $k=1$, by~\eqref{eq:Gronwall} and~\eqref{v3.2}, we have
\begin{equation*}
	g_\infty(t)\leq g_\infty(\tau_1)+C_2\int_t^{\tau_1}\Big(g_\infty(\tau_1)+C_2\int_s^{\tau_1} g_\infty(r)\dr\Big)\,{\rm d}s.
\end{equation*}
Therefore, since
\begin{equation*}
    \int_t^{\tau_1}\int_s^{\tau_1}g_\infty(r)\dr\,{\rm d}s=\int_t^{\tau_1}\int^r_tg_{\infty}(r)\,{\rm d}s\dr=\int_t^{\tau_1}g_\infty(r)(r-t)\dr,
\end{equation*}
we have that
\begin{equation*}
	g_{\infty}(t)\leq g_{\infty}(\tau_1)(1+C_{2}(\tau_{1}-t))+C_{2}^2\int_{t}^{\tau_1}g_{\infty}(s)(s-t)\,{\rm d}s
\end{equation*}
for all $t\in\cap_{n=1}^{\infty}L(g_{n})$ such that $\tau_0\leq t<\tau_1$. Iterating, we obtain that
\begin{equation*}
    g_{\infty}(t)\leq g_{\infty}(\tau_{1})\Big(\sum_{j=0}^{k}\frac{C_{2}^j(\tau_{1}-t)^j}{j!}\Big)+ C_{2}^{k+1}\int_{t}^{\tau_1}g_{\infty}(s)\frac{(s-t)^{k}}{k!}\,{\rm d}s
\end{equation*}
for all $t\in [\tau_0,\tau_1)\cap\bigcap\limits_{n=1}^{\infty}L(g_{n})$, $k\in\mathbb{N}$. Note that
\begin{equation*}
    \int_t^{\tau_1}g_\infty(s)\frac{(s-t)^k}{k!}\,{\rm d}s\leq\frac{(\tau_1-t)^k}{k!}\int_t^{\tau_1}g_\infty(s)\,{\rm d}s\leq c \frac{T^k}{k!}\int_{\tau_0}^{\tau_1}\intr u\xt P_{1}(x)\dx
\end{equation*}
where $c$ is  the constant in~\eqref{a.46}. Therefore,
\begin{equation*}
    g_{\infty}(t)\leq g_{\infty}(\tau_{1})\Big(\sum_{j=0}^{k}\frac{C_{2}^j(\tau_{1}-t)^j}{j!}\Big)
    +c\frac{C_{2}^{k+1}T^{k}}{k!}\int^{\tau_1}_{\tau_0}\intr u\xt P_{1}(x)\dx,
\end{equation*}
for all $t\in \cap_{n=1}^{\infty}L(g_{n})$ such that $t\in [\tau_{0},\tau_{1})$ and $ k\in\mathbb{N}$.  By Remark~\ref{remark3}, $u\in L^{1}((\tau_0,\tau_1);L^{1}_{P_{1}})$ and therefore taking $k\to\infty$ in the above inequality, we obtain
\begin{equation*}
	g_{\infty}(t)\leq g_{\infty}(\tau_1) e^{C_{2}(\tau_{1}-t)}\quad\textrm{a.e. }t\in (\tau_{0},\tau_{1})
\end{equation*}
due to \eqref{v3.2}. Since $C_{2}$ does not depend on $\tau_{0}$ or $\tau_{1}$, the result can be extrapolated and we obtain
\begin{equation}\label{v3.4}	
    g_{\infty}(t)\leq g_{\infty}(\tau) e^{C_{2}(\tau-t)}\quad\textrm{a.e. }t,\tau \in (0,T),\ t\leq\tau.
\end{equation}

Repeating the induction argument, starting now from
\begin{equation*}
    g_{\infty}(\tau_1)\leq g_{\infty}(t)+C_{2}\int_{t}^{\tau_1} g_{\infty}(s)\,{\rm d}s \quad\textrm{for all }t\in [\tau_0,\tau_1)\cap\bigcap\limits_{n=1}^{\infty}L(g_{n};\{\eta_k\}_{k\in\mathbb{N}}),
\end{equation*}
which is also implied by~\eqref{eq:Gronwall.absolute.value}, we arrive at
\begin{equation*}\label{v3.5}	
    g_{\infty}(\tau)\leq g_{\infty}(t) e^{C_{2}(\tau-t)}\quad\textrm{a.e. }t,\tau \in (0,T),\ t\leq\tau,
\end{equation*}
which combined with~\eqref{v3.4} gives~\eqref{va.34}.

\noindent (ii) Since $u$ is a very weak solution, $u\in\luloc$; see Remark~\ref{remark3}. Hence, $\|u(\cdot,\tau)\|_{\lp}<\infty$ for some $\tau\in(0,T)$. The result now follows from the second inequality in~\eqref{lematecnicodesigualdad}.
\end{proof}

The combination of Corollary~\ref{UniquenessAcotadas} and Lemma~\ref{lemmatrace} (ii) yields the desired uniqueness result for nonnegative very weak solutions.

\begin{corollary}[Uniqueness for nonnegative very weak solutions]\label{UniquenessNonnegative}
    Let $\mathcal{L}=\mathcal{L}_K$ with $K$ satisfying~\eqref{A0}--\eqref{A3}. There is at most one nonnegative very weak solution of~\eqref{a.nonlocalheatequation} with a given initial trace.
\end{corollary}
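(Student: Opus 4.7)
The plan is to deduce Corollary~\ref{UniquenessNonnegative} directly by combining the smoothing result in Lemma~\ref{lemmatrace}(ii) with the uniqueness result for bounded very weak solutions in Corollary~\ref{UniquenessAcotadas}. Since all the hard work has already been done in those two statements, the argument should be essentially a one-line deduction.

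More concretely, I would proceed as follows. Let $u_1$ and $u_2$ be two nonnegative very weak solutions of~\eqref{a.nonlocalheatequation} sharing the same initial trace $\mu_0$ (in the sense of Definition~\ref{deftrace}). First, I would invoke Lemma~\ref{lemmatrace}(ii), which applies because $K$ satisfies~\eqref{A0}--\eqref{A3} and each $u_i$ is a nonnegative very weak solution. This immediately gives $u_1, u_2 \in L^\infty((0,T);L^1_{P_1})$. Next, I would apply Corollary~\ref{UniquenessAcotadas}, whose hypotheses ($K$ satisfying~\eqref{A0}--\eqref{A2}, and the two solutions lying in $L^\infty((0,T);L^1_{P_1})$ with the same initial trace) are now verified. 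The conclusion $u_1 = u_2$ a.e.~in $\mathbb{R}^d\times(0,T)$ follows.

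There is no genuine obstacle here; the entire difficulty was absorbed into Lemma~\ref{lemmatrace}, where nonnegativity was traded for the $L^\infty_t L^1_{P_1,x}$ bound via the Gronwall-type estimate~\eqref{lematecnicodesigualdad} built on the auxiliary function $\phi$ of Lemma~\ref{lemmaGettingPhi}. The only thing worth double-checking is that the notion of initial trace used in Lemma~\ref{lemmatrace} and in Corollary~\ref{UniquenessAcotadas} (via Proposition~\ref{equivDefvwProp}) coincides: both refer to Definition~\ref{deftrace}, so the two statements chain together cleanly. Hence the proof is complete in one short paragraph, with no further estimates required.
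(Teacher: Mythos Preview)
Your proposal is correct and matches the paper's approach exactly: the paper states that Corollary~\ref{UniquenessNonnegative} follows directly from the combination of Corollary~\ref{UniquenessAcotadas} and Lemma~\ref{lemmatrace}(ii), precisely as you describe.
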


We finish this paragraph proving that nonnegative solutions have some regularity in time.

\begin{proposition}\label{prop:regularity.in.time}
    Let $\mathcal{L}=\mathcal{L}_K$ with $K$ satisfying~\eqref{A0}--\eqref{A3}, and let $u$ be a nonnegative very weak solution to~\eqref{a.nonlocalheatequation}.

    \noindent\textup{(i)} Let $\phi\in C^2(\mathbb{R}^d)$ satisfying~\eqref{a.46}--\eqref{Lphi}. There is a constant $C>0$ such that
    \begin{equation}\label{eq:continuity.phi}
        \Big|\intr u(x,\tau_1)\phi(x)\dx-\intr u(x,\tau_2)\phi(x)\dx\Big|\leq C\|u\|_{\li}|\tau_1-\tau_2|
    \end{equation}
for a.e. $\tau_1,\tau_2\in (0,T).$

    \noindent\textup{(ii)} Given $\psi\in C^2_{\textup{c}}(\mathbb{R}^d)$, there is a constant $C_\psi>0$ such that
    \begin{equation}\label{eq:continuity.psi}
        \Big|\intr u(x,\tau_1)\psi(x)\dx-\intr u(x,\tau_2)\psi(x)\dx\Big|\leq C_\psi\|u\|_{\li}|\tau_1-\tau_2|
    \end{equation}
    for a.e. $\tau_1,\tau_2\in (0,T).$
\end{proposition}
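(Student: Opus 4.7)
The plan is to test the very weak formulation~\eqref{a.3} against products of the smooth time cutoff $\varphi_k^{\tau_0,\tau_1}$ of $\mathcal{X}_{[\tau_0,\tau_1]}$ introduced in the proof of Lemma~\ref{lemmatrace} with an appropriate spatial weight, and then combine the resulting integrated inequality with the bound $u\in\li$ supplied by Lemma~\ref{lemmatrace}(ii) to upgrade it to a Lipschitz-in-time estimate.

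For (i), I would observe that the proof of Lemma~\ref{lemmatrace} already establishes, using only the assumptions~\eqref{a.46}--\eqref{Lphi} on $\phi$ (and not the specific construction of $\phi$ in Lemma~\ref{lemmaGettingPhi}), the integrated estimate~\eqref{eq:Gronwall.absolute.value}, namely
\begin{equation*}
    |g_\infty(\tau_1)-g_\infty(\tau_0)|\le C_2\int_{\tau_0}^{\tau_1}g_\infty(t)\,\textup{d}t,\qquad g_\infty(t)=\intr u(x,t)\phi(x)\dx,
\end{equation*}
for a.e.\ $0<\tau_0<\tau_1<T$, with $C_2$ depending only on the constants in~\eqref{Lphi} and on the cutoff $\psi$ in~\eqref{cutoffspace}. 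The same derivation goes through verbatim for any $\phi\in C^2(\Rn)$ satisfying~\eqref{a.46}--\eqref{Lphi}. By Lemma~\ref{lemmatrace}(ii) combined with the upper bound $\phi\le cP_1$ from~\eqref{a.46}, one has $g_\infty(t)\le c\|u\|_{\li}$ for a.e.\ $t\in(0,T)$; substituting this pointwise bound under the integral immediately yields~\eqref{eq:continuity.phi}.

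For (ii), no spatial cutoff is needed since $\psi\in C^2_{\textup{c}}(\Rn)$ is already compactly supported. For $0<\tau_1<\tau_2<T$ and $k$ large enough, the function $\varphi_k^{\tau_1,\tau_2}\psi$ lies in $C^\infty_{\textup{c}}(\Rn\times(0,T))$ and can be used as a test function in~\eqref{a.3}, giving
\begin{equation*}
    \int_0^T (\varphi_k^{\tau_1,\tau_2})'(t)\,h(t)\,\textup{d}t=\int_0^T\varphi_k^{\tau_1,\tau_2}(t)\intr u(x,t)\,\mathcal{L}_K\psi(x)\dx\dt,\qquad h(t):=\intr u(x,t)\psi(x)\dx.
\end{equation*}
By Lemma~\ref{a.17}, $|\mathcal{L}_K\psi|\le C_\psi P_1$, and since also $|\psi|\le\tilde{C}_\psi P_1$, both $h$ and $t\mapsto\intr u(x,t)\mathcal{L}_K\psi(x)\dx$ belong to $L^1((0,T))$ because $u\in\li$. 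Letting $k\to\infty$ at common Lebesgue points (of full measure in $(0,T)$) --- on the left using that $\{\eta_k\}$ is a summability kernel, and on the right using the DCT together with $|\varphi_k^{\tau_1,\tau_2}|\le 1$ --- one obtains
\begin{equation*}
    h(\tau_1)-h(\tau_2)=\int_{\tau_1}^{\tau_2}\intr u(x,t)\,\mathcal{L}_K\psi(x)\dx\dt,
\end{equation*}
and bounding the right-hand side by $C_\psi\|u\|_{\li}|\tau_2-\tau_1|$ yields~\eqref{eq:continuity.psi}.

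The only real obstacle is bookkeeping rather than analysis: the limit identities are valid only at common Lebesgue points of the relevant $L^1$ functions with respect to $\{\eta_k\}$, which explains the \emph{a.e.} qualifier in the statement. Since both parts follow by combining the integrated inequality from Lemma~\ref{lemmatrace} (for (i)) or a direct test against $\varphi_k^{\tau_1,\tau_2}\psi$ (for (ii)) with the $\li$ bound, no new machinery beyond Lemmas~\ref{a.17} and~\ref{lemmatrace} is required.
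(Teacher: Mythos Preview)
Your proposal is correct and follows essentially the same approach as the paper: for (i) you invoke the integrated estimate~\eqref{eq:Gronwall.absolute.value} from the proof of Lemma~\ref{lemmatrace} and cap it with the $\li$ bound, exactly as the paper does; for (ii) the paper says to ``proceed as in the proof of Lemma~\ref{lemmatrace}, with $\phi$ replaced by $\psi$,'' which is what you do, though you streamline by dropping the spatial cutoff $\psi_n$ (harmless, since $\psi_n\psi=\psi$ for large $n$ anyway) and thereby obtain an exact identity rather than an inequality before taking absolute values. One small slip: since $\psi\in C^2_{\textup{c}}(\Rn)$, the product $\varphi_k^{\tau_1,\tau_2}\psi$ is only $C^2$ in space, not $C^\infty$, so strictly speaking you need either a density argument or an approximation of $\psi$ by $C^\infty_{\textup{c}}$ functions to plug it into~\eqref{a.3}; the paper is equally informal on this point.
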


\begin{proof}
(i) We have seen in the proof of Lemma~\ref{lemmatrace} that there is a constant $C>0$ such that
\begin{equation*}
    \Big|\intr u(x,\tau_1)\phi(x)\dx-\intr u(x,\tau_2)\phi(x)\dx\Big|\leq C\int_{\tau_1\wedge\tau_2}^{\tau_1\vee\tau_2}u(x,t)P_1(x)\dx \quad\text{for a.e. }\tau_1,\tau_2\in (0,T);
\end{equation*}
see~\eqref{eq:Gronwall.absolute.value} and~\eqref{a.46}. The result now follows from Lemma~\ref{lemmatrace} (ii).

\noindent (ii) By Lemma~\ref{a.17}, there is a constant $C_\psi>0$ such that $|\Lk\psi|\leq C_{\psi}P_{1}$. Hence, we can proceed as in the proof of Lemma~\ref{lemmatrace}, with $\phi$ replaced by $\psi$, to obtain that
\begin{equation*}
    \Big|\intr u(x,\tau_1)\psi(x)\dx-\intr u(x,\tau_2)\psi(x)\dx\Big|\leq C_\psi\int_{\tau_1\wedge\tau_2}^{\tau_1\vee\tau_2}u(x,t)P_1(x)\dx \quad\text{for a.e. }\tau_1,\tau_2\in (0,T),
\end{equation*}
whence the result, thanks again to Lemma~\ref{lemmatrace} (ii).
\end{proof}

If $\Lk=\Ls$, the smooth function $\phi=P^{\alpha}_1$,  where $P^{\alpha}_1$ denotes the heat kernel associated to~$\Ls$ (see Subsection~\ref{2.3}),  satisfies (trivially)~\eqref{a.46}. Next lemma implies that it also satisfies~\eqref{Lphi}. Therefore, nonnegative very weak solutions are Lipschitz as functions in time with values in $L^1 _{P_1}$.

\begin{remark}
    We already know by Proposition~\ref{pprop1}~(iv) that the first inequality in~\eqref{Lphi} holds. However, we give an alternative proof that might be useful to deal with other operators.
\end{remark}

\begin{lemma} \label{Remark-P-satisface}
    Let $\Lk=\Ls$, $\alpha\in(0,2)$, and $0<\varepsilon<T$. Then, there is $C>0$ such that
	\begin{equation}\label{LPalpha}	
        |\Ls P^{\alpha}_{t}|\leq\intr|\Lambda P^{\alpha}(\cdot,y,t)|K(y)\dy\leq CP^{\alpha}_{1},\quad
        |B(\psi,P^{\alpha}_{t})|\leq C\max\{\|\psi\|_{\infty},\|\nabla\psi\|_{\infty}\}P^{\alpha}_{1}
	\end{equation}
    for all $t\in(\varepsilon,T)$ where $B$ is the bilinear form given by~\eqref{51.2} with $K=K_{\alpha}$.
\end{lemma}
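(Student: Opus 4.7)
The plan is as follows. The first inequality $|\Ls P^\alpha_t|\le \intr|\Lambda P^\alpha(\cdot,y,t)|K_\alpha(y)\dy$ is immediate from the integral representation of $\Ls$ together with the symmetry of $K_\alpha$, so the real content is the pointwise bound by $CP_1^\alpha$. Following the strategy of Lemma~\ref{a.17}, I would split the integration into the regions $|y|\le 1$ and $|y|\ge 1$, exploiting that $K_\alpha$ integrates $|y|^2$ near the origin (L\'evy property \eqref{A0}) and that the tail of $K_\alpha$ is comparable to $P_1^\alpha$ by \eqref{Atotal}.

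On $|y|\le 1$, Taylor's formula gives $|\Lambda P^\alpha(x,y,t)|\le \tfrac12 |y|^2 \|D^2 P_t^\alpha\|_{L^\infty(B_1(x))}$. Proposition~\ref{pprop1}(vi) combined with $t\ge\varepsilon$ bounds the Hessian at any $z\in B_1(x)$ by $C\varepsilon^{-2\alpha}P_1^\alpha(z)$, and \eqref{eq:Pz.Px} then controls $P_1^\alpha(z)\le C P_1^\alpha(x)$, so \eqref{A0} finishes this piece. On $|y|\ge 1$, the triangle inequality and the symmetry of $K_\alpha$ yield two contributions. The first, $P_t^\alpha(x)\intb K_\alpha(y)\dy$, is handled by \eqref{Atotal2} (passing from $t$ to $1$) and the L\'evy condition. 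The second, $\intb P_t^\alpha(x-y)K_\alpha(y)\dy$, I would estimate by invoking \eqref{Atotal} to replace $K_\alpha(y)$ with $CP_1^\alpha(y)$ on $|y|\ge 1$, identifying $\intr P_t^\alpha(x-y)P_1^\alpha(y)\dy=P^\alpha_{t+1}(x)$ via the semigroup property \eqref{eq:semigroup.property}, and then applying \eqref{Atotal2} once more to bound $P^\alpha_{t+1}(x)\le C_{\varepsilon,T}P_1^\alpha(x)$.

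For the bilinear form I would perform the same $|y|\le 1/|y|\ge 1$ decomposition of the integrand defining $B(\psi,P_t^\alpha)$ in \eqref{51.2}. On the near region, two MVT applications produce $|\psi(x)-\psi(x-y)|\le \|\nabla\psi\|_\infty |y|$ and, via Proposition~\ref{pprop1}(v) together with \eqref{eq:Pz.Px} (taking the $t^{-\alpha}$ branch at $t\ge\varepsilon$), $|P_t^\alpha(x)-P_t^\alpha(x-y)|\le C_\varepsilon |y| P_1^\alpha(x)$; the L\'evy property closes this piece with a factor $\|\nabla\psi\|_\infty$. On the far region, I would bound $|\psi(x)-\psi(x-y)|\le 2\|\psi\|_\infty$ and reduce $\intb |P_t^\alpha(x)-P_t^\alpha(x-y)|K_\alpha(y)\dy$ to the two ingredients already handled in the previous paragraph, obtaining a factor $\|\psi\|_\infty P_1^\alpha(x)$. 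Adding the two contributions produces the stated constant $\max\{\|\psi\|_\infty,\|\nabla\psi\|_\infty\}$.

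The principal obstacle is keeping all constants uniform on $t\in(\varepsilon,T)$: the derivative bounds in Proposition~\ref{pprop1}(v)--(vi) degenerate as $t\to 0^+$, and it is essential to select the $t^{-\alpha}$ or $t^{-2\alpha}$ branch there and absorb those factors into constants depending only on $\varepsilon$. A secondary pitfall to avoid is trying to compare $P_t^\alpha(x\pm y)$ with $P_1^\alpha(x)$ pointwise in $y$, which would fail; the correct route is to turn every such integral over $|y|\ge 1$ into the convolution $P_t^\alpha*P_1^\alpha=P_{t+1}^\alpha$ via \eqref{Atotal}--\eqref{eq:semigroup.property} and then invoke \eqref{Atotal2}.
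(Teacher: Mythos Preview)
Your proposal is correct and follows essentially the same approach as the paper's proof: the same $|y|\le 1/|y|\ge 1$ split, the same use of Proposition~\ref{pprop1}(v)--(vi) plus Taylor/MVT near the origin, and the same \eqref{Atotal}--\eqref{eq:semigroup.property}--\eqref{Atotal2} chain for the tail. The only cosmetic difference is that the paper cites the slow-change property (Proposition~\ref{Pslowly}) directly where you invoke its consequence \eqref{eq:Pz.Px}; your treatment of the $t$-uniformity is in fact more explicit than the paper's.
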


\begin{proof}
The key to get~\eqref{LPalpha} is to prove that there is $c>1$ such that
\begin{align}
    \label{z.18}
	&\ints|\Lambda P^\alpha_t(x,y)|K(y)\dy\leq cP^\alpha_1(x),\quad \ints|y||P^\alpha_t(x)-P^\alpha_t(x-y)|K(y)\dy\leq cP^\alpha_1(x),\\
	\label{z.19}
	&\intb|\Lambda P^\alpha_t(x,y)|K(y)\dy\leq cP^\alpha_1(x),\quad \intb|P^\alpha_t(x)-P^\alpha_t(x-y)|K(y)\dy\leq cP^\alpha_1(x)
\end{align}
for all $x\in\Rn$; see the proof of Lemma~\ref{lemmaGettingPhi}. The estimates~\eqref{z.19} are obtained easily, following closely the proof of Lemma~\ref{lemmaGettingPhi}, using that $K_{\alpha}$ is a symmetric L\'evy kernel,  $K_{\alpha}$ and $P^{\alpha}_1$ have comparable tails (see~\eqref{Atotal}), the semigroup property~\eqref{eq:semigroup.property} and~\eqref{Atotal2}.
	
On the other hand, by~Proposition~\ref{pprop1} (vi), using Taylor's expansion, for  $x\in \Rn$ and $|y|\leq 1$,
\begin{gather*}
    |\Lambda P^{\alpha}(x,y,t)|\leq C|D^{2}P^{\alpha}_t(z)||y|^{2}\leq CP^{\alpha}_1(z)|y|^2,\\
    |y||P^{\alpha}_{t}(x)-P^{\alpha}_{t}(x-y)|\leq|\nabla P^{\alpha}_t(z')||y|^{2}\leq CP^{\alpha}_1(z')|y|^2
\end{gather*}
where  $z$ and $z'$ depends on $x$ and $y$ and $z,z'\in B_{1}(x)$. Since $P^{\alpha}_1(\cdot)$ is {slowly changing}, Proposition~\ref{Pslowly},
\begin{equation*}
    |\Lambda P^{\alpha}(x,y,t)|\leq CP^{\alpha}_1(x)|y|^2,\quad|y||P^{\alpha}_1(x)-P^{\alpha}_1(x-y)|\leq CP^{\alpha}_1(x)|y|^2.
\end{equation*}
Therefore, since $K_{\alpha}$ is a L\'evy kernel, we obtain \eqref{z.18}.
\end{proof}

\begin{corollary}
    Let $\Lk=\Ls$, $\alpha\in(0,2)$ and let $u$ be a nonnegative very weak solution to~\eqref{a.nonlocalheatequation}. There is a constant $C>0$ such that
    \begin{equation*}
        \big|\|u(\cdot,\tau_1)\|_{L^1_{P_1}}-\|u(\cdot,\tau_2)\|_{L^1_{P_1}}\big|\leq C\|u\|_{\li}|\tau_1-\tau_2|\quad\text{for a.e. }\tau_1,\tau_2\in T.
    \end{equation*}
\end{corollary}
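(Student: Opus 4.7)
The plan is to apply Proposition~\ref{prop:regularity.in.time}~(i) with the explicit choice $\phi=P^\alpha_1$, since in the present setting $\mathcal{L}=\Ls$ the heat kernel entering the weighted space $L^1_{P_1}$ is exactly $P_1=P^\alpha_1$, so that $\|u(\cdot,\tau)\|_{L^1_{P_1}}=\intr u(x,\tau)\phi(x)\dx$ (recall that $u\geq0$).

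To use that proposition I need to verify that $\phi=P^\alpha_1$ fulfills the hypotheses~\eqref{a.46}--\eqref{Lphi}. First, by Corollary~\ref{pcor1} we have $P^\alpha_1\in C^\infty(\mathbb{R}^d)\subset C^2(\mathbb{R}^d)$ and $P^\alpha_1>0$. Inequality~\eqref{a.46} is automatic with $c=1$ since $\phi=P_1$ in this case. For~\eqref{Lphi}, I apply Lemma~\ref{Remark-P-satisface} at time $t=1$: the first estimate in~\eqref{LPalpha} yields $|\Ls\phi|\leq C\phi$, and the second estimate in~\eqref{LPalpha} yields $|B(\psi,\phi)|\leq C\max\{\|\psi\|_\infty,\|\nabla\psi\|_\infty\}\phi$ for every $\psi\in C^\infty_{\textup{c}}(\mathbb{R}^d)$.

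With these hypotheses satisfied, Proposition~\ref{prop:regularity.in.time}~(i) gives a constant $C>0$ such that, for a.e.~$\tau_1,\tau_2\in(0,T)$,
\begin{equation*}
    \Big|\intr u(x,\tau_1)\phi(x)\dx-\intr u(x,\tau_2)\phi(x)\dx\Big|\leq C\|u\|_{\li}|\tau_1-\tau_2|.
\end{equation*}
Since $u\geq0$ and $\phi=P_1$, the left-hand side equals $\big|\|u(\cdot,\tau_1)\|_{L^1_{P_1}}-\|u(\cdot,\tau_2)\|_{L^1_{P_1}}\big|$, and the conclusion follows.

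There is no real obstacle here: the corollary is essentially a specialization of Proposition~\ref{prop:regularity.in.time}~(i) to the fractional case, and all the analytic work that goes into verifying the hypotheses on $\phi=P^\alpha_1$ has already been carried out in Lemma~\ref{Remark-P-satisface}. The only subtle point worth noting is that the first inequality in~\eqref{LPalpha} (namely $|\Ls P^\alpha_t|\leq CP^\alpha_1$) also follows independently from Proposition~\ref{pprop1}~(iv), which gives an alternative route to the same conclusion.
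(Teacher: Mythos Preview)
The proposal is correct and follows essentially the same approach as the paper: the paper explicitly states (in the paragraph preceding Lemma~\ref{Remark-P-satisface}) that taking $\phi=P^\alpha_1$ satisfies~\eqref{a.46} trivially and~\eqref{Lphi} via Lemma~\ref{Remark-P-satisface}, and then invokes Proposition~\ref{prop:regularity.in.time}~(i). Your observation about the alternative route through Proposition~\ref{pprop1}~(iv) is also noted in the paper.
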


\subsubsection{Existence of an initial trace}

We now prove that every nonnegative solution has an initial trace. We also have to assume~\eqref{A3} for this purpose.

\begin{theorem}[Existence of  initial trace integrable against $P_{1}$]\label{theoremtraces}
    Let $\mathcal{L}=\mathcal{L}_K$ with $K$ satisfying~\eqref{A0}--\eqref{A3}. Let $u$ be a nonnegative very weak solution of~\eqref{a.nonlocalheatequation}. There is a nonnegative Radon measure $\mu_{0}$ such that~\eqref{a.1} holds; that is, $\mu_0$ is the initial trace of $u$.  Moreover, $\mu_0$ satisfies the integral growth condition~\eqref{eq:growth.initial.trace}.
\end{theorem}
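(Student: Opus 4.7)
The plan is to build $\mu_0$ as a Radon measure by defining it on $C^2_{\textup{c}}(\mathbb{R}^d)$ via an essential limit, showing that this defines a positive continuous linear functional, extending by density to all of $C_{\textup{c}}(\mathbb{R}^d)$, and then invoking the Riesz representation theorem.

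First I would fix $\psi\in C^2_{\textup{c}}(\mathbb{R}^d)$ and consider $g_\psi(t):=\int_{\mathbb{R}^d}u(x,t)\psi(x)\dx$. By Lemma~\ref{lemmatrace}(ii) we have $u\in\li$, so $g_\psi$ is defined and bounded for a.e.\ $t\in(0,T)$ (with $|g_\psi(t)|\le C\|\psi\|_\infty\|u\|_{\li}$, using that $\mathcal{X}_{\operatorname{supp}\psi}\le C_\psi P_1$ thanks to $P_1>0$, continuous). By Proposition~\ref{prop:regularity.in.time}(ii), $g_\psi$ is Lipschitz on its set of definition, hence extends uniquely to a Lipschitz function on $[0,T)$. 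Define $L(\psi):=\lim_{t\to0^+}g_\psi(t)$. Clearly $L$ is linear, $L(\psi)\ge 0$ when $\psi\ge 0$ (because $u\ge 0$), and for any $\psi\in C^2_{\textup{c}}(\mathbb{R}^d)$ with $\operatorname{supp}\psi\subset B_R(0)$ we have the bound $|L(\psi)|\le C_R\|\psi\|_\infty\|u\|_{\li}$.

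Next I would extend $L$ to $C_{\textup{c}}(\mathbb{R}^d)$. Given $\psi\in C_{\textup{c}}(\mathbb{R}^d)$ with $\operatorname{supp}\psi\subset B_R(0)$, mollify to get $\{\psi_\varepsilon\}\subset C^2_{\textup{c}}(B_{R+1}(0))$ with $\psi_\varepsilon\to\psi$ uniformly. The bound $|L(\psi_\varepsilon)-L(\psi_{\varepsilon'})|\le C_{R+1}\|\psi_\varepsilon-\psi_{\varepsilon'}\|_\infty\|u\|_{\li}$ shows $\{L(\psi_\varepsilon)\}$ is Cauchy, so we may define $L(\psi)$ as its limit, which is independent of the approximating sequence. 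The extended $L$ is a positive linear functional on $C_{\textup{c}}(\mathbb{R}^d)$ bounded on each $C_{\textup{c}}(\overline{B_R(0)})$, so Riesz's theorem produces a nonnegative Radon measure $\mu_0$ such that $L(\psi)=\int_{\mathbb{R}^d}\psi\dmu_0$ for all $\psi\in C_{\textup{c}}(\mathbb{R}^d)$.

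To verify that $\mu_0$ is the initial trace of $u$ in the sense of \eqref{a.1}, fix $\psi\in C_{\textup{c}}(\mathbb{R}^d)$ with $\operatorname{supp}\psi\subset B_R(0)$ and approximate by $\psi_\varepsilon\in C^2_{\textup{c}}(B_{R+1}(0))$ uniformly. Triangle-inequality decomposition gives
\begin{equation*}
\Bigl|\int_{\mathbb{R}^d}u(x,t)\psi(x)\dx-\int_{\mathbb{R}^d}\psi\dmu_0\Bigr|
\le C_{R+1}\|\psi-\psi_\varepsilon\|_\infty\bigl(\|u\|_{\li}+\mu_0(B_{R+1}(0))\bigr)+\bigl|g_{\psi_\varepsilon}(t)-L(\psi_\varepsilon)\bigr|,
\end{equation*}
and letting $t\to 0^+$ (along the essential limit set of $g_{\psi_\varepsilon}$) then $\varepsilon\to 0$ yields~\eqref{a.1}.

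Finally, for the growth condition~\eqref{eq:growth.initial.trace}, fix the cutoffs $\{\psi_R\}_{R\ge 1}$ from~\eqref{cutoffspace} and consider $\psi_R P_1\in C^2_{\textup{c}}(\mathbb{R}^d)$ (smoothness from Corollary~\ref{pcor1}(i)). By the construction of $\mu_0$,
\begin{equation*}
\int_{\mathbb{R}^d}\psi_R(x)P_1(x)\dmu_0(x)=L(\psi_R P_1)=\operatornamewithlimits{ess\,lim}_{t\to 0^+}\int_{\mathbb{R}^d}u(x,t)\psi_R(x)P_1(x)\dx\le\|u\|_{\li},
\end{equation*}
since $0\le\psi_R\le 1$. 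Choosing $\psi_R$ so that $\psi_R\nearrow 1$ pointwise and applying the Monotone Convergence Theorem gives $\int_{\mathbb{R}^d}P_1\dmu_0\le\|u\|_{\li}<\infty$, which is \eqref{eq:growth.initial.trace}. The main obstacle is the careful handling of essential (not pointwise) limits at $t=0$, which the Lipschitz estimate of Proposition~\ref{prop:regularity.in.time} allows us to circumvent by extending $g_\psi$ continuously to $[0,T)$.
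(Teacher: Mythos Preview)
Your proof is correct and relies on the same two key ingredients as the paper: Lemma~\ref{lemmatrace}(ii) (to place $u$ in $\li$) and Proposition~\ref{prop:regularity.in.time}(ii) (the Lipschitz-in-time estimate for $\int u(\cdot,t)\psi$ with $\psi\in C^2_{\textup{c}}$). The execution differs slightly: the paper first extracts a subsequential weak* limit $\mu_0$ by compactness of the family $\{u(\cdot,t)\}_{t\in(0,T)\setminus E}$ (using the uniform $L^1_{P_1}$ bound), and only afterwards invokes the Lipschitz estimate to show that the limit is independent of the chosen subsequence; you instead use the Lipschitz estimate up front to define $L(\psi)=\lim_{t\to0^+}g_\psi(t)$ directly on $C^2_{\textup{c}}$, extend by density to $C_{\textup{c}}$, and then apply Riesz. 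Your route is marginally cleaner in that it avoids the compactness/subsequence step entirely, but the two arguments are equivalent in substance and depend on exactly the same estimates.
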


\begin{proof}
Let $u$ be a nonnegative very weak solution of~\eqref{a.nonlocalheatequation}. By Remark \ref{remark3}, $u\in\luloc$, whence there is some $\tilde{T}\in(0,T)$ such that $\intr u(x,\tilde{T})P_1(x)\dx<\infty$. Besides, by Lemma \ref{lemmatrace}, there is some exceptional set $E\subset (0,T)$ with $|E|=0$ such that
\begin{equation*}\label{v3.7}	
    \intr u(x,t)P_1(x)\dx\leq e^{cT}\intr u(x,\tilde{T})P_1(x)\dx<\infty \quad\textrm{for all }t\in(0,\tilde{T})\setminus E.
\end{equation*}
On the other hand, for each $R>0$, let $c_R:=1/\min\{P_1(x):|x|\leq R\}$. This quantity is finite for each $R$, since $P_1$ is continuous and $P_1>0$. Hence
\begin{equation*}\label{a.38}
	\sup_{t\in(0,T)\setminus E}\int_{B_{R}(0)}u\xt\dx\leq c_{R}\intr u\xt P_1(x)\dx\leq c_R e^{cT}\intr u(x,\tilde{T})P_1(x)\dx=C_R.
\end{equation*}
This implies weak* compactness in the sense of Radon measures: there exists a sequence of times $\{t_n\}_{n\in\mathbb{N}}\subset(0,T)\setminus E$ with $t_{n}\to 0^{+}$ as $n\to \infty$ and a nonnegative Radon measure $\mu_0$ such that
\begin{equation*}\label{eq:trace.limit.subsequences}
	\lim\limits_{n\to\infty}\intr \psi(x)u(x,t_{n})\dx=\intr\psi\dmu_0\quad\textrm{for all }\psi\in C_{\textup{c}}(\Rn).
\end{equation*}
For a reference, see for instance \cite[Theorem 4.4]{Simon}. Moreover, for any $R>0$, $\mu_0(B_{R}(0))\leq C_{R} $.

However, we are not done yet, since the limit might depend on the sequence. Let us discard this possibility. Suppose that for some sequence $\{\tilde t_n\}_{n\in\mathbb{N}}\subset(0,T)\setminus E$  with $\tilde t_n\to 0^+$ as $n\to\infty$ we have
\begin{equation*}\label{a.43}	
    \lim\limits_{n\to\infty}\intr \psi(x)u(x,\tilde t_n)\dx=\intr\psi\,\textup{d}\tilde\mu_0(x)\quad\text{for all }\psi\in C_{\textup{c}}(\Rn).
\end{equation*}
If moreover $\psi\in C^\infty_{\textup{c}}(\Rn)$, taking $\tau_1=t_n$ and $\tau_2=\tilde t_k$ in \eqref{eq:continuity.psi}, we obtain
\begin{equation*}
	\big|\intr u(x,t_n)\psi(x)\dx-\intr u(x,\tilde t_k)\psi(x)\dx\big|\leq C_\psi\|u\|_{\li}|t_n-t_k|\quad\textrm{for all }n,k\in\mathbb{N}.
\end{equation*}
Letting first $n\to\infty$ and then $k\to\infty$, we conclude that 	
\begin{equation}\label{eq:equality.traces.as.distributions}
    \int \psi\dmu_0= \intr\psi\,\textup{d}\tilde\mu_0(x) \quad\text{for all }\psi\in C^\infty_{\textup{c}}(\mathbb{R}^d).
\end{equation}
By a density argument,~\eqref{eq:equality.traces.as.distributions} is also true for $\psi\in C_{\textup{c}}(\mathbb{R}^d)$, hence $\mu_0\equiv\tilde\mu_0$ as Radon measures on~$\RN$.

To conclude, we have to prove that $\mu_0$ satisfies~\eqref{eq:growth.initial.trace}. To this aim, we consider a family of cut-off functions $\{\psi_{R}\}_{R>0}$ with $\psi_R$ as in~\eqref{cutoffspace}. By Lemma~\ref{lemmatrace}, for all $t\in (0,T)\setminus E$ we have
\begin{equation*}
    \intr  u(x,t)\psi_{R}(x)P_{1}(x)\dx	\leq \intr u\xt P_{1}(x)\dx\leq  e^{cT}\intr u(x,\tilde{T})P_{1}(x)\dx= C_{\tilde{T},T}
\end{equation*}
Thus, letting $t\to 0^{+}$, we get $\displaystyle\intr  \psi_{R}P_{1}\dmu_{0}\leq C_{\tilde{T},T}$, whence, by the MCT,
$\displaystyle\intr  P_{1}\dmu_{0}\leq C_{\tilde{T},T}$.
\end{proof}

\subsubsection{Existence of very weak solutions}

We have just seen that nonnegative weak solutions to equation~\eqref{a.nonlocalheatequation} have an initial trace $\mu_0$ that is a Radon measure satisfying~\eqref{eq:growth.initial.trace}. We now investigate the reciprocal. Is it true that for any nonnegative Radon measure $\mu_0$ satisfying~\eqref{eq:growth.initial.trace} there is a very weak solution to~\eqref{a.nonlocalheatequation} having $\mu_0$ as initial trace? We will answer to this question positively. In fact, we will be able to construct also solutions with sign changes, if the initial trace satisfies a suitable growth condition closely related to~\eqref{eq:growth.initial.trace}. These solutions belong to the class $\li$ for which there is uniqueness. As a corollary we will obtain a representation formula.

\begin{theorem}\label{prop2}
    Let $\mathcal{L}=\mathcal{L}_K$ with $K$ satisfying~\eqref{A0}--\eqref{A2}. Let $\mu_{0}$ be a Radon measure with the admissible growth at  infinity~\eqref{q4}. Then, for each $T>0$ the function $U$ given in~\eqref{q5} is a very weak solution of \eqref{a.nonlocalheatequation} in $C(\Rn\times(0,T))\cap\li$ with initial trace~$\mu_{0}$.
 \end{theorem}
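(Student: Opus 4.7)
The plan is to check directly the four defining features of a very weak solution in $\li \cap C(\mathbb{R}^d\times(0,T))$ with initial trace $\mu_0$, namely regularity, the $L^1_{P_1}$-bound, the very weak formulation \eqref{a.2}--\eqref{a.3}, and the trace condition \eqref{a.1}. The common engine behind every step is that, for $(x,t)$ in a compact subset of $\mathbb{R}^d\times(0,\infty)$, the translated kernel $P_t(x-y)$ is bounded by $CP_1(y)$ uniformly, a consequence of Lemma~\ref{v3.19} (translation comparison) combined with~\eqref{Atotal2} (time comparison). Since $\mu_0$ satisfies~\eqref{q4}, this uniform domination allows the DCT/Fubini applications needed throughout.

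\textbf{Step 1 (regularity and integral bound).} First I would show $U\in C(\mathbb{R}^d\times(0,T))$. For $(x_0,t_0)$ fixed and $(x,t)$ in a neighborhood $B_R(x_0)\times[\varepsilon,T']$, the above uniform domination $P_t(x-y)\le CP_1(y)$ together with the joint continuity of $P$ (Corollary~\ref{pcor1}\,(iii)) and the DCT yield continuity. Next, by Fubini-Tonelli and the semigroup property~\eqref{eq:semigroup.property},
\begin{equation*}
    \int |U(x,t)|P_1(x)\dx \leq \int (P_t*P_1)(y)\,\textup{d}|\mu_0|(y) = \int P_{t+1}(y)\,\textup{d}|\mu_0|(y) \leq C_T \int P_1\,\textup{d}|\mu_0|<\infty,
\end{equation*}
where the last inequality uses~\eqref{Atotal2}, so $U\in\li$.

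\textbf{Step 2 (very weak formulation).} The integrability condition \eqref{a.2} is automatic from $U\in\luloc$ via Proposition~\ref{prop1}. For the identity \eqref{a.3}, fix $\theta\in C^\infty_{\textup{c}}(\mathbb{R}^d\times(0,T))$. By Lemma~\ref{a.17} and the bound $|\partial_t\theta|\le C_\theta P_1$ (which follows from \eqref{a.4}), one has $|\partial_t\theta|+|\Lk\theta|\leq C_\theta P_1$. Arguing as in Step 1,
\begin{equation*}
    \int_0^T\!\!\intr\!\!\intr P_t(x-y)|\partial_t\theta-\Lk\theta|(x,t)\,\textup{d}|\mu_0|(y)\dx\dt \leq C_\theta T \int P_1\,\textup{d}|\mu_0| <\infty,
\end{equation*}
so Fubini-Tonelli allows the $\mu_0$-integration to be pulled outside. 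For each fixed $y$, $\theta(\cdot+y,\cdot)\in C^\infty_{\textup{c}}(\mathbb{R}^d\times(0,T))$, and translation invariance of $\mathcal{L}_K$ combined with the fact that $P$ is a very weak solution (Corollary~\ref{PisaVeryWeaksolution.cor}) give
\begin{equation*}
    \int_0^T\!\!\intr P_t(x-y)\bigl(\partial_t\theta(x,t)-\Lk\theta(x,t)\bigr)\dx\dt = 0.
\end{equation*}
Integrating against $\mu_0$ yields \eqref{a.3}.

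\textbf{Step 3 (initial trace).} For $\psi\in C_{\textup{c}}(\mathbb{R}^d)$, Fubini (justified by the same $P_1$-domination; note $|\psi|\le C_\psi P_1$) and symmetry of $P_t$ give
\begin{equation*}
    \intr \psi(x)U(x,t)\dx = \intr (P_t*\psi)(y)\dmu_0(y).
\end{equation*}
By Feller strong continuity, $P_t*\psi\to\psi$ uniformly as $t\to0^+$, and the dominating estimate $|P_t*\psi|(y)\le C_\psi P_{t+1}(y)\le C_{\psi,T}P_1(y)$ (valid for $t\in(0,T)$ by \eqref{Atotal2}) lets the DCT pass the limit through the $\mu_0$-integral. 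The ordinary limit suffices in place of the essential limit in Definition~\ref{deftrace}, since $U$ is continuous in time by Step 1. The main obstacle throughout is ensuring that the comparability of the heat kernel across space translations and time shifts gives a bound by $P_1(y)$ whose constant is uniform over the relevant region of $(x,t)$; once this uniform domination is in hand, every integral swap is routine and the fact that $P$ itself solves the equation (Corollary~\ref{PisaVeryWeaksolution.cor}) closes the argument immediately.
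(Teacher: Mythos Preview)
Your proposal is correct and follows essentially the same route as the paper: well-definedness and the $\li$ bound via the semigroup property and~\eqref{Atotal2}; condition~\eqref{a.2} via Proposition~\ref{prop1}; condition~\eqref{a.3} via Fubini--Tonelli and the fact that translates of $P$ are very weak solutions (Corollary~\ref{PisaVeryWeaksolution.cor}); and the initial trace via the dominating estimate~\eqref{a.5} and DCT. The only genuine difference is in the continuity argument: the paper approximates $\mu_0$ by its restrictions $\mu^r$ to balls, shows each $U^r$ is continuous (since $|\mu_0|(B_r(0))<\infty$), and then proves $U^r\to U$ uniformly on compacts, whereas you invoke DCT directly using the uniform domination $P_t(x-y)\le CP_1(y)$ on compact $(x,t)$-sets. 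Your route is slightly more direct; the paper's truncation scheme is a bit more hands-on but makes the role of the growth condition~\eqref{q4} equally transparent. Both are perfectly valid.
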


\begin{proof}
We first check that $U$ is well defined. By \eqref{Atotal2}, $P_{t}\leq C_{t}P_{1}$ for some $C_{t}>0$ depending on~$t$. Then, using Lemma~\ref{v3.19} and~\eqref{q4}, we obtain that
\begin{equation}\label{c5}	
\begin{aligned}
    \intr P_{t}(x-y)\,{\rm d}|\mu_{0}|(y)&\leq C_{t}\intr P_{1}(x-y)\,{\rm d}|\mu_{0}|(y)\\ &\leq C_{x,t}\intr P_{1}(y)\,{\rm d}|\mu_{0}|(y)<\infty,\quad\xt\in\Rn\times (0,\infty).
\end{aligned}
\end{equation}

Now, we will prove that $U\in \li$. By Tonelli's theorem, the fact that $P_{t}$ is symmetric, the semigroup property~\eqref{eq:semigroup.property},~\eqref{Atotal2} and~\eqref{q4},
\begin{align}
	\notag 	\intr |U\xt|P_{1}(x)\dx &\leq \intr\intr P_{t}(y-x)P_{1}(y)\dx\,{\rm d}|\mu_{0}|(y)\\
	\label{v3.16}&=\intr P_{1+t}(y){\rm d}|\mu_{0}|(y) 	\leq C_{T}\intr P_{1}(y)\,{\rm d}|\mu_{0}|(y)<\infty\quad\textrm{for all } t\in (0,T).
\end{align}

Let us prove next that $\mu_{0}$ is the initial trace of $U$. Let $\psi\in C_{\textup{c}}(\Rn)$. By~\eqref{a.5} and~\eqref{q4},
\begin{equation*}
    \intr\intr P_{t}(x-y)|\psi(x)|\dx\,{\rm d}|\mu_{0}|(y)\leq C_{T,\psi}\intr P_{1}(y)\,{\rm d}|\mu_{0}|(y)<\infty.
\end{equation*}
Then, by Fubini-Tonelli's theorem,
\begin{equation*}\label{q6}	
    \intr U(x,t)\psi(x)\dx=\intr \intr P_{t}(x-y)\psi(x)\dmu_{0}(y)\dx=\intr \intr P_{t}(x-y)\psi(x)\dx\dmu_{0}(y).
\end{equation*}
Since $P_{t}$ is symmetric, we may use the DCT in $L^{1}(\mathbb{R}^d,{\rm d}|\mu_{0}|)$ to get
\begin{equation*}
	\lim_{t\to0^{+}}\intr U(x,t)\psi(x)\dx=\intr \lim_{t\to0^{+}}\intr P_{t}(y-x)\psi(x)\dx\dmu_{0}(y)= \intr\psi(y)\dmu_{0}(y),
\end{equation*}
because $P_{t}(\cdot-y)$ has the Dirac delta centered at $y$ as initial trace;~see Proposition \ref{pprop2}.

Besides, since  $U\in \li\subset\luloc$, by Proposition~\ref{prop1}, $U$~satisfies~\eqref{a.2}.
	
Finally, let us prove that $U$ satisfies~\eqref{a.3}. Let $\theta\in C^{\infty}_{\textup{c}}(\RN\times (0,T))$. Due to Lemma~\ref{a.17},~\eqref{v3.16}, the symmetry of $P_{t}$, the semigroup property~\eqref{eq:semigroup.property} and~\eqref{Atotal2},
\begin{align*}
    \int_{0}^{T}\intr \intr P_{t}(x-y)|\Lk\theta(x,t)|\dx\,{\rm d}|\mu_{0}|(y)\dt&\leq C_{\theta}\int_{0}^{T}\intr \intr P_{t}(x-y) P_{1}(x)\dx\,{\rm d}|\mu_{0}|(y)\dt\\
	&\leq C _{\theta}C_T T\intr P_{1}(y)\,{\rm d}|\mu_{0}|(y)<\infty.
\end{align*}
Since $P$ is a very weak solution, see Corollary~\ref{PisaVeryWeaksolution.cor}, so is any  spatial translation of it, due to the translation invariance of the equation. Therefore,  using also Fubini-Tonelli's Theorem,
\begin{equation}\label{z7}
    \begin{aligned}
        \int_{0}^{T}\intr U(x,t)\Lk\theta(x,t)\dx\dt&=\intr\int_{0}^{T}\intr P_{t}(x-y)\Lk\theta(x,t)\dx\dt\dmu_{0}(y)\\
        &=\intr\int_{0}^{T}\intr  P_{t}(x-y)\partial_{t}\theta(x,t) \dx\dt\dmu_{0}(y).
    \end{aligned}
\end{equation}
On the other hand, using~\eqref{a.5} and~\eqref{q4},
\begin{equation*}
	\intr\int_{0}^{T}\intr  P_{t}(x-y)|\partial_{t}\theta(x,t)| \dx\dt\,{\rm d}|\mu_{0}|(y)<\infty.
\end{equation*}
Then, we can use Fubini-Tonelli's Theorem to change the order of integration for the last integral in~\eqref{z7}, and we finally obtain that $U$ satisfies~\eqref{a.3}.
	
Now, we will prove that $U\in C(\Rn\times(0,T))$. This comes from the fact that $U$ is the limit, uniform in compact sets, of continuous functions.

Let us define the approximating functions. For each $r>0$ we define the finite Radon measure
\begin{equation*}\label{c7}	
    \mu^{r}(B):=\mu_{0}(B\cap B_{r}(0))\quad\textrm{for all Borel set }B,
\end{equation*}
and we set
\begin{equation*}\label{c8}	
    U^{r}(x,t):=\intr P_{t}(x-y)\dmu^{r}(y)=\int_{\{|y|\leq r\}}P_{t}(x-y)\dmu_{0}(y).
\end{equation*}
Let us check that $U^r\in C(\Rn\times(0,T))$. Let $(x_{1},t_{1})\in \Rn\times (0, T)$. Then
\begin{equation*}
	|U^{r}(x_{1},t_{1})-U^{r}(x_{2},t_{2})|\leq \int_{\{|y|\leq r\}}|P_{t_{1}}(x_{1}-y)-P_{t_{2}}(x_{2}-y)|\,{\rm d}|\mu_{0}|(y).
\end{equation*}
Since $P$ is uniformly continuous  in $\Rn\times(\varepsilon,T')$ if $0<\varepsilon<T'<T$ (Corollary \ref{pcor1}), given $\widetilde{\varepsilon}>0$  there exist $\delta_{1},\delta_{2}>0$ such that
\begin{equation*}
    |P_{t_{1}}(x_{1}-y)-P_{t_{2}}(x_{2}-y)|\leq \widetilde{\varepsilon}
\end{equation*}
for each $|x_{1}-x_{2}|\leq \delta_{1}$ and $|t_{1}-t_{2}|<\delta_{2}$, so that we have the desired continuity,
\begin{equation*}
    |U^{r}(x_{1},t_{1})-U^{r}(x_{2},t_{2})|\leq \widetilde{\varepsilon} |\mu_{0}|(B_{r}(0)).
\end{equation*}

Let us now prove that $U^{r}$ converges uniformly to $U$ in compact sets of $\Rn\times (0,T)$. Let $R>0$ and $0<\varepsilon<T'<T$. We define
\begin{equation*}
	I_{r}(R):=\sup\limits_{\varepsilon\leq t\leq T'}\sup\limits_{|x|\leq R}|U\xt-U^{r}\xt|\leq \sup\limits_{\varepsilon\leq t
    \leq T'}\sup\limits_{|x|\leq R}\int_{\{|y|>r\}}P_{t}(x-y)\,{\rm d}|\mu_{0}|(y).
\end{equation*}
By~\eqref{Atotal2}, there is a  positive constant $C$ only depending on $\varepsilon$ and $T$, such that $P_{t}(x)\leq CP_{1}(x)$ for all $\xt\in \Rn\times (\varepsilon,T')$.  Thus, using also~\eqref{eq:comparison.translations} and the growth condition~\eqref{q4} for the initial datum, 	
\begin{equation*}
    I_{r}(R)\leq C\int_{\{|y|>r\}}P_{1}(y)\,{\rm d}|\mu_{0}|(y)\to 0\quad\text{as }r\to+\infty,
\end{equation*}
which shows the desired uniform convergence.
\end{proof}

As a corollary to our existence and uniqueness results, Theorem~\ref{prop2}, Corollary~\ref{UniquenessAcotadas} and Corollary~\ref{UniquenessNonnegative}, there is a representation formula for very weak solutions belonging to $\li$ and for nonnegative very weak solutions whenever the initial datum $\mu_{0}$ is admissible.

\begin{corollary}[Representation formula]\label{CorRepresentationFormulaVW}
    Let $\mathcal{L}=\mathcal{L}_K$ with $K$ satisfying~\eqref{A0}--\eqref{A2} and let $\mu_{0}$ be a Radon measure satisfying~\eqref{q4}. Let $u$ be a very weak solution to~\eqref{a.nonlocalheatequation} with trace $\mu_{0}$. If either $u\in \li$, or $u$ is nonnegative and $K$ satisfies in addition~\eqref{A3}, then $u$ is given by representation formula~\eqref{representationformula}.
\end{corollary}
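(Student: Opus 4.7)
The plan is to derive the representation formula as a straightforward corollary of the existence result (Theorem~\ref{prop2}) combined with one of the previously established uniqueness theorems, according to the hypothesis placed on $u$.

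First, since $\mu_0$ is a Radon measure satisfying the admissible growth condition~\eqref{q4}, Theorem~\ref{prop2} guarantees that the function $U$ defined by~\eqref{q5} is a very weak solution of~\eqref{a.nonlocalheatequation} on $\mathbb{R}^d\times(0,T)$ belonging to $\li$ and having $\mu_0$ as initial trace. Hence, whenever $u$ is any other very weak solution with trace $\mu_0$, it suffices to show that $u$ and $U$ lie in a class in which uniqueness is available, and conclude $u=U$ a.e.~in $\mathbb{R}^d\times(0,T)$.

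In the first case, $u\in\li$ by hypothesis, so both $u$ and $U$ are very weak solutions in the class $\li$ sharing the same initial trace $\mu_0$. Corollary~\ref{UniquenessAcotadas} (which requires only~\eqref{A0}--\eqref{A2}) applies directly and gives $u\equiv U$, i.e., the representation formula~\eqref{representationformula} holds.

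In the second case, $u$ is nonnegative and $K$ satisfies~\eqref{A3} in addition. The smoothing effect in Lemma~\ref{lemmatrace}(ii) then implies that $u\in\li$, and we are reduced to the previous situation: Corollary~\ref{UniquenessAcotadas} again yields $u=U$. (Alternatively, since $\mu_0$ is necessarily nonnegative in this case and $U\geq 0$, one could invoke Corollary~\ref{UniquenessNonnegative} directly.) There is no genuine obstacle beyond correctly matching the hypotheses; the work has all been done in the previous steps, and the whole point of the corollary is to assemble them into a single clean statement.
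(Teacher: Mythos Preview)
Your proof is correct and follows exactly the approach the paper indicates: the corollary is stated immediately after the sentence ``As a corollary to our existence and uniqueness results, Theorem~\ref{prop2}, Corollary~\ref{UniquenessAcotadas} and Corollary~\ref{UniquenessNonnegative}\ldots'' with no further argument given. Your write-up simply spells out how these ingredients combine, and the reduction of the nonnegative case to the $\li$ case via Lemma~\ref{lemmatrace}(ii) (or, equivalently, the direct appeal to Corollary~\ref{UniquenessNonnegative}) is precisely what the paper intends.
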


\subsection{Classical solutions}

We consider now classical solutions. If we are able to prove that classical solutions are very weak solutions, our uniqueness theorems for the latter class, Corollary~\ref{UniquenessAcotadas} and Corollary~\ref{UniquenessNonnegative}, will yield uniqueness for the former. In the case of nonnegative solutions, this will also guarantee the existence of an initial trace, thanks to Theorem~\ref{theoremtraces}, and a representation formula, because of Theorem~\ref{prop2}. However, proving this is not as straightforward as in the case of the classical heat equation, due to the nonlocal character of the diffusion operator~$\Lk$. This will be our first goal.

Next, we will give sufficient conditions, in terms of the heat kernel, ensuring the existence of a classical solution with initial trace any Radon measure $\mu_{0}$ satisfying the integral growth condition~\eqref{prop2}. Such conditions are satisfied, for instance, by the heat kernel of the fractional heat equation.

\subsubsection{Uniqueness of classical solutions}

The following proposition presents two conditions that together are sufficient for a classical solution, even one with sign changes, to be a very weak solution. The first condition, denoted by \eqref{energyH}, concerns the regularity of the function. We require a classical solution $u$ to possess this regularity to ensure that $\Lk u$ is integrable against any test function. This would correspond to the regularity requirement $u\in C^{2,1}_{x,t}$ in the classical result by Widder. The second one, the integrability condition $u\in\luloc$, arises from the nonlocal character of the operator $\mathcal{L}$, and is required in the definition of very weak solution; see Proposition~\ref{prop1}. However, we will prove later that it is satisfied by all nonnegative classical solutions.

\begin{proposition}\label{cprop1}
    Let $\mathcal{L}=\mathcal{L}_K$ with $K$ satisfying~\eqref{A0}--\eqref{A2} and $u$ a classical solution to~\eqref{a.nonlocalheatequation}. If $u\in\luloc$ and satisfies~\eqref{energyH}, then it is a very weak solution to the same equation.
\end{proposition}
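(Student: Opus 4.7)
The goal is to verify the two clauses of Definition~\ref{defWS}, namely the integrability condition~\eqref{a.2} and the weak formulation~\eqref{a.3}, for a classical solution $u$ satisfying $u\in\luloc$ and~\eqref{energyH}. The local integrability requirement $u\in L^1_{\rm loc}(\mathbb{R}^d\times(0,T))$ is automatic since $u\in C(\mathbb{R}^d\times(0,T))$.

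For~\eqref{a.2}, the argument is immediate. Given any test function $\theta\in C^\infty_{\textup{c}}(\mathbb{R}^d\times(0,T))$ supported in $\mathbb{R}^d\times[\delta,\tau]$, Lemma~\ref{a.17} yields $|\mathcal{L}_K\theta(x,t)|\le C_\theta P_1(x)$ on that slab, while the hypothesis $u\in\luloc$ gives $\int_\delta^\tau\intr|u|P_1\dx\dt<\infty$ by Proposition~\ref{prop1}, and hence $\int_0^T\intr|u||\mathcal{L}_K\theta|\dx\dt<\infty$.

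For~\eqref{a.3}, the plan is to integrate by parts in time and then transfer $\mathcal{L}_K$ from $u$ to $\theta$ via the symmetry of $K$. For each fixed $x$, since $u(x,\cdot),\partial_t u(x,\cdot)\in C((0,T))$ and $\theta(x,\cdot)$ has compact support in $t$, classical integration by parts plus the equation $\partial_t u=-\mathcal{L}_K u$ give
\[
\int_0^T u(x,t)\partial_t\theta(x,t)\dt=-\int_0^T\partial_t u(x,t)\theta(x,t)\dt=\int_0^T\theta(x,t)\mathcal{L}_K u(x,t)\dt.
\]
For each fixed $t$, an elementary change of variables $x\mapsto x\pm y$ together with $K(-y)=K(y)$ yields
\[
\intr\theta(x,t)\Lambda u(x,y,t)\dx=\intr u(x,t)\Lambda\theta(x,y,t)\dx,
\]
so that after integrating against $K(y)\dy$ one formally gets $\int\theta\mathcal{L}_K u\dx=\int u\mathcal{L}_K\theta\dx$. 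Composing these two identities yields~\eqref{a.3}.

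The main obstacle is the rigorous justification of all these manipulations by Fubini--Tonelli. Everything hinges on showing
\[
\int_0^T\intr|\theta(x,t)|\intr|\Lambda u(x,y,t)|K(y)\dy\dx\dt<\infty.
\]
The inner $y$-integral is split into $\{|y|\le1\}$ and $\{|y|>1\}$. The singular part is controlled directly by hypothesis~\eqref{energyH} together with the compact support of $\theta$. In the tail, bound $|\Lambda u(x,y,t)|$ by $|u(x,t)|+\tfrac12(|u(x+y,t)|+|u(x-y,t)|)$: the diagonal term is handled by $\int_{\{|y|>1\}}K(y)\dy<\infty$ and the local integrability of $u$; for the translated terms, a change of variables $z=x\pm y$, the estimate $K(z-x)\le C P_1(z-x)$ for $|z-x|>1$ provided by~\eqref{Atotal}, and finally Lemma~\ref{v3.19} (which gives $P_1(z-x)\le C_R P_1(z)$ uniformly for $x$ in the spatial support of $\theta$) together with $u\in\luloc$ close the estimate. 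Once Fubini is justified, the two identities above combine to give~\eqref{a.3}, completing the verification.
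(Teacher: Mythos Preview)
Your proof is correct and follows essentially the same route as the paper: verify~\eqref{a.2} via Proposition~\ref{prop1}, integrate by parts in time using that $u$ is classical, and justify the Fubini step needed to pass $\mathcal{L}_K$ from $u$ to $\theta$ by splitting the $y$-integral at $|y|=1$, handling the singular part with~\eqref{energyH} and the tail by separating the diagonal and translated contributions. The only cosmetic difference is in the translated tail term: the paper bounds $|\theta(x-y,t)|\le C_\theta P_1(x-y)$ and then uses the semigroup property together with~\eqref{Atotal2} to get $P_1*P_1=P_2\le CP_1$, whereas you invoke Lemma~\ref{v3.19} directly; both arguments are equivalent here.
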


\begin{proof}
Since $u\in\luloc$, $u$ satisfies \eqref{a.2}; see Proposition~\ref{prop1}.

It remains to prove that $u$ satisfies \eqref{a.3}. Since $u$ is a classical solution,
\begin{equation*}
    0=\int_0^T\intr (\partial_tu\xt+\Lk u\xt)\theta\xt\dx\dy=\int_0^T\intr (-u\xt\partial_t\theta\xt+\Lk u\xt\theta\xt)\dx\dy
\end{equation*}
for all $\theta\in C^{\infty}_{\textup{c}}(\RN\times(0,T))$. If
\begin{equation}\label{a.30}
    \mathcal{C}:=\int_0^T\intr\intr|\theta\xt| |\Lambda u(x,y,t)|K(y)\dy\dx\dt<\infty,
\end{equation}
we would have, via Fubini's theorem, that
\begin{equation*}
    \int_0^T\intr\Lk u\xt\theta\xt\dx\dy=\int_0^T\intr u\xt\Lk\theta\xt\dx\dy,
\end{equation*}
and we would be done. However, proving~\eqref{a.30} is not straightforward, because of the singularity of L\'evy kernel~$K$ at the origin. It is here that we need hypothesis~\eqref{energyH}.

To prove~\eqref{a.30}, we make the decomposition $\mathcal{C}=\textup{I}+\textup{II}$, where
\begin{align*}
    \textup{I}&:=\int_0^T\intr\ints|\theta\xt|\,|\Lambda u(x,y,t)|K(y)\dy\dx\dt,\\
    \textup{II}&:=\int_{0}^{T}\intr\intb|\theta\xt|\,|\Lambda u(x,y,t)|K(y)\dy\dx\dt.
\end{align*}
The term $\textup{I}$ is finite, due to~\eqref{energyH}. As for $\textup{II}$, since $K$ is symmetric, then $\textup{II}\le \textup{II}_1+\textup{II}_2$, where
\begin{align*}
    \textup{II}_1&:=\int_{0}^{T}\intr|\theta\xt||u(x,t)|\dx\dt\,\intb K(y)\dy,\\
    \textup{II}_2&:=\int_{0}^{T}\intr\intb |\theta\xt|\,|u(x+y,t)|K(y)\dy\dx\dt\\
    &=\int_{0}^{T}\intr|u(x,t)|\intb |\theta(x-y,t)|K(y)\dy\dx\dt.	
\end{align*}	
As $u\in C(\Rn\times(0,T))$ and $K$ is a L\'evy kernel, then
\begin{equation*}
    \textup{II}_1\le \|\theta u\|_{L^1(\RN\times(0,T))}\intb K(y)\dy<\infty.
\end{equation*}

Let now $0<\varepsilon<T'<T$ such that $\operatorname{supp}(\theta)\subset \Rn\times(\varepsilon,T')$. Using~\eqref{Atotal}, \eqref{a.4},~\eqref{eq:semigroup.property} and \eqref{Atotal2},
\begin{align*}
    \textup{II}_2&\leq C_{\theta,T}\int_{\varepsilon}^{T'}\intr|u(x,t)|\intr P_1(x-y)P_1(y)\dy\dx\dt
    =C_{\theta,T}\int_{\varepsilon}^{T'}\intr|u(x,t)|P_2(x)\dx\dt\\
    &\leq C_{\theta,T}\int_{\varepsilon}^{T'}\intr|u(x,t)|P_{1}(x)\dx\dt<\infty,
\end{align*}
because $u\in \luloc$.
\end{proof}

\begin{corollary}\label{CorRepresentativeFormulaClassicalAcotadas}
    Let $\mathcal{L}=\mathcal{L}_K$ with $K$ satisfying~\eqref{A0}--\eqref{A2}.

    \noindent{\rm (i)} There is at most one classical solution $u\in\li$ to~\eqref{a.nonlocalheatequation} satisfying~\eqref{energyH} with initial trace a given Radon measure.

    \noindent{\rm (ii)} Let $K$ satisfy also~\eqref{A3}. If there exists a classical solution $u\in\li$ to~\eqref{a.nonlocalheatequation} satisfying~\eqref{energyH} with initial trace a measure $\mu_0$ verifying~\eqref{q4}, then $u$ is given by the representation formula~\eqref{representationformula}.
\end{corollary}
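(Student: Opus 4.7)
The plan is to reduce both statements to the corresponding results already established for very weak solutions, using Proposition~\ref{cprop1} as the bridge from classical to very weak solutions. First I would note that since $u\in\li$ and $P_1>0$ is locally bounded below, we automatically have $u\in\luloc$, so the integrability hypothesis in Proposition~\ref{cprop1} is free. Combined with the assumed regularity condition~\eqref{energyH}, Proposition~\ref{cprop1} then upgrades $u$ to a very weak solution of~\eqref{a.nonlocalheatequation} in the sense of Definition~\ref{defWS}. Importantly, the notion of initial trace is the same (Definition~\ref{deftrace}) for both classes of solutions, so the trace of $u$ as a classical solution is also its trace as a very weak solution.

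For part (i), once $u$ is identified as a very weak solution in $\li$ with the prescribed trace, the uniqueness result for very weak solutions within $\li$, Corollary~\ref{UniquenessAcotadas}, applies directly: any two such classical solutions with the same initial trace must coincide almost everywhere, and since both are continuous they must coincide everywhere in $\RN\times(0,T)$.

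For part (ii), I would argue as follows. Under the additional hypothesis~\eqref{A3} and~\eqref{q4} on $\mu_0$, Theorem~\ref{prop2} furnishes the candidate $U$ given by~\eqref{q5}, which belongs to $\li$ and is a very weak solution with initial trace $\mu_0$. On the other hand, the classical solution $u$, by the argument above, is also a very weak solution in $\li$ with the same initial trace $\mu_0$. Invoking Corollary~\ref{UniquenessAcotadas} once more, or equivalently the representation result Corollary~\ref{CorRepresentationFormulaVW}, we conclude that $u=U$ a.e., hence everywhere by continuity, which is precisely the representation formula~\eqref{representationformula}.

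There is essentially no obstacle here: the entire content is packaged in earlier results, and the only small point to check is that membership of $u$ in $\li$ (rather than merely $\luloc$) is preserved when passing through Proposition~\ref{cprop1}, which is immediate since the space $\li$ is unchanged by that proposition. Note also that no sign condition on $u$ is needed, so~\eqref{A3} is only invoked in (ii) through its use in Theorem~\ref{prop2} and Corollary~\ref{CorRepresentationFormulaVW}; in (i), assumptions~\eqref{A0}--\eqref{A2} suffice because uniqueness in $\li$ is obtained via the duality method of Theorem~\ref{UniquenessSoria} without appeal to~\eqref{A3}.
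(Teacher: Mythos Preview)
Your proposal is correct and follows essentially the same route as the paper: use the inclusion $\li\subset\luloc$ together with Proposition~\ref{cprop1} to pass from classical to very weak, then invoke Corollary~\ref{UniquenessAcotadas} for (i) and Corollary~\ref{CorRepresentationFormulaVW} for (ii). One minor inaccuracy in your closing remark: neither Theorem~\ref{prop2} nor the $\li$-branch of Corollary~\ref{CorRepresentationFormulaVW} actually uses~\eqref{A3}, so the hypothesis~\eqref{A3} in part (ii) of the statement is in fact not needed for the argument you (and the paper) give.
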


\begin{proof}
(i) Suppose that $u\in\li$ is a classical solution to~\eqref{a.nonlocalheatequation} that satisfies~\eqref{energyH}. Since  $ \li\subset\luloc$, then $u$ is a very weak solution of \eqref{a.nonlocalheatequation}; see Proposition~\ref{cprop1}. Uniqueness now follows from Corollary~\ref{UniquenessAcotadas}, since $u\in\li$.
	
\noindent (ii) It follows immediately from Corollary~\ref{CorRepresentationFormulaVW}, since $u$ is a very weak solution of~\eqref{a.nonlocalheatequation}.
\end{proof}

We move on now to the class of nonnegative classical solutions $u$ to~\eqref{a.nonlocalheatequation} satisfying~\eqref{energyH}. We will prove that $u\in\luloc$, whence, by Proposition~\ref{cprop1}, it will be a very weak nonnegative solution, from where uniqueness and a representation formula will follow. The aforementioned integrability will be a consequence of the inequality
\begin{equation}\label{a.16}
    U(x,t):=\intr P_{t}(x-y)\dmu_{0}(y)\leq u\xt \quad\textrm{for all }\xt\in\Rn\times (0,T),
\end{equation}
satisfied by nonnegative classical solutions, even if~\eqref{energyH} does not hold. The first step to prove~\eqref{a.16} is to obtain a maximum principle, which we do next. This only requires~\eqref{A0}.

\begin{lemma}[A maximum principle] \label{a.mp}
    Let $\mathcal{L}=\mathcal{L}_K$ with $K$ satisfying~\eqref{A0}. Let $\Omega\subset\RN$ be non-empty, open, bounded and smooth, and $v\in C(\overline{\Omega}\times[0,T))$ such that $\partial_t v\in C(\Omega\times(0,T))$ and $\Lk v\xt$ is defined for all $\xt\in\Omega\times (0,T)$. If
	\begin{equation}\label{eq:ineq.MP}
		\begin{cases}
			(\partial_t v+\Lk v)(x,t) \leq 0\quad&\textrm{for all }\xt\in\Omega\times(0,T),\\
			v\xt\leq 0&\textrm{for all }(x,t)\in\big(\Rn\times[0,T)\big)\setminus\big(\Omega\times(0,T)\big),
		\end{cases}
	\end{equation}
	then $v\leq 0$ for all $x\in\mathbb{R}^d\times[0,T)$.
\end{lemma}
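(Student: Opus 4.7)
The approach is the standard parabolic argument combined with the positivity of $K$, adapted to the nonlocal setting. Fix $\tau \in (0,T)$ and $\varepsilon > 0$, and set $w(x,t) := v(x,t) - \varepsilon t$. The plan is to show that $w \le 0$ on $\overline{\Omega}\times[0,\tau]$, then let $\varepsilon \to 0^+$ and $\tau \to T^-$; combined with the hypothesis $v \le 0$ outside $\Omega\times(0,T)$, this yields the conclusion.

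Since $w$ is continuous on the compact set $\overline{\Omega}\times[0,\tau]$, it attains its maximum at some point $(x_0,t_0)$. Arguing by contradiction, assume $w(x_0,t_0) > 0$. By hypothesis \eqref{eq:ineq.MP}, $w \le -\varepsilon t \le 0$ on the parabolic boundary $(\partial\Omega\times[0,\tau])\cup(\overline{\Omega}\times\{0\})$, so necessarily $x_0 \in \Omega$ and $t_0 \in (0,\tau]$. At this point:
\begin{itemize}
    \item $\partial_t w(x_0,t_0) \ge 0$, either because $t_0$ is an interior time (so $\partial_t w(x_0,t_0)=0$) or because $t_0=\tau$ is a right-endpoint maximum of a $C^1$ function of time;
    \item since $w(x_0,t_0) > 0 \ge v(z,t_0) - \varepsilon t_0 = w(z,t_0)$ for every $z \notin \Omega$, and $w(x_0,t_0) \ge w(z,t_0)$ for $z \in \overline{\Omega}$ by the maximum property, we have $w(x_0,t_0) \ge w(x_0 \pm y, t_0)$ for \emph{every} $y \in \mathbb{R}^d$, hence $\Lambda w(x_0,y,t_0) \ge 0$ for a.e.\ $y$. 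As $K > 0$ a.e.\ by \eqref{A0}, this gives $\Lk w(x_0,t_0) \ge 0$ (the integral is well-defined because $\Lk v(x_0,t_0)$ is, and $\Lk w = \Lk v$ as $\varepsilon t$ is spatially constant and thus annihilated by $\Lk$).
\end{itemize}
Consequently $(\partial_t w + \Lk w)(x_0,t_0) \ge 0$. On the other hand, the equation for $v$ and $\partial_t w = \partial_t v - \varepsilon$ yield
\begin{equation*}
    (\partial_t w + \Lk w)(x_0,t_0) = (\partial_t v + \Lk v)(x_0,t_0) - \varepsilon \le -\varepsilon < 0,
\end{equation*}
a contradiction. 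Therefore $\max_{\overline{\Omega}\times[0,\tau]} w \le 0$, i.e.\ $v(x,t) \le \varepsilon t$ on this set. Letting $\varepsilon \to 0^+$ and then $\tau \to T^-$, we conclude $v \le 0$ on $\overline{\Omega}\times[0,T)$, which together with the boundary/initial assumption gives $v \le 0$ on $\mathbb{R}^d\times[0,T)$.

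The only delicate point is the nonlocal elliptic inequality $\Lk w(x_0,t_0) \ge 0$: unlike in the local case, the value of $\Lk w$ at a maximum depends on the values of $w$ across the entire space, not just locally. Here this is handled gracefully by the sign hypothesis \eqref{eq:ineq.MP}(ii), which forces $w \le 0$ outside $\Omega\times(0,T)$, so a positive interior maximum is automatically a global maximum. The strict positivity $K > 0$ a.e.\ from \eqref{A0} is what guarantees the inequality is preserved after integration; no scaling or growth assumptions on $K$ are needed.
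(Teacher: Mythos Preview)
Your proof is correct, but it follows a different route from the paper's. The paper works directly with $v$ (no $\varepsilon$-perturbation): assuming the maximum of $v$ over $\bar\Omega\times[0,T']$ is positive at some interior $(x_0,t_0)$, it observes $\partial_t v(x_0,t_0)\ge 0$ and $\Lambda v(x_0,y,t_0)\ge 0$ for all $y$, and then restricts the integral defining $\mathcal{L}_K v(x_0,t_0)$ to the tail $\{|y|\ge 2R_0\}$ where $\Omega\subset B_{R_0}$. On that set $v(x_0\pm y,t_0)\le 0$, so $\Lambda v(x_0,y,t_0)\ge v(x_0,t_0)>0$, and the strict positivity of $K$ yields the strict inequality $0\ge \mathcal{L}_K v(x_0,t_0)\ge v(x_0,t_0)\int_{|y|\ge 2R_0}K(y)\,dy>0$.

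The trade-off: your $\varepsilon$-shift is the textbook parabolic device and obtains strictness from the $-\varepsilon$ term; it only needs $K\ge 0$ to conclude $\mathcal{L}_K w(x_0,t_0)\ge 0$. The paper's argument instead extracts strictness from the nonlocal tail of $K$, which genuinely uses that $\int_{|y|\ge R}K(y)\,dy>0$ for large $R$. This distinction matters later in the paper: when the result is generalized to measures $\nu$ (Lemma~\ref{anmp3}), the paper must add the hypothesis $\nu\big(B_R(0)^{\textup{c}}\big)>0$ precisely to make its tail argument go through, whereas your approach would not require it.
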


\begin{proof}
Choose any $T'\in (0,T)$. Let $(x_0,t_0)\in \bar{\Omega}\times[0,T']$ such that
\begin{equation*}
	v(x_0,t_0):=\max\limits_{\bar{\Omega}\times[0,T']}v\xt.
\end{equation*}	
Assume for contradiction that $v(x_0,t_0)>0$. Thanks to the \lq\lq boundary conditions'', necessarily $(x_0,t_0)\in\Omega\times(0,T']$. Observe that $\partial_t v(x_0,t_0)\geq0$ and $\Lambda v(x_0,y,t_0)\geq 0$ for all $y\in \RN$, because $v$ reaches it maximum at time $t_{0}$ at $x=x_{0}$. Hence, for any $R_0>0$,
\begin{equation*}
	0\ge-\partial_t v(x_0,t_0)\geq \Lk v (\xoo,\too)\geq \int_{B_{2R_{0}}^{\textup{c}}(0)}\Lambda v(x_{0},y,t_0)K(y)\dy.
\end{equation*}
Let $R_{0}>0$ be such that $\Omega\subset B_{R_{0}}(0)$, and $|y|> 2R_{0}$. Then, $|x_{0}\pm y|>R_0$, whence $x_0\pm y\not\in \Omega$ and therefore $v(x_0\pm y,t_0)\leq 0$. Thus,
\begin{equation*}
	\label{a.mp1}	0\ge \int_{B_{2R_{0}}^{\textup{c}}(0)}\Lambda v(x_{0},y,t_{0})K(y)\dy \geq v(\xoo,\too)\int_{B_{2R_{0}}^{\textup{c}}(0)}K(y)\dy>0,
\end{equation*}
because $K$ is positive, see \eqref{A0}, and we arrive at a contradiction.
\end{proof}

We now proceed to prove~\eqref{a.16}.

\begin{lemma}\label{a.cor4}
    Let $\mathcal{L}=\mathcal{L}_K$ with $K$ satisfying~\eqref{A0}--\eqref{A2}. If $u$ is a nonnegative classical solution of the nonlocal heat equation~\eqref{a.nonlocalheatequation} with initial data a nonnegative Radon measure $\mu_0$, then~\eqref{a.16} holds.
\end{lemma}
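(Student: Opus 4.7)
My strategy is to first prove the semigroup-type inequality
\[
u(x,\tau)\ge\intr P_{\tau-t}(x-y)u(y,t)\,\textup{d}y\qquad\text{for all }0<t<\tau<T,\ x\in\mathbb{R}^d,\tag{$\ast$}
\]
using the classical solution property of $u$ together with a maximum principle argument on solutions starting from the truncated bounded data $u(\cdot,t)\phi_R$. Once $(\ast)$ is established, I will let $t\to0^+$ and exploit the definition of initial trace to conclude $U\le u$.

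\textbf{Step 1 (Maximum principle for $(\ast)$).} Fix $0<t<\tau<T$ and a smooth cutoff $\phi_R\in C^\infty_{\textup{c}}(\mathbb{R}^d)$ with $\phi_R\nearrow 1$ as $R\to\infty$. Set
\[
V_R(x,\sigma):=\intr P_\sigma(x-y)\,u(y,t)\phi_R(y)\,\textup{d}y,\qquad(x,\sigma)\in\Rn\times[0,T-t).
\]
Since $u(\cdot,t)\phi_R\in C_{\textup{c}}(\Rn)\subset C_0(\Rn)$, $V_R$ is a bounded, nonnegative classical solution of \eqref{a.nonlocalheatequation} (by Proposition~\ref{pprop2}, the smoothing properties of Corollary~\ref{pcor1}, and the semigroup identities), and $V_R(\cdot,\sigma)\to u(\cdot,t)\phi_R$ uniformly as $\sigma\to0^+$ by the Feller property. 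Because the initial data is compactly supported, $V_R(x,\sigma)\to 0$ as $|x|\to\infty$ uniformly on $\sigma\in[0,\tau-t]$.

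Define $W:=V_R(\cdot,\cdot)-u(\cdot,t+\cdot)\in C(\Rn\times[0,T-t))$, which satisfies $(\partial_\sigma+\mathcal{L}_K)W=0$. Since $u\ge 0$ and $\phi_R\le 1$, $W(x,0)=u(x,t)(\phi_R(x)-1)\le 0$; since $u\ge 0$, also $W\le V_R$, so $\limsup_{|x|\to\infty}W(x,\sigma)\le 0$ uniformly in $\sigma$. If $M:=\sup_{\Rn\times[0,\tau-t]}W>0$, then this supremum is attained at some $(x_0,\sigma_0)$ with $\sigma_0>0$. At such a point $\partial_\sigma W(x_0,\sigma_0)\ge 0$ and $\Lambda W(x_0,y,\sigma_0)\ge 0$ for all $y$; since $K>0$ a.e.\ by \eqref{A0} and $W(\cdot,\sigma_0)\not\equiv M$ (as $W$ vanishes at infinity), we have $\mathcal{L}_KW(x_0,\sigma_0)>0$, contradicting the equation. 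Hence $W\le 0$, i.e., $V_R\le u(\cdot,t+\cdot)$. Letting $R\to\infty$ using the Monotone Convergence Theorem yields $(\ast)$.

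\textbf{Step 2 (Passage to the initial trace).} Fix $(x_*,s_*)\in\Rn\times(0,T)$ and $\chi_R\in C_{\textup{c}}(\mathbb{R}^d)$ with $0\le\chi_R\le\mathcal{X}_{B_R(0)}$. Applying $(\ast)$ with $\tau=s_*$ and $t\in(0,s_*)$, and restricting the integrand,
\[
u(x_*,s_*)\ge\intr\chi_R(y)\,P_{s_*-t}(x_*-y)\,u(y,t)\,\textup{d}y.
\]
By continuity of $P$ (Corollary~\ref{pcor1}), $\chi_R(y)P_{s_*-t}(x_*-y)\to\chi_R(y)P_{s_*}(x_*-y)$ uniformly in $y$ as $t\to0^+$, and the limit lies in $C_{\textup{c}}(\Rn)$. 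The hypothesis that $\mu_0$ is the initial trace of $u$ in the sense of Definition~\ref{deftrace}, together with $\mu_0$ being locally finite, implies $\int_{B_{R'}}u(\cdot,t)\,\textup{d}y$ is bounded as $t\to0^+$ for any $R'$, so
\[
u(x_*,s_*)\ge\lim_{t\to0^+}\intr\chi_R(y)\,P_{s_*-t}(x_*-y)\,u(y,t)\,\textup{d}y=\intr\chi_R(y)\,P_{s_*}(x_*-y)\,\textup{d}\mu_0(y).
\]
Letting $\chi_R\nearrow\mathcal{X}_{B_R(0)}$ and then $R\to\infty$ with the Monotone Convergence Theorem gives $u(x_*,s_*)\ge U(x_*,s_*)$, which is \eqref{a.16}.

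\textbf{Main obstacle.} The delicate point is the maximum principle in Step~1: neither $V_R$ nor $u(\cdot,t+\sigma)$ is controlled on the complement of any bounded set in a way that makes Lemma~\ref{a.mp} directly applicable. The workaround relies on the decay of $V_R$ at spatial infinity (forcing the supremum of $W$ to be attained at an interior point) and on the strict positivity of $K$ in \eqref{A0} to upgrade $\mathcal{L}_K W(x_0,\sigma_0)\ge 0$ to a strict inequality, thereby excluding a positive supremum.
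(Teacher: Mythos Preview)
Your proof is correct and follows essentially the same strategy as the paper: truncate $u(\cdot,t)$ with a cutoff, compare $V_R$ with $u(\cdot,t+\cdot)$ via a maximum principle, let $R\to\infty$ by monotone convergence, and finally pass to the limit $t\to0^+$ using the initial trace. The only noteworthy differences are in execution. First, for the maximum principle you argue directly on the whole space (a positive supremum of $W$ must be attained since $V_R\to0$ at infinity, and at the maximum point strict positivity of $K$ forces $\mathcal{L}_KW>0$), whereas the paper uses the $\varepsilon$-trick: it exploits the bound $v_R^\delta\le C_{R,T}\|u(\cdot,\delta)\|_{L^\infty(B_{2R})}P_1(x)$ from \eqref{a.5} to find a large ball outside which $v_R^\delta\le\varepsilon$, then applies Lemma~\ref{a.mp} on that ball to $v_R^\delta-u(\cdot,\cdot+\delta)-\varepsilon$ and lets $\varepsilon\to0$. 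Second, the paper keeps a single truncation $\psi_R$ throughout and lets $\delta\to0^+$ \emph{before} $R\to\infty$ (so the test function $y\mapsto P_t(x-y)\psi_R(y)$ used in the trace step is directly in $C_{\textup{c}}$), while you first pass to $(\ast)$ and then re-truncate with $\chi_R$; this forces you to control the extra error term by noting that $\int_{B_{R'}}u(\cdot,t)$ stays bounded as $t\to0^+$, which is fine but slightly less clean. Both routes work; the paper's packaging via Lemma~\ref{a.mp} and \eqref{a.5} is a bit more systematic, while your direct argument is self-contained.
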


\begin{proof}
Let $\delta\in (0,T)$. Since $u$ is a classical solution, $u(\cdot,\delta)\in C(\Rn)$. We consider the family of cutoff functions $\{\psi_{R}\}_{R>0}$ with $\psi_R$ as in~\eqref{cutoffspace}. Let
\begin{equation*}
	v_{R}^{\delta}\xt=\intr P_{t}(x-y)\psi_{R}(y)u(y,\delta)\dy.
\end{equation*}
Since the heat kernel $P\in C^{\infty}(\Rn\times(0,\infty))$, see Corollary~\ref{pcor1},  and $\big(\mathcal{P}_{t}\big)_{t\geq 0}$ is a strongly continuous semigroup in $C_{0}(\Rn)$, then $v_{R}^{\delta}\in C^{\infty}(\Rn\times[0,T))$, and solves
\begin{equation*}
	\begin{cases}
		\partial_{t}v_{R}^{\delta}+\Lk v_{R}^{\delta}=0\quad&\textrm{everywhere in }\Rn\times (0,T),\\
		v_{R}^{\delta}\geq 0&\text{everywhere in }\Rn\times [0,T),\\
		v_{R}^{\delta}(\cdot,0)=\psi_{R}(\cdot)u(\cdot,\delta)&\textrm{everywhere in }\Rn.
	\end{cases}
\end{equation*}
By~\eqref{a.5},
\begin{equation*}
	0\leq v_{R}^{\delta}(x,t)\leq  \|u(\cdot,\delta)\|_{L^{\infty}(B_{2R}(0))}(P_{t}*\psi_{R})(x)
	\leq C_{R, T}\|u(\cdot,\delta)\|_{L^{\infty}(B_{2R}(0))}P_{1}(x).
\end{equation*}
On the other hand, $P_{1}(x)\to 0$ as $|x|\to \infty$, since $P_{1}$ is comparable to $K$ away from the origin, \eqref{Atotal}, and $K(x)\to 0$ as $|x|\to\infty$, see \eqref{A1} and \eqref{A2}. Therefore, given $\varepsilon>0$ we can choose $\rho>0$ large such that
$C_{R,T}\|u(\cdot,\delta)\|_{L^{\infty}(B_{2R}(0))}P_{1}(x)\leq \varepsilon$ for all $|x|\geq \rho$. Hence, since $u$ is nonnegative,
\begin{equation*}
	\begin{array}{ll}
		0\leq v_{R}^{\delta}(x,t)\leq\varepsilon\leq u(x,t+\delta)+\varepsilon\quad&\textrm{for any }|x|\geq\rho,\,t\in(0,T-\delta)\\[6pt]
		v_{R}^{\delta}(x,0)=\psi_{R}(x)u(x,\delta)\leq u(x,\delta)\leq u(x,\delta)+\varepsilon\quad&\textrm{for any }x\in\Rn.
	\end{array}
\end{equation*}
Thus, applying the Maximum Principle, Lemma~\ref{a.mp}, with $\Omega=B_{\rho}(0)$ to $w:=v_{R}^{\delta}-u^{\delta}-\varepsilon$, we get
\begin{equation*}
	v_{R}^{\delta}\xt\leq u(x,t+\delta)+\varepsilon\quad\textrm{for all }\xt\in\Rn\times[0,T-\delta).
\end{equation*}
Since $\varepsilon$ is arbitrary,
\begin{equation*}\label{a.15}
    v_{R}^{\delta}\xt=	\intr P_{t}(x-y)\psi_{R}(y)u(y,\delta)\dy\leq u(x,t+\delta)\quad\textrm{for all }\xt\in\Rn\times[0,T-\delta).
\end{equation*}
Using that the initial trace of $u$ is $\mu_0$, for all $(x,t)\in\Rn\times (0,T)$ we have
\begin{equation*}
	\intr P_{t}(x-y)\psi_{R}(y)\dmuo(y)=\operatornamewithlimits{ess\, lim}_{\delta\to0^+} \intr P_{t}(x-y)\psi_{R}(y)u(y,\delta)\dy
    \leq \lim\limits_{\delta\to0^{+}}u(x,t+\delta)=u(x,t),
\end{equation*}
since $u\in C(\Rn\times(0,T))$. Letting $R\to\infty$, using the MCT, we arrive at~\eqref{a.16}.
\end{proof}

\begin{remark}\label{remark1}	
    Thanks to inequality \eqref{a.16} and the fact that $\mu_{0}$ is a nonnegative measure, the integral $\intr P_{t}(x-y)\dmuo(y)$  is well defined as long as the nonnegative classical solution $u$ exists. Moreover, $\intr P_{1}\dmuo\leq u(0,1)<\infty$, because $P_{1}$ is symmetric. Hence, we do not need to assume the integrability condition~\eqref{eq:growth.initial.trace} in advance in order for $U$ to be well defined.
\end{remark}

We can prove now the desired integrability, and even a bit more than required.

\begin{corollary}\label{a.cor2}
    Let $\mathcal{L}=\mathcal{L}_K$ with $K$ satisfying~\eqref{A0}--\eqref{A2}. If $u$ is a nonnegative classical solution of the nonlocal heat equation \eqref{a.nonlocalheatequation} with initial trace a nonnegative Radon measure $\mu_{0}$, then $u\in L^{1}_{\rm loc}([0,T);\lp)$.
\end{corollary}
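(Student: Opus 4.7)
The plan is to apply Lemma~\ref{a.cor4} to suitable time shifts of $u$ and then convert the resulting pointwise bound into an $\lp$ bound using the continuity of $u$ and the comparability of heat kernels at different times.

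For each $\delta\in(0,T)$, the function $\tilde u_\delta(x,s):=u(x,s+\delta)$ is a nonnegative classical solution of \eqref{a.nonlocalheatequation} on $\mathbb{R}^d\times(0,T-\delta)$, and by the continuity of $u$ its initial trace, in the sense of Definition~\ref{deftrace}, is the nonnegative Radon measure $u(\cdot,\delta)(x)\,{\rm d}x$, which is locally finite because $u(\cdot,\delta)\in C(\mathbb{R}^d)$. Applying Lemma~\ref{a.cor4} to $\tilde u_\delta$ therefore yields
\begin{equation*}
    \int_{\mathbb{R}^d}P_s(x-y)u(y,\delta)\,{\rm d}y\le u(x,s+\delta)\quad\text{for all }(x,s)\in\mathbb{R}^d\times(0,T-\delta).
\end{equation*}

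Given $T'\in(0,T)$, I would fix $t_*\in(0,T-T')$, take $x=0$ and $s=t_*$, and use the symmetry of $P_{t_*}$ to obtain
\begin{equation*}
    \int_{\mathbb{R}^d}P_{t_*}(y)u(y,\delta)\,{\rm d}y\le u(0,t_*+\delta)\quad\text{for all }\delta\in(0,T').
\end{equation*}
The right-hand side is bounded uniformly in $\delta\in(0,T')$ by $M:=\max_{s\in[t_*,t_*+T']}u(0,s)<\infty$, since $u(0,\cdot)\in C((0,T))$ and $[t_*,t_*+T']$ is a compact subset of $(0,T)$. Combining this with the comparability $P_1\le C_{t_*}P_{t_*}$ given by~\eqref{Atotal2}, one obtains
\begin{equation*}
    \sup_{\delta\in(0,T')}\int_{\mathbb{R}^d}P_1(y)u(y,\delta)\,{\rm d}y\le C_{t_*}M,
\end{equation*}
which is in fact the stronger statement $u\in L^\infty((0,T');\lp)$ for every $T'<T$, and in particular $u\in L^1_{\rm loc}([0,T);\lp)$.

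I do not expect any serious obstacle here, since the argument is essentially a direct application of Lemma~\ref{a.cor4} after the time shift. The only minor point requiring careful verification is that $u(\cdot,\delta)(x)\,{\rm d}x$ really is the initial trace of $\tilde u_\delta$; this follows at once from the joint continuity of $u$ together with dominated convergence against compactly supported test functions, using that $u$ is bounded on sets of the form $\operatorname{supp}(\psi)\times[\delta/2,3\delta/2]$.
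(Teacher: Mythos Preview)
Your proof is correct and follows essentially the same approach as the paper: both apply Lemma~\ref{a.cor4} to the time-shifted solutions $u(\cdot,\cdot+\delta)$, evaluate the resulting inequality at $x=0$, and then use the comparability~\eqref{Atotal2} to pass from $P_{t_*}$ to $P_1$. The only minor difference is that the paper integrates the inequality over $\tau\in(0,T')$ and bounds by $\int_{T''-T'}^{T''}u(0,s)\,{\rm d}s$, whereas you take a supremum over $\delta$ and bound by $\max_{[t_*,t_*+T']}u(0,\cdot)$; your version is slightly cleaner and yields the marginally stronger conclusion $u\in L^\infty_{\rm loc}([0,T);\lp)$.
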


\begin{proof}
Since $u$ is nonnegative, it is enough to prove that for all $T'\in(0,T)$ we have
\begin{equation*}
	\int_{0}^{T'}\intr P_{1}(y)u(y,\tau)\dy\,{\rm d}\tau<\infty.
\end{equation*}

Let $\tau\in (0,T')$ and $u^\tau(x,t):=u(x,t+\tau)$, $(x,t)\in\Rn\times[0,T-\tau)$. Since $u$ is a classical solution to~\eqref{a.nonlocalheatequation}, so it is $u^{\tau}$, for $t\in (0,T-\tau)$. The initial trace of $u^\tau$ is $u(\cdot,\tau)$. By Lemma~\ref{a.cor4},
\begin{equation*}
	\intr P_{t}(x-y)u(y,\tau)\dy\leq u(x,t+\tau)\quad\textrm{for all }\xt\in\Rn\times(0,T-\tau)
\end{equation*}
which implies that
\begin{equation}\label{eq:estimate.for.integrability}
    \int_{0}^{T'}\intr P_{t}(x-y)u(y,\tau)\dy\,{\rm d}\tau\leq \int_{t}^{t+T'}u(x,s)\,{\rm d}s.
\end{equation}
Let $T''\in (T',T)$. By \eqref{Atotal2} we know that $P_1(y)\leq C_{T',T''} P_{T''-T'}(y)$ for all $y\in\Rn$.
Then, since $P_t$ is symmetric for all times, using the estimate~\eqref{eq:estimate.for.integrability} with $x=0$, we get
\begin{align*}
	\int_{0}^{T'}\intr P_{1}(y)u(y,\tau)\dy\,{\rm d}\tau&\leq  C_{T',T''}\int_{0}^{T'}\intr P_{T''-T'}(-y)u(y,\tau)\dy\,{\rm d}\tau\\
    &\leq  C_{T''-T'}\int_{T''-T'}^{T''}u(0,\tau)\,{\rm d}\tau<\infty,
\end{align*}
due to the fact that  $u(0,\cdot)\in C([T''-T',T''])$.
\end{proof}

At this point we have gathered all the necessary ingredients to prove what may be considered as one of the main outcomes of the paper: a Widder-type representation formula (implying uniqueness) for nonnegative classical solutions (assuming that the \lq\lq energy assumption''~\eqref{energyH} holds).

\begin{theorem} \label{thm:main.K}
    Let $\mathcal{L}=\mathcal{L}_K$ with $K$ satisfying~\eqref{A0}--\eqref{A3}. Let $u$ be a nonnegative classical solution of the nonlocal heat equation~\eqref{a.nonlocalheatequation} with initial trace a nonnegative Radon measure $\mu_{0}$. If~\eqref{energyH} holds,  then $u$ is given by the representation formula~\eqref{representationformula}.
\end{theorem}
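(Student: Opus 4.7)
The proof reduces to chaining together the results already obtained for nonnegative very weak solutions, so the plan is short: first upgrade $u$ to a nonnegative very weak solution, then apply the representation formula for that class.

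\textbf{Step 1: integrability.} Since $u\ge0$ is a classical solution with initial trace the nonnegative Radon measure $\mu_0$, Corollary~\ref{a.cor2} applies (it only uses~\eqref{A0}--\eqref{A2} and no energy condition) and yields $u\in L^1_{\rm loc}([0,T);L^1_{P_1})$. Internally this rests on the pointwise comparison $U\le u$ of Lemma~\ref{a.cor4}, itself a consequence of the maximum principle, Lemma~\ref{a.mp}.

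\textbf{Step 2: $u$ is a very weak solution.} With $u\in L^1_{\rm loc}([0,T);L^1_{P_1})\subset\luloc$ in hand, and the hypothesis~\eqref{energyH} on the nonlocal \lq\lq energy'', the two inputs of Proposition~\ref{cprop1} are met; hence $u$ is a very weak solution of~\eqref{a.nonlocalheatequation} in the sense of Definition~\ref{defWS}. Note that~\eqref{energyH} is essential here: it controls the small-$y$ piece of the double integral $\int_0^T\!\int_{\mathbb{R}^d}|\theta|\,|\Lambda u|K\,\textup{d}y\,\textup{d}x\,\textup{d}t$, i.e., the part that cannot be handled by $L^1_{P_1}$-integrability alone.

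\textbf{Step 3: trace admissibility.} The assumption that $\mu_0$ is the initial trace of $u$ is preserved under this change of viewpoint. Since $u$ is now a \emph{nonnegative} very weak solution and $K$ satisfies~\eqref{A0}--\eqref{A3}, Theorem~\ref{theoremtraces} guarantees that this trace automatically verifies the integral growth condition $\int_{\mathbb{R}^d}P_1\,\textup{d}\mu_0<\infty$, i.e.,~\eqref{q4}.

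\textbf{Step 4: representation formula.} All hypotheses of Corollary~\ref{CorRepresentationFormulaVW} in its nonnegative branch are now met: $K$ satisfies~\eqref{A0}--\eqref{A3}, $u$ is a nonnegative very weak solution with trace $\mu_0$, and $\mu_0$ fulfils~\eqref{q4}. That corollary gives $u(x,t)=\int_{\mathbb{R}^d}P_t(x-y)\,\textup{d}\mu_0(y)$ for $(x,t)\in\mathbb{R}^d\times(0,T)$, which is the desired formula~\eqref{representationformula}.

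The only nontrivial ingredient specific to this theorem (beyond combining earlier results) is the use of~\eqref{energyH} in Step~2; everything else is a direct appeal to previously established statements. There is no real obstacle to overcome: once the classical solution is shown to be a very weak solution, the full Widder-type machinery for nonnegative very weak solutions developed in the previous subsection takes over.
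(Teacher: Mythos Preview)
Your proof is correct and follows essentially the same route as the paper's. The only minor difference is in Step~3: the paper obtains the growth condition~\eqref{eq:growth.initial.trace} directly from Remark~\ref{remark1} (i.e., from the pointwise comparison $U\le u$ already used in Step~1, which gives $\int P_1\,\textup{d}\mu_0\le u(0,1)<\infty$), whereas you invoke the heavier Theorem~\ref{theoremtraces}; both work, but the paper's shortcut is more economical.
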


\begin{proof}
Thanks to its nonnegativity, $u\in \luloc$; see Corollary~\ref{a.cor2}. Therefore, since $u$ satisfies~\eqref{energyH}, it is  a nonnegative very weak solution of \eqref{a.nonlocalheatequation}; see Proposition~\ref{cprop1}. Finally, the integral growth condition~\eqref{eq:growth.initial.trace} holds; see Remark~\ref{remark1}. Thus, $u$ satisfies the hypotheses of Corollary~\ref{CorRepresentationFormulaVW}, from where the result follows immediately.
\end{proof}

\subsubsection{Existence of classical solutions}

We would like to prove now that given a Radon measure $\mu_0$ satisfying the integral growth condition~\eqref{q4}, there is a classical solution to~\eqref{a.nonlocalheatequation} having it as initial trace. We have a natural candidate: the function $U(x,t)=P_t*\mu_0$, which is a very weak solution of the equation and has $\mu_0$ as initial trace; see~Theorem~\ref{prop2}. Moreover, it belongs to the class $\li$ in which there is uniqueness. However, proving that $U$ is a classical solution is not a trivial task, and will require some assumptions, that we will write in terms of the heat kernel associated to the operator. These assumptions are satisfied, for instance, when the operator is the fractional Laplacian.

\begin{theorem}\label{Uclassical}
    Let $\mathcal{L}=\mathcal{L}_K$ with $K$ satisfying~\eqref{A0}--\eqref{A2}, $\mu_{0}$ a Radon measure satisfying the integral growth condition~\eqref{q4}, and $U$ the very weak solution to~\eqref{a.nonlocalheatequation} given in~\eqref{q5}. Suppose that for each $T>0$ and $\varepsilon\in(0,T)$ there is a constant $C>0$ such that
    \begin{equation}\label{eq:conditions.existence.classical}
    	|\partial_{t} P_{t}(x)|\leq CP_{1}(x), \quad \int_{\mathbb{R}^d}|\Lambda P(x,y,t)| K(y)\,\textup{d}y\le C P_1 (x)\quad\text{for all }\xt\in \Rn\times(\varepsilon,T).
    \end{equation}

    \noindent\textup{(i)} Then, $U$ is a classical solution to~\eqref{a.nonlocalheatequation} within $\li$ and with initial trace $\mu_{0}$.

    \noindent\textup{(ii)} If in addition for each $T>0$ and $\varepsilon\in(0,T)$ there is a constant $C>0$ such that
    \begin{equation}\label{eq:stronger.conditions.existence.classical}
        |D^{2} P_{t}(x)|\leq CP_{1}(x)\quad\text{for all }\xt\in \Rn\times(\varepsilon,T),
    \end{equation}
    then $U$ is the unique classical solution to~\eqref{a.nonlocalheatequation} with initial trace $\mu_{0}$ that satisfies~\eqref{energyH} and belongs to $\li$.

    \noindent\textup{(iii)} If, in addition to all the above, $K$ satisfies~\eqref{A3} and $\mu_0$ is nonnegative, then $U$ is the unique nonnegative classical solution to~\eqref{a.nonlocalheatequation} with initial trace $\mu_0$ that satisfies~\eqref{energyH}.
\end{theorem}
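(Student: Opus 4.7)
The guiding principle is to exploit Theorem~\ref{prop2}, which already establishes that $U\in C(\Rn\times(0,T))\cap\li$ is a very weak solution to~\eqref{a.nonlocalheatequation} with initial trace $\mu_{0}$; what remains for part~(i) is to upgrade $U$ to a classical solution by differentiating and applying $\Lk$ under the integral sign in~\eqref{q5}. For the time derivative, the bound $|\partial_{t}P_{t}(x)|\leq CP_{1}(x)$ from~\eqref{eq:conditions.existence.classical}, combined with Lemma~\ref{v3.19} (which dominates $P_{1}(x-y)$ by $C_{R}P_{1}(y)$ whenever $x\in B_{R}(0)$) and the admissibility condition~\eqref{q4}, yields a dominated convergence argument that produces
$$
\partial_{t}U(x,t)=\int_{\Rn}\partial_{t}P_{t}(x-y)\,{\rm d}\mu_{0}(y),
$$
with joint continuity in $(x,t)$ inherited from $\partial_{t}P\in C(\Rn\times(0,\infty))$ (Corollary~\ref{pcor1}(iii)) via a second DCT argument. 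For $\Lk U$ the second bound in~\eqref{eq:conditions.existence.classical} provides the integrability required to invoke Tonelli, giving
$$
\Lk U(x,t)=\int_{\Rn}\int_{\Rn}\Lambda P(x-y',y,t)K(y)\,{\rm d}y\,{\rm d}\mu_{0}(y')=\int_{\Rn}\Lk P_{t}(x-y')\,{\rm d}\mu_{0}(y'),
$$
and the same bound, together with Lemma~\ref{v3.19} and~\eqref{q4}, supplies an integrable majorant that makes $\Lk U$ continuous in $(x,t)$, using also that $\Lk P=-\partial_{t}P$ is continuous on $\Rn\times(0,\infty)$ by Proposition~\ref{pprop2}. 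The equation $\partial_{t}U+\Lk U=0$ then holds pointwise since $P$ is a classical solution of~\eqref{a.nonlocalheatequation}.

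For part~(ii), once $U$ is known to be a classical solution belonging to $\li$, uniqueness follows from Corollary~\ref{CorRepresentativeFormulaClassicalAcotadas}(i), provided~\eqref{energyH} holds for $U$. The extra control~\eqref{eq:stronger.conditions.existence.classical} permits differentiating $U$ twice in space under the integral sign, giving
$$
|D^{2}U(x,t)|\leq C\int_{\Rn}P_{1}(x-y)\,{\rm d}|\mu_{0}|(y),
$$
which is finite and locally bounded on $\Rn\times(0,T)$ by Lemma~\ref{v3.19}. A second-order Taylor expansion then yields $|\Lambda U(x,y,t)|\leq C|y|^{2}$ for $|y|\leq 1$ and $(x,t)$ in compact subsets of $\Rn\times(0,T)$, and the L\'evy condition~\eqref{A0} converts this into~\eqref{energyH}.

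For part~(iii), the function $U$ is nonnegative since $\mu_{0}\geq 0$ and $P_{t}>0$, and we have already established that $U$ is a classical solution satisfying~\eqref{energyH}. For uniqueness within the nonnegative classical class, any competitor $v$ lies in $L^{1}_{\textup{loc}}([0,T);\lp)$ by Corollary~\ref{a.cor2}, so Proposition~\ref{cprop1} makes $v$ a very weak solution, and Corollary~\ref{UniquenessNonnegative}, which requires~\eqref{A3}, forces $v=U$ almost everywhere and thus everywhere by continuity. The main technical obstacle throughout the argument is legitimizing the passage of $\Lk$ under the integral defining $U$: one has to simultaneously dominate the singularity of $K$ at the origin, absorbed by the bound on $\int|\Lambda P(\cdot,y,t)|K(y)\,{\rm d}y$ in~\eqref{eq:conditions.existence.classical}, and the slow decay of $K$ at infinity, absorbed because $P_{1}$ is integrable against $\textup{d}|\mu_{0}|$ thanks to~\eqref{q4}. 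Every other step rests on the comparability, slow-change, and almost-decreasing properties of $P_{1}$ developed in Section~\ref{section2}.
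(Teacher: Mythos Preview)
Your proposal is correct and follows essentially the same approach as the paper: Theorem~\ref{prop2} for the baseline very weak solution, DCT via~\eqref{eq:conditions.existence.classical} and Lemma~\ref{v3.19} to justify $\partial_t U=\int\partial_t P_t(x-y)\,\mathrm{d}\mu_0$, Fubini--Tonelli with the second bound in~\eqref{eq:conditions.existence.classical} to pass $\mathcal{L}_K$ inside, and then~\eqref{eq:stronger.conditions.existence.classical} plus Taylor for~\eqref{energyH}. The only cosmetic differences are that the paper deduces continuity of $\mathcal{L}_K U$ from $\mathcal{L}_K U=-\partial_t U$ rather than proving it directly, and in~(ii) it works with $\Lambda P$ inside the $\mu_0$-integral instead of first bounding $D^2U$; for~(iii) the paper packages your chain of Corollary~\ref{a.cor2}, Proposition~\ref{cprop1}, and Corollary~\ref{UniquenessNonnegative} into a single citation of Theorem~\ref{thm:main.K}.
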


\begin{remark}
    Since $K$ satisfies the L\'evy condition~\eqref{A0}, it is easily checked that condition~\eqref{eq:stronger.conditions.existence.classical} implies the second condition in~\eqref{eq:conditions.existence.classical}.
\end{remark}

\begin{remark}
    Let $\Lk=\Ls$, $\alpha\in (0,2)$. Thanks to Proposition~\ref{pprop1} (iv) and (vi),
    \begin{equation*}
       |\partial_{t} P_{t}(x)|\leq C\varepsilon^{-1}P_{1}(x),\quad |D^2 P^\alpha_t(x)|\leq C\varepsilon^{-2\alpha}P_{1}(x)\quad\text{for all }\xt\in \Rn\times(\varepsilon,T),
    \end{equation*}
    whence Theorem~\ref{Uclassical} (ii)--(iii) apply to this case.
\end{remark}

Thanks to Theorem~\ref{prop2} we already know that the function $U$ has initial datum $\mu_0$ and belongs to $C(\Rn\times (0,T))\cap\li$, which implies that $U\in \luloc$. Thus, it only remains to check that $\partial_{t}U$ belongs to $C(\Rn\times (0,T))$ and that $U$ satisfies~\eqref{eq:classical.equation}. This is what we do in the next two lemmas.

\begin{lemma}\label{clemma2}
	Under the general assumptions of Theorem~\ref{Uclassical}, $\partial_{t}U\xt\in C(\Rn\times (0,T))$.
\end{lemma}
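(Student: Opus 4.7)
The plan is to justify differentiating under the integral sign and then to apply the dominated convergence theorem to obtain continuity of the resulting expression. The three ingredients that make this work are: the pointwise bound $|\partial_t P_t(x)|\le CP_1(x)$ on cylinders $\mathbb{R}^d\times(\varepsilon,T)$ supplied by hypothesis~\eqref{eq:conditions.existence.classical}; the translation comparison $P_1(x-y)\le C_R P_1(y)$ for $x\in B_R(0)$ given by Lemma~\ref{v3.19}; and the joint continuity $\partial_t P\in C(\mathbb{R}^d\times(0,\infty))$ from Corollary~\ref{pcor1}~(iii).

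First, I would fix $(x_0,t_0)\in\mathbb{R}^d\times(0,T)$, choose $R>0$ and $0<\varepsilon<\tau<T$ so that $(x_0,t_0)$ lies in the interior of the cylinder $\overline{B_R(0)}\times[\varepsilon,\tau]$, and produce an integrable majorant on this cylinder. For any $(x,t)\in \overline{B_R(0)}\times[\varepsilon,\tau]$ and any $y\in\mathbb{R}^d$, combining~\eqref{eq:conditions.existence.classical} with Lemma~\ref{v3.19} yields
\begin{equation*}
    |\partial_t P_t(x-y)|\le CP_1(x-y)\le CC_R P_1(y),
\end{equation*}
and the right-hand side is $\mu_0$-integrable by the admissibility condition~\eqref{q4}.

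Next, I would justify bringing $\partial_t$ inside the integral. For $(x,t)\in B_R(0)\times(\varepsilon,\tau)$ and small $h\neq 0$, by the mean value theorem applied to the $C^1$-in-$t$ function $s\mapsto P_s(x-y)$ (Corollary~\ref{pcor1}~(iii)),
\begin{equation*}
    \frac{P_{t+h}(x-y)-P_t(x-y)}{h}=\partial_t P_{s(h,y)}(x-y)
\end{equation*}
for some $s(h,y)$ between $t$ and $t+h$, and this difference quotient is dominated by $CC_RP_1(y)$ uniformly in $h$ once $h$ is small enough that $s(h,y)\in[\varepsilon,\tau]$. Dominated convergence therefore gives
\begin{equation*}
    \partial_t U(x,t)=\intr \partial_t P_t(x-y)\dmu_0(y)\quad\textrm{for every }\xt\in B_R(0)\times(\varepsilon,\tau).
\end{equation*}

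Finally, continuity at $(x_0,t_0)$ follows from dominated convergence: for any sequence $(x_n,t_n)\to(x_0,t_0)$ eventually inside $\overline{B_R(0)}\times[\varepsilon,\tau]$, the integrand $\partial_t P_{t_n}(x_n-y)$ converges pointwise to $\partial_t P_{t_0}(x_0-y)$ by Corollary~\ref{pcor1}~(iii) and is dominated by the fixed integrable function $CC_RP_1(y)$. Since $R$, $\varepsilon$ and $\tau$ were arbitrary, the continuity extends to all of $\mathbb{R}^d\times(0,T)$. I do not foresee a substantive obstacle here; the only delicate point is ensuring the majorant is genuinely independent of $(x,t)$ in the cylinder, which is precisely what Lemma~\ref{v3.19} provides.
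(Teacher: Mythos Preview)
Your proof is correct and follows essentially the same approach as the paper for establishing the representation $\partial_t U(x,t)=\intr\partial_t P_t(x-y)\dmuo(y)$, via the mean value theorem, the bound $|\partial_t P_t|\le CP_1$ from~\eqref{eq:conditions.existence.classical}, and the integrability~\eqref{c5} (which is exactly your combination of Lemma~\ref{v3.19} with~\eqref{q4}). For the continuity step the paper instead approximates $\partial_t U$ by the truncated integrals $(\partial_t U)^r(x,t)=\int_{\{|y|\le r\}}\partial_t P_t(x-y)\dmuo(y)$ and shows uniform convergence on compact sets, mirroring its earlier treatment of $U$ in Theorem~\ref{prop2}; your direct application of dominated convergence with the uniform majorant $CC_RP_1$ is a cleaner alternative that uses the same ingredients.
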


\begin{proof}
The first step is to prove that
\begin{equation}\label{eq:representation.derivative}
	\partial_{t}U\xt=\intr \partial_{t}P(x-y,t)\dmuo(y).
\end{equation}
Let $0<\varepsilon<T'<T$ and $\xt\in\Rn\times (\varepsilon,T')$. By definition,
\begin{equation*}
	\partial_{t} U\xt=\lim_{h\to0}\intr\frac{P(x-y,t+h)-P(x-y,t)}{h}\dmu_{0}(y).
\end{equation*}
The MVT and the first hypothesis in~\eqref{eq:conditions.existence.classical} imply that
\begin{equation*}
	\Big|\frac{P(x-y,t+h)-P(x-y,t)}{h}\Big|\leq |\partial_{t}P(x-y,\tilde{t})|\leq CP_{1}(x-y)
\end{equation*}
for all $x,y\in \Rn$, $t\in(\varepsilon,T')$ and $2|h|\leq \varepsilon\wedge (T-T')$, where $\tilde{t}$ depends on $x$, $y$, $t$, and  $h$ and $|\tilde{t}-t |\leq |h|$. Moreover, $C$ only depends on $\varepsilon$ and $T'$. Since $\mu_{0}$ satisfies \eqref{q4}, we already know that $\intr P(x-y,1)\,{\rm d}|\mu_{0}|(y)<\infty$ for each $(x,t)\in\Rn\times(\varepsilon,T')$; see~\eqref{c5}. Therefore, applying the DCT we obtain that $\partial_{t}U$ is well defined and
\begin{equation*}
	\partial_{t} U\xt=\lim_{h\to0}\intr\frac{P(x-y,t+h)-P(x-y,t)}{h}\dmu_{0}(y)=\intr \partial_{t}P(x-y,t)\dmu_{0}(y).
\end{equation*}

The continuity of $\partial_t U$ follows from the fact that it is the limit as $r\to+\infty$, uniform in compact sets, of the approximating functions
\begin{equation*}
    (\partial_tU)^{r}\xt=\intr \partial_{t}P(x-y,t)\,{\rm d} \mu^{r}(y),
\end{equation*}
arguing as in the proof of continuity of $U$ performed in Theorem~\ref{prop2}; we omit the details.
\end{proof}

\begin{lemma}\label{clemma1}
	Under the general assumptions of Theorem~\ref{Uclassical}, $U$ satisfies~\eqref{eq:classical.equation}.
\end{lemma}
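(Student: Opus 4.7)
The plan is to exchange the L\'evy integral defining $\mathcal{L}_K$ with the integration against $\mu_0$, and then exploit that $P_t(\cdot-z)$ is itself a classical solution of~\eqref{a.nonlocalheatequation} (by translation invariance and Proposition~\ref{pprop2}), combined with Lemma~\ref{clemma2}.

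Fix $(x,t)\in\mathbb{R}^d\times(0,T)$, and let $0<\varepsilon<t<T'<T$. By the definition of $U$ and the linearity of $\Lambda$ in its first argument,
\begin{equation*}
    \Lambda U(x,y,t)=\int_{\mathbb{R}^d}\Lambda P(\cdot-z,t)(x,y)\,\mathrm{d}\mu_0(z),
\end{equation*}
provided we can make sense of these expressions; here $\Lambda P(\cdot-z,t)(x,y):=P(x-z,t)-\tfrac12(P(x+y-z,t)+P(x-y-z,t))$. So formally
\begin{equation*}
    \mathcal{L}_KU(x,t)=\int_{\mathbb{R}^d}\!\int_{\mathbb{R}^d}\Lambda P(\cdot-z,t)(x,y)\,K(y)\,\mathrm{d}y\,\mathrm{d}\mu_0(z)
    =\int_{\mathbb{R}^d}\mathcal{L}_KP_t(x-z)\,\mathrm{d}\mu_0(z).
\end{equation*}
The key step is to justify this Fubini-type exchange, and this is where the second condition in~\eqref{eq:conditions.existence.classical} plays its role. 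Indeed, by that hypothesis and translation invariance,
\begin{equation*}
    \int_{\mathbb{R}^d}|\Lambda P(\cdot-z,t)(x,y)|\,K(y)\,\mathrm{d}y\le C\,P_1(x-z)
\end{equation*}
with $C$ depending only on $\varepsilon$ and $T'$. Since $x$ lies in a fixed ball, Lemma~\ref{v3.19} yields $P_1(x-z)\le C_x P_1(z)$ for every $z\in\mathbb{R}^d$ (applied after writing $x-z$ and using symmetry of $P_1$), and then the admissibility~\eqref{q4} of $\mu_0$ gives
\begin{equation*}
    \int_{\mathbb{R}^d}\!\int_{\mathbb{R}^d}|\Lambda P(\cdot-z,t)(x,y)|\,K(y)\,\mathrm{d}y\,\mathrm{d}|\mu_0|(z)
    \le C\,C_x\int_{\mathbb{R}^d}P_1(z)\,\mathrm{d}|\mu_0|(z)<\infty.
\end{equation*}
Hence Fubini--Tonelli applies, both iterated integrals are well defined and equal, and the formal identity above is rigorous.

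To conclude, recall that by Proposition~\ref{pprop2} the heat kernel $P$ is a classical solution of~\eqref{a.nonlocalheatequation}; by translation invariance so is $P_t(\cdot-z)$ for each $z$, whence $\mathcal{L}_KP_t(x-z)=-\partial_tP(x-z,t)$ pointwise. Combining this with Lemma~\ref{clemma2} gives
\begin{equation*}
    \mathcal{L}_KU(x,t)=\int_{\mathbb{R}^d}\mathcal{L}_KP_t(x-z)\,\mathrm{d}\mu_0(z)=-\int_{\mathbb{R}^d}\partial_tP(x-z,t)\,\mathrm{d}\mu_0(z)=-\partial_tU(x,t),
\end{equation*}
which is~\eqref{eq:classical.equation}. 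The only genuine obstacle is the Fubini exchange, and it is precisely the second hypothesis in~\eqref{eq:conditions.existence.classical} (comparability of the L\'evy integral of $|\Lambda P(\cdot,y,t)|$ with $P_1$) that makes it work, in tandem with the translation estimate of Lemma~\ref{v3.19} and the growth condition~\eqref{q4}.
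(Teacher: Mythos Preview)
Your proof is correct and follows essentially the same approach as the paper: both arguments hinge on using the second hypothesis in~\eqref{eq:conditions.existence.classical} together with Lemma~\ref{v3.19} and~\eqref{q4} to justify a Fubini exchange between the L\'evy integral and integration against $\mu_0$, and then invoke Proposition~\ref{pprop2} and Lemma~\ref{clemma2}. The only cosmetic difference is the direction of the computation (the paper starts from $\partial_t U$ via~\eqref{eq:representation.derivative} and arrives at $-\mathcal{L}_K U$, while you start from $\mathcal{L}_K U$ and arrive at $-\partial_t U$), but the mathematical content is the same.
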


\begin{proof}
We start from~\eqref{eq:representation.derivative}. Since $P$ is a classical solution to the nonlocal heat equation~\eqref{a.nonlocalheatequation},
\begin{equation}\label{c6}	
    \partial_{t} U\xt=-\intr \Lk P(x-y,t)\dmu_{0}(y)=-\intr\intr \Lambda P(x-y,z,t)K(z)\,{\rm d}z\dmu_{0}(y).
\end{equation}
At this point, we would like  to change the order of integration in the last integral to conclude that $\partial_{t} U+\Lk U=0$. By the second hypothesis in~\eqref{eq:conditions.existence.classical},
\begin{equation*}
	\intr\intr |\Lambda P(x-y,z,t)|K(z)\,{\rm d}z\,{\rm d}|\mu_{0}|(y)\leq C\intr P_1(x-y)\,{\rm d}|\mu_{0}|(y)<\infty,
\end{equation*}
because the last integral is finite for all $x\in\Rn$, as we proved in \eqref{c5}. Since  the last integral in \eqref{c6} converges absolutely, using Fubini-Tonelli's Theorem
\begin{equation*}
    \partial_{t} U\xt=-\intr\intr \Lambda P(x-y,z,t)K(z)\dmu_{0}(y)\,{\rm d}z= -\intr  \Lambda  U(x,z,t)K(z)\,{\rm d}z= -\Lk U(x,t)
\end{equation*}
for all $(x,t)\in\Rn\times(0,T)$.
\end{proof}

\begin{proof}[Proof of Theorem \ref{Uclassical}]
(i) By Theorem~\ref{prop2},  Lemma~\ref{clemma2} and Lemma~\ref{clemma1}, $U$ is a classical solution of \eqref{a.nonlocalheatequation} with initial trace $\mu_{0}$ and $U\in \li$.

\noindent (ii) It only remains to prove that $U$ satisfies~\eqref{energyH}, since then uniqueness will follow from Corollary~\ref{CorRepresentativeFormulaClassicalAcotadas}. Given $\theta\in C^{\infty}_{\textup{c}}(\Rn\times(0,T))$, let $0<\varepsilon<T'<T$ be such that $\operatorname{supp}(\theta)\subset\mathbb{R}^n\times(\varepsilon,T')$. Using the definition of $U$ and applying Taylor's expansion,
\begin{align*}
	 &\left|\int_0^T\intr\theta\xt\ints \Lambda U(x,y,t) K(y)\dy\dx\dt \right|\\
	&\leq\int_\varepsilon^{T'}\intr|\theta\xt|\ints K(y)\intr |\Lambda P(x-z,y,t)|\,{\rm d}|\mu_{0}|(z)\dy\dx\dt\\
    &\leq C\int_\varepsilon^{T'}\intr|\theta\xt|\ints|y|^{2}K(y)\intr |D^{2}P(x-z+w(x,z,y,t),t)|\,{\rm d}|\mu_{0}|(z)\dy\dx\dt,
\end{align*}
where $w(x,z,y,t)\in B_{1}(0)$ because $|y|\leq 1$. By assumption~\eqref{eq:stronger.conditions.existence.classical},
\begin{equation*}
	|D^{2}P(x-z+w(x,z,y,t),t)| \leq CP_1(x-z+w(x,z,y,t)),
\end{equation*}
where $C$ only depends on $\varepsilon$ and $T'$. Since $P_1$ is {slowly changing}, see Proposition~\ref{Pslowly}, and $|w|\leq 1$, and it is also symmetric we have $|P_1(x-z+w(x,z,y,t))|\leq CP_1(z-x)$. Therefore, using, one after another, Tonelli's Theorem, assumption~\eqref{A0},~\eqref{a.5} and~\eqref{q4}, we arrive at
\begin{align*}
    \int_0^T\intr|\theta\xt|&\ints|\Lambda U(x,y,t)|K(y)\dy\dx\dt\\
    &\leq C\ints |y|^{2}K(y)\dy\int_{\varepsilon}^{T'}\intr (P_1*|\theta(\cdot,t)|)(z)\,{\rm d}|\mu_{0}|(z)\dt\\
    &\leq C(T'-\varepsilon)\intr P_1(z)\,{\rm d}|\mu_{0}|(z)<\infty.
\end{align*}

\noindent (iii) If $\mu_0$ is nonnegative, then $U$ is nonnegative. Uniqueness in the class of nonnegative solutions satisfying~\eqref{energyH} is then granted by Theorem~\ref{thm:main.K}.
\end{proof}

\section{General purely nonlocal operators}\label{section4}
\setcounter{equation}{0}

We have developed in Section~\ref{section3} a quite complete Widder-type theory for operators of the form $\mathcal{L}=\mathcal{L}_K$ with $K$ satisfying~\eqref{A0}--\eqref{A2}, plus~\eqref{A3} for some results. A careful inspection of the proofs shows that they do not use these assumptions directly, but their implications for the heat kernel associated to the operator, that could be satisfied by more general operators. Our aim in this section is to rewrite the theorems in terms of this more general conditions, so that they may include a wider class of purely nonlocal operators $\mathcal{L}_\nu$ of the form~\eqref{a.Lnu} with $\nu$ satisfying~\eqref{eq:Levy.condition}. In order to check that this is not a useless generalization, we will show that these new versions of the theorems encompass an important class of anisotropic operators that were not covered by the theory in Section~\ref{section3}.

In addition to the L\'evy condition~\eqref{eq:Levy.condition}, we will always assume that the L\'evy process associated to $\nu$ has a heat kernel $P$ satisfying
\begin{equation}\label{eq:continuity.positivity}
    P_t\in C(\mathbb{R}^d),\quad P_t>0\quad\text{for all }t>0.
\end{equation}
Thus, we are leaving out, for instance, the case in which $\textup{d}\nu=J\,\textup{d}x$ with $J\in L^1(\mathbb{R}^d)$, for which there is a transition density but not a heat kernel, or the operator $\log(1-\Delta)$, with Fourier multiplier $\log(1+|\xi|^2)$, that has a positive heat kernel, though not with the desired continuity until some time has passed.

Observe that, since we are assuming that $\nu$ is symmetric, so it is $P_t$ for all $t>0$.

\subsection{Very weak solutions}

We will be able to recover all the important results for very weak solutions, with less restrictive assumptions than in Section~\ref{section3}.
\subsubsection{Alternative definitions of very weak solution}

A quick inspection of the proof of Lemma~\ref{a.17} shows that it can be easily adapted to show the following more general result, that will be helpful to establish the equivalence of our two definitions of very weak solution if $u\in\li$.

\begin{lemma}\label{lem:Lnu.theta}
    Let $\mathcal{L}=\mathcal{L}_\nu$ of the form~\eqref{a.Lnu} with $\nu$ satisfying~\eqref{eq:Levy.condition}. Assume that it has associated a heat kernel~$P$ satisfying~\eqref{eq:continuity.positivity} and
    \begin{equation}\label{eq:conds.general.L}
        \int_{\{|y|\geq1\}}P_{1} (x-y)\,\textup{d}\nu(y)\le C P_1(x)\quad\text{for all } x\in\mathbb{R}^d
    \end{equation}
    for some constant $C$. If $\theta\in C^{\infty}_{\textup{c}}(\RN\times [\delta,\tau])$, $0\leq \delta<\tau\leq T$, there is a constant $C_{\theta}$ such that
    \begin{equation}\label{an7}	
        |\Lnu \theta\xt|\leq \intr |\Lambda \theta (x,y,t)| \dnu(y)\leq C_{\theta}P_{1}(x),\quad\xt\in \Rn\times[\delta,\tau].
    \end{equation}
\end{lemma}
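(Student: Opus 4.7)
The plan is to adapt the proof of Lemma~\ref{a.17}, splitting the integration domain into the singular region $\{|y|\le 1\}$ and the tail $\{|y|\ge 1\}$ in order to exploit, respectively, the L\'evy property~\eqref{eq:Levy.condition} near the origin and the new hypothesis~\eqref{eq:conds.general.L} far away. The first inequality in~\eqref{an7} is immediate from the definition of $\Lnu$, so the bulk of the work is on the second.

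First I would treat the near region. A second order Taylor expansion of $\theta(\cdot,t)$ gives
\begin{equation*}
|\Lambda\theta(x,y,t)|\le \frac{|y|^{2}}{2}\sum_{j,k=1}^{d}|\partial_{j}\partial_{k}\theta(x+\lambda y,t)|
\end{equation*}
for some $\lambda\in[-1,1]$ depending on $x$, $y$ and $t$. If $R_{\theta}>0$ is such that $\operatorname{supp}(\theta(\cdot,t))\subset B_{R_{\theta}}(0)$ for every $t\in[\delta,\tau]$, then whenever $|y|\le 1$ the translated second derivatives $\partial_{j}\partial_{k}\theta(\cdot+\lambda y,t)$ are supported in $B_{R_{\theta}+1}(0)$. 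By~\eqref{eq:continuity.positivity}, $P_{1}$ is continuous and strictly positive, hence bounded below on that ball, so $\mathcal{X}_{B_{R_{\theta}+1}(0)}\le C_{\theta}P_{1}$, giving $\sum_{j,k}|\partial_{j}\partial_{k}\theta(x+\lambda y,t)|\le C_{\theta}P_{1}(x)$. Combining this with the L\'evy condition $\int_{\{|y|\le 1\}}|y|^{2}\,\textup{d}\nu(y)<\infty$ supplied by~\eqref{eq:Levy.condition} bounds the integral over $\{|y|\le 1\}$ by $C'_{\theta}P_{1}(x)$.

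Next I would handle the tail. The symmetry of $\nu$ yields
\begin{equation*}
\int_{\{|y|\ge 1\}}|\Lambda\theta(x,y,t)|\,\textup{d}\nu(y)\le |\theta\xt|\,\nu(\{|y|\ge 1\})+\int_{\{|y|\ge 1\}}|\theta(x-y,t)|\,\textup{d}\nu(y),
\end{equation*}
where $\nu(\{|y|\ge 1\})<\infty$ by~\eqref{eq:Levy.condition}. Using once more the continuity and strict positivity of $P_{1}$ together with the compact support of $\theta$, we can write $|\theta(\cdot,t)|\le C_{\theta}P_{1}$ in analogy with~\eqref{a.4}; in particular $|\theta\xt|\le C_{\theta}P_{1}(x)$, which controls the first term, while $|\theta(x-y,t)|\le C_{\theta}P_{1}(x-y)$ inside the second integral. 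The new hypothesis~\eqref{eq:conds.general.L} then bounds that integral by $C_{\theta}P_{1}(x)$ as well, and adding the two pieces produces the desired estimate $C_{\theta}P_{1}(x)$.

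I do not foresee any real obstacle: the only substantive difference with respect to Lemma~\ref{a.17} is that the tail control of the L\'evy kernel obtained there from the comparability~\eqref{Atotal} and the semigroup property of $P$ is now supplied directly by assumption~\eqref{eq:conds.general.L}, so the argument essentially rewrites itself once the integration domain is split as above.
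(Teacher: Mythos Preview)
Your proposal is correct and follows exactly the approach the paper indicates: the paper does not give a standalone proof of this lemma but simply states that a quick inspection of the proof of Lemma~\ref{a.17} shows it can be easily adapted, which is precisely the adaptation you carry out. Your observation that hypothesis~\eqref{eq:conds.general.L} directly replaces the chain of estimates (comparability~\eqref{Atotal}, semigroup property, and~\eqref{Atotal2}) used for the tail term in Lemma~\ref{a.17} is exactly the point.
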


\begin{remark}\label{an.prop1}
    We proved in Section~\ref{section3}, under certain assumptions on $K$, that, given $\theta\in C^{\infty}_{\textup{c}}(\RN\times [\delta,\tau])$, $0\leq \delta<\tau\leq T$, there are constants $C,R_0>0$ depending on $\theta$ such that $|\mathcal{L}_K\theta(x)|\ge CP_1(x)$ if $|x|\ge R_0>0$. This was helpful to prove that very weak solutions belong to $L^1_{\textup{loc}}((0,T);L^1_{P_1})$. Unfortunately, hypotheses~\eqref{eq:Levy.condition},~\eqref{eq:continuity.positivity} and~\eqref{eq:conds.general.L} are not enough to ensure the validity of such an inequality. As a counterexample (in spatial dimension $d=2$) we consider the operator $\mathcal{L}_\nu=(-\partial^2_{x_1x_1})^{1/2}+(-\partial^2_{x_2x_2})^{1/2}$, which satisfies the three hypotheses, and the function $\theta(x)=\theta_1(x_1)\theta_2(x_2)$, with $\theta_1,\theta_2\in C^\infty_{\textup{c}}(\mathbb{R})$, so that $\theta\in C^\infty_{\textup{c}}(\mathbb{R}^2)$. If $\operatorname{supp}(\theta)\subset B_R(0)$ for some $R>0$, then
    \begin{equation*}
        \mathcal{L}_\nu\theta(x)=\theta_2(x_2)(-\partial^2_{x_1x_1})^{1/2}\theta_1(x_1)+\theta_1(x_1)(-\partial^2_{x_2x_2})^{1/2}\theta_2(x_2)=0\quad \text{if }|x_1|>R\text{ and }|x_2|>R.
    \end{equation*}
\end{remark}

Lemma~\ref{lem:Lnu.theta} is enough to show that the integrability condition~\eqref{a.2} is satisfied if $u\in L^{1}([\delta,\tau];L^{1}_{P_1})$.

\begin{proposition} \label{anprop1}
    Let $\mathcal{L}=\mathcal{L}_\nu$ and $P$ as in Lemma~\ref{lem:Lnu.theta}. If $u\in L^{1}([\delta,\tau];L^{1}_{P_1})$, $0\leq \delta<\tau\leq T$, then
	\begin{equation*}
        \int_{\delta}^{\tau}\intr|u(x,t)||\Lk \theta(x,t)|\dx\dt<\infty\quad\textrm{for all }\theta\in C^{\infty}_{\textup{c}}(\RN\times [\delta,\tau]).
	\end{equation*}
    As a consequence, if  $u\in \luloc$, $u$ satisfies~\eqref{a.2}.
\end{proposition}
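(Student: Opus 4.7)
The plan is essentially a one-line application of Lemma~\ref{lem:Lnu.theta} followed by a standard localization argument for the consequence. First I would fix $\theta\in C^{\infty}_{\textup{c}}(\RN\times[\delta,\tau])$ and invoke Lemma~\ref{lem:Lnu.theta} directly to obtain a constant $C_\theta>0$ such that $|\Lnu\theta(x,t)|\le C_\theta P_1(x)$ for all $(x,t)\in\Rn\times[\delta,\tau]$. Then I would simply estimate
\begin{equation*}
    \int_\delta^\tau\intr |u(x,t)||\Lnu\theta(x,t)|\dx\dt \le C_\theta \int_\delta^\tau\intr |u(x,t)|P_1(x)\dx\dt = C_\theta \|u\|_{L^1([\delta,\tau];L^1_{P_1})}<\infty,
\end{equation*}
which is the desired bound.

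For the consequence, given $u\in\luloc$ and $\theta\in C^{\infty}_{\textup{c}}(\Rn\times(0,T))$, I would pick $0<\delta<\tau<T$ such that $\operatorname{supp}(\theta)\subset\Rn\times[\delta,\tau]$. Since $u\in\luloc$ implies in particular $u\in L^1([\delta,\tau];L^1_{P_1})$, the first part applies and yields~\eqref{a.2}, as the integral over $(0,T)$ coincides with the integral over $[\delta,\tau]$ by the support condition on $\theta$.

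There is no genuine obstacle here: all the work has already been done in Lemma~\ref{lem:Lnu.theta}, whose hypotheses~\eqref{eq:continuity.positivity} and~\eqref{eq:conds.general.L} are exactly what is needed to control the \textquotedblleft far field\textquotedblright\ contribution of $\Lnu\theta$ by $P_1$. The statement of the proposition is really just the bookkeeping step that records that this bound is enough to place $u\,\Lnu\theta$ in $L^1(\Rn\times(\delta,\tau))$ whenever $u\in L^1([\delta,\tau];L^1_{P_1})$.
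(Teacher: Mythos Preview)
Your proposal is correct and matches the paper's approach exactly: the paper does not even write out a proof for this proposition, simply noting that Lemma~\ref{lem:Lnu.theta} is enough, and the analogous implication in Proposition~\ref{prop1} is proved by precisely the one-line estimate you give. Your handling of the consequence via a support localization is also standard and implicit in the paper's treatment.
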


With this integrability condition at hand, we can now prove the equivalence of both definitions of very weak solution if $u\in\li$, just following step by step the proof of Proposition~\ref{equivDefvwProp}.

\begin{proposition}\label{an.equivDefvwProp}
    Let $\mathcal{L}=\mathcal{L}_\nu$ and $P$ as in Lemma~\ref{lem:Lnu.theta}, $\mu_{0}$ a Radon measure and $u\in\li$.
    	
    \noindent{\rm (i)} If $u$ satisfies~\eqref{v3.10}, then it is a very weak solution of~\eqref{a.nonlocalheatequation} with initial trace~$\mu_0$.
    	
    \noindent{\rm(ii)} If $u$ is a very weak solution to~\eqref{a.nonlocalheatequation} with initial trace $\mu_{0}$, then it  satisfies~\eqref{eq:space.alternative.definition}--\eqref{v3.10}.
\end{proposition}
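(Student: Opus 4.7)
The plan is to mimic the proof of Proposition~\ref{equivDefvwProp}, replacing the kernel $K$ by the measure $\nu$ and using Lemma~\ref{lem:Lnu.theta} in place of Lemma~\ref{a.17}, and Proposition~\ref{anprop1} in place of Proposition~\ref{prop1}. The positivity and continuity of $P_1$ (from~\eqref{eq:continuity.positivity}) together with its inclusion in $L^1(\mathbb{R}^d)$ still allow us to dominate cutoff test functions by multiples of $P_1$, exactly as before.

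For part~(i), the integrability condition~\eqref{a.2} follows from Proposition~\ref{anprop1}, since $\li\subset L^{1}((0,T);L^1_{P_1})\subset L^{1}_{\textup{loc}}((0,T);L^1_{P_1})$, and~\eqref{a.3} is immediate because $C^{\infty}_{\textup{c}}(\mathbb{R}^d\times(0,T))\subset C^{\infty}_{\textup{c}}(\mathbb{R}^d\times[0,T))$. To recover the initial trace, I would repeat verbatim the approximation argument: pick $\psi\in C_{\textup{c}}(\mathbb{R}^d)$, take a sequence $\{\psi_j\}\subset C^{\infty}_{\textup{c}}(\mathbb{R}^d)$ converging uniformly to $\psi$ with $|\psi_j|\le M\mathcal{X}_{B_R(0)}$, and use as test function $\varphi_k^{\tau}(t)\psi_j(x)\in C^{\infty}_{\textup{c}}(\mathbb{R}^d\times[0,T))$, where $\varphi_k^\tau$ is the smoothed Heaviside defined through the summability kernel $\{\eta_k\}_{k\in\mathbb{N}}$. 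Lemma~\ref{lem:Lnu.theta} gives $|\mathcal{L}_\nu\psi_j|\le C_j P_1$, so the term $\int_0^T\int\varphi_k^\tau u\,\mathcal{L}_\nu\psi_j$ is controlled by $C_j\int_0^\tau\int u P_1\,\textup{d}x\,\textup{d}t\to 0$ as $\tau\to 0^+$; the remaining terms are handled by the summability kernel property at Lebesgue points of $g_j(t)=\int\psi_j(x)u(x,t)\,\textup{d}x$, giving~\eqref{a.1} first for $\psi_j$, and then for $\psi$ by a uniform approximation using $\|u\|_{\li}<\infty$ and $|\mu_0|(B_R(0))<\infty$.

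For part~(ii), \eqref{eq:space.alternative.definition} is just the hypothesis $u\in\li$. To derive~\eqref{v3.10}, given $\tilde{\theta}\in C^{\infty}_{\textup{c}}(\mathbb{R}^d\times[0,T))$, take $\tilde{\varphi}_k^\tau(t)\tilde{\theta}(x,t)\in C^{\infty}_{\textup{c}}(\mathbb{R}^d\times(0,T))$ as test function in~\eqref{a.3} (valid because $u$ is a very weak solution in the sense of Definition~\ref{defWS}). One then splits $\textup{I}_k(\tau):=\int(\tilde{\varphi}_k^\tau)'\int u\tilde{\theta}$ into a main piece $\textup{I}_{1,k}(\tau)$ involving $\tilde{\theta}(\cdot,0)$ (handled by the summability kernel property, since $|\tilde{\theta}(\cdot,0)|\le C P_1$ and $u\in\li\subset L^1((0,T);L^1_{P_1})$, yielding the boundary term $\int\tilde{\theta}(x,0)\,\textup{d}\mu_0$ after $\tau\to 0^+$ via the trace property), and a remainder $\textup{I}_{2,k}(\tau)$ that vanishes thanks to the MVT and the uniform bound $\|u\|_{\li}$. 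For the other term, $\textup{II}_k(\tau):=\int\tilde{\varphi}_k^\tau\int u(\partial_t\tilde{\theta}-\mathcal{L}_\nu\tilde{\theta})$, Lemma~\ref{lem:Lnu.theta} gives $|\mathcal{L}_\nu\tilde{\theta}|\le C_{\tilde\theta}P_1$, so Proposition~\ref{anprop1} ensures the integrand is in $L^1(\mathbb{R}^d\times(0,T))$, and two applications of the DCT (first $k\to\infty$, then $\tau\to 0^+$) yield the full integral over $(0,T)$.

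The only point requiring care is that the lower bound $|\mathcal{L}_\nu\psi_j|\ge cP_1$ used in the original proof of Proposition~\ref{prop1} is \emph{not} available here (see Remark~\ref{an.prop1}), but it was only needed to \emph{derive} the integrability condition~\eqref{a.2} from~\eqref{v3.9}; in the present proposition the integrability $u\in\li$ is part of the hypothesis, so this obstacle does not affect either direction of the equivalence.
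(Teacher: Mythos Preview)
Your proposal is correct and follows exactly the approach the paper takes: the paper simply states that the proof is obtained by ``following step by step the proof of Proposition~\ref{equivDefvwProp}'', replacing Lemma~\ref{a.17} by Lemma~\ref{lem:Lnu.theta} and Proposition~\ref{prop1} by Proposition~\ref{anprop1}. Your observation that the unavailable lower bound $|\mathcal{L}_\nu\psi|\ge cP_1$ is not needed here (since $u\in\li$ is assumed) is spot on and is precisely why the argument carries over despite Remark~\ref{an.prop1}.
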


\subsubsection{Uniqueness for very weak solutions}

We have now all the tools to prove uniqueness for very weak solutions in the sense of~\eqref{eq:space.alternative.definition}--\eqref{v3.10} under two new assumptions on the kernel, that have already appeared in Section~\ref{section3}. This is done by following the steps in the proof of Theorem~\ref{UniquenessSoria}, where now $\varphi$ is a solution of the backward problem $\partial_{t}\varphi-\Lk_{\nu}\varphi=\tilde{\theta}$ and $B$ is the bilinear form
\begin{equation}\label{eq:def.Bnu}
    B_{\nu}(u,v)(x)=\intr\big(u(x)-u(x-y)\big)\big(v(x)-v(x-y)\big)\dnu(y),
\end{equation}
taking profit of the nonlocal Leibniz's formula
\begin{equation*}
    B_\nu(uv)=u\mathcal{L}_\nu v+v\mathcal{L}_\nu u-B_\nu(u,v).
\end{equation*}

\begin{theorem}\label{anUniquenessSoria}
    Let $\mathcal{L}=\mathcal{L}_\nu$ of the form~\eqref{a.Lnu} with $\nu$ satisfying~\eqref{eq:Levy.condition}. Assume that it has associated a heat kernel~$P$ satisfying~\eqref{eq:continuity.positivity}, \eqref{eq:conds.general.L}, \eqref{Atotal2} and~\eqref{eq:Pz.Px}. Let $\mu_{0}$ be a Radon measure. If $u_1$ and $u_2$ satisfy~\eqref{eq:space.alternative.definition}--\eqref{v3.10}, then, $u_1=u_2$ a.e.\,in $\mathbb{R}^d\times(0,T)$.
\end{theorem}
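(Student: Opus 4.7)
The plan is to follow the strategy of the proof of Theorem~\ref{UniquenessSoria}, adapting it to the measure-theoretic setting. Setting $u := u_1 - u_2 \in L^1_{\mathrm{loc}}([0,T); L^1_{P_1})$, I would reduce uniqueness to showing that $\int_0^T\!\int u\xt \tilde\theta\xt \dx\dt = 0$ for every $\tilde\theta \in C^\infty_{\mathrm{c}}(\mathbb{R}^d \times [0,T))$. Fix such a $\tilde\theta$ with $\operatorname{supp}(\tilde\theta) \subset B_{R_0}(0) \times [0,t_0]$ and define the candidate dual solution by Duhamel's formula as in~\eqref{eq:solution.backward.problem}, replacing $P_{\too-t-s}$ by the heat kernel of $\mathcal{L}_\nu$; formally $\partial_t\varphi - \mathcal{L}_\nu \varphi = \tilde\theta$ with $\varphi(\cdot,\too) = 0$. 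Regularity of $\varphi$ in space follows from smoothness of $P_t$, which is inherited from $\tilde\theta$ via convolution.

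Since $\varphi$ fails to be compactly supported in space, I would proceed by localization: plug $\psi_R \varphi$ (with $\psi_R$ as in~\eqref{cutoffspace} and $R \geq R_0$) into~\eqref{v3.10} and use the nonlocal Leibniz formula for $\mathcal{L}_\nu$, namely
\begin{equation*}
    \mathcal{L}_\nu(\psi_R \varphi) = \psi_R \mathcal{L}_\nu \varphi + \varphi \mathcal{L}_\nu \psi_R - B_\nu(\psi_R, \varphi),
\end{equation*}
with $B_\nu$ defined in~\eqref{eq:def.Bnu}. Since $\psi_R \tilde\theta = \tilde\theta$ for $R \geq R_0$, this yields exactly the analogue of~\eqref{eq:with.R}:
\begin{equation*}
    0 = \int_0^T\!\!\int u \tilde\theta - \int_0^T\!\!\int u\,\varphi\,\mathcal{L}_\nu \psi_R + \int_0^T\!\!\int u\,B_\nu(\psi_R,\varphi).
\end{equation*}
The result reduces to showing that the last two integrals vanish as $R \to \infty$.

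For $\varphi$ itself, the convolution representation together with the bound $|\tilde\theta(\cdot,t)|\le C_{\tilde\theta} P_1$, the semigroup property, and~\eqref{Atotal2} gives $|\varphi\xt|\le C_{\tilde\theta,T}P_1(x)$, and a parallel estimate holds for $\nabla\varphi$ using $\partial_i \tilde\theta$ in place of $\tilde\theta$ and~\eqref{eq:Pz.Px}. For $\mathcal{L}_\nu\psi_R$, the L\'evy condition~\eqref{eq:Levy.condition} combined with $|\Lambda\psi_R|\le C(1\wedge |y|^2)\|D^2\psi\|_\infty$ gives $\mathcal{L}_\nu \psi_R \to 0$ pointwise and a uniform bound, so the DCT against $u\varphi \in L^1_{\mathrm{loc}}([0,T);L^1)$ handles the first vanishing. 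For $B_\nu$, the proof of Theorem~\ref{UniquenessSoria} splits the $y$-integral at $|y|=1$; on $\{|y|\le1\}$ one uses the MVT twice together with $|\nabla\varphi|\le CP_1$ and $|\psi_R(x)-\psi_R(x-y)|\le R^{-1}\|\nabla\psi\|_\infty |y|$ to dominate the integrand by $CR^{-1} P_1(x)(1\wedge |y|^2)\dnu(y)$, which is integrable thanks to~\eqref{eq:Levy.condition}. On $\{|y|\ge1\}$ the crude bounds $|\psi_R(x)-\psi_R(x-y)|\le 2$ and $|\varphi\xt - \varphi(x-y,t)|\le 2\|\tilde\theta\|_\infty$ yield pointwise convergence to zero and domination by $2\|\tilde\theta\|_\infty\dnu$ (locally), while the full domination of $B_\nu(\psi_R,\varphi)$ by $CP_1(x)$ uses precisely assumption~\eqref{eq:conds.general.L} to estimate $\int_{\{|y|\ge 1\}}|\varphi(x-y,t)|\dnu(y) \le C_{\tilde\theta} \int_{\{|y|\ge1\}} P_1(x-y)\dnu(y) \le C P_1(x)$. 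Once both terms vanish via the DCT (valid since $u\in L^1_{\mathrm{loc}}([0,T);L^1_{P_1})$), the identity $\int_0^T\!\int u\tilde\theta = 0$ follows.

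The main obstacle will be handling $B_\nu$ on $\{|y|\ge 1\}$ in full measure-theoretic generality: without absolute continuity of $\nu$, the pointwise ``translation" arguments used for $K(y)\dy$ in Section~\ref{section3} must be recast entirely as mass estimates against $\dnu$, and this is exactly where assumption~\eqref{eq:conds.general.L} becomes indispensable. The combination of~\eqref{Atotal2} (to compare $P_{1+t}$ with $P_1$ after the semigroup identity) and~\eqref{eq:Pz.Px} (to absorb translations of $P_1$ by unit vectors arising from MVT on $\varphi$) plays the role that slow variation and tail comparability played in Section~\ref{section3}; I would verify that no further regularity is implicitly needed by tracking each occurrence of $K$ or $P_1$ in the original argument.
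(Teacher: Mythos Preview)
Your proposal is correct and follows essentially the same approach as the paper: the paper's own proof of Theorem~\ref{anUniquenessSoria} simply says to repeat the argument of Theorem~\ref{UniquenessSoria} with $\varphi$ solving the backward problem for $\mathcal{L}_\nu$ and with $B$ replaced by $B_\nu$, and you have carried out precisely that adaptation, correctly identifying that~\eqref{Atotal2} handles the semigroup comparison, \eqref{eq:Pz.Px} absorbs the unit translations arising from the MVT on $\nabla\varphi$, and~\eqref{eq:conds.general.L} replaces the tail comparability~\eqref{Atotal} in the estimate of $\int_{\{|y|\ge1\}}|\varphi(x-y,t)|\,\mathrm{d}\nu(y)$. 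One minor wording issue: the smoothness of $\varphi$ comes from convolving the (merely continuous) kernel $P_t$ against the smooth compactly supported $\tilde\theta$, not from any assumed smoothness of $P_t$ itself; your sentence is slightly garbled on this point but the conclusion is right.
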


Since by Proposition~\ref{an.equivDefvwProp} a very weak solution $u\in \li$ with initial datum a Radon measure $\mu_{0}$ satisfies~\eqref{v3.10}, we obtain the following corollary.

\begin{corollary}[Uniqueness for  very weak solutions within $\li$] \label{anUniquenessAcotadas}
    Let $\mathcal{L}=\mathcal{L}_\nu$ and $P$ as in Theorem~\ref{anUniquenessSoria}. There is at most one nonnegative very weak solution of \eqref{a.nonlocalheatequation}	 belonging to $\li$ with a given initial trace.
\end{corollary}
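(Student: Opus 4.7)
The proof will be a direct combination of the two results that immediately precede this corollary, exactly paralleling how Corollary~\ref{UniquenessAcotadas} was deduced from Theorem~\ref{UniquenessSoria} and Proposition~\ref{equivDefvwProp} in Section~\ref{section3}. Thus, the plan is to let $u_1,u_2\in \li$ be two very weak solutions of~\eqref{a.nonlocalheatequation} in the sense of Definition~\ref{defWS} sharing the same Radon-measure initial trace $\mu_0$, and to show that the difference $u:=u_1-u_2$ vanishes a.e.\ in $\mathbb{R}^d\times(0,T)$.

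First, I would invoke Proposition~\ref{an.equivDefvwProp}(ii): since each $u_i$ belongs to $\li$, is a very weak solution of the equation, and has $\mu_0$ as initial trace, both $u_1$ and $u_2$ satisfy the alternative formulation~\eqref{eq:space.alternative.definition}--\eqref{v3.10}. This step is where the assumptions~\eqref{eq:continuity.positivity} and~\eqref{eq:conds.general.L} (encoded through Lemma~\ref{lem:Lnu.theta}) are silently used; the $L^\infty$-in-time control against $P_1$ is what makes both formulations equivalent.

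Next, I would apply Theorem~\ref{anUniquenessSoria}, whose hypotheses (continuity and positivity of $P_t$, condition~\eqref{eq:conds.general.L}, plus \eqref{Atotal2} and~\eqref{eq:Pz.Px}) are precisely those assumed in the statement of the present corollary. Since $u_1$ and $u_2$ both satisfy~\eqref{eq:space.alternative.definition}--\eqref{v3.10} with the same datum $\mu_0$, the theorem gives $u_1=u_2$ a.e.\ in $\mathbb{R}^d\times(0,T)$.

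There is no genuine obstacle to overcome here: all the analytic work (the duality/Hilbert argument with the backward heat kernel, together with the cutoff/commutator estimates via the bilinear form $B_\nu$ defined in~\eqref{eq:def.Bnu}) has already been carried out in Theorem~\ref{anUniquenessSoria}, and the passage between the two notions of very weak solution is contained in Proposition~\ref{an.equivDefvwProp}. The only thing worth double-checking is that the statement really concerns very weak solutions in $\li$ (rather than nonnegative ones, as the current wording inherited from a template might suggest), since nonnegativity plays no role in either of the two ingredients invoked.
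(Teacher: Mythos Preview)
Your proposal is correct and matches the paper's approach exactly: the paper simply observes that, by Proposition~\ref{an.equivDefvwProp}(ii), any very weak solution in $\li$ with initial trace $\mu_0$ satisfies~\eqref{eq:space.alternative.definition}--\eqref{v3.10}, and then invokes Theorem~\ref{anUniquenessSoria}. Your remark that nonnegativity plays no role here is also well taken; the word ``nonnegative'' in the statement appears to be a slip, as the parallel Corollary~\ref{UniquenessAcotadas} in Section~\ref{section3} does not include it and neither ingredient of the proof requires it.
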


We consider now the class of nonnegative very weak solutions that belong to $\luloc$. As in Section~\ref{section3}, the strategy to prove uniqueness in this class is to show that very weak solutions within it belong to $\li$. Uniqueness will then be inherited from Corollary~\ref{anUniquenessAcotadas}.

The required boundedness is obtained following the proof of Lemma~\ref{lemmatrace}, that depends only on the existence of a function $\phi\in C^2(\mathbb{R}^d)$ satisfying~\eqref{a.46}--\eqref{Lphi}, with $\mathcal{L}=\mathcal{L}_\nu$ and $B=B_\nu$, remembering that~\eqref{an7} holds. Thus we have the following result.

\begin{lemma}\label{thm:uniqueness.very.weak.nu}
    Let $\mathcal{L}=\mathcal{L}_\nu$ of the form~\eqref{a.Lnu} with $\nu$ satisfying~\eqref{eq:Levy.condition} and $B=B_\nu$ the bilinear form defined in \eqref{eq:def.Bnu}. Assume that~$\mathcal{L}_\nu$ has associated a heat kernel~$P$ satisfying~\eqref{eq:continuity.positivity} and that there is a function $\phi\in C^{2}(\Rn)$ satisfying~\eqref{a.46}--\eqref{Lphi}.

    \noindent\textup{(i)} If $u$ is a nonnegative very weak solution to~\eqref{a.nonlocalheatequation} in $\luloc$, there is a constant $c>0$ such that~\eqref{lematecnicodesigualdad} holds.

    \noindent\textup{(ii)} If $u\in\luloc$ is a nonnegative very weak solution of~\eqref{a.nonlocalheatequation}, then  $u\in\li$.
\end{lemma}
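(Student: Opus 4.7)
The plan is to follow the proof of Lemma~\ref{lemmatrace} essentially line by line. That proof used assumptions~\eqref{A0}--\eqref{A3} on $K$ only as a vehicle for constructing, via Lemma~\ref{lemmaGettingPhi}, a function $\phi\in C^2(\RN)$ satisfying~\eqref{a.46}--\eqref{Lphi}; in the present statement such a $\phi$ is handed to us as a hypothesis, so most of the argument carries over without change. The only auxiliary estimate used in that proof which is not immediately available here is the uniform-in-$n$ bound $\|\mathcal{L}_\nu\psi_n\|_\infty\le C$ for a cutoff family $\{\psi_n\}$ of the form~\eqref{cutoffspace}. This follows directly from the L\'evy condition~\eqref{eq:Levy.condition}: a Taylor expansion gives
\begin{equation*}
    |\Lambda\psi_n(x,y)|\le C\big(1\wedge|y|^2\big),\qquad x,y\in\RN,\ n\in\mathbb{N},
\end{equation*}
with $C$ depending only on $\|\psi\|_\infty$ and $\|D^2\psi\|_\infty$ (since $\|D^2\psi_n\|_\infty\le\|D^2\psi\|_\infty$), and integrating against $\dnu$ yields the claim.

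For (i), I would plug the test function $\varphi_k^{\tau_0,\tau_1}\psi_n\phi$ into the very weak formulation~\eqref{a.3}, where $\varphi_k^{\tau_0,\tau_1}$ is the smooth approximation of $\mathcal{X}_{[\tau_0,\tau_1]}$ built from the summability kernel in~\eqref{etaParaCutOff}, exactly as in Lemma~\ref{lemmatrace}. The nonlocal Leibniz formula
\begin{equation*}
    \Lnu(\psi_n\phi)=\psi_n\Lnu\phi+\phi\Lnu\psi_n-B_\nu(\psi_n,\phi)
\end{equation*}
together with $|\Lnu\phi|\le c\phi$, the uniform bound $|\Lnu\psi_n|\le C$ just established, and $|B_\nu(\psi_n,\phi)|\le c\max\{\|\psi_n\|_\infty,\|\nabla\psi_n\|_\infty\}\phi\le C\phi$ (from~\eqref{Lphi} and the scaling of $\psi_n$), gives the key estimate $|\Lnu(\psi_n\phi)|\le C_2\phi$ with $C_2$ independent of $n$. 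Setting $g_n(t):=\intr u(x,t)\psi_n(x)\phi(x)\dx$ and using nonnegativity of $u$ and of $\varphi_k^{\tau_0,\tau_1}$, this yields
\begin{equation*}
    \Big|\int_0^T(\varphi_k^{\tau_0,\tau_1})'(t)g_n(t)\dt\Big|\le C_2\int_0^T\varphi_k^{\tau_0,\tau_1}(t)g_\infty(t)\dt,
\end{equation*}
where $g_\infty(t):=\intr u(x,t)\phi(x)\dx\asymp\|u(\cdot,t)\|_{\lp}$ by~\eqref{a.46}.

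Passing $k\to\infty$ at common Lebesgue points of the countable family $\{g_n\}_{n\in\mathbb{N}}$, and then $n\to\infty$ by the MCT (using $u\ge 0$ and $\phi\in L^1_{\mathrm{loc}}(\RN)$), I obtain
\begin{equation*}
    |g_\infty(\tau_1)-g_\infty(\tau_0)|\le C_2\int_{\tau_0}^{\tau_1}g_\infty(t)\dt\quad\text{for a.e.\,}0<\tau_0<\tau_1<T.
\end{equation*}
The iterative Gronwall-type argument from the proof of Lemma~\ref{lemmatrace} then converts this integral inequality into the pointwise bounds $g_\infty(t)\le g_\infty(\tau)e^{C_2(\tau-t)}$ for both $t\le\tau$ and $\tau\le t$, which via~\eqref{a.46} rephrases as~\eqref{lematecnicodesigualdad}. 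Part (ii) is then immediate: since $u\in\luloc$, there is some $\tau\in(0,T)$ with $\|u(\cdot,\tau)\|_{\lp}<\infty$, and inserting this $\tau$ in~\eqref{lematecnicodesigualdad} bounds $\|u(\cdot,t)\|_{\lp}$ uniformly on $(0,T)$.

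The only genuinely delicate point is admissibility of $\varphi_k^{\tau_0,\tau_1}\psi_n\phi$ as a test function in~\eqref{a.3}: this function has compact support and $C^2$ spatial regularity, but is not $C^\infty$. This is handled exactly as in the $\mathcal{L}_K$ case, either by noting that the class of admissible tests extends to $C^2_{\textup{c}}$ functions (the integrability~\eqref{a.2} needed for them is guaranteed here by $u\in\luloc$ via Proposition~\ref{anprop1} combined with $|\Lnu(\psi_n\phi)|\le C\phi\le cP_1$), or by a standard mollification of $\psi_n\phi$ in space, using that convergence in $W^{2,\infty}$ of the mollifiers together with the Lévy condition on $\nu$ gives uniform convergence of their images under $\Lnu$.
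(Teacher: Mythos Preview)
Your proposal is correct and follows essentially the same approach as the paper, which simply points back to the proof of Lemma~\ref{lemmatrace} noting that it depends only on the existence of a $\phi\in C^2(\Rn)$ satisfying~\eqref{a.46}--\eqref{Lphi}. You have in fact supplied more detail than the paper does, correctly isolating the one auxiliary ingredient (the uniform bound $\|\Lnu\psi_n\|_\infty\le C$ from the L\'evy condition~\eqref{eq:Levy.condition}) and the test-function admissibility issue. One small remark: invoking Proposition~\ref{anprop1} at the end is unnecessary and slightly off, since that proposition carries the extra hypothesis~\eqref{eq:conds.general.L} not assumed here; but as you yourself note, the direct estimate $|\Lnu(\psi_n\phi)|\le C_2\phi\le cP_1$ together with $u\in\luloc$ already gives the required integrability for this specific test, so the argument stands.
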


\begin{remark}
    Lemma~\ref{lemmatrace} does not have $u\in \luloc$ among its hypotheses, though this fact is used in its proof. The reason is that when  we consider $\Lk_K$ instead of $\Lk_{\nu}$ any very weak solution has this integrability, as noted in Remark~\ref{remark3}. But this is not guaranteed \emph{a priori} in the case of more general operators.
\end{remark}

Mimicking the proof of Proposition~\ref{prop:regularity.in.time} we get also some regularity in time for general operators.

\begin{proposition}
    Let $\mathcal{L}=\mathcal{L}_\nu$ and $P$ as in Lemma~\ref{thm:uniqueness.very.weak.nu}, and let $u$ be a nonnegative very weak solution to~\eqref{a.nonlocalheatequation}.

    \noindent\textup{(i)} Let $\phi\in C^2(\mathbb{R}^d)$ satisfying~\eqref{a.46}--\eqref{Lphi}. There is a constant $C>0$ such that~\eqref{eq:continuity.phi} holds.

    \noindent\textup{(ii)} Given $\psi\in C^2_{\textup{c}}(\mathbb{R}^d)$, there is a constant $C_\psi>0$ such that~\eqref{eq:continuity.psi} holds.
\end{proposition}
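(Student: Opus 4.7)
The plan is to retrace the two-step argument of Proposition~\ref{prop:regularity.in.time} with $\mathcal{L}_\nu$ in place of $\mathcal{L}_K$. The skeleton consists of (a) an integrated Lipschitz-type inequality
\begin{equation*}
    \Big|\intr u(x,\tau_1)\omega(x)\dx - \intr u(x,\tau_2)\omega(x)\dx\Big| \le C_\omega \int_{\tau_1 \wedge \tau_2}^{\tau_1 \vee \tau_2}\intr u(x,t)\,P_1(x)\dx\dt,
\end{equation*}
in the spirit of~\eqref{eq:Gronwall.absolute.value}; and (b) the boundedness $u \in \li$, which comes for free from Lemma~\ref{thm:uniqueness.very.weak.nu}(ii). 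All the work is therefore in establishing (a), with $\omega = \phi$ for part (i) and $\omega = \psi$ for part (ii).

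For part (i), the inequality is already produced implicitly inside the proof of Lemma~\ref{thm:uniqueness.very.weak.nu}(i). I would reuse the same test function $\psi_n\phi\,\varphi^{\tau_1,\tau_2}_k$, with $\psi_n$ a spatial cutoff of the form~\eqref{cutoffspace} and $\varphi^{\tau_1,\tau_2}_k$ a mollification of $\mathcal{X}_{[\tau_1,\tau_2]}$; the crucial uniform bound $|\mathcal{L}_\nu(\psi_n\phi)|\le C_2\phi$ follows from the nonlocal Leibniz formula, the estimates in~\eqref{Lphi}, and the uniform control $|\mathcal{L}_\nu\psi_n|\le C$ (Taylor expansion plus the L\'evy condition). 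Passing to the limit first as $k\to\infty$ at common Lebesgue points of $t\mapsto\intr u(x,t)\psi_n(x)\phi(x)\dx$ and then as $n\to\infty$ via the MCT (using $u\ge 0$) yields the integrated inequality for $\omega=\phi$; combining with $\phi \le cP_1$ from~\eqref{a.46} delivers~\eqref{eq:continuity.phi}.

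For part (ii), the scheme is cleaner since $\psi$ is already compactly supported: one tests the very weak formulation against $\psi\,\varphi^{\tau_1,\tau_2}_k$ (mollifying $\psi$ if strict $C^\infty$-smoothness of the test is required, with estimates uniform in the mollification parameter). The only ingredient missing is the pointwise bound
\begin{equation*}
    |\mathcal{L}_\nu\psi(x)| \le C_\psi P_1(x),\qquad x\in\mathbb{R}^d.
\end{equation*}
This will be the \emph{main obstacle}, because, in contrast to the setting of Section~\ref{section3}, we do not have at our disposal a tail comparability between $\nu$ and $P_1$ analogous to~\eqref{Atotal}, and the argument of Lemma~\ref{a.17} is not directly available. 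The cleanest workaround is to exploit the function $\phi$ by writing $\psi=\eta\phi$, where $\eta:=\psi/\phi$; since $\phi\in C^2(\mathbb{R}^d)$ is everywhere positive and $\psi\in C^2_{\textup{c}}(\mathbb{R}^d)$, the quotient $\eta$ lies in $C^2_{\textup{c}}(\mathbb{R}^d)$ with support inside $\operatorname{supp}(\psi)$. Leibniz's rule then gives
\begin{equation*}
    \mathcal{L}_\nu\psi = \eta\,\mathcal{L}_\nu\phi + \phi\,\mathcal{L}_\nu\eta - B_\nu(\eta,\phi),
\end{equation*}
and each summand is bounded by a constant times $\phi\le cP_1$: the first by $\|\eta\|_\infty\cdot c\phi$ from~\eqref{Lphi}; the third by $c\max\{\|\eta\|_\infty,\|\nabla\eta\|_\infty\}\phi$, again from~\eqref{Lphi}; and the middle by $\phi\cdot C_\eta$, since $\eta\in C^2_{\textup{c}}$ and $\nu$ is a L\'evy measure (Taylor expansion controls $|\Lambda\eta(x,y)|\lesssim (1\wedge|y|^2)$, which is $\nu$-integrable). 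With $|\mathcal{L}_\nu\psi|\le C_\psi P_1$ in hand, the approximation and passage to the limit repeat verbatim those of part~(i), producing inequality (a) with $\omega=\psi$ and hence~\eqref{eq:continuity.psi}.
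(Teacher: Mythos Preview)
Your proof is correct. Part (i) coincides with what the paper does: it simply points back to the integrated estimate~\eqref{eq:Gronwall.absolute.value} implicit in the proof of Lemma~\ref{thm:uniqueness.very.weak.nu}(i) (itself a transcription of Lemma~\ref{lemmatrace}), and then invokes part (ii) of that lemma to bound the time integral by $\|u\|_{\li}|\tau_1-\tau_2|$.

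Part (ii), however, departs from the paper. The paper's instruction is to ``mimic the proof of Proposition~\ref{prop:regularity.in.time}'', which for (ii) means obtaining $|\mathcal{L}_\nu\psi|\le C_\psi P_1$ via the analogue of Lemma~\ref{a.17}, namely Lemma~\ref{lem:Lnu.theta} and estimate~\eqref{an7}. That lemma, though, requires the tail condition~\eqref{eq:conds.general.L}, which is \emph{not} among the hypotheses inherited ``as in Lemma~\ref{thm:uniqueness.very.weak.nu}'' (only~\eqref{eq:Levy.condition}, \eqref{eq:continuity.positivity}, and the existence of $\phi$ satisfying~\eqref{a.46}--\eqref{Lphi} are assumed there). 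Your factorization $\psi=(\psi/\phi)\,\phi$ together with Leibniz's rule and~\eqref{Lphi} recovers the same bound using only the properties of $\phi$, so your argument is actually tighter with respect to the stated hypotheses. The trade-off is that the paper's route is a one-line citation once~\eqref{eq:conds.general.L} is available (as it is in all the concrete examples treated afterwards), whereas your route costs a short extra computation but removes that dependency.
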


The combination of Lemma~\ref{thm:uniqueness.very.weak.nu} (ii) and Corollary~\ref{anUniquenessAcotadas} yields the desired uniqueness result for nonnegative very weak solutions.

\begin{corollary}[Uniqueness for nonnegative very weak solutions]\label{UniquenessNonnegative.nu}
    Let $\mathcal{L}=\mathcal{L}_\nu$ and $P$ as in Theorem~\ref{anUniquenessSoria}, and $B=B_\nu$ the bilinear form defined in \eqref{eq:def.Bnu}. If there is a function $\phi\in C^{2}(\Rn)$ satisfying~\eqref{a.46}--\eqref{Lphi}, then there is at most one nonnegative very weak solution of~\eqref{a.nonlocalheatequation} belonging to $\luloc$ with a given initial trace.
\end{corollary}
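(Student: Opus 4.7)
The plan is to reduce this uniqueness question to the already-established uniqueness theorem in the smaller class $\li$, using the smoothing/boundedness result to promote nonnegative solutions from $\luloc$ up to $\li$.

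First, I would take two nonnegative very weak solutions $u_1, u_2 \in \luloc$ of \eqref{a.nonlocalheatequation} sharing the same initial trace $\mu_0$. The standing hypotheses give us the function $\phi \in C^2(\mathbb{R}^d)$ satisfying \eqref{a.46}--\eqref{Lphi}, along with all the heat-kernel assumptions inherited from Theorem~\ref{anUniquenessSoria}. In particular, the hypotheses of Lemma~\ref{thm:uniqueness.very.weak.nu}~(ii) are met for each $u_i$, since each $u_i$ is nonnegative and lies in $\luloc$. Applying that lemma, I conclude that $u_1, u_2 \in \li$.

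Once both solutions belong to $\li$ and agree in initial trace, Corollary~\ref{anUniquenessAcotadas} applies directly, and it yields $u_1 = u_2$ almost everywhere in $\mathbb{R}^d \times (0,T)$. The hypotheses required by that corollary (which is phrased in terms of Theorem~\ref{anUniquenessSoria}) coincide exactly with the hypotheses of the present statement, so no additional work is needed to invoke it.

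In short, there is no real obstacle: all the substantive analysis has been carried out in the preceding lemma and corollary. The only thing one needs to verify carefully is that the hypotheses listed in the statement are indeed sufficient to trigger both Lemma~\ref{thm:uniqueness.very.weak.nu}~(ii) and Corollary~\ref{anUniquenessAcotadas} simultaneously, which they are by construction. Thus the proof will consist of little more than chaining these two results together.
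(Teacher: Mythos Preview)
Your proposal is correct and matches the paper's approach exactly: the paper states that the corollary follows from the combination of Lemma~\ref{thm:uniqueness.very.weak.nu}~(ii) and Corollary~\ref{anUniquenessAcotadas}, which is precisely the chaining you describe.
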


\subsubsection{Existence of an initial trace}

The existence of an initial trace satisfying the integral growth condition~\eqref{eq:growth.initial.trace} for nonnegative very weak solutions is based on the estimates~\eqref{lematecnicodesigualdad}; see the proof of Theorem~\ref{theoremtraces}. Following that proof we obtain easily our next result.

\begin{theorem}[Existence of initial trace integrable against $P_{1}$] \label{antheoremtraces}
    Assume the same hypotheses as in Lemma~\ref{thm:uniqueness.very.weak.nu}. If $u\in\luloc$ is a nonnegative very weak solution to~\eqref{a.nonlocalheatequation}, then there is a nonnegative Radon measure $\mu_{0}$ such that~\eqref{a.1} holds; that is, $\mu_0$ is the initial trace of $u$.  Moreover, $\mu_0$ satisfies the integral growth condition\eqref{eq:growth.initial.trace}.
\end{theorem}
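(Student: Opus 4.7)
The plan is to follow the proof of Theorem~\ref{theoremtraces}, verifying that the general hypotheses assumed here suffice for each step. First, since $u\in\luloc$, there exists some $\tilde T\in(0,T)$ such that $\|u(\cdot,\tilde T)\|_{L^1_{P_1}}<\infty$. Applying Lemma~\ref{thm:uniqueness.very.weak.nu}(i), which is available since all its hypotheses are part of our assumptions, we obtain an exceptional set $E\subset(0,T)$ with $|E|=0$ such that
\begin{equation*}
    \|u(\cdot,t)\|_{L^1_{P_1}}\leq e^{cT}\|u(\cdot,\tilde T)\|_{L^1_{P_1}}=:M<\infty\quad\text{for all }t\in(0,T)\setminus E.
\end{equation*}
Since $P_1\in C(\mathbb{R}^d)$ and $P_1>0$ (by \eqref{eq:continuity.positivity}), for each $R>0$ the constant $c_R:=1/\min_{|x|\le R}P_1(x)$ is finite, so
\begin{equation*}
    \sup_{t\in(0,T)\setminus E}\int_{B_R(0)}u(x,t)\dx\le c_R M=:C_R.
\end{equation*}

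Next, I would invoke weak-$\ast$ sequential compactness of Radon measures (see e.g. \cite[Theorem 4.4]{Simon}): by a diagonal argument on a sequence $R_k\to\infty$, one extracts a sequence $t_n\in(0,T)\setminus E$ with $t_n\to 0^+$ and a nonnegative Radon measure $\mu_0$ on $\mathbb{R}^d$ with $\mu_0(B_R(0))\le C_R$ for every $R$ such that
\begin{equation*}
    \lim_{n\to\infty}\intr \psi(x)u(x,t_n)\dx=\intr\psi\,\textup{d}\mu_0\quad\textrm{for all }\psi\in C_{\textup{c}}(\Rn).
\end{equation*}
The main point requiring care is that the limit must be independent of the extracted subsequence, so that \eqref{a.1} holds as an essential limit and not merely along a subsequence. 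Here I would use the time-regularity statement (the analogue of~Proposition~\ref{prop:regularity.in.time}(ii) that was recorded just before Corollary~\ref{UniquenessNonnegative.nu}): for any $\psi\in C^\infty_{\textup{c}}(\mathbb{R}^d)$ there is $C_\psi>0$ with
\begin{equation*}
    \Big|\intr u(x,\tau_1)\psi(x)\dx-\intr u(x,\tau_2)\psi(x)\dx\Big|\le C_\psi\|u\|_{\li}|\tau_1-\tau_2|\quad\text{for a.e. }\tau_1,\tau_2\in(0,T),
\end{equation*}
noting that $u\in\li$ by Lemma~\ref{thm:uniqueness.very.weak.nu}(ii). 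If $\{\tilde t_n\}\subset(0,T)\setminus E$ is another sequence with $\tilde t_n\to 0^+$ converging weakly-$\ast$ to some $\tilde\mu_0$, sending $n\to\infty$ and then $k\to\infty$ in the above inequality with $\tau_1=t_n$, $\tau_2=\tilde t_k$ yields $\langle\mu_0,\psi\rangle=\langle\tilde\mu_0,\psi\rangle$ for all $\psi\in C^\infty_{\textup{c}}(\mathbb{R}^d)$, and by density this extends to $C_{\textup{c}}(\mathbb{R}^d)$, so $\mu_0\equiv\tilde\mu_0$. This identifies $\mu_0$ as the initial trace of $u$ in the sense of Definition~\ref{deftrace}.

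Finally, for the growth condition~\eqref{eq:growth.initial.trace}, I would take the family of cutoff functions $\{\psi_R\}_{R\ge R_0}$ from \eqref{cutoffspace}. Since $\psi_R P_1\in C_{\textup{c}}(\mathbb{R}^d)$ and $u(\cdot,t_n)\to\mu_0$ weak-$\ast$, and by the uniform bound above,
\begin{equation*}
    \intr\psi_R(x)P_1(x)\dmu_0(x)=\lim_{n\to\infty}\intr u(x,t_n)\psi_R(x)P_1(x)\dx\le M.
\end{equation*}
Letting $R\to\infty$ and invoking the Monotone Convergence Theorem gives $\intr P_1\dmu_0\le M<\infty$, which is~\eqref{eq:growth.initial.trace}. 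I do not anticipate any serious obstacle beyond carefully using $u\in\li$ and the time-Lipschitz bound to guarantee uniqueness of the limit: the original proof for $\mathcal{L}_K$ uses exactly these ingredients, and both are available in the general setting under our standing assumptions.
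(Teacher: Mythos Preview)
Your proposal is correct and follows essentially the same approach as the paper, which simply refers back to the proof of Theorem~\ref{theoremtraces} and notes that the estimates~\eqref{lematecnicodesigualdad} (supplied here by Lemma~\ref{thm:uniqueness.very.weak.nu}) are the key ingredient. You have correctly identified all the required pieces in the general setting: the positivity and continuity of $P_1$ from~\eqref{eq:continuity.positivity}, the uniform $L^1_{P_1}$ bound from Lemma~\ref{thm:uniqueness.very.weak.nu}, the time-Lipschitz estimate from the proposition preceding Corollary~\ref{UniquenessNonnegative.nu}, and the cutoff/MCT argument for the growth condition.
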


\subsubsection{Existence of very weak solutions}

The existence of very weak solutions, for suitable initial data, is obtained by mimicking the proof of Theorem~\ref{prop2}, with two new hypotheses.

\begin{theorem}\label{anprop2}
     Let $\mathcal{L}=\mathcal{L}_\nu$ of the form~\eqref{a.Lnu} with $\nu$ satisfying~\eqref{eq:Levy.condition}. Assume that it has associated a heat kernel~$P$ that is a very weak solution of \eqref{a.nonlocalheatequation} and satisfies~\eqref{eq:continuity.positivity},~\eqref{Atotal2} and~\eqref{eq:comparison.translations}. Let $\mu_{0}$ be a Radon measure with the admissible growth at  infinity~\eqref{q4}. Then, for each $T>0$, the function $U$ given in~\eqref{q5} is a very weak solution of \eqref{a.nonlocalheatequation} in $C(\Rn\times(0,T))\cap\li$ with initial trace~$\mu_{0}$.
\end{theorem}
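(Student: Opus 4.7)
The plan is to adapt the proof of Theorem~\ref{prop2} step by step, replacing each appeal to Lemma~\ref{a.17}, Lemma~\ref{v3.19} and Corollary~\ref{PisaVeryWeaksolution.cor} by the corresponding property now assumed directly on $P$. Four items need to be checked: (a) $U$ is well defined on $\Rn\times(0,T)$ and lies in $\li$; (b) $U\in C(\Rn\times(0,T))$; (c) $\mu_0$ is the initial trace of $U$; and (d) $U$ satisfies the integrability condition~\eqref{a.2} and the weak equation~\eqref{a.3}.

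For (a), I would fix $\xt\in\Rn\times(0,T)$ and combine~\eqref{Atotal2} (bounding $P_t(x-y)\le C_t P_1(x-y)$) with~\eqref{eq:comparison.translations} (bounding $P_1(x-y)\le C_{|x|}P_1(y)$) and the growth condition~\eqref{q4} to obtain convergence of $\int P_t(x-y)\,\textup{d}|\mu_0|(y)$ pointwise in $\xt$. The $\li$ bound then follows from a single application of Tonelli, the symmetry of $P_t$, the semigroup property~\eqref{eq:semigroup.property} and~\eqref{Atotal2}, giving $\int |U\xt|P_1(x)\dx \le \int P_{1+t}(y)\,\textup{d}|\mu_0|(y) \le C_T\int P_1(y)\,\textup{d}|\mu_0|(y)$ uniformly in $t\in(0,T)$.

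For (b), I plan to approximate $\mu_0$ by the truncations $\mu^r(B):=\mu_0(B\cap B_r(0))$ and set $U^r\xt:=\int P_t(x-y)\,\textup{d}\mu^r(y)$. Each $U^r$ inherits joint continuity from that of $P$ (hypothesis~\eqref{eq:continuity.positivity}) by dominated convergence on the finite measure $\mu^r$, and the uniform convergence $U^r\to U$ on compact sets $\{|x|\le R\}\times[\varepsilon,T']$ follows from~\eqref{Atotal2},~\eqref{eq:comparison.translations} and the tail bound $\int_{\{|y|>r\}}P_1(y)\,\textup{d}|\mu_0|(y)\to 0$ implied by~\eqref{q4}. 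For (c), I use the symmetry of $P_t$ to rewrite $\int U\xt\psi(x)\dx=\int (P_t\ast\psi)(y)\,\textup{d}\mu_0(y)$ for $\psi\in C_{\textup{c}}(\Rn)$; the strong continuity of the Feller semigroup~\eqref{eq:strong.continuity.initial.trace} supplies the pointwise limit $P_t\ast\psi\to\psi$, and a $\textup{d}|\mu_0|$-integrable dominating function of the form $C_\psi P_1$ (built from~\eqref{Atotal2} and the compact support of $\psi$) allows the DCT to close the trace identity~\eqref{a.1}.

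Finally, item~\eqref{a.2} in (d) is immediate from $U\in\li\subset\luloc$ together with Proposition~\ref{anprop1}. The identity~\eqref{a.3} is the delicate part: writing $\int_0^T\!\!\int U\mathcal{L}_\nu\theta\dx\dt = \int\!\int_0^T\!\!\int P_t(x-y)\mathcal{L}_\nu\theta\xt\dx\dt\,\textup{d}\mu_0(y)$ and using the translation invariance of $\mathcal{L}_\nu$ (each translate $P(\cdot-y,\cdot)$ is itself a very weak solution, by hypothesis), the inner double integral becomes $\int_0^T\!\int P_t(x-y)\partial_t\theta\dx\dt$, and a second Fubini recovers $\int_0^T\!\int U\partial_t\theta\dx\dt$. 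The hard part will be justifying both Fubini swaps, that is, proving absolute integrability of the corresponding triple integrals. For the $\partial_t\theta$ side this follows from~\eqref{a.5} and~\eqref{q4}; for the $\mathcal{L}_\nu\theta$ side one needs the pointwise bound $|\mathcal{L}_\nu\theta\xt|\le C_\theta P_1(x)$, after which the semigroup property yields $\int_0^T\!\int P_t(x-y)|\mathcal{L}_\nu\theta\xt|\dx\dt \le C_{\theta,T}P_1(y)$, which is $\textup{d}|\mu_0|$-integrable by~\eqref{q4}. This pointwise bound is precisely Lemma~\ref{lem:Lnu.theta}, whose hypothesis~\eqref{eq:conds.general.L} is implicit in the current setting and should be invoked here to close the argument.
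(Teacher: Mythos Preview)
Your proposal is correct and takes exactly the same route as the paper: the paper's own proof of Theorem~\ref{anprop2} is the single line ``mimicking the proof of Theorem~\ref{prop2}, with two new hypotheses'', and your four-step breakdown (a)--(d) is a faithful elaboration of that mimicry, with each use of Lemma~\ref{a.17}, Lemma~\ref{v3.19} and Corollary~\ref{PisaVeryWeaksolution.cor} correctly replaced by the corresponding hypothesis now imposed directly on $P$.

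You are also right to flag that the Fubini for the $\mathcal{L}_\nu\theta$ term (and the verification of~\eqref{a.2} through Proposition~\ref{anprop1}) ultimately rests on the bound $|\mathcal{L}_\nu\theta|\le C_\theta P_1$ of Lemma~\ref{lem:Lnu.theta}, which in turn requires~\eqref{eq:conds.general.L}. Strictly speaking this hypothesis is \emph{not} listed in the statement of Theorem~\ref{anprop2} (it reappears only in the downstream Corollary~\ref{anCorRepresentationFormulaVW}), and none of~\eqref{eq:continuity.positivity}, \eqref{Atotal2} or~\eqref{eq:comparison.translations} obviously implies it. So your phrase ``implicit in the current setting'' is generous; this looks like a small omission in the paper's hypothesis list, and your instinct to invoke it explicitly is the right one. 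A parallel minor point: your step~(b) appeals to joint continuity of $P$ in $(x,t)$, but~\eqref{eq:continuity.positivity} only asserts $P_t\in C(\Rn)$ for each fixed $t$; the proof of Theorem~\ref{prop2} draws the needed uniform continuity from Corollary~\ref{pcor1}(iii), whose analogue is likewise not among the stated hypotheses here.
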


As a corollary to Theorem~\ref{anprop2} and the uniqueness results Corollary~\ref{anUniquenessAcotadas} and Corollary~\ref{UniquenessNonnegative.nu}, solutions are given by a representation formula.

\begin{corollary}[Representation formula for very weak solutions]\label{anCorRepresentationFormulaVW}
   Let $\mathcal{L}=\mathcal{L}_\nu$ of the form~\eqref{a.Lnu} with $\nu$ satisfying~\eqref{eq:Levy.condition}. Assume that it has associated a heat kernel~$P$ satisfying~\eqref{eq:continuity.positivity}, \eqref{eq:conds.general.L}, \eqref{Atotal2},~\eqref{eq:comparison.translations} and~\eqref{eq:Pz.Px}, and let $\mu_{0}$ be a Radon measure satisfying~\eqref{q4}. Let $u$ be a very weak solution to~\eqref{a.nonlocalheatequation} with trace $\mu_{0}$. If either $u\in \li$, or $u$ is nonnegative and in addition there is a function $\phi\in C^{2}(\Rn)$ satisfying~\eqref{a.46}--\eqref{Lphi}, then $u$ is given by representation formula~\eqref{representationformula}.
\end{corollary}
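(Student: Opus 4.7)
The strategy mirrors that of Corollary~\ref{CorRepresentationFormulaVW} in Section~\ref{section3}: I would produce an explicit candidate satisfying the representation formula, check that it lies in a uniqueness class that $u$ also belongs to, and then invoke the relevant uniqueness result. Concretely, define
\begin{equation*}
U(x,t):=\intr P_{t}(x-y)\dmuo(y),\quad \xt\in\Rn\times(0,T).
\end{equation*}
The hypotheses placed on $\nu$ and $P$ are exactly those required by Theorem~\ref{anprop2}, so $U\in C(\Rn\times(0,T))\cap\li$ is a very weak solution to \eqref{a.nonlocalheatequation} with initial trace $\mu_{0}$. The rest of the argument is to match $u$ with $U$ in each of the two alternatives.

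Case (a): $u\in\li$. Here both $u$ and $U$ lie in $\li$ and share the initial trace $\mu_{0}$, so Corollary~\ref{anUniquenessAcotadas} delivers $u=U$ almost everywhere, which is the representation formula \eqref{representationformula}.

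Case (b): $u\geq 0$ with $\phi$ as prescribed. The plan is to first argue that $u\in\luloc$; granted this, Lemma~\ref{thm:uniqueness.very.weak.nu}(ii) (whose hypotheses are satisfied since \eqref{a.46}--\eqref{Lphi} hold) upgrades $u$ to $\li$, after which case (a) applies verbatim. Alternatively, Corollary~\ref{UniquenessNonnegative.nu} can be invoked directly: both $u$ and $U$ are nonnegative very weak solutions in $\luloc$ with trace $\mu_{0}$, and the existence of $\phi$ is precisely the assumption that corollary needs. Either route yields $u=U$ almost everywhere.

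The principal obstacle, unlike in Section~\ref{section3}, is precisely this $\luloc$ step in case (b). In the $\mathcal{L}_{K}$ setting, Remark~\ref{remark3} made it automatic because Proposition~\ref{prop1}(ii) supplied the two-sided comparison $|\mathcal{L}_{K}\psi|\asymp P_{1}$ from a single cutoff test. As noted in Remark~\ref{an.prop1}, the lower bound can fail for general~$\nu$ (e.g.\ the anisotropic $(-\partial^{2}_{x_{1}x_{1}})^{1/2}+(-\partial^{2}_{x_{2}x_{2}})^{1/2}$), so one cannot simply repeat that step. One way forward is to use the existence of $\phi$ comparable to $P_{1}$ with $|\mathcal{L}_{\nu}\phi|\le c\phi$: testing the weak formulation against $\phi$ times a suitable time cutoff, as in the proof of Lemma~\ref{thm:uniqueness.very.weak.nu}, yields the local-in-time $L^{1}_{P_{1}}$ bound from the a.e.\ finiteness of $\intr u\xt P_{1}\dx$ on a set of positive measure in $(0,T)$, which in turn follows from $u\in L^{1}_{\rm loc}(\mathbb{R}^{d}\times(0,T))$ together with nonnegativity and the comparability of $P_{1}$ with a positive continuous reference function. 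Once this integrability is secured, no new ideas are needed and the corollary is immediate from Theorem~\ref{anprop2} combined with Corollary~\ref{anUniquenessAcotadas} or Corollary~\ref{UniquenessNonnegative.nu}.
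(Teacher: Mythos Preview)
Your overall approach matches the paper's exactly: produce $U$ via Theorem~\ref{anprop2}, then invoke Corollary~\ref{anUniquenessAcotadas} in case~(a) and Corollary~\ref{UniquenessNonnegative.nu} in case~(b). You also correctly flag a genuine subtlety the paper glosses over: Corollary~\ref{UniquenessNonnegative.nu} requires $u\in\luloc$, and (as Remark~\ref{an.prop1} explains) this does not follow automatically from the very weak formulation when $\nu$ is a general L\'evy measure.

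However, your proposed workaround does not close the gap. The claim that a.e.\ finiteness of $\int_{\mathbb{R}^d} u(\cdot,t)P_1\,\textup{d}x$ on a set of positive measure ``follows from $u\in L^{1}_{\rm loc}(\mathbb{R}^{d}\times(0,T))$ together with nonnegativity and the comparability of $P_{1}$ with a positive continuous reference function'' is false: local integrability of $u$ gives no control whatsoever on the global integral against $P_1$, and the boundedness/positivity of $P_1$ only yields the useless estimate $\int_K u\,P_1\le P_1(0)\int_K u$ on compacts $K$. Even granting a single time $\tau_0$ with $g_\infty(\tau_0)<\infty$, the Gronwall step $|g_n(\tau_0)-g_n(\tau_1)|\le C_2\int_{\tau_0}^{\tau_1}g_\infty$ needs $\int_{\tau_0}^{\tau_1}g_\infty<\infty$ to conclude anything about $g_\infty(\tau_1)$---and that is precisely $u\in L^1((\tau_0,\tau_1);\lp)$, which is what you are trying to establish. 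The paper supplies no argument here either; the honest reading is that the hypothesis $u\in\luloc$ is implicit in case~(b), consistently with its explicit appearance in Lemma~\ref{thm:uniqueness.very.weak.nu} and Corollary~\ref{UniquenessNonnegative.nu}.
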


\subsection{Classical solutions}

It is turn now to analyze classical solutions for general operators. We have to tailor the energy condition~\eqref{energyH} on the solutions to the general situation. Thus, we will look for solutions satisfying
\begin{equation}\label{energyH'}\tag{H'}	
        \ints|\Lambda u(x,y,t)|\dnu(y)\in L^{1}_{\textup{loc}}(\Rn\times (0,T)).
\end{equation}

\subsubsection{Uniqueness of classical solutions.}

The clue for uniqueness is to prove that classical solutions satisfying the energy condition~\eqref{energyH'} are very weak solutions, so that we can apply to them the results that we have already obtained for the latter class. This is done easily following the proof of Proposition~\ref{cprop1}, adding assumption~\eqref{eq:conds.general.L}, so that we have the following result.

\begin{proposition}\label{anprop3}
    Let $\mathcal{L}=\mathcal{L}_\nu$ and $P$ as in Lemma~\ref{lem:Lnu.theta}, and $u$ a classical solution to~\eqref{a.nonlocalheatequation}. If $u\in\luloc$ and satisfies~\eqref{energyH'}, then it is a very weak solution to the same equation.
\end{proposition}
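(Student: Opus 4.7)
The plan is to follow the proof of Proposition~\ref{cprop1} closely, replacing $K(y)\dy$ by $\dnu(y)$ and substituting the integrability condition \eqref{eq:conds.general.L} (built into the hypotheses via Lemma~\ref{lem:Lnu.theta}) at the places where the proof of Proposition~\ref{cprop1} invoked the comparability $K\asymp P_1$ from \eqref{Atotal}.

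First, since $u\in\luloc$, Proposition~\ref{anprop1} yields that $u$ satisfies~\eqref{a.2}. It then remains to verify~\eqref{a.3}. Because $u$ is a classical solution, for every $\theta\in C^\infty_{\textup{c}}(\RN\times(0,T))$ we have, after integrating by parts in time,
\begin{equation*}
    \int_0^T\!\intr \big(-u\xt\partial_t\theta\xt + \theta\xt\,\Lnu u\xt\big)\dx\dt = 0.
\end{equation*}
If we are allowed to apply Fubini's theorem to replace $\int\theta\,\Lnu u$ by $\int u\,\Lnu\theta$, we are done. This reduces everything to verifying the absolute integrability
\begin{equation*}
    \mathcal{C} := \int_0^T\!\intr\!\intr |\theta\xt|\,|\Lambda u(x,y,t)|\dnu(y)\dx\dt < \infty.
\end{equation*}

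The estimate $\mathcal{C}<\infty$ is obtained by splitting the inner integral into the regions $\{|y|\le 1\}$ and $\{|y|\ge 1\}$. On the region $\{|y|\le 1\}$, the resulting contribution is finite directly from the energy assumption \eqref{energyH'} and the fact that $\theta$ is bounded with compact support. On the region $\{|y|\ge 1\}$, we use the symmetry of $\nu$ to dominate $|\Lambda u(x,y,t)|$ by $|u(x,t)| + \tfrac12(|u(x+y,t)|+|u(x-y,t)|)$, which reduces matters to bounding two terms: one where the factor is $|u(x,t)|$, and one where it is $|u(x\pm y,t)|$. The first is bounded by $\|\theta u\|_{L^1}\cdot\nu(\{|y|\ge 1\})<\infty$ using the L\'evy property~\eqref{eq:Levy.condition}.

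The main obstacle is the second term, which in the proof of Proposition~\ref{cprop1} relied on the pointwise comparability $K\asymp P_1$ outside the unit ball. In the present generality there is no such comparability; instead we exploit hypothesis~\eqref{eq:conds.general.L}. Bounding $|\theta(\cdot,t)|\le C_\theta P_1$ by~\eqref{a.4}, translating the variable, and applying Fubini-Tonelli yields
\begin{equation*}
    \int_0^T\!\intr |u\xt|\!\int_{\{|y|\ge 1\}}\!|\theta(x-y,t)|\dnu(y)\dx\dt \le C_\theta\!\int_0^T\!\intr |u\xt|\!\int_{\{|y|\ge 1\}}\!P_1(x-y)\dnu(y)\dx\dt,
\end{equation*}
and~\eqref{eq:conds.general.L} bounds the inner integral by $CP_1(x)$. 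The resulting integral $\int_0^T\!\int_{\mathbb{R}^d}|u\xt|P_1(x)\dx\dt$ is finite because $u\in\luloc$, closing the argument.
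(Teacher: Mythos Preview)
Your proof is correct and follows essentially the same route as the paper, which simply refers back to the proof of Proposition~\ref{cprop1} and indicates that assumption~\eqref{eq:conds.general.L} should be used in place of the comparability~\eqref{Atotal}. Your treatment of the term $\textup{II}_2$ via~\eqref{a.4} and~\eqref{eq:conds.general.L} is exactly the adaptation the paper has in mind; the only cosmetic point is that the final integral $\int_0^T\!\int_{\mathbb{R}^d}|u|P_1$ should be understood as ranging over the time support of $\theta$, so that membership in $\luloc$ suffices.
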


As a consequence of Proposition~\ref{anprop3}, Corollary~\ref{anUniquenessAcotadas} and Corollary~\ref{anCorRepresentationFormulaVW}, we obtain uniqueness for classical solutions in $\li$, and a representation formula if there is existence in this class.

\begin{corollary}\label{anCorRepresentativeFormulaClassicalAcotadas}
    Let $\mathcal{L}=\mathcal{L}_\nu$ and $P$ as in Theorem~\ref{anUniquenessSoria}.

    \noindent{\rm (i)} There is at most one classical solution $u\in\li$ to~\eqref{a.nonlocalheatequation} satisfying~\eqref{energyH'} with initial trace a given Radon measure.

    \noindent{\rm (ii)} If there exists a classical solution $u\in\li$ to~\eqref{a.nonlocalheatequation} satisfying~\eqref{energyH'} with initial trace a measure $\mu_0$ verifying~\eqref{q4}, then $u$ is given by the representation formula~\eqref{representationformula}.
\end{corollary}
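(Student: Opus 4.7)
The plan is to reduce the statement to already-established results for very weak solutions, using Proposition~\ref{anprop3} as the bridge between the classical and very weak frameworks. The key observation is that the hypotheses of both parts are perfectly tailored so that the given classical solution can be reinterpreted as a very weak solution within the uniqueness class $\li$, at which point uniqueness and the representation formula follow from Corollary~\ref{anUniquenessAcotadas} and Corollary~\ref{anCorRepresentationFormulaVW} respectively.

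For part~(i), I would start from a classical solution $u \in \li$ satisfying the energy condition~\eqref{energyH'}. The trivial inclusion $\li \subset \luloc$ ensures that the integrability hypothesis of Proposition~\ref{anprop3} is met, hence $u$ is a very weak solution in the sense of Definition~\ref{defWS} with the same initial trace. Since $u \in \li$, Corollary~\ref{anUniquenessAcotadas} applies directly and yields the desired uniqueness among classical solutions sharing the given initial datum. Part~(ii) proceeds by exactly the same upgrade from classical to very weak: the assumed $u$ is now a very weak solution in $\li$ whose initial trace $\mu_0$ additionally meets the admissibility condition~\eqref{q4}, and the $u \in \li$ branch of Corollary~\ref{anCorRepresentationFormulaVW} delivers the representation formula~\eqref{representationformula}.

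No substantive obstacle is expected, since the entire content has already been developed upstream; the present statement is essentially a packaging of Proposition~\ref{anprop3}, Corollary~\ref{anUniquenessAcotadas} and Corollary~\ref{anCorRepresentationFormulaVW} in classical-solution language. The only bookkeeping point worth checking is that the hypotheses on $P$ needed to invoke those results (in particular \eqref{eq:continuity.positivity}, \eqref{eq:conds.general.L}, \eqref{Atotal2} and \eqref{eq:Pz.Px}) are indeed in force under the standing assumptions inherited from Theorem~\ref{anUniquenessSoria}, which is immediate by inspection of its statement.
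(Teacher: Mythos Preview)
Your proposal is correct and mirrors the paper's own argument: the paper derives the corollary directly from Proposition~\ref{anprop3} (upgrading the classical solution to a very weak one via $\li\subset\luloc$ and~\eqref{energyH'}), Corollary~\ref{anUniquenessAcotadas} for part~(i), and Corollary~\ref{anCorRepresentationFormulaVW} for part~(ii), exactly as in the proof of the analogous Corollary~\ref{CorRepresentativeFormulaClassicalAcotadas} in Section~\ref{section3}.
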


We move on now to the class of nonnegative classical solutions $u$ to~\eqref{a.nonlocalheatequation} satisfying~\eqref{energyH'}. Following the strategy of Section~\ref{section3}, we will prove that they satisfy inequality~\eqref{a.16}. This will imply that they belong to $\luloc$, so that they are nonnegative very weak solutions because of Proposition~\ref{anprop3}. Uniqueness and a representation formula will then follow from the results for this class of solutions.

Inequality~\eqref{a.16} follows easily if we have a comparison principle, $P\in C^{2,1}_{x,t}(\mathbb{R}^d\times(0,\infty))$ and $P_1\in C_0(\mathbb{R}^d)$. An inspection of the proof of Lemma~\ref{a.mp} shows that such a comparison principle will be available for $\mathcal{L}_\nu$ if the L\'evy measure $\nu$ satisfies that
\begin{equation}\label{eq:cond.nu.maximum.principle}
	\nu\big((B_{R}(0))^{\textup{c}}\big)>0\quad\textrm{for all }R\geq R_{0}\text{ for some }R_0>0.
\end{equation}

\begin{lemma}[A maximum principle]\label{anmp3}
    Let $\mathcal{L}=\mathcal{L}_\nu$ of the form~\eqref{a.Lnu} with $\nu$ satisfying~\eqref{eq:Levy.condition} and~\eqref{eq:cond.nu.maximum.principle}. Let $\Omega\subset\RN$ be a non-empty, open, bounded  and smooth set, and  $v\in C(\Omega\times[0,T))$ such that $\partial_t v\in C(\bar{\Omega}\times(0,T))$ and $\Lk v\xt$ is defined for all $\xt\in\RN\times (0,T)$. If~\eqref{eq:ineq.MP} holds, then $v\leq 0$ for all $x\in\mathbb{R}^d\times[0,T)$.
\end{lemma}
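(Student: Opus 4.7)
The plan is to follow the proof of Lemma~\ref{a.mp} nearly verbatim, replacing integration against $K(y)\,\textup{d}y$ with integration against $\textup{d}\nu(y)$. Fix an arbitrary $T'\in(0,T)$ and let $(x_0,t_0)\in\overline{\Omega}\times[0,T']$ attain the maximum of $v$ on this compact set. Arguing by contradiction, assume $v(x_0,t_0)>0$. The boundary hypothesis in~\eqref{eq:ineq.MP} excludes $x_0\in\partial\Omega$ and $t_0=0$, so $(x_0,t_0)\in\Omega\times(0,T']$; at this point $\partial_t v(x_0,t_0)\ge0$ (as a one-sided derivative when $t_0=T'$, unambiguous thanks to continuity of $\partial_t v$), and since $v(\cdot,t_0)$ attains its global maximum at $x_0$, the second-order increment satisfies $\Lambda v(x_0,y,t_0)\ge0$ for every $y\in\mathbb{R}^d$.

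Now pick $\rho>0$ with $\Omega\subset B_\rho(0)$ and $2\rho\ge R_0$, where $R_0$ is the constant in~\eqref{eq:cond.nu.maximum.principle}. Using the differential inequality in~\eqref{eq:ineq.MP} and discarding a neighborhood of the L\'evy singularity,
\[
0\;\ge\;-\partial_t v(x_0,t_0)\;\ge\;\Lnu v(x_0,t_0)\;\ge\;\int_{\{|y|>2\rho\}}\Lambda v(x_0,y,t_0)\,\textup{d}\nu(y).
\]
For $|y|>2\rho$ one has $|x_0\pm y|>\rho$, hence $x_0\pm y\notin\Omega$, and the boundary condition in~\eqref{eq:ineq.MP} gives $v(x_0\pm y,t_0)\le0$; consequently $\Lambda v(x_0,y,t_0)\ge v(x_0,t_0)$ on this region. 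Applying~\eqref{eq:cond.nu.maximum.principle} with $R=2\rho\ge R_0$,
\[
0\;\ge\;v(x_0,t_0)\,\nu\bigl(B_{2\rho}(0)^{\textup{c}}\bigr)\;>\;0,
\]
the desired contradiction. Since $T'\in(0,T)$ was arbitrary, $v\le0$ on $\overline{\Omega}\times[0,T)$, and combined with the hypothesis on $v$ outside $\Omega\times(0,T)$, we conclude $v\le0$ on $\mathbb{R}^d\times[0,T)$.

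The only genuinely new ingredient relative to Lemma~\ref{a.mp} is hypothesis~\eqref{eq:cond.nu.maximum.principle}: in the proof for $\mathcal{L}_K$ one had $\int_{B_{2R_0}^{\textup{c}}(0)}K(y)\,\textup{d}y>0$ for free from the strict positivity of $K$ assumed in~\eqref{A0}, whereas for a general L\'evy measure $\nu$—which may be singular or fail to charge large balls—this positivity can fail; in that degenerate case the final chain of inequalities would collapse to $0\ge 0$ and yield no contradiction. Assumption~\eqref{eq:cond.nu.maximum.principle} is precisely what is needed to keep the last step alive, and every other piece of the argument transfers without modification.
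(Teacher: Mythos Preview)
Your proof is correct and follows essentially the same approach as the paper, which simply refers back to the proof of Lemma~\ref{a.mp} and observes that the positivity of $\int_{B_{2R_0}^{\textup{c}}(0)}K(y)\,\textup{d}y$ (automatic there from~\eqref{A0}) must be replaced by assumption~\eqref{eq:cond.nu.maximum.principle}. Your explicit identification of this as the only new ingredient matches the paper's reasoning exactly.
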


We can now show the validity of~\eqref{a.16}, following the proof of~Lemma~\ref{a.cor4}.

\begin{lemma}\label{a.cor4.nu}
    Let $\mathcal{L}=\mathcal{L}_\nu$ of the form~\eqref{a.Lnu} with $\nu$ satisfying~\eqref{eq:Levy.condition} and~\eqref{eq:cond.nu.maximum.principle}. Assume that  it has associated a heat kernel~$P$ satisfying~\eqref{eq:continuity.positivity} and~\eqref{Atotal2}, and such that $P\in C^{2,1}_{x,t}(\mathbb{R}^d\times(0,\infty))$ and $P_1\in C_0(\mathbb{R}^d)$. If $u$ is a nonnegative classical solution of the nonlocal heat equation~\eqref{a.nonlocalheatequation} with initial data a nonnegative Radon measure $\mu_0$, then~\eqref{a.16} holds.
\end{lemma}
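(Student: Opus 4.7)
The plan is to follow the scheme of the proof of Lemma~\ref{a.cor4} almost verbatim, replacing the properties of $P$ that came from Corollary~\ref{pcor1} and the ad~hoc computations for $\mathcal{L}_K$ with the hypotheses we now impose directly on $P$ and $\nu$. Fix $\delta\in(0,T)$, a cutoff family $\{\psi_R\}_{R\ge R_0}$ as in~\eqref{cutoffspace}, and define
\begin{equation*}
    v_R^{\delta}(x,t):=\intr P_t(x-y)\psi_R(y)u(y,\delta)\dy,\qquad \xt\in\Rn\times[0,T-\delta).
\end{equation*}
Since $u(\cdot,\delta)\in C(\Rn)$, the integrand is bounded and compactly supported in $y$, and the hypothesis $P\in C^{2,1}_{x,t}(\Rn\times(0,\infty))$ lets us differentiate under the integral to conclude that $v_R^{\delta}\in C^{2,1}_{x,t}(\Rn\times(0,T-\delta))$ and that $v_R^{\delta}$ is a classical solution of~\eqref{a.nonlocalheatequation} (here we also use that $P$ itself is a classical solution, together with Fubini to commute $\mathcal{L}_\nu$ with the integral against $\psi_R u(\cdot,\delta)$). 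Moreover $v_R^{\delta}\ge 0$ and $v_R^{\delta}(\cdot,0)=\psi_R\,u(\cdot,\delta)$.

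Next I would establish the key decay bound
\begin{equation*}
    0\le v_R^{\delta}(x,t)\le C_{R,T}\,\|u(\cdot,\delta)\|_{L^\infty(B_{2R}(0))}\,P_1(x),\qquad\xt\in\Rn\times[0,T-\delta).
\end{equation*}
Because $P_1$ is continuous and strictly positive, $c_R:=\min_{|y|\le 2R}P_1(y)>0$, so $\psi_R(y)\le c_R^{-1}P_1(y)$. Combining this with the semigroup property~\eqref{eq:semigroup.property} and~\eqref{Atotal2} gives
\begin{equation*}
    v_R^{\delta}(x,t)\le c_R^{-1}\|u(\cdot,\delta)\|_{L^\infty(B_{2R}(0))}(P_t*P_1)(x)=c_R^{-1}\|u(\cdot,\delta)\|_{L^\infty(B_{2R}(0))}P_{t+1}(x)\le C_{R,T}\,P_1(x).
\end{equation*}
Since $P_1\in C_0(\Rn)$, we have $P_1(x)\to 0$ as $|x|\to\infty$, so for every $\varepsilon>0$ there exists $\rho=\rho(\varepsilon,R,\delta,T)>0$ with $v_R^{\delta}(x,t)\le\varepsilon$ whenever $|x|\ge\rho$ and $t\in[0,T-\delta)$.

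I would then apply the maximum principle of Lemma~\ref{anmp3} on $\Omega=B_\rho(0)$ to $w:=v_R^{\delta}-u(\cdot,\cdot+\delta)-\varepsilon$, noting that $w$ satisfies $\partial_tw+\mathcal{L}_\nu w=0$ classically in $\Omega\times(0,T-\delta)$, that $w(\cdot,0)=\psi_R u(\cdot,\delta)-u(\cdot,\delta)-\varepsilon\le -\varepsilon\le 0$ on $\Rn$, and that $w\le 0$ on $B_\rho(0)^{\textup{c}}\times[0,T-\delta)$ by the previous bound. Thus $w\le 0$ everywhere, that is, $v_R^{\delta}(x,t)\le u(x,t+\delta)+\varepsilon$, and letting $\varepsilon\to 0$ yields
\begin{equation*}
    \intr P_t(x-y)\psi_R(y)u(y,\delta)\dy\le u(x,t+\delta)\quad\text{for all }\xt\in\Rn\times[0,T-\delta).
\end{equation*}

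To conclude, I would pass to the limit $\delta\to 0^+$ using that $\mu_0$ is the initial trace of $u$: for each fixed $\xt\in\Rn\times(0,T)$ the function $y\mapsto P_t(x-y)\psi_R(y)$ lies in $C_{\textup{c}}(\Rn)$, so Definition~\ref{deftrace} gives
\begin{equation*}
    \intr P_t(x-y)\psi_R(y)\dmu_0(y)=\operatornamewithlimits{ess\,lim}_{\delta\to 0^+}\intr P_t(x-y)\psi_R(y)u(y,\delta)\dy\le \lim_{\delta\to 0^+}u(x,t+\delta)=u(x,t),
\end{equation*}
since $u\in C(\Rn\times(0,T))$. Finally, the MCT applied as $R\to\infty$ (using $\psi_R\uparrow 1$ and the nonnegativity of $\mu_0$) delivers~\eqref{a.16}. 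The main technical obstacle is the bound $v_R^{\delta}\le C_R P_1$, which in Section~\ref{section3} relied on~\eqref{a.5}; the reproduction above shows that the combination of continuity and positivity of $P_1$ on compact sets with the semigroup property and~\eqref{Atotal2} is exactly the structural ingredient needed to replace that estimate in the present, more general, setting.
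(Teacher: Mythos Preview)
Your proposal is correct and follows essentially the same approach as the paper, which simply instructs to mimic the proof of Lemma~\ref{a.cor4}. You correctly identify the substitutions needed in the general setting: the regularity of $v_R^\delta$ now comes from the hypothesis $P\in C^{2,1}_{x,t}$, the decay of $P_1$ at infinity from $P_1\in C_0(\Rn)$, the bound $v_R^\delta\le C_{R,T}P_1$ from positivity/continuity of $P_1$ together with the semigroup property and~\eqref{Atotal2} (this is exactly the content of~\eqref{a.4}--\eqref{a.5} transplanted to the present hypotheses), and the comparison step from Lemma~\ref{anmp3}, which uses~\eqref{eq:cond.nu.maximum.principle}.
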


\begin{remark}\label{remark1.nu}	
    Since $\mu_{0}$ is a nonnegative measure, $U(x,t)$ is well defined as long as the nonnegative classical solution $u$ exists, thanks to inequality \eqref{a.16}. Moreover, thanks to the symmetry of $P_1$,~\eqref{eq:growth.initial.trace} holds, since $\intr P_{1}\dmuo\leq u(0,1)<\infty$.
\end{remark}

The desired integrability is now obtained easily, proceeding as in the proof of Corollary~\ref{a.cor2}.

\begin{corollary}\label{a.cor2.nu}
    Let $\mathcal{L}=\mathcal{L}_\nu$ and $P$ as in Lemma~\ref{a.cor4.nu}. If $u$ is a nonnegative classical solution of the nonlocal heat equation \eqref{a.nonlocalheatequation} with initial trace a nonnegative Radon measure $\mu_{0}$, then $u\in L^{1}_{\rm loc}([0,T);\lp)$.
\end{corollary}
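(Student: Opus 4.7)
The plan is to mimic verbatim the proof of Corollary~\ref{a.cor2}, since all the ingredients used there are available in the present more general setting: the symmetry of $P_t$ (inherited from the symmetry of $\nu$), the comparison~\eqref{Atotal2} between heat kernels at different times, the inequality~\eqref{a.16} for nonnegative classical solutions (now furnished by Lemma~\ref{a.cor4.nu} instead of Lemma~\ref{a.cor4}), and the translation invariance of the equation.

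First, since $u\ge 0$, it suffices to show that $\int_0^{T'}\intr P_1(y)u(y,\tau)\dy\,{\rm d}\tau<\infty$ for every $T'\in (0,T)$. Fix $\tau\in(0,T')$ and consider the shift $u^\tau\xt:=u(x,t+\tau)$ for $\xt\in\Rn\times[0,T-\tau)$. By translation invariance of~\eqref{a.nonlocalheatequation} in time, $u^\tau$ is again a nonnegative classical solution on $\Rn\times(0,T-\tau)$; moreover, its initial trace is the measure $u(y,\tau)\dy$ (here we use that $u\in C(\Rn\times(0,T))$).

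Applying Lemma~\ref{a.cor4.nu} to $u^\tau$ yields
\begin{equation*}
    \intr P_t(x-y)u(y,\tau)\dy\le u(x,t+\tau)\quad\text{for all }\xt\in\Rn\times(0,T-\tau).
\end{equation*}
Integrating this inequality over $\tau\in(0,T')$ gives
\begin{equation*}
    \int_0^{T'}\intr P_t(x-y)u(y,\tau)\dy\,{\rm d}\tau\le \int_t^{t+T'}u(x,s)\,{\rm d}s.
\end{equation*}
Now pick $T''\in(T',T)$ and apply this bound with $t=T''-T'$ and $x=0$. Using the symmetry of $P_{T''-T'}$ together with the comparability estimate~\eqref{Atotal2} to obtain $P_1(y)\le C_{T',T''}P_{T''-T'}(-y)$, we conclude
\begin{equation*}
    \int_0^{T'}\intr P_1(y)u(y,\tau)\dy\,{\rm d}\tau\le C_{T',T''}\int_{T''-T'}^{T''}u(0,s)\,{\rm d}s<\infty,
\end{equation*}
the last integral being finite because $u(0,\cdot)\in C([T''-T',T''])$.

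There is essentially no obstacle here; the only subtlety is the justification that Lemma~\ref{a.cor4.nu} applies to the shifted solution $u^\tau$, which amounts to checking that its initial trace is indeed $u(\cdot,\tau)\dy$ in the vague sense of Definition~\ref{deftrace}. This follows immediately from the continuity of $u$ in $\Rn\times(0,T)$ via dominated convergence on compact subsets. Once that is in place, the chain of inequalities above is a purely mechanical application of the tools already established.
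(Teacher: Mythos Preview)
Your proof is correct and follows exactly the approach indicated by the paper, which simply says to proceed as in the proof of Corollary~\ref{a.cor2}. Your argument reproduces that proof step by step, replacing Lemma~\ref{a.cor4} by Lemma~\ref{a.cor4.nu}, and your added remark justifying that $u(\cdot,\tau)\dy$ is the initial trace of $u^\tau$ is a welcome clarification that the paper leaves implicit.
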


We have now all the elements to obtain a representation formula (implying uniqueness) for nonnegative classical solutions, following the proof of Theorem~\ref{thm:main.K}.

\begin{theorem} \label{thm:main.nu}
    Let $\mathcal{L}=\mathcal{L}_\nu$ of the form~\eqref{a.Lnu} with $\nu$ satisfying~\eqref{eq:Levy.condition} and~\eqref{eq:cond.nu.maximum.principle}, and $B=B_\nu$ the bilinear form defined in \eqref{eq:def.Bnu}. Assume that~$\mathcal{L}_\nu$ has associated a heat kernel~$P$ satisfying~\eqref{eq:continuity.positivity}, \eqref{eq:conds.general.L}, \eqref{Atotal2},~\eqref{eq:comparison.translations} and~\eqref{eq:Pz.Px}, and such that $P\in C^{2,1}_{x,t}(\mathbb{R}^d\times(0,\infty))$ and $P_1\in C_0(\mathbb{R}^d)$, and that there is a function $\phi\in C^{2}(\Rn)$ satisfying~\eqref{a.46}--\eqref{Lphi}. Let $u$ be a nonnegative classical solution to the nonlocal heat equation~\eqref{a.nonlocalheatequation} with initial trace a nonnegative Radon measure $\mu_{0}$ satisfying~\eqref{q4}. If~\eqref{energyH'} holds,  then $u$ is given by the representation formula~\eqref{representationformula}.
\end{theorem}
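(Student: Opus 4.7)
The plan is to mirror the chain of arguments used for Theorem~\ref{thm:main.K}, now invoking the general counterparts from Section~\ref{section4}. Under the stated hypotheses on $\nu$ and $P$, all the intermediate results needed in the purely nonlocal case $\mathcal{L}_K$ have already been promoted to $\mathcal{L}_\nu$, so the proof is essentially a concatenation of these. The nonnegativity of $u$ will be what unlocks the local integrability against $P_1$, which is the only piece not \emph{a priori} guaranteed for a classical solution in this generality; with that in hand, the energy assumption~\eqref{energyH'} will upgrade $u$ to a very weak solution, and the uniqueness machinery will then force the representation formula.

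\textbf{Key steps.} First, since $u$ is a nonnegative classical solution with nonnegative initial trace $\mu_0$, Lemma~\ref{a.cor4.nu} applies (using $\nu$ satisfies~\eqref{eq:cond.nu.maximum.principle},  $P$ satisfies~\eqref{eq:continuity.positivity} and~\eqref{Atotal2}, and $P \in C^{2,1}_{x,t}$ with $P_1 \in C_0(\mathbb{R}^d)$) and yields the pointwise bound $U(x,t) \leq u(x,t)$ on $\mathbb{R}^d \times (0,T)$. Combined with Remark~\ref{remark1.nu}, this simultaneously gives $\mu_0 \in L^1_{P_1}(\mathbb{R}^d)$, i.e., the integral growth condition~\eqref{q4} is satisfied (note that we do not need to assume it separately, as the proof of Corollary~\ref{a.cor2.nu} shows). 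Second, Corollary~\ref{a.cor2.nu} then promotes this pointwise bound to $u \in L^1_{\textup{loc}}([0,T); L^1_{P_1})$, in particular $u \in \luloc$. Third, since $u$ is a classical solution in $\luloc$ satisfying~\eqref{energyH'}, Proposition~\ref{anprop3} gives that $u$ is a very weak solution to~\eqref{a.nonlocalheatequation} with initial trace $\mu_0$. Finally, we apply Corollary~\ref{anCorRepresentationFormulaVW}: the nonnegative branch of that corollary requires precisely the function $\phi \in C^2(\mathbb{R}^d)$ satisfying~\eqref{a.46}--\eqref{Lphi} that we are assuming to exist, together with the heat-kernel hypotheses already in force, and concludes $u(x,t) = \int_{\mathbb{R}^d} P_t(x-y)\,\textup{d}\mu_0(y)$, which is exactly~\eqref{representationformula}.

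\textbf{Where the difficulty lies.} No serious new obstacle appears at the level of this final theorem, because the heavy lifting has been distributed across Section~\ref{section4}: the maximum principle Lemma~\ref{anmp3} (which needs~\eqref{eq:cond.nu.maximum.principle}), the comparison-with-$U$ estimate of Lemma~\ref{a.cor4.nu}, the smoothing/boundedness Lemma~\ref{thm:uniqueness.very.weak.nu} based on the auxiliary function $\phi$, the equivalence of the two notions of very weak solution within $\li$ (Proposition~\ref{an.equivDefvwProp}), and the duality-based uniqueness Theorem~\ref{anUniquenessSoria}. The real subtlety — constructing or verifying the existence of $\phi$ satisfying~\eqref{a.46}--\eqref{Lphi} in settings beyond~\eqref{A0}--\eqref{A3}, and checking the heat-kernel properties~\eqref{eq:conds.general.L},~\eqref{Atotal2},~\eqref{eq:comparison.translations},~\eqref{eq:Pz.Px} — is isolated as the structural input into the theorem and carried out case by case (for instance, for the anisotropic operators mentioned in the introduction). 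Thus the proof itself is short: it is the three-line chain \emph{nonnegativity $\Rightarrow$ $\luloc$ $\Rightarrow$ very weak solution $\Rightarrow$ representation formula via uniqueness}, each arrow being a cited result from Section~\ref{section4}.
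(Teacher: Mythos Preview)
Your proposal is correct and follows essentially the same approach as the paper, which simply says to mimic the proof of Theorem~\ref{thm:main.K} using the general-$\nu$ counterparts: nonnegativity gives $u\in\luloc$ via Corollary~\ref{a.cor2.nu}, then \eqref{energyH'} and Proposition~\ref{anprop3} upgrade $u$ to a very weak solution, and Corollary~\ref{anCorRepresentationFormulaVW} yields the representation formula. Your observation that the growth condition~\eqref{q4} on $\mu_0$ is in fact automatic (via Remark~\ref{remark1.nu}) rather than a genuine extra hypothesis is also correct and worth noting.
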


\subsubsection{Existence of classical solutions}

Let $\mu_0$ a Radon measure (that may have sign changes) satisfying the integral growth condition~\eqref{q4}. We have already obtained conditions ensuring the function $U(\cdot,t)=P_t*\mu_0$ to be a very weak solution to~\eqref{a.nonlocalheatequation} having $\mu_0$ as initial trace; see~Theorem~\ref{anprop2}. Mimicking the proof of Theorem~\ref{Uclassical} we get conditions showing that it is also a classical solution. Let us note that $U$ belongs to the class $\li$ in which there is uniqueness.

Before proceeding, let us make a short technical detour. The starting point of our desired existence result is Theorem~\ref{anprop2}, that requires $P$ to be a very weak solution to~\eqref{a.nonlocalheatequation}. But our proof needs $P$ to be a classical solution also. Do we need both assumptions? Fortunately not. As can be easily seen following the proof Lemma~\ref{PisaVeryWeaksolution.cor}, under some additional mild hypothesis that we need anyway, if $P$ is a classical solution, it is also a very weak solution.

\begin{lemma}
    Let $\mathcal{L}=\mathcal{L}_\nu$ of the form~\eqref{a.Lnu} with $\nu$ satisfying~\eqref{eq:Levy.condition}. Assume that it has associated a heat kernel~$P$ that is a classical solution to~\eqref{a.nonlocalheatequation} and satisfies~\eqref{eq:continuity.positivity} and~\eqref{Atotal2}. If
    \begin{equation*}
        \int_{\mathbb{R}^d}|\Lambda P(x,y,t)|\,\textup{d}\nu(y)\le C P_1(x)\quad\text{for all }\xt\in \Rn\times(\varepsilon,T),
    \end{equation*}
    then $P$ is also a very weak solution to~\eqref{a.nonlocalheatequation}.
\end{lemma}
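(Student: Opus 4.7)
The argument is a direct adaptation of Corollary~\ref{PisaVeryWeaksolution.cor}, with the pointwise bound on $\mathcal{L}_K\theta$ provided by Lemma~\ref{a.17} there replaced by an iterated integral estimate, and with the standing hypothesis $\intr|\Lambda P(x,y,t)|\dnu(y)\le CP_1(x)$ playing the role that Lemma~\ref{a.17} had in the original proof. The local integrability $P\in L^1_{\textup{loc}}(\Rn\times(0,T))$ is immediate from the continuity of $P$, so only conditions~\eqref{a.2} and~\eqref{a.3} in Definition~\ref{defWS} need to be verified. I would fix $\theta\in C^{\infty}_{\textup{c}}(\Rn\times(0,T))$ with $\operatorname{supp}(\theta)\subset B_R(0)\times[\varepsilon,\tau]$ for some $R>0$ and $0<\varepsilon<\tau<T$.

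For~\eqref{a.2}, I would bound $|\mathcal{L}_\nu\theta\xt|\le\intr|\Lambda\theta(x,y,t)|\dnu(y)$ and apply Fubini-Tonelli, splitting the $y$-integration at $|y|=1$. On $\{|y|\le 1\}$, Taylor's expansion gives $|\Lambda\theta(x,y,t)|\le\tfrac12\|D^2\theta\|_\infty|y|^2\mathcal{X}_{B_{R+1}(0)}(x)$; combined with the fact that $P_t$ is a probability density (so $\|P_t\|_{L^1(\Rn)}=1$) and the L\'evy bound $\int_{|y|\le 1}|y|^2\dnu(y)<\infty$, this yields a finite contribution. On $\{|y|>1\}$, the crude estimate $|\Lambda\theta(x,y,t)|\le|\theta(x,t)|+\tfrac12(|\theta(x+y,t)|+|\theta(x-y,t)|)$, the translation-invariance of Lebesgue measure, $\|\theta\|_\infty<\infty$, and $\int_{|y|>1}\dnu(y)<\infty$ likewise yield a finite contribution.

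For~\eqref{a.3}, I would exploit that $P$ is a classical solution, so that $\partial_tP,\mathcal{L}_\nu P\in C(\Rn\times(0,\infty))$ and $\partial_tP=-\mathcal{L}_\nu P$ pointwise. Integration by parts in $t$ (with no boundary terms, since $\operatorname{supp}(\theta)$ is compactly contained in $\Rn\times(0,T)$) gives $\int_0^T\intr P\,\partial_t\theta\dx\dt=\int_0^T\intr(\mathcal{L}_\nu P)\,\theta\dx\dt$, with Fubini justified by $|\mathcal{L}_\nu P|\le CP_1$ (the standing hypothesis) together with the compact support and boundedness of $\theta$. Next I would transfer $\mathcal{L}_\nu$ from $P$ onto $\theta$ by means of the elementary symmetry identity
\begin{equation*}
    \intr\theta(x,t)\Lambda P(x,y,t)\dx=\intr P(x,t)\Lambda\theta(x,y,t)\dx,
\end{equation*}
which follows at once from the changes of variables $x\mapsto x\pm y$, combined again with Fubini-Tonelli. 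The only delicate point of the proof lies precisely here: absolute integrability in the direction transferring $\mathcal{L}_\nu$ from $P$ onto $\theta$ is granted by the hypothesis $\intr|\Lambda P(x,y,t)|\dnu(y)\le CP_1(x)$ paired with the local boundedness of $P_1$, while the reverse direction is controlled by the estimates already obtained for~\eqref{a.2}. Chaining the resulting identities yields $\int_0^T\intr P(\partial_t\theta-\mathcal{L}_\nu\theta)\dx\dt=0$, which is~\eqref{a.3}.
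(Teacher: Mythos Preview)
Your proof is correct and follows the route the paper intends (it merely says to adapt the proof of Corollary~\ref{PisaVeryWeaksolution.cor}). One point worth noting: your treatment of~\eqref{a.2} is slightly more elementary than a literal transcription of that corollary would be. In the $\mathcal{L}_K$ setting the paper bounds $|\mathcal{L}_K\theta|\le C_\theta P_1$ via Lemma~\ref{a.17} and then uses the semigroup property; the analogue for $\mathcal{L}_\nu$ is Lemma~\ref{lem:Lnu.theta}, which however requires the extra hypothesis~\eqref{eq:conds.general.L}, not assumed here. Your direct estimate, using only $\|P_t\|_{L^1(\Rn)}=1$ and the L\'evy integrability of $\nu$, sidesteps this and keeps the argument self-contained under the stated hypotheses. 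The rest of the argument (integration by parts in $t$ using that $P$ is classical, then transferring $\mathcal{L}_\nu$ from $P$ to $\theta$ via the symmetry identity and Fubini--Tonelli, with absolute integrability granted respectively by the standing bound $\intr|\Lambda P(\cdot,y,\cdot)|\dnu(y)\le CP_1$ and by your~\eqref{a.2} estimate) is exactly what the adaptation calls for.
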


We can now state the existence theorem.

\begin{theorem}\label{Uclassical.nu}
    Let $\mathcal{L}=\mathcal{L}_\nu$ of the form~\eqref{a.Lnu} with $\nu$ satisfying~\eqref{eq:Levy.condition}. Assume that it has associated a heat kernel~$P$ that is a classical solution to~\eqref{a.nonlocalheatequation} and satisfies~\eqref{eq:continuity.positivity},~\eqref{Atotal2} and~\eqref{eq:comparison.translations}. Let $\mu_{0}$ a Radon measure satisfying the growth condition~\eqref{q4}, and $U$ the very weak solution to~\eqref{a.nonlocalheatequation} given in~\eqref{q5}. Suppose that for each $T>0$ and $\varepsilon\in(0,T)$ there is a constant $C>0$ such that
    \begin{equation*}\label{eq:conditions.existence.classical.nu}
        |\partial_{t} P_{t}(x)|\leq CP_{1}(x), \quad \int_{\mathbb{R}^d}|\Lambda P(x,y,t)|\,\textup{d}\nu(y)\le C P_1(x)\quad\text{for all }\xt\in \Rn\times(\varepsilon,T).
    \end{equation*}

    \noindent\textup{(i)} Then, $U$ is a classical solution to~\eqref{a.nonlocalheatequation} within $\li$ and with initial trace $\mu_{0}$.

    \noindent\textup{(ii)} If, in addition, for each $T>0$ and $\varepsilon\in(0,T)$ there is a constant $C>0$ such that
    \begin{equation}\label{eq:stronger.conditions.existence.classical.nu}
        |D^{2} P_{t}(x)|\leq CP_{1}(x)\quad\text{for all }\xt\in \Rn\times(\varepsilon,T),
    \end{equation}
    then $U$ is the unique classical solution to~\eqref{a.nonlocalheatequation} with initial trace $\mu_{0}$ that satisfies~\eqref{energyH'} and belongs to $\li$.

    \noindent\textup{(iii)} If, in addition to all the above, $\nu$ satisfies~\eqref{eq:cond.nu.maximum.principle},~$P$ satisfies~\eqref{eq:conds.general.L},~\eqref{eq:Pz.Px}, $P\in C^{2,1}_{x,t}(\mathbb{R}^d\times(0,\infty))$ and $P_1\in C_0(\mathbb{R}^d)$, and there is a function $\phi\in C^{2}(\Rn)$ satisfying~\eqref{a.46}--\eqref{Lphi}, where $B=B_\nu$ is the bilinear form defined in \eqref{eq:def.Bnu}, and $\mu_0$ is nonnegative, then $U$ is the unique nonnegative classical solution to~\eqref{a.nonlocalheatequation} with initial trace $\mu_0$ that satisfies~\eqref{energyH'}.
\end{theorem}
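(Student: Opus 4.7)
The plan is to follow very closely the proof of Theorem~\ref{Uclassical} in Section~\ref{section3}, since the arguments there depend only on the properties of the heat kernel and not on the specific assumptions on $K$. The starting point is that, by Theorem~\ref{anprop2}, $U$ is already a very weak solution to~\eqref{a.nonlocalheatequation} in $C(\mathbb{R}^d\times(0,T))\cap\li$ with initial trace $\mu_0$. Consequently, for part~(i) it remains to verify that $U\in C^{2,1}_{x,t}$ is not required by Definition~\ref{a.def classical} in the purely nonlocal setting; what we need is that $\partial_t U$ and $\mathcal{L}_\nu U$ belong to $C(\mathbb{R}^d\times(0,T))$ and that the equation holds pointwise.

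To obtain $\partial_t U \in C(\mathbb{R}^d\times(0,T))$, I would mimic Lemma~\ref{clemma2}: the bound $|\partial_t P_t(x)|\le CP_1(x)$ on bounded time intervals $[\varepsilon,T']\subset(0,T)$ combined with the finiteness of $\int P_1(x-y)\,\textup{d}|\mu_0|(y)$ (proved for the $\mathcal{L}_K$ case in~\eqref{c5}, and which goes through here using~\eqref{Atotal2} and~\eqref{eq:comparison.translations}) allows me to differentiate under the integral sign via the DCT, yielding
\[
\partial_t U(x,t)=\intr\partial_t P(x-y,t)\,\textup{d}\mu_0(y).
\]
Continuity then follows by showing $\partial_t U$ is the uniform-on-compacts limit of the approximating continuous functions $(\partial_t U)^r(x,t)=\intr \partial_t P(x-y,t)\,\textup{d}\mu^r(y)$, exactly as done for $U$ itself in Theorem~\ref{prop2}, with~\eqref{Atotal2} and~\eqref{eq:comparison.translations} controlling the tail.

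For the equation, I would proceed as in Lemma~\ref{clemma1}: since $P$ is a classical solution to~\eqref{a.nonlocalheatequation}, we have $\partial_t P(x-y,t)=-\mathcal{L}_\nu P(x-y,t)=-\intr \Lambda P(x-y,z,t)\,\textup{d}\nu(z)$. Substituting this into the representation of $\partial_t U$ and using Fubini-Tonelli, justified by
\[
\intr\!\intr|\Lambda P(x-y,z,t)|\,\textup{d}\nu(z)\,\textup{d}|\mu_0|(y)\le C\intr P_1(x-y)\,\textup{d}|\mu_0|(y)<\infty
\]
thanks to the second hypothesis in the statement, I get $\partial_t U(x,t)=-\mathcal{L}_\nu U(x,t)$ pointwise, which in particular shows $\mathcal{L}_\nu U\in C(\mathbb{R}^d\times(0,T))$. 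This establishes part~(i).

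For part~(ii), the missing piece is~\eqref{energyH'}. Given $\theta\in C^\infty_{\textup{c}}(\mathbb{R}^d\times(0,T))$ supported in $\mathbb{R}^d\times(\varepsilon,T')$, I would apply Taylor's expansion on $\{|y|\le 1\}$ to bound $|\Lambda P(x-z,y,t)|\le C|y|^2|D^2 P(x-z+w,t)|$ with $w\in B_1(0)$, then use~\eqref{eq:stronger.conditions.existence.classical.nu} together with~\eqref{eq:Pz.Px} and the slowly changing property (or~\eqref{eq:comparison.translations}) to replace $P_1(x-z+w)$ by $CP_1(z-x)$, and conclude via the L\'evy condition and the growth condition~\eqref{q4} on $\mu_0$, precisely as in the proof of Theorem~\ref{Uclassical}(ii). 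Uniqueness then follows from Corollary~\ref{anCorRepresentativeFormulaClassicalAcotadas}. Finally, part~(iii) is immediate: nonnegativity of $\mu_0$ yields $U\ge 0$, and uniqueness in the nonnegative class satisfying~\eqref{energyH'} is then Theorem~\ref{thm:main.nu}. The main technical obstacle is really just the careful bookkeeping for the Fubini exchange in the pointwise verification of the equation; everything else is routine dominated convergence once the heat-kernel hypotheses are in place.
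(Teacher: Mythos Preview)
Your proposal is correct and follows essentially the same approach as the paper, which simply states that the theorem is obtained by mimicking the proof of Theorem~\ref{Uclassical}. One small remark: in part~(ii) you invoke~\eqref{eq:Pz.Px}, which is only listed among the hypotheses of part~(iii); however, as you parenthetically note, the translation-comparison property~\eqref{eq:comparison.translations} (which \emph{is} assumed throughout) suffices for the step $P_1(x-z+w)\le CP_1(x-z)$ with $|w|\le1$, so the argument goes through with the stated hypotheses.
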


\subsection{A family of anisotropic operators to which the theory applies}

Our goal now is to show that the general theory we have just presented includes many interesting examples that were not covered by the results of Section~\ref{section3}. To this aim, we will introduce a family of anisotropic operators $\mathcal{L}_\nu$ that satisfy all the assumptions in the above results.

\subsubsection{The L\'evy measure and the operator}

We consider a family of L\'evy kernels $\{K_j\}_{j=1}^\ell$, with $\ell\in\{1,\dots,d\}$, where each $K_j:\mathbb{R}^{d_j}\to(0,\infty)$ satisfies  assumptions~\eqref{A0}--\eqref{A2} (and~\eqref{A3} for some results)  with $d=d_j$, and $\sum_{j=1}^\ell d_j=d$. Given $y\in\mathbb{R}^d$, we write it as $y=(y_1,\dots,y_\ell)$, with $y_j\in\mathbb{R}^{d_j}$, $j\in\{1,\dots,\ell\}$. The (in general anisotropic) measure defining our operator $\mathcal{L}_\nu$ is
\begin{equation}\label{an2}
    {\rm d}\nu(y):=\sum_{j=1}^\ell K_j(y_j)\,{\rm d}y_j\prod_{k=1,k\not = j}^\ell\delta_{0}^{d_k}(y_k),\quad y \in\Rn,
\end{equation}
where $\delta_{0}^{d_k}$ is the Dirac delta in $\mathbb{R}^{d_k}$. It is easily checked that this is a L\'evy measure.

\begin{proposition}\label{prop:nu.levy}
    If $K_j$, $j\in\{1,\dots,\ell\}$, satisfy~\eqref{A0} with $d=d_j$, then $\nu$ in~\eqref{an2} satisfies~\eqref{eq:Levy.condition}.	
\end{proposition}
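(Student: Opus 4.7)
The plan is to unpack the definition of $\nu$ to show that integration against $\nu$ reduces to a sum of one-per-coordinate-block integrals against the $K_j$, after which both required properties---symmetry and the L\'evy integrability---will follow transparently from the corresponding properties of each $K_j$.

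First I would verify that $\nu$ is a well-defined nonnegative Radon measure on $\mathbb{R}^d\setminus\{0\}$. From~\eqref{an2}, for any Borel set $B\in\mathbb{B}(\mathbb{R}^d)$ one has
\begin{equation*}
    \nu(B)=\sum_{j=1}^{\ell}\int_{\mathbb{R}^{d_j}}\mathcal{X}_B(0,\dots,0,y_j,0,\dots,0)\,K_j(y_j)\,\textup{d}y_j,
\end{equation*}
where $y_j$ occupies the $j$-th block. Hence $\nu$ is concentrated on the union of the coordinate subspaces $\mathbb{R}^{d_j}$ (identified with the $j$-th axis block in $\mathbb{R}^d$), and on each such subspace it agrees with $K_j\,\textup{d}y_j$.

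Next, for the symmetry, I would fix $B\in\mathbb{B}(\mathbb{R}^d)$ and use the above representation to write
\begin{equation*}
    \nu(-B)=\sum_{j=1}^{\ell}\int_{\mathbb{R}^{d_j}}\mathcal{X}_B\bigl(0,\dots,-y_j,\dots,0\bigr)K_j(y_j)\,\textup{d}y_j.
\end{equation*}
Applying the change of variables $y_j\mapsto -y_j$ in each summand and invoking the symmetry of each $K_j$ supplied by~\eqref{A0} (with $d=d_j$), each summand equals the $j$-th summand of $\nu(B)$, giving $\nu(-B)=\nu(B)$.

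For the integrability condition, the key observation is that if $y\in\mathbb{R}^d$ has only its $j$-th coordinate block nonzero then $|y|=|y_j|$. Therefore
\begin{equation*}
    \int_{\mathbb{R}^d}(1\wedge|y|^2)\,\textup{d}\nu(y)=\sum_{j=1}^{\ell}\int_{\mathbb{R}^{d_j}}\bigl(1\wedge|y_j|^2\bigr)K_j(y_j)\,\textup{d}y_j,
\end{equation*}
and each term on the right-hand side is finite by the L\'evy condition in~\eqref{A0} applied to $K_j$ on $\mathbb{R}^{d_j}$. There is no real obstacle here: the whole argument is bookkeeping with the product/Dirac structure of $\nu$, and the only point to be careful about is ensuring that the delta-factors correctly collapse each summand to an integral on the appropriate coordinate subspace, which is precisely the content of the first step.
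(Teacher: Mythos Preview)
Your proposal is correct and follows essentially the same approach as the paper: reduce the integral against $\nu$ to a sum of integrals against the individual kernels $K_j$ on $\mathbb{R}^{d_j}$, using that the Dirac factors collapse each summand to the corresponding coordinate subspace (where $|y|=|y_j|$), and then invoke \eqref{A0} for each $K_j$. The paper's proof is in fact even terser---it only writes out the integrability computation and leaves the symmetry implicit---so your explicit verification of $\nu(-B)=\nu(B)$ is a welcome addition rather than a deviation.
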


\begin{proof}
Since $K_j$ is a L\'evy kernel in $\mathbb{R}^{d_j}$ for all $j\in\{1,\dots,\ell\}$,
\begin{equation*}
	\intr (1\wedge |y|^2)\,{\rm d}\nu (y)=\sum_{j=1}^\ell \int_{\mathbb{R}^{d_j}}(1\wedge |y_j|^2)K_j(y_j)\,{\rm d}y_j<\infty.\qedhere
\end{equation*}
\end{proof}
The L\'evy operator $\mathcal{L}_\nu$ defined by $\nu$ can be written as $\Lk_{\nu}=\sum_{j=1}^\ell\Lk_{K_j}$, where each of the operators $\mathcal{L}_{K_j}$ is of the form~\eqref{a.Lk} and acts only on the subspace $\mathbb{R}^{d_j}$; to be specific,
\begin{equation*}	
    \mathcal{L}_{K_j} u(x)=\int_{\mathbb{R}^{d_j}}\Big(u(x)-\frac{u(x_1,\dots,x_j+y_j,\dots, x_\ell)+u(x_1,\dots,x_j-y_j,\dots, x_\ell)}{2}\Big)K_j(y_j)\,\textup{d}y_j.
\end{equation*}
In what follows we write for short $\mathcal{L}^j$ to denote $\mathcal{L}_{K_j}$.

The measure $\nu$ in~\eqref{an2} satisfies the condition needed to apply the comparison principle.
\begin{proposition}
    Let $\nu$ as in Proposition~\ref{prop:nu.levy}. Then $\nu$ satisfies~\eqref{eq:cond.nu.maximum.principle}.	
\end{proposition}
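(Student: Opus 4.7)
The plan is to compute $\nu(B_R(0)^c)$ directly from the definition~\eqref{an2} and exploit the pointwise positivity of each $K_j$.

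First, I would observe that the measure $\nu$ is supported on the union of coordinate subspaces $V_j := \{y \in \mathbb{R}^d : y_k = 0 \text{ for all } k \neq j\}$, $j \in \{1,\dots,\ell\}$, since the Dirac factors $\prod_{k \neq j}\delta_0^{d_k}(y_k)$ force all components outside the $j$-th block to vanish. For a point $y \in V_j$ of the form $y = (0,\dots,0,y_j,0,\dots,0)$ we have $|y| = |y_j|$, so $y \in B_R(0)^c$ if and only if $|y_j| \ge R$. Consequently, by the definition of $\nu$,
\begin{equation*}
    \nu(B_R(0)^c) \;=\; \sum_{j=1}^\ell \int_{\{y_j \in \mathbb{R}^{d_j}:\, |y_j|\ge R\}} K_j(y_j)\,\textup{d}y_j.
\end{equation*}

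Next, I would use assumption~\eqref{A0} for each $K_j$: it gives $K_j(y_j) > 0$ for almost every $y_j \in \mathbb{R}^{d_j}$. Since the set $\{|y_j| \ge R\}$ has positive (indeed, infinite) Lebesgue measure in $\mathbb{R}^{d_j}$ for every $R \ge 0$, the integral $\int_{\{|y_j|\ge R\}} K_j(y_j)\,\textup{d}y_j$ is strictly positive. Hence each summand above is positive and the sum is positive as well. Taking for instance $R_0 = 0$, condition~\eqref{eq:cond.nu.maximum.principle} is satisfied.

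There is no real obstacle here: the argument is essentially a direct unpacking of the definitions combined with the positivity hypothesis in~\eqref{A0}. The only point to be slightly careful about is noticing that $\nu$ does not charge any region outside the coordinate subspaces $V_j$, so that the complement of a ball only sees the one-dimensional (in the coordinate-block sense) tails of the $K_j$'s; once that is made precise, positivity is immediate.
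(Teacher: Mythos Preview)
Your proof is correct and follows essentially the same approach as the paper's own proof, which simply writes $\nu\big((B_R(0))^{\textup{c}}\big)=\sum_{j=1}^\ell\int_{\{|y_j|>R\}}K_j(y_j)\,\textup{d}y_j>0$ using the positivity of each $K_j$. Your version is slightly more explicit in justifying the reduction to the coordinate subspaces, but the argument is the same.
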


\begin{proof}
Since $K_j>0$ for all $j\in\{1,\dots,\ell\}$, then $\displaystyle\nu\big((B_{R}(0))^{\textup{c}}\big) =\sum_{j=1}^\ell\int_{\{|y_j|>R\}} K_j(y_j)\,\textup{d}y_j>0$.
\end{proof}

Just for the record, $-\Lk_{\nu}$ is the infinitesimal generator of the L\'evy process $X_{t}=(Y^{1}_{t},\dots,Y^\ell_{t})$, $t\ge0$,
where each $\big(Y^j_{t}\big)_{t\ge0}$, $j\in\{1,\dots,\ell\}$, is a L\'evy process with infinitesimal generator $-\Lk^j$.

\subsubsection{The heat kernel}

As explained in Section 2, the Fourier symbol of the operator $\Lk^j$ is
\begin{equation*}
	m^j(\xi_j)=\int_{\mathbb{R}^{d_j}} (1-\cos\langle\xi_j,y_j\rangle)\,{\rm d}y_j,\quad\xi_j\in\mathbb{R}^{d_j},\quad j\in\{1,\dots,\ell\}.
\end{equation*}
and the corresponding heat kernel $P^j(x_j,t)=P^j_{t}(x_j)$, $(x_j,t)\in\mathbb{R}^{d_j}\times (0,\infty)$, is given by
\begin{equation*}
    \mathcal{F}(P^j_{t})(\xi_j)=e^{-tm^j(\xi_j)},\quad(\xi_j,t)\in\mathbb{R}^{d_j}\times (0,\infty),\quad j\in\{1,\dots,\ell\}.
\end{equation*}
Using L\'evy-Khintchine's formula we obtain the L\'evy-Khintchine exponent $m$ for the operator $\mathcal{L}_\nu$,
\begin{equation*}\label{a.18}	
    m(\xi)=\!\intr (1-\cos \langle \xi,y\rangle)\,{\rm d}\nu(y)=\sum_{j=1}^\ell\!\int_{\mathbb{R}^{d_j}} (1-\cos\langle\xi_j,y_j\rangle)\,{\rm d}y_j=\sum_{j=1}^\ell m^j(\xi_j),\quad\!\!\xi=(\xi_1,\dots,\xi_\ell)\in\Rn,
\end{equation*}
with $\xi_j\in\mathbb{R}^{d_j}$ for $j\in\{1,\dots,\ell\}$. Therefore, the heat kernel $P_t$ associated to the nonlocal heat equation \eqref{a.nonlocalheatequation} for this operator is given by
\begin{equation}\tag{AHK}\label{anHK}
    \begin{aligned}
        P_t(x)&=\intr e^{-tm(\xi)}e^{\langle\xi, x\rangle}\,{\rm d}\xi=\intr e^{-t\sum_{j=1}^\ell m^j(\xi_j)}e^{i\sum_{j=1}^\ell\langle\xi_j, x_j\rangle}\,{\rm d}\xi_1\cdots{\rm d}\xi_\ell\\	
        &=\prod_{j=1}^\ell\int_{\mathbb{R}^{d_j}} e^{-tm^j(\xi_j)}e^{i\langle\xi_j, x_j\rangle}\,{\rm d }\xi_j=\prod_{j=1}^\ell P^j_{t}(x_j),\quad\xt\in\Rn\times (0,\infty).
    \end{aligned}
\end{equation}
This special structure in separate variables, combined with the special form of the L\'evy measure, is the clue to prove all the desired properties for the kernel, that will be inherited in most of the cases from the corresponding ones for the factors $P^j$, $j\in\{1,\dots,\ell\}$.

\begin{proposition}
    Let $\mathcal{L}_\nu$ of the form~\eqref{a.Lnu} with $\nu$ given by~\eqref{an2} and $K_j$, $j\in\{1,\dots,\ell\}$, satisfying~\eqref{A0}--\eqref{A2} with $d=d_j$. The corresponding heat kernel $P$ belongs to $C^{2,1}_{x,t}(\mathbb{R}^d\times(0,\infty))\cap C_0(\mathbb{R}^d)$,  satisfies~\eqref{Atotal2},~\eqref{eq:comparison.translations},~\eqref{eq:Pz.Px}, \eqref{eq:continuity.positivity} and \eqref{eq:conds.general.L}, and solves the equation~\eqref{a.nonlocalheatequation} in the classical and in the very weak sense.
\end{proposition}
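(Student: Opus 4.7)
The strategy is to exploit the product structure~\eqref{anHK}, $P_t(x)=\prod_{j=1}^\ell P^j_t(x_j)$, and to deduce the required properties of $P$ from the corresponding one-dimensional properties of each factor $P^j$ established in Section~\ref{2.2}, using in a fundamental way the special axis-supported form of~$\nu$. Since each $K_j$ satisfies~\eqref{A0}--\eqref{A2} on $\mathbb{R}^{d_j}$, Corollary~\ref{pcor1} together with the positivity statement of~\cite[Theorem~1.2]{Panki}, applied factorwise, give $P\in C^{2,1}_{x,t}(\mathbb{R}^d\times(0,\infty))$, $P_t\in C_0(\mathbb{R}^d)\cap C^\infty(\mathbb{R}^d)$ for every $t>0$, and $P_t>0$; in particular,~\eqref{eq:continuity.positivity} holds. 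The comparability estimates~\eqref{Atotal2},~\eqref{eq:comparison.translations} and~\eqref{eq:Pz.Px} then follow by multiplying factorwise their one-dimensional analogs: if for instance $x\in B_R(0)$, then $|x_j|\le R$ for every $j$, and Lemma~\ref{v3.19} applied to $P^j$ gives $P^j_1(x_j-y_j)\le C_R P^j_1(y_j)$; multiplying in $j$ produces~\eqref{eq:comparison.translations}, and the other two estimates are treated the same way.

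The only size bound that essentially uses the anisotropy of $\nu$ is~\eqref{eq:conds.general.L}. Because $\nu$ is supported on the union of the $\ell$ coordinate subspaces, any $y\in\operatorname{supp}(\nu)$ with $|y|\ge 1$ has the form $(0,\dots,y_j,\dots,0)$ for a unique $j$, with $|y_j|\ge 1$, and the integral splits as
\[
\int_{\{|y|\ge 1\}}P_1(x-y)\,\textup{d}\nu(y)=\sum_{j=1}^\ell\prod_{k\ne j}P^k_1(x_k)\int_{\{|y_j|\ge 1\}}P^j_1(x_j-y_j)\,K_j(y_j)\,\textup{d}y_j.
\]
The inner integral is controlled by the comparability~\eqref{Atotal} between $K_j$ and $P^j_1$ on $\{|y_j|\ge 1\}$, the semigroup property~\eqref{eq:semigroup.property} for $P^j$, and~\eqref{Atotal2}, producing $C\,P^j_2(x_j)\le C\,P^j_1(x_j)$. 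Summing in $j$ yields~\eqref{eq:conds.general.L}.

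For the classical solution property, we rely on the decomposition $\mathcal{L}_\nu=\sum_{j=1}^\ell \mathcal{L}^j$, where $\mathcal{L}^j$ acts only on the $x_j$ variable. For $y$ on the $j$-th axis, the second-order increment factors as $\Lambda P(x,y,t)=\prod_{k\ne j} P^k_t(x_k)\,\Lambda P^j_t(x_j,y_j)$, whence
\[
\mathcal{L}_\nu P(x,t)=\sum_{j=1}^\ell\prod_{k\ne j}P^k_t(x_k)\,\mathcal{L}^j P^j_t(x_j),\qquad \partial_t P(x,t)=\sum_{j=1}^\ell\prod_{k\ne j}P^k_t(x_k)\,\partial_t P^j_t(x_j),
\]
and Proposition~\ref{pprop2} applied to each factor yields $\partial_tP+\mathcal{L}_\nu P=0$ pointwise on $\mathbb{R}^d\times(0,\infty)$.

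Finally, the very weak character of $P$ is obtained by adapting the proof of Corollary~\ref{PisaVeryWeaksolution.cor}. Since~\eqref{eq:conds.general.L} is now available, Lemma~\ref{lem:Lnu.theta} gives $|\mathcal{L}_\nu\theta|\le C_\theta P_1$ for any test function $\theta$, which combined with the semigroup property and~\eqref{Atotal2} takes care of~\eqref{a.2}. For~\eqref{a.3}, the Fubini-Tonelli manipulation transferring $\Lambda$ from $\theta$ onto $P$ is justified by the bound $|\Lambda P(x,y,t)|\le C_{\varepsilon,\tau}(1\wedge|y_j|^2)$ for $y$ on the $j$-th axis, obtained from Taylor's expansion together with the uniform bounds on $\|P^j_t\|_\infty$ and $\|D^2P^j_t\|_\infty$ for $t\in[\varepsilon,\tau]$ coming from~\eqref{p14} applied to each $m^j$; the L\'evy property $\int(1\wedge|y_j|^2)K_j(y_j)\,\textup{d}y_j<\infty$ then ensures absolute convergence. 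Once $\Lambda$ has been moved to $P$, the classical equation $\partial_tP+\mathcal{L}_\nu P=0$ and an integration by parts in time close the argument. The main technical point in this last step is the careful bookkeeping of the axis-supported structure of $\nu$ inside the Fubini-Tonelli argument; the remaining parts of the statement are inherited component-wise from the one-factor theory of Section~\ref{2.2}.
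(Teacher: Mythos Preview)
Your proof is correct and follows essentially the same approach as the paper: both exploit the product structure~\eqref{anHK} to inherit the regularity, positivity, $C_0$ property and the size estimates~\eqref{Atotal2},~\eqref{eq:comparison.translations},~\eqref{eq:Pz.Px} factorwise from Section~\ref{2.2}, handle~\eqref{eq:conds.general.L} by the axis decomposition combined with~\eqref{Atotal}, the semigroup property and~\eqref{Atotal2}, verify the classical equation via $\partial_tP^j+\mathcal{L}^jP^j=0$, and obtain the very weak formulation by mimicking Corollary~\ref{PisaVeryWeaksolution.cor}. In fact you supply more detail than the paper, which omits most of these verifications.
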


\begin{proof}
All the properties, except~\eqref{eq:conds.general.L}, have already been proved for the factors $P^j$. The corresponding properties for~$P$ can be easily recovered from the results for the factors thanks to the special form of the L\'evy measure. We omit the details.

The proof of~\eqref{eq:conds.general.L} also uses the special form~\eqref{an2} of the L\'evy measure $\nu$ and the product structure~\eqref{anHK} of $P_1$. Indeed,
\begin{equation*}
    \int_{\{|y|\geq1\}}P_{1} (x-y)\,\textup{d}\nu(y)=\sum_{j=1}^\ell\underbrace{\int_{\{|y_j|\ge 1\}}P_{1}^j (x_j- y_j)|K^j(y_j)\,\textup{d}y_j}_{\textup{I}_j(x_j)}\,\prod_{k=1,k\not = j}^\ell P_1^k(x_k).
\end{equation*}
But we can control the tail of $K^j$ by the tail of $P^j_1$; see~\eqref{Atotal}. Thus, using also the semigroup property~\eqref{eq:semigroup.property}, for some constant $C_j>0$,
\begin{equation*}
    \textup{I}_j(x_j)\leq C_j\int_{\{|y_j|\geq 1\}}P_{1}^j (x_j- y_j)P_1^j(y_j)\,\textup{d}y_j\leq  C_j\int_{\mathbb{R}^{d_j}}P_{1}^j (x_j- y_j)P_1^j(y_j)\,\textup{d}y_j=C_jP_2^j(x_j).
\end{equation*}
On the other hand, because of~\eqref{Atotal2}, there exists $C_j>0$ such that $P^j_2\leq C_jP^j_1$, and we conclude that $\textup{I}_j\le C_jP^j_1$ for some constant $C_j>0$ for each $j\in \{1,\dots,\ell\}$. Therefore,
\begin{equation*}
	 \int_{\{|y|\geq1\}}P_{1} (x-y)\,\textup{d}\nu(y)\le\sum_{j=1}^\ell C_jP^j_1(x_j)\prod_{k=1,k\not = j}^\ell P_1^k(x_k)=\sum_{j=1}^\ell C_j P_1(x).
\end{equation*}

To check that $P$ is a classical solution to~\eqref{a.nonlocalheatequation}, we use that $\partial_t P^j+\mathcal{L}^j P^j=0$, $j\in\{1,\dots,\ell\}$. Hence, for all $\xt\in\Rn\times (0,\infty)$,
\begin{align*}
	-\partial_{t}P(x,t)&=-\sum_{j=1}^\ell\partial_{t}P^j(x_j,t)\prod_{k=1,k\neq j}^\ell P^k(x_k,t)=\sum_{j=1}^\ell\Lk^jP^j(x_j,t)\prod_{k=1,k\neq j}^\ell P^k(x_k,t)=\Lk_{\nu}P(x,t).
\end{align*}

Once we know that $P$ is a classical solution to the equation we can show that it is also a very weak solution mimicking the proof of Lemma~\ref{PisaVeryWeaksolution.cor}.
\end{proof}

\subsubsection{The function $\phi$}

The last thing that we have to check is the existence of a function $\phi\in C^2(\mathbb{R}^d)$ satisfying~\eqref{a.46}--\eqref{Lphi}. We have already proved that such function $\phi^j$ exists for each of the operators $\mathcal{L}^j$, $j\in\{1,\dots,\ell\}$ (and the corresponding bilinear form $B_j$) if the kernels $K_j$ satisfy \eqref{A0}--\eqref{A3}. Our function $\phi$ will then be given by
\begin{equation}\label{anPhi}
    \phi(x):=\prod_{j=1}^\ell\phi^j(x_j)\quad\textrm{for all }x=(x_1,\dots,x_\ell)\in\Rn.
\end{equation}

\begin{lemma}\label{lemmaGettingPhiAnisotropic}
    Let $\mathcal{L}=\mathcal{L}_\nu$ of the form~\eqref{a.Lnu} with $\nu$ given by~\eqref{an2} with functions $K_j$, $j=1,\dots,\ell$ satisfying \eqref{A0}--\eqref{A3}. Let $P$ be the associated heat kernel, given by~\eqref{anHK}, and $B=B_\nu$ the bilinear form defined in \eqref{eq:def.Bnu}. The function $\phi$ defined in~\eqref{anPhi} belongs to $C^2(\mathbb{R}^d)$ and satisfies~\eqref{a.46}--\eqref{Lphi}.
\end{lemma}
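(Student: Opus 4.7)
The plan is to exploit the product structure of everything in sight. Since each $\phi^j\in C^2(\mathbb{R}^{d_j})$ acts on a separate block of coordinates, the product $\phi(x)=\prod_{j=1}^\ell\phi^j(x_j)$ automatically belongs to $C^2(\mathbb{R}^d)$. By Lemma~\ref{lemmaGettingPhi} applied to each $K_j$, there are constants $c_j>1$ such that $c_j^{-1}P^j_1\leq\phi^j\leq c_jP^j_1$, and the identity $P_1(x)=\prod_{j=1}^\ell P^j_1(x_j)$ from~\eqref{anHK} gives \eqref{a.46} for $\phi$ with constant $c=\prod_{j=1}^\ell c_j$ upon multiplying the componentwise bounds. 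So the nontrivial work is showing the two estimates in \eqref{Lphi}.

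For the first, I would use that $\Lk_\nu=\sum_{j=1}^\ell\Lk^j$, where $\Lk^j$ acts only on the $j$-th block. Since $\phi$ is a product, for each $j$ freezing the other variables gives
\begin{equation*}
    \Lk^j\phi(x)=\Big(\prod_{k\neq j}\phi^k(x_k)\Big)\Lk^j\phi^j(x_j).
\end{equation*}
Summing over $j$ and invoking $|\Lk^j\phi^j|\leq c_j\phi^j$ from Lemma~\ref{lemmaGettingPhi} applied to $K_j$,
\begin{equation*}
    |\Lk_\nu\phi(x)|\leq\sum_{j=1}^\ell c_j\,\phi^j(x_j)\prod_{k\neq j}\phi^k(x_k)=\Big(\sum_{j=1}^\ell c_j\Big)\phi(x),
\end{equation*}
since every $\phi^k$ is positive. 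This gives the first inequality in~\eqref{Lphi}.

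For the bound on the bilinear form, the structure of $\nu$ in~\eqref{an2} decomposes $B_\nu$ as
\begin{equation*}
    B_\nu(\psi,\phi)(x)=\sum_{j=1}^\ell\int_{\mathbb{R}^{d_j}}\big(\psi(x)-\psi(x-y_je_j)\big)\big(\phi(x)-\phi(x-y_je_j)\big)K_j(y_j)\dy_j,
\end{equation*}
where $y_je_j\in\mathbb{R}^d$ has $y_j$ in the $j$-th block and zeros elsewhere. For fixed $\hat x_j:=(x_1,\ldots,\widehat{x_j},\ldots,x_\ell)$, the slice $\psi_{j,\hat x_j}(z):=\psi(x_1,\ldots,z,\ldots,x_\ell)$ lies in $C^\infty_{\textup{c}}(\mathbb{R}^{d_j})$ with $\|\psi_{j,\hat x_j}\|_\infty\leq\|\psi\|_\infty$ and $\|\nabla\psi_{j,\hat x_j}\|_\infty\leq\|\nabla\psi\|_\infty$, and the factor $\phi(x)-\phi(x-y_je_j)$ factors as $\prod_{k\neq j}\phi^k(x_k)\cdot(\phi^j(x_j)-\phi^j(x_j-y_j))$. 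Thus the $j$-th integral equals $\prod_{k\neq j}\phi^k(x_k)\cdot B_j(\psi_{j,\hat x_j},\phi^j)(x_j)$, and Lemma~\ref{lemmaGettingPhi} applied to $K_j$ bounds each such term by $c_j\max\{\|\psi\|_\infty,\|\nabla\psi\|_\infty\}\phi^j(x_j)\prod_{k\neq j}\phi^k(x_k)$. Summing over $j$ yields $|B_\nu(\psi,\phi)|\leq c\max\{\|\psi\|_\infty,\|\nabla\psi\|_\infty\}\phi$ with $c=\sum_j c_j$.

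There is no serious obstacle: the key is that both the operator $\Lk_\nu$ and the bilinear form $B_\nu$ split into one-dimensional (in the block sense) pieces that interact with the product function $\phi$ only through the corresponding factor, while all other factors survive as positive multiplicative constants (in the relevant variable). The one subtlety worth flagging explicitly is that the slice $\psi_{j,\hat x_j}$ of a test function $\psi\in C^\infty_{\textup{c}}(\mathbb{R}^d)$ is still compactly supported in $\mathbb{R}^{d_j}$ for each fixed $\hat x_j$, which justifies appealing to the second inequality in~\eqref{Lphi} of Lemma~\ref{lemmaGettingPhi} uniformly in $\hat x_j$.
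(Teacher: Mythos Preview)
Your proof is correct and, for the first two claims (the $C^2$ regularity, inequality~\eqref{a.46}, and the bound $|\Lk_\nu\phi|\le c\phi$), it matches the paper's argument essentially verbatim.

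For the bilinear form estimate you take a genuinely more direct route than the paper. The paper does not invoke the second inequality in~\eqref{Lphi} from Lemma~\ref{lemmaGettingPhi}; instead it redoes the estimate by hand, splitting $|B_\nu(\psi,\phi)|\le\|\nabla\psi\|_\infty\,\textup{I}+2\|\psi\|_\infty\,\textup{II}$ into near-origin and far-from-origin parts and then appealing to intermediate inequalities \eqref{z26}--\eqref{z25} established \emph{inside} the proof of Lemma~\ref{lemmaGettingPhi}, together with comparability of $K^j$ and $P^j_1$, the semigroup property, and~\eqref{Atotal2}. Your approach is cleaner: you observe that the $j$-th summand of $B_\nu(\psi,\phi)(x)$ factors exactly as $\prod_{k\neq j}\phi^k(x_k)\cdot B_j(\psi_{j,\hat x_j},\phi^j)(x_j)$, and then apply the \emph{conclusion} of Lemma~\ref{lemmaGettingPhi} to the slice $\psi_{j,\hat x_j}\in C^\infty_{\textup{c}}(\mathbb{R}^{d_j})$, using that the constant $c_j$ there is uniform over test functions. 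This buys you a shorter argument that uses only the statement of the cited lemma rather than digging into its proof; the paper's version, by contrast, is self-contained at the cost of repeating machinery. The subtlety you flag---that the slice remains compactly supported with norms dominated by those of $\psi$---is exactly what makes your reduction legitimate.
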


\begin{proof}
Since $P^j_1$ is comparable to $\phi^j$ for each $j\in\{1,\dots,\ell\}$,~\eqref{a.46} follows easily thanks to the structure in separate variables of both $\phi$ and $P$.

On the other hand, since $|\Lk^j\phi^j|\le c_j\phi^j$ for some $c_j$ for all $j\in\{1,\dots,\ell\}$, using again the structure in separate variables of $\phi$ and the special form~\eqref{an2} of the L\'evy measure we arrive at
\begin{equation*}
	|\Lk_{\nu}\phi(x)|=\Big|\sum_{j=1}^\ell\Lk^j\phi^j(x_j)\prod_{k=1,k\neq j}^\ell\phi^k(x_k)\Big|
    \leq\sum_{j=1}^\ell c_j\phi(x)=c\phi(x)\quad\textrm{for all }x\in\Rn.
\end{equation*}

As for the second inequality in~\eqref{Lphi}, we have $|B_{\nu}(\psi,\phi)(x)|\leq \|\nabla \psi\|_{\infty}\textup{I}(x)+2\|\psi\|_{\infty}\textup{II}(x)$, where
\begin{align*}
    \textup{I}(x)&:=\ints|y||\phi(x)-\phi(x-y)|\dnu(y),\\
    \textup{II}(x)&:=\underbrace{\phi(x)\intb\dnu(y)}_{\textup{II}_1(x)}+\underbrace{\intb\phi(x-y)\dnu(y)}_{\textup{II}_2(x)}.
\end{align*}
Because of the special form~\eqref{an2} of the L\'evy measure $\nu$ and the product structure~\eqref{anPhi} of $\phi$, since  $K_j$ and $\phi^j$ satisfy~\eqref{z26}--\eqref{z25} and the kernels $K^j$ satisfy the L\'evy condition~\eqref{A0}, then
\begin{align*}
    \textup{I}(x)&=\sum_{j=1}^\ell\int_{\{|y_j|\leq 1\}}|y_j||\phi^j(x_j)-\phi^j(x_j-y_j)|K_j(y_j)\dy_j\prod_{k=1,k\neq j}^\ell\phi^k(x_k)\\
    &\leq\sum_{j=1}^\ell C_j \int_{\{|y_j|\leq 1\}}|y_j|^2K_j(y_j)\dy_j\prod_{k=1}^\ell\phi^k(x_k)=C\phi(x).
\end{align*}
On the other hand, since $\nu$ is a L\'evy measure, then $\textup{II}_1(x)\le C\phi(x)$. Finally, using again the special form of $\nu$ and $\phi$, and then, for all $j\in\{1,\dots,\ell\}$, the comparability~\eqref{Atotal} of $K^j$ and $P^j_1$ outside the origin, the semigroup property~\eqref{eq:semigroup.property},~\eqref{Atotal2} (for $P^j$) and the comparability of $P_1^j$ and $\phi^j$,
\begin{align*}
    \textup{II}_2(x)&=\sum_{j=1}^\ell\int_{\{|y_j|\geq 1\}}\phi^j(x_j-y_j)K^j(y_j)\dy_j\prod_{k=1,k\neq j}^\ell\phi^k(x_k)\\
    &\leq\sum_{j=1}^\ell\big(\prod_{k=1,k\neq j}^\ell\phi^k(x_k)\big)\int_{\mathbb{R}^{d_j}}C_jP^j_1(x_j-y_j)P^j_1(y_j)\dy_j
    \leq\sum_{j=1}^\ell\big(\prod_{k=1,k\neq j}^\ell\phi^k(x_k)\big)C_jP^j_2(x_j)\\
    &\leq\sum_{j=1}^\ell\big(\prod_{k=1,k\neq j}^\ell\phi^{j}(x_{j})\big)C_jP^j_1(x_j)\le
    \sum_{j=1}^\ell\big(\prod_{k=1,k\neq j}^\ell\phi^{j}(x_{j})\big)C_j\phi^j(x_j)=C\phi(x).\qedhere
\end{align*}
\end{proof}

\subsubsection{Existence in a special case}

As in the case of operators of the form $\mathcal{L}_K$, we will only be able to prove existence for a special case, namely
$\mathcal{L}_\nu=\sum_{j=1}^\ell(-\Delta_{d_j})^{\alpha/2}$, where $\alpha\in (0,2)$ and $(-\Delta_{d_j})^{\alpha/2}$ is the $d_j$-dimensional $\alpha$-fractional Laplacian.

We denote the heat kernels associated to~$(-\Delta_{d_j})^{\alpha/2}$, $j=1,\dots,\ell$, and~$\sum_{j=1}^\ell(-\Delta_{d_j})^{\alpha/2}$  respectively by $P^{j,\alpha}$, $j=1,\dots,\ell$ and $P^\alpha$. We recall that
\begin{equation}\label{anHkalpha}	
    P^\alpha(x,t)=\prod_{j=1}^\ell P^{j,\alpha}(x_j,t),\quad x=(x_1,\dots,x_\ell)\in\Rn,\ t>0.
\end{equation}
Then the heat kernel $P^\alpha$ inherits the self-similar structure from the kernels $P_j^\alpha$, $j=1,\dots,\ell$,
\begin{equation}\label{eq:Palpha.selfsimilar}
    P^{\alpha}(x,t)=\prod_{j=1}^\ell\big(t^{-d_j/\alpha}P^{j,\alpha}(x_jt^{-\alpha},1)\big)=t^{-d/\alpha}
    \prod_{j=1}^d P^{j,\alpha}(x_jt^{-\alpha},1)=t^{-d/\alpha}P^{\alpha}(xt^{-\alpha},1).
\end{equation}
This is the clue to prove the necessary conditions for the existence result.

\begin{proposition}
    For all $\varepsilon\in(0,T)$  there is a constant $C>0$ such that
    \begin{equation*}
        \textup{(i)}\ |\partial_{t}P^{\alpha}\xt|\leq C P^{\alpha}_1(x),\quad\textup{(ii)}\ |D^{2} P_{t}(x)|\leq CP_{1}(x),\qquad x\in\Rn, \ t\in(\varepsilon,T).
    \end{equation*}
\end{proposition}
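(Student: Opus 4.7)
The plan is to exploit the product structure \eqref{anHkalpha}, $P^{\alpha}(x,t)=\prod_{j=1}^{\ell}P^{j,\alpha}(x_{j},t)$, and reduce both estimates to the one-block bounds in Proposition~\ref{pprop1}. Throughout, for $t\in(\varepsilon,T)$, I will use that each single-block kernel $P^{j,\alpha}$ satisfies the comparability property \eqref{Atotal2}, so there is a constant $c_{\varepsilon,T}>0$ with $P^{j,\alpha}_{t}(x_{j})\le c_{\varepsilon,T}P^{j,\alpha}_{1}(x_{j})$ for every $x_{j}\in\mathbb{R}^{d_{j}}$, $t\in[\varepsilon,T]$, and $j\in\{1,\dots,\ell\}$. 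Multiplying these over $j$ immediately gives $P^{\alpha}_{t}(x)\le C_{\varepsilon,T}P^{\alpha}_{1}(x)$ for $x\in\mathbb{R}^{d}$ and $t\in[\varepsilon,T]$.

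For part (i), the product rule yields
\[
\partial_{t}P^{\alpha}(x,t)=\sum_{j=1}^{\ell}\partial_{t}P^{j,\alpha}(x_{j},t)\prod_{k\neq j}P^{k,\alpha}(x_{k},t).
\]
By Proposition~\ref{pprop1}~(iv) applied in dimension $d_{j}$, $|\partial_{t}P^{j,\alpha}(x_{j},t)|\le(C/t)P^{j,\alpha}_{t}(x_{j})$, so that
\[
|\partial_{t}P^{\alpha}(x,t)|\le\frac{C}{t}\sum_{j=1}^{\ell}P^{j,\alpha}_{t}(x_{j})\prod_{k\neq j}P^{k,\alpha}(x_{k},t)=\frac{C\ell}{t}P^{\alpha}_{t}(x)\le\frac{C\ell}{\varepsilon}c_{\varepsilon,T}\,P^{\alpha}_{1}(x),
\]
which is the desired bound.

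For part (ii), I write a generic second derivative $\partial_{a}\partial_{b}P^{\alpha}_{t}$ with $a,b\in\{1,\dots,d\}$. Because each factor $P^{j,\alpha}(x_{j},t)$ depends only on the block coordinates~$x_{j}$, the product rule gives exactly two cases. If $a,b$ belong to the same block $j$, then
\[
\partial_{a}\partial_{b}P^{\alpha}_{t}(x)=\bigl(\partial_{a}\partial_{b}P^{j,\alpha}_{t}(x_{j})\bigr)\prod_{k\neq j}P^{k,\alpha}_{t}(x_{k}),
\]
and Proposition~\ref{pprop1}~(vi) in $\mathbb{R}^{d_{j}}$ yields $|\partial_{a}\partial_{b}P^{j,\alpha}_{t}(x_{j})|\le(C/\varepsilon^{2\alpha})P^{j,\alpha}_{1}(x_{j})$; combined with $P^{k,\alpha}_{t}\le c_{\varepsilon,T}P^{k,\alpha}_{1}$ for $k\neq j$, this produces $|\partial_{a}\partial_{b}P^{\alpha}_{t}(x)|\le CP^{\alpha}_{1}(x)$. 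If instead $a$ lies in block $j$ and $b$ in block $j'\neq j$, then
\[
\partial_{a}\partial_{b}P^{\alpha}_{t}(x)=\bigl(\partial_{a}P^{j,\alpha}_{t}(x_{j})\bigr)\bigl(\partial_{b}P^{j',\alpha}_{t}(x_{j'})\bigr)\prod_{k\neq j,j'}P^{k,\alpha}_{t}(x_{k}),
\]
and Proposition~\ref{pprop1}~(v) applied in each of $\mathbb{R}^{d_{j}}$ and $\mathbb{R}^{d_{j'}}$ gives $|\partial_{a}P^{j,\alpha}_{t}(x_{j})|\le(C/\varepsilon^{\alpha})P^{j,\alpha}_{1}(x_{j})$ and the analogous estimate for the $j'$ factor; multiplying by the comparability of the remaining factors again gives $CP^{\alpha}_{1}(x)$.

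The argument is essentially mechanical once the product structure is in place; the only point requiring a little care is the case split in (ii) for mixed second derivatives, since the bounds from Proposition~\ref{pprop1}~(v) and~(vi) enter different numbers of factors depending on whether the two indices lie in the same block or not. The self-similarity \eqref{eq:Palpha.selfsimilar} could alternatively be used directly to mimic the proof of Proposition~\ref{pprop1}~(iv)--(vi) without the block splitting, but the product-rule approach outlined above already reduces everything to estimates already proved.
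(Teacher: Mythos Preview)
Your proof is correct and, for part~(ii), essentially identical to the paper's: the same case split according to whether the two derivative indices fall in the same block (use Proposition~\ref{pprop1}~(vi)) or different blocks (use Proposition~\ref{pprop1}~(v)).

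For part~(i) there is a small difference worth noting. The paper differentiates the self-similar form~\eqref{eq:Palpha.selfsimilar} of $P^{\alpha}$, obtaining
\[
\partial_{t}P^{\alpha}(x,t)=-\frac{d}{\alpha t}t^{-d/\alpha}P^{\alpha}_{1}(xt^{-\alpha})-\frac{t^{-d/\alpha}}{\alpha t}\sum_{j=1}^{\ell}\langle x_{j}t^{-\alpha},\nabla_{x_{j}}P^{j,\alpha}_{1}(x_{j}t^{-\alpha})\rangle\prod_{k\neq j}P^{k,\alpha}_{1}(x_{k}t^{-\alpha}),
\]
and then invokes Proposition~\ref{pprop1}~(iii) to control the inner products. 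You instead apply the product rule in time directly and use Proposition~\ref{pprop1}~(iv) on each factor. Your route is slightly more economical: it avoids re-deriving a self-similar expansion and reduces immediately to the already-packaged estimate~(iv), whereas the paper's route makes the scaling structure visible but goes through~(iii). Both land on the same bound $|\partial_{t}P^{\alpha}_{t}|\le (C/t)P^{\alpha}_{t}\le C_{\varepsilon,T}P^{\alpha}_{1}$ for $t\in(\varepsilon,T)$.
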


\begin{proof}
(i) The selfsimilar structure~\eqref{eq:Palpha.selfsimilar} of $P^\alpha$ and its product structure~\eqref{anHkalpha} imply that
\begin{equation*}
    \partial_{t}P^{\alpha}(x,t)=-\frac{d}{\alpha t}t^{-d/\alpha}P^{\alpha}_1(xt^{-\alpha})-\frac{t^{-d/\alpha}}{\alpha t}\sum_{j=1}^\ell
    \langle x_jt^{-\alpha},\nabla_{x_j}P^{j,\alpha}_1(x_jt^{-\alpha})\rangle\prod_{k=1,k\neq j}^\ell P^{k,\alpha}_1(x_kt^{-\alpha}).
\end{equation*}
On the other hand, by Proposition \ref{pprop1} (iii), for each $j\in\{1,\dots,\ell\}$ there is a constant $C_j$ such that $|\langle x_jt^{-\alpha},\nabla P^{j,\alpha}_1(x_jt^{-\alpha})\rangle|\leq C_jP^{j,\alpha}_1(x_jt^{-\alpha})$, and the result follows using~\eqref{anHkalpha} and~\eqref{Atotal2}.

\noindent (ii) There are two possibilities. If the two variables, $x_i,x_l$, $i,l\in\{1,\dots,d\}$, with respect to which we differentiate fall within the same subspace $\mathbb{R}^{d_j}$, $j\in\{1,\dots,\ell\}$, then, thanks to~\eqref{anHkalpha},
\begin{equation*}
    \partial_i\partial_l P_t^\alpha(x)=\partial_i\partial_l P_t^{j,\alpha}(x_j)\prod_{k=1,k\neq j}^\ell P_t^{k,\alpha}(x_k),
\end{equation*}
and the result follows using Proposition~\ref{pprop1} (vi). If, instead, each of them falls in a different subspace, say $x_i$ in $\mathbb{R}^{d_{j_1}}$ and $x_l$ in $\mathbb{R}^{d_{j_2}}$, $j_1,j_2\in\{1,\dots,\ell\}$, $j_1\neq j_2$, then,
\begin{equation*}
    \partial_i\partial_l P_t^\alpha(x)=\partial_i P_t^{j_1,\alpha}(x_{j_1})\partial_l P_t^{j_2,\alpha}(x_{j_2})\prod_{k=1,k\neq j_1,j_2}^\ell P_t^{k,\alpha}(x_k),
\end{equation*}
and now the result follows using Proposition~\ref{pprop1} (v).
\end{proof}

\section{Operators having a local and a nonlocal part}\label{section5}
\setcounter{equation}{0}

We consider now operators {that combine both a local and a nonlocal part, more precisely those of the form } $\mathcal{L}=\mathcal{L}_{\Delta,K}:=-\Delta+\mathcal{L}_K$, with $\mathcal{L}_K$ as in~\eqref{a.Lk} and $K$ satisfying~\eqref{A0}--\eqref{A2} (and~\eqref{A3} in some cases).

\subsection{The heat kernel}

When $\mathcal{L}=\mathcal{L}_{\Delta,K}$, with $K$ satisfying~\eqref{A0}--\eqref{A2}, the L\'evy process associated to equation~\eqref{a.nonlocalheatequation} is the sum of two independent L\'evy processes, each one with its heat kernel: the L\'evy process associated to $\partial_t u+\mathcal{L}_K u=0$,  with heat kernel~$P^{K}$, and the Brownian process associated to the classical heat equation, with the well-known heat kernel $G$, given in~\eqref{p12}.

Let $m_K$ be the L\'evy-Khintchine exponent of $\mathcal{L}_K$, given by~\eqref{p6}. Since the Fourier symbol of $-\Delta$ is $|\xi|^2$, the heat kernel $P$ associated to $\mathcal{L}_{\Delta,K}$ is then given in terms of Fourier transform by
\begin{equation*}
	\mathcal{F}(P_{t})(\xi)=e^{-t(m_K(\xi)+|\xi|^{2})},\quad\xi\in\Rn,\ t>0.
\end{equation*}
Therefore, using the properties of the Fourier transform,
\begin{equation}\label{51.9}
    P_t(x)=P^{K}_t*G_t(x),\quad x\in\Rn,\ t>0.
\end{equation}

Since $G_t$ decays as the inverse of a quadratic exponential and $\tilde P_t$ as the inverse of some polynomial (see~\eqref{Atotal},~\eqref{A1} and~\eqref{A2}), there is a constant $C_t>0$ such that
\begin{equation}\label{eq:slowest.decay}
    G_t(x)\le C_t P^K_t(x)\quad\text{for all }x\in\mathbb{R}^d.
\end{equation}
From this and~\eqref{51.9} we deduce now that $P_{t}$ is comparable with $P^{K}_{t}$, a key fact for what follows.

\begin{proposition}\label{51.prop1}
    Let $\mathcal{L}=\mathcal{L}_{\Delta,K}$ with $K$ satisfying~\eqref{A0}--\eqref{A2} and let $P$ be corresponding heat kernel. For each $t>0$ there is a constant $C_t>1$ such that $C_t^{-1}P^{K}_t\leq P_t\leq C_tP^{K}_t$.
\end{proposition}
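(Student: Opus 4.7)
The idea is to exploit the convolution representation $P_t = P^K_t \ast G_t$ from~\eqref{51.9} together with two structural features of $P^K_t$ established in Section~\ref{2.2}: its slowly changing character (Proposition~\ref{Pslowly} and Remark~\ref{rk:Pt.sv.and.ad}) and its polynomial lower bound $P^K_t(x) \gtrsim |x|^{-(d+\beta_2)}$ for $|x|\ge 1$, coming from~\eqref{Atotal} combined with the upper scaling in~\eqref{A2}. These, together with the exponential decay and radial monotonicity of $G_t$, should yield both bounds for fixed $t>0$.

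\textbf{Upper bound.} Fix $t>0$ and $|x|\ge 2$. I would split
\begin{equation*}
    P_t(x)=\int_{\{|y|\le |x|/2\}}P^K_t(x-y)G_t(y)\,\mathrm dy+\int_{\{|y|>|x|/2\}}P^K_t(x-y)G_t(y)\,\mathrm dy=:I_1(x)+I_2(x).
\end{equation*}
On the first region, $|x-y|/|x|\in[1/2,3/2]$, so the slowly changing property gives $P^K_t(x-y)\le C_t P^K_t(x)$, hence $I_1(x)\le C_t P^K_t(x)\int G_t=C_t P^K_t(x)$. On the second region I would use that $G_t$ is radially decreasing and $\|P^K_t\|_{L^1}=1$ to write $I_2(x)\le G_t(x/2)$. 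The Gaussian-versus-polynomial comparison $G_t(x/2)\le C_t' P^K_t(x)$ for $|x|\ge 2$ then follows from $P^K_t(x)\ge c_t|x|^{-(d+\beta_2)}$ and the super-polynomial decay of $e^{-|x|^2/(16t)}$. For $|x|\le 2$, Young's inequality gives $P_t\le P^K_t(0)$, while continuity and strict positivity of $P^K_t$ on $\overline{B_2(0)}$ give a positive lower bound, and hence comparability on this compact set.

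\textbf{Lower bound.} For $|x|$ large enough, I would keep only the central part of the convolution:
\begin{equation*}
    P_t(x)\ge\int_{\{|y|\le|x|/2\}}P^K_t(x-y)G_t(y)\,\mathrm dy\ge C_t^{-1}P^K_t(x)\int_{\{|y|\le|x|/2\}}G_t(y)\,\mathrm dy,
\end{equation*}
using the slowly changing property in the reverse direction. The last integral tends to $1$ as $|x|\to\infty$, so it is bounded below by, say, $1/2$ for $|x|\ge R_t$ with $R_t$ chosen large. For $|x|<R_t$, both $P^K_t$ and $P_t=P^K_t\ast G_t$ are continuous and strictly positive on $\overline{B_{R_t}(0)}$ (the positivity of $P_t$ following from that of both factors), so they are comparable there with constants depending on $t$ and $R_t$.

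\textbf{Expected obstacle.} The delicate point is the Gaussian-versus-polynomial comparison $G_t(x/2)\le C_t' P^K_t(x)$, which relies crucially on~\eqref{A2} to produce a polynomial lower bound for $P^K_t$ large enough to be eventually dominated by a Gaussian; the dependence of constants on $t$ (through the Gaussian scale and through~\eqref{Atotal}) must be tracked carefully, but, since only a $t$-dependent comparability is claimed, this causes no conceptual difficulty.
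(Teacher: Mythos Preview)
Your proposal is correct and follows essentially the same approach as the paper's proof: both exploit the convolution $P_t=P^K_t\ast G_t$, split the integral into a ``near'' and a ``far'' piece, and combine the slowly changing/almost decreasing behaviour of $P^K_t$ with the radial decay of $G_t$ and the Gaussian--versus--polynomial comparison~\eqref{eq:slowest.decay}. The only cosmetic differences are that the paper splits on $|x-y|\lessgtr|x|/2$ rather than on $|y|\lessgtr|x|/2$, and for the lower bound restricts to the fixed ball $|y|<1$ instead of your growing ball $|y|\le|x|/2$; both choices lead to the same estimates.
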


The result can be seen in \cite{Zoran2}. Here we provide a, somehow, shorter proof.

\begin{proof}
As for the first inequality, since $P^{K}_t, G_t>0$, then $P_t>0$. Therefore, there is a constant $C_t>0$ such that $P^{K}_t(x)\leq C_tP_t(x)$ for all $|x|\leq 2$. Consider now $|x|\geq 2$. If $|y|<1$, then $|x-y|<|x|+1\le 3|x|/2$. Hence, since $P^{K}_t$ is almost decreasing and {slowly changing}, see Remark~\ref{rk:Pt.sv.and.ad},
\begin{equation*}
    P_{t}(x)\geq \int_{\{|y|< 1\}} \tilde P_t(x-y)G_{t}(y)\dy\ge C_tP^{K}_{t}(3x/2)\int_{\{|y|<1\}}G_{t}(y)\dy\ge C_tP^{K}_t(x).
\end{equation*}

To prove the second inequality, we split $P_t$ in two integrals,
\begin{equation*}
    P_{t}(x)=\int_{\{|x-y|\leq\frac{1}{2}|x|\}}P^{K}_{t}(x-y)G_{t}(y)\dy+\int_{\{|x-y|\geq\frac{1}{2}|x|\}}P^{K}_{t}(x-y)G_{t}(y)\dy.
\end{equation*}
If $|x-y|\leq|x|/2$, then $|x|/2\leq |y|$. Since $G_{t}$ is radially decreasing and $\|P^{K}_t\|_{L^1(\mathbb{R}^d)}=1$, using also~\eqref{eq:slowest.decay} and the {slow change} of $P^{K}_t$,
\begin{equation*}
    \int_{\{|x-y|\leq\frac{1}{2}|x|\}}P^{K}_{t}(x-y)G_{t}(y)\dy\leq G_t(x/2)\intr P^{K}_{t}(x-y)\dy= G_t(x/2)\leq C_tP^{K}_{t}(x/2)\leq C_tP^{K}_{t}(x).
\end{equation*}
Besides, since $P^{K}_{t}$ is almost decreasing and {slowly changing}, and $\|G_t\|_{L^1(\mathbb{R}^d)}=1$, we have that
\begin{equation*}
	\int_{\{|x-y|\geq\frac{|x|}{2}\}}P^{K}_{t}(x-y)G_{t}(y)\dy\leq P^{K}_t(x/2)\intr G_{t}(y)\dy\leq C P^{K}_t(x).\qedhere
\end{equation*}
\end{proof}

As a corollary of Proposition \ref{51.prop1}, since $P^{K}$ is comparable in two different times, see \eqref{Atotal2}, so it is~$P$.

\begin{corollary}\label{51.cor1}
    Let $\mathcal{L}=\mathcal{L}_{\Delta,K}$ with $K$ satisfying~\eqref{A0}--\eqref{A2} and let $P$ be corresponding heat kernel. For all $t_{1}, t_{2}>0$ there exists $C>1$ such that
	\begin{equation*}\label{5.Atotal2}
        C^{-1}P_{t_{1}}\leq P_{t_{2}}\leq CP_{t_{1}}.
	\end{equation*}
\end{corollary}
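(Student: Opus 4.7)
The statement is an essentially immediate consequence of Proposition~\ref{51.prop1} combined with the corresponding two-time comparison~\eqref{Atotal2} for the purely nonlocal heat kernel $P^K$. The plan is therefore to chain four comparisons: $P_{t_2}\asymp P^K_{t_2}$ via Proposition~\ref{51.prop1} at time $t_2$; then $P^K_{t_2}\asymp P^K_{t_1}$ via~\eqref{Atotal2}; and finally $P^K_{t_1}\asymp P_{t_1}$ via Proposition~\ref{51.prop1} at time $t_1$.

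The first step is to fix arbitrary $t_1,t_2>0$ and choose $\varepsilon=\min\{t_1,t_2\}$ and $T=2\max\{t_1,t_2\}$, so that $t_1,t_2\in[\varepsilon,T]$. By Proposition~\ref{51.prop1}, there exist constants $C_{t_1},C_{t_2}>1$ with
\begin{equation*}
    C_{t_1}^{-1}P^{K}_{t_1}\leq P_{t_1}\leq C_{t_1}P^{K}_{t_1},\qquad C_{t_2}^{-1}P^{K}_{t_2}\leq P_{t_2}\leq C_{t_2}P^{K}_{t_2}.
\end{equation*}
By~\eqref{Atotal2} applied to the heat kernel $P^K$ on the interval $[\varepsilon,T]$, there exists $C_{\varepsilon,T}>1$ such that
\begin{equation*}
    C_{\varepsilon,T}^{-1}P^{K}_{t_1}\leq P^{K}_{t_2}\leq C_{\varepsilon,T}P^{K}_{t_1}\quad\text{on }\Rn.
\end{equation*}

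The second step is to combine these bounds, which gives
\begin{equation*}
    P_{t_2}\leq C_{t_2}P^{K}_{t_2}\leq C_{t_2}C_{\varepsilon,T}P^{K}_{t_1}\leq C_{t_2}C_{\varepsilon,T}C_{t_1}P_{t_1},
\end{equation*}
and symmetrically $P_{t_1}\leq C_{t_1}C_{\varepsilon,T}C_{t_2}P_{t_2}$. Setting $C:=C_{t_1}C_{t_2}C_{\varepsilon,T}>1$ yields the desired conclusion $C^{-1}P_{t_1}\leq P_{t_2}\leq CP_{t_1}$.

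There is no serious obstacle here: the whole content of the statement sits in the two ingredients already proved, and the proof is just bookkeeping of constants. The only mild care needed is to ensure that $t_1$ and $t_2$ lie in a common interval $[\varepsilon,T]$ bounded away from zero, so that~\eqref{Atotal2} actually applies; the choice $\varepsilon=\min\{t_1,t_2\}$ takes care of this. The resulting constant $C$ depends on $t_1$ and $t_2$ (through $C_{t_1}$, $C_{t_2}$ and $C_{\varepsilon,T}$), which is exactly what the statement permits.
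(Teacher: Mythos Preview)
Your proof is correct and follows exactly the approach the paper itself takes: chain Proposition~\ref{51.prop1} (comparability of $P_t$ and $P^K_t$) with~\eqref{Atotal2} for $P^K$. The paper merely states this in one sentence; you have simply written out the constant-tracking explicitly.
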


Moreover, since $P^{K}_t$ is {slowly changing} and almost decreasing for all $t>0$ and $P^{K}_{1}$ and $P_{1}$ are comparable, the heat kernel associated to $\mathcal{L}_{\Delta,K}$ has also these properties.
\begin{corollary}\label{51.cor2}
    Let $\mathcal{L}=\mathcal{L}_{\Delta,K}$ with $K$ satisfying~\eqref{A0}--\eqref{A2} and let $P$ be corresponding heat kernel. Then, $P_t$ is {slowly changing} and almost decreasing for all $t>0$ .
\end{corollary}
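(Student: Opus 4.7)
The plan is to reduce this corollary directly to the analogous properties already established for the heat kernel $P^K$ of the purely nonlocal operator $\mathcal{L}_K$. The essential tool is Proposition~\ref{51.prop1}, which furnishes, for each fixed $t>0$, a constant $C_t>1$ such that $C_t^{-1}P^K_t \le P_t \le C_t P^K_t$ pointwise on $\mathbb{R}^d$. Since the properties \emph{slowly changing} and \emph{almost decreasing} are both stated as two-sided inequalities for ratios $P_t(x)/P_t(y)$, they are stable under multiplicative comparability, so the result will follow from the corresponding properties of $P^K_t$, which were established in Proposition~\ref{Pslowly} and Proposition~\ref{prop:P1.almost.decreasing} (for $t=1$) and then promoted to every $t>0$ in Remark~\ref{rk:Pt.sv.and.ad} via~\eqref{Atotal2}.

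To show that $P_t$ is slowly changing, fix $0<\rho_1\le\rho_2$ and $x,y\in\mathbb{R}^d$ with $\rho_1\le |y|/|x|\le \rho_2$. Using Proposition~\ref{51.prop1} in both directions I would write
\begin{equation*}
    \frac{P_t(x)}{P_t(y)} \;\le\; C_t^2\,\frac{P^K_t(x)}{P^K_t(y)} \;\le\; C_t^2\, C_{\rho_1,\rho_2},
\end{equation*}
where the last bound is the slow change of $P^K_t$ from Remark~\ref{rk:Pt.sv.and.ad}. The lower bound is obtained identically by swapping the roles of $x$ and $y$, yielding the required two-sided estimate with constant $C_t^2 C_{\rho_1,\rho_2}$.

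For the almost decreasing property, fix $x,z\in\mathbb{R}^d$ with $|x|\le |z|$. Again by Proposition~\ref{51.prop1} together with the fact that $P^K_t$ is almost decreasing (Remark~\ref{rk:Pt.sv.and.ad}, via Proposition~\ref{prop:P1.almost.decreasing}), there exists a constant $C>0$ such that
\begin{equation*}
    P_t(z) \;\le\; C_t\, P^K_t(z) \;\le\; C\,C_t\, P^K_t(x) \;\le\; C\,C_t^2\, P_t(x),
\end{equation*}
which gives the desired almost-decreasing bound. No step should present any real obstacle, since Proposition~\ref{51.prop1} does all the heavy lifting; the only care needed is to note that the constants in both properties depend on $t$ (through $C_t$), which is perfectly consistent with the definitions in play here.
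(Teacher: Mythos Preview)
Your proposal is correct and follows essentially the same approach as the paper: the paper simply notes (in the sentence preceding the corollary) that since $P^K_t$ is slowly changing and almost decreasing and $P_t$ is comparable to $P^K_t$ by Proposition~\ref{51.prop1}, these properties transfer to $P_t$. You have written out the details of this transfer explicitly, which is fine.
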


\subsection{Very weak solutions}

Thanks to the comparability between $P^K_t$ and $P_t$, we can develop a complete Widder-type theory for very weak solutions for the operator $\mathcal{L}_{\Delta,K}$.

\subsubsection{Alternative definitions of very weak solution}

We can adapt easily the proof of Proposition~\ref{prop1} to check that integrability against $\mathcal{L}_{\Delta,K}\theta$ with $\theta$ a test function is equivalent to integrability against $P_1$. The only delicate point is to check that if $\psi\in C^{\infty}_{\textup{c}}(\Rn)$ is such that $\mathcal{X}_{B_{1}(0)}\leq \psi\leq \mathcal{X}_{B_{2}(0)}$, then there is a constant $C>0$ such that $|\mathcal{L}_{\Delta,K}\psi(x)|\geq CP_{1}(x)$ for all $|x|\geq 4$. But this follows easily from~\eqref{v3.24},  since $-\Delta$ is a local operator and $\textrm{supp}(\psi)\subset B_{2}(0)$. Thus we have the following result.

\begin{proposition}\label{51.propequivalenceveryweak}
    Let $\mathcal{L}=\mathcal{L}_{\Delta,K}$, with $K$ satisfying~\eqref{A0}--\eqref{A2}.

    \noindent{\rm (i)} Let  $0\leq\delta<\tau\leq T$. Then $u\in L^{1}((\delta,\tau);\lp)$ if and only if $u\in L^{1}_{\rm{loc}}(\Rn\times[\delta,\tau])$ and \eqref{v3.9} holds.

    \noindent{\rm (ii)}  $u\in\luloc$ if and only if  $u\in L^{1}_{\rm{loc}}(\Rn\times(0,T))$ and~\eqref{a.2} holds.
\end{proposition}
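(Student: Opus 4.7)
The plan is to mimic, step by step, the proof of Proposition~\ref{prop1}, with two mild adjustments that account for the presence of the local Laplacian in $\mathcal{L}_{\Delta,K}$. The essential fact that makes this adaptation possible is Proposition~\ref{51.prop1}: the heat kernel $P$ of $\mathcal{L}_{\Delta,K}$ is comparable to the heat kernel $P^{K}$ of $\mathcal{L}_K$ at every positive time. In particular $P_1 \asymp P^{K}_1$, so all the size estimates involving $P^{K}_1$ that were proved in Section~\ref{section3} transfer, up to constants, to estimates involving $P_1$.

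The first adjustment is an analogue of Lemma~\ref{a.17} for $\mathcal{L}_{\Delta,K}$: for every $\tilde\theta \in C^{2}_{\textup{c}}(\mathbb{R}^d \times [\delta,\tau])$ there is a constant $C_{\tilde\theta}>0$ with $|\mathcal{L}_{\Delta,K}\tilde\theta(x,t)| \leq C_{\tilde\theta} P_1(x)$. By the triangle inequality, $|\mathcal{L}_{\Delta,K}\tilde\theta| \leq |\Delta\tilde\theta| + |\mathcal{L}_K\tilde\theta|$. The nonlocal summand is bounded by $C P^{K}_1 \leq C' P_1$ by Lemma~\ref{a.17} combined with Proposition~\ref{51.prop1}. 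The local summand $\Delta\tilde\theta$ is smooth and compactly supported in space, hence bounded by $C\mathcal{X}_{B_R(0)}$ for some $R>0$; since $P_1$ is continuous and strictly positive, it is further bounded by a multiple of $P_1$. This bound yields the forward implication in~(i) at once: for $u \in L^1((\delta,\tau);L^1_{P_1})$ and any admissible $\tilde\theta$, one has $\int_\delta^\tau \int |u|\,|\mathcal{L}_{\Delta,K}\tilde\theta| \leq C_{\tilde\theta}\|u\|_{L^1((\delta,\tau);L^1_{P_1})} < \infty$, while the local integrability of $u$ on $\mathbb{R}^d\times[\delta,\tau]$ is immediate from the positivity of $P_1$.

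The second adjustment, which is the only genuinely new calculation and the main (mild) obstacle, is the lower bound $|\mathcal{L}_{\Delta,K}\psi(x)| \geq C P_1(x)$ for $|x|\geq 4$, where $\psi \in C^{\infty}_{\textup{c}}(\mathbb{R}^d)$ satisfies $\mathcal{X}_{B_1(0)} \leq \psi \leq \mathcal{X}_{B_2(0)}$. This is where locality is exploited: since $\operatorname{supp}(\psi)\subset B_2(0)$, we have $\Delta\psi(x)=0$ for all $|x|\geq 4$, so $\mathcal{L}_{\Delta,K}\psi(x) = \mathcal{L}_K\psi(x)$ on that region, and the estimate~\eqref{v3.24} together with Proposition~\ref{51.prop1} gives $|\mathcal{L}_{\Delta,K}\psi(x)| \geq CP^{K}_1(x) \geq C'P_1(x)$ for $|x|\geq 4$. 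With this lower bound in hand, the reverse implication in~(i) is obtained exactly as in the proof of Proposition~\ref{prop1}, via the splitting
\begin{equation*}
    \int_\delta^\tau\!\!\int_{\mathbb{R}^d} |u(x,t)|\,P_1(x)\,\textup{d}x\,\textup{d}t \leq P_1(0)\!\int_\delta^\tau\!\!\int_{B_4(0)} |u|\,\textup{d}x\,\textup{d}t + C\!\int_\delta^\tau\!\!\int_{\mathbb{R}^d} |u|\,|\mathcal{L}_{\Delta,K}\psi|\,\textup{d}x\,\textup{d}t,
\end{equation*}
where the first term is finite by local integrability and the second by~\eqref{v3.9}.

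Part~(ii) is then deduced from~(i) exactly as in Proposition~\ref{prop1}: given $0<\varepsilon<T'<T$, one introduces a smooth temporal cutoff $\varphi\colon[0,T]\to\mathbb{R}$ with $\mathcal{X}_{[\varepsilon,T']}\leq\varphi\leq\mathcal{X}_{[\varepsilon/2,T'+(T-T')/2]}$, tests~\eqref{a.2} against $\varphi\psi \in C^{\infty}_{\textup{c}}(\mathbb{R}^d\times(0,T))$, and applies the decomposition above on the compact time interval $[\varepsilon,T']$ to conclude that $u \in L^1((\varepsilon,T');L^1_{P_1})$; letting $\varepsilon\to 0^+$ and $T'\to T^-$ gives $u \in \luloc$. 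No further obstacle is anticipated, because once Proposition~\ref{51.prop1} is available the modifications are mechanical and locality of $-\Delta$ intervenes only through the vanishing of $\Delta\psi$ outside $B_2(0)$.
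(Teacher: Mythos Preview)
Your proposal is correct and follows exactly the same approach as the paper: adapt the proof of Proposition~\ref{prop1} verbatim, using Proposition~\ref{51.prop1} to transfer all $P^K_1$-estimates to $P_1$, with the only nontrivial point being the lower bound $|\mathcal{L}_{\Delta,K}\psi(x)|\geq CP_1(x)$ for $|x|\geq4$, which holds because $\Delta\psi$ vanishes outside $B_2(0)$ so that~\eqref{v3.24} applies directly. The paper's own proof is a one-paragraph sketch making precisely these observations.
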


It is now easy to mimic the proof of Proposition~\ref{equivDefvwProp} (that essentially only has to deal with the way the initial datum is taken) to show the equivalence  in the class $\li$ between the two definitions of very weak solution that we are handling.

\begin{proposition}\label{mixequivDefvwProp}
    Let $\mathcal{L}=\mathcal{L}_{\Delta,K}$ with $K$ satisfying~\eqref{A0}--\eqref{A2}. Let $\mu_{0}$ be a Radon measure and $u\in\li$.
    	
    \noindent{\rm (i)} If $u$ satisfies~\eqref{v3.10}, then it is a very weak solution to~\eqref{a.nonlocalheatequation} with initial trace~$\mu_0$.
    	
    \noindent{\rm(ii)} If $u$ is a very weak solution to~\eqref{a.nonlocalheatequation} with initial trace $\mu_{0}$, then it  satisfies~\eqref{eq:space.alternative.definition}--\eqref{v3.10}.
\end{proposition}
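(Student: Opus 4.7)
The plan is to mirror the strategy of Proposition~\ref{equivDefvwProp} step by step, using the technical tools already developed for the mixed operator: the equivalence of integrability characterizations in Proposition~\ref{51.propequivalenceveryweak}, the comparability of $P_t$ at different times (Corollary~\ref{51.cor1}), and the slow change and almost-monotonicity of $P_1$ (Corollary~\ref{51.cor2}). The only genuinely new point compared with the purely nonlocal case is that the action of the local part $-\Delta$ on a test function must also be controlled by $P_1$; this will be the first brick.

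First I would establish the analogue of Lemma~\ref{a.17} for $\mathcal{L}_{\Delta,K}$: for every $\tilde\theta\in C^{2}_{\textup{c}}(\RN\times[\delta,\tau])$ there is $C_{\tilde\theta}>0$ with
\begin{equation*}
|\mathcal{L}_{\Delta,K}\tilde\theta(x,t)|\le |\Delta\tilde\theta(x,t)|+|\mathcal{L}_K\tilde\theta(x,t)|\le C_{\tilde\theta} P_1(x)\quad\text{on }\RN\times[\delta,\tau].
\end{equation*}
The nonlocal summand is controlled by Lemma~\ref{a.17}, while $|\Delta\tilde\theta|$ is bounded with compact support in $x$, so it is dominated by $C_{\tilde\theta}P_1$ since $P_1$ is continuous and strictly positive (use $P_1\asymp P^K_1$ from Proposition~\ref{51.prop1}, together with $P^K_1>0$ everywhere). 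With this estimate in hand, part (i) of the equivalence proceeds as follows: condition \eqref{a.2} is immediate from $u\in\li\subset L^1((0,T);L^1_{P_1})$ together with Proposition~\ref{51.propequivalenceveryweak}, and \eqref{a.3} follows from \eqref{v3.10} by the inclusion $C^\infty_{\textup{c}}(\RN\times(0,T))\subset C^\infty_{\textup{c}}(\RN\times[0,T))$. For the initial trace, I would take a sequence $\{\psi_j\}\subset C^\infty_{\textup{c}}(\RN)$ with $\psi_j\to\psi$ uniformly and $|\psi_j|\le M\mathcal{X}_{B_R(0)}$, combined with the time cutoffs $\varphi^\tau_k$ from \eqref{etaParaCutOff}, plug $\varphi_k^\tau\psi_j$ into \eqref{v3.10}, and let $k\to\infty$ (using that $t\mapsto\int u\psi_j$ is in $L^1((0,T))$), then $\tau\to 0^+$ along Lebesgue points, and finally $j\to\infty$; the bound $|\mathcal{L}_{\Delta,K}\psi_j|\le C_j P_1$ from the first brick makes the argument go through verbatim as in Proposition~\ref{equivDefvwProp}~(i).

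Part (ii) is the converse. The condition \eqref{eq:space.alternative.definition} is immediate from $u\in\li$. For \eqref{v3.10}, let $\tilde\theta\in C^\infty_{\textup{c}}(\RN\times[0,T))$; set $\tilde\varphi_k(t)=\int_{-\infty}^{2^k t}\eta(s)\,\textup{d}s$ and $\tilde\varphi^\tau_k(t)=\tilde\varphi_k(t-\tau)$ as in Proposition~\ref{equivDefvwProp}~(ii), so that $\tilde\varphi^\tau_k\tilde\theta\in C^\infty_{\textup{c}}(\RN\times(0,T))$ for $k$ large, and test it in \eqref{a.3} to obtain
\begin{equation*}
\underbrace{\int_0^T(\tilde\varphi^\tau_k)'(t)\!\!\intr u\tilde\theta\,\textup{d}x\,\textup{d}t}_{\mathrm{I}_k(\tau)}+\underbrace{\int_0^T\tilde\varphi^\tau_k(t)\!\!\intr u(\partial_t\tilde\theta-\mathcal{L}_{\Delta,K}\tilde\theta)\,\textup{d}x\,\textup{d}t}_{\mathrm{II}_k(\tau)}=0.
\end{equation*}
Splitting $\mathrm{I}_k(\tau)=\mathrm{I}_{1,k}(\tau)+\mathrm{I}_{2,k}(\tau)$ exactly as in Proposition~\ref{equivDefvwProp}~(ii) and using that $|\tilde\theta(\cdot,0)|\le CP_1$ (since $\tilde\theta(\cdot,0)$ has compact support and $P_1>0$ is continuous), the summability-kernel argument gives $\operatornamewithlimits{ess\,lim}_{\tau\to 0^+}\lim_k \mathrm{I}_{1,k}(\tau)=\int u(\cdot,\tau)\tilde\theta(\cdot,0)\to\int\tilde\theta(\cdot,0)\,\textup{d}\mu_0$ because $\mu_0$ is the trace; the remainder $\mathrm{I}_{2,k}$ vanishes in the double limit by the same integration-by-parts of the time weight used before. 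For $\mathrm{II}_k(\tau)$, the integrand is bounded in absolute value by $|u|\bigl(|\partial_t\tilde\theta|+C_{\tilde\theta}P_1\bigr)$, which is in $L^1(\RN\times(0,T))$ since $u\in L^1((0,T);L^1_{P_1})$, so the DCT yields $\operatornamewithlimits{ess\,lim}_{\tau\to 0^+}\lim_k \mathrm{II}_k(\tau)=\int_0^T\int u(\partial_t\tilde\theta-\mathcal{L}_{\Delta,K}\tilde\theta)$. Putting the two limits together recovers \eqref{v3.10}.

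The main (and really the only) obstacle is the control $|\mathcal{L}_{\Delta,K}\tilde\theta|\le C_{\tilde\theta}P_1$, whose local and nonlocal parts must be handled separately; once this is in place, every subsequent manipulation is a transcription of the argument in Proposition~\ref{equivDefvwProp}, with the roles of $P^K_1$ and $K$ replaced by $P_1$ and the mixed operator, and the necessary comparability, slow change, and almost-decreasing properties of $P_1$ supplied by Proposition~\ref{51.prop1} and Corollaries~\ref{51.cor1}--\ref{51.cor2}.
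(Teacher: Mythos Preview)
Your proposal is correct and follows exactly the approach the paper intends: the paper simply states that one should ``mimic the proof of Proposition~\ref{equivDefvwProp}'', and that is precisely what you have done, with all the right ingredients (Proposition~\ref{51.propequivalenceveryweak}, Proposition~\ref{51.prop1}, Corollaries~\ref{51.cor1}--\ref{51.cor2}) in their proper places. You have even made explicit the one new detail the paper leaves implicit, namely that $|\Delta\tilde\theta|\le C_{\tilde\theta}P_1$ because $\Delta\tilde\theta$ is bounded with compact support and $P_1$ is continuous and strictly positive.
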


\subsubsection{Uniqueness for very weak solutions}

The proof of uniqueness for very weak solutions is similar to that of Theorem~\ref{UniquenessSoria}, taking care of an extra term in the bilinear form associated to the operator.

\begin{theorem}\label{mixUniquenessSoria}
    Let $\mathcal{L}=\mathcal{L}_{\Delta,K}$ with $K$ satisfying~\eqref{A0}--\eqref{A2}. Let $\mu_{0}$ be a Radon measure. Let $u_1$ and $u_2$ satisfy~\eqref{eq:space.alternative.definition}--\eqref{v3.10}. Then, $u_1=u_2$ a.e.\,in $\mathbb{R}^d\times(0,T)$.
\end{theorem}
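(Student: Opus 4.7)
The plan is to adapt the Hilbert duality argument of Theorem~\ref{UniquenessSoria} to the operator $\mathcal{L}_{\Delta,K}$, using as heat kernel the convolution $P=P^K*G$, which by Proposition~\ref{51.prop1} is comparable to $P^K$ and therefore inherits, via Corollary~\ref{51.cor1} and Corollary~\ref{51.cor2}, the slow change, almost decreasingness, semigroup property and time comparability that are the backbone of the estimates in Theorem~\ref{UniquenessSoria}. Setting $u:=u_1-u_2$, the weak identity~\eqref{v3.10} holds with $\mu_0=0$, and it suffices to prove that $\int_0^T\int_{\Rn} u\tilde\theta\dx\dt=0$ for every $\tilde\theta\in C^\infty_{\textup{c}}(\Rn\times[0,T))$; fix such a $\tilde\theta$ with $\operatorname{supp}\tilde\theta\subset B_{R_0}(0)\times[0,t_0]$. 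I would define $\varphi$ by the Duhamel formula~\eqref{eq:solution.backward.problem} with the kernel $P$, obtaining a smooth (but non compactly supported) solution of $\partial_t\varphi-\mathcal{L}_{\Delta,K}\varphi=\tilde\theta$ that vanishes for $t\ge t_0$. The bounds
\begin{equation*}
|\varphi(x,t)|\le C_{\tilde\theta,T}P_1(x),\qquad |\nabla\varphi(x,t)|\le C_{\tilde\theta,T}P_1(x),\qquad (x,t)\in\Rn\times[0,t_0],
\end{equation*}
follow exactly as in Theorem~\ref{UniquenessSoria}, moving the space derivative onto $\tilde\theta$ inside the Duhamel integral and invoking~\eqref{a.5} together with Lemma~\ref{v3.19}, both of which hold verbatim for $P$.

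Testing~\eqref{v3.10} against $\psi_R\varphi\in C^\infty_{\textup{c}}(\Rn\times[0,T))$, where $\{\psi_R\}_{R\ge R_0}$ is the cutoff family~\eqref{cutoffspace}, I would use the Leibniz rule for $\mathcal{L}_{\Delta,K}$: combining the classical product rule for $-\Delta$ with the nonlocal formula~\eqref{leibniz} one obtains, for $R\ge R_0$,
\begin{equation*}
\psi_R\partial_t\varphi-\mathcal{L}_{\Delta,K}(\psi_R\varphi)=\tilde\theta-\varphi\,\mathcal{L}_{\Delta,K}\psi_R+2\nabla\psi_R\cdot\nabla\varphi+B(\psi_R,\varphi),
\end{equation*}
with $B$ as in~\eqref{51.2}. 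Substitution into the weak formulation yields the identity
\begin{equation*}
0=\int_0^{t_0}\!\!\int_{\Rn} u\tilde\theta\dx\dt-\int_0^{t_0}\!\!\int_{\Rn} u\varphi\,\mathcal{L}_{\Delta,K}\psi_R\dx\dt+2\int_0^{t_0}\!\!\int_{\Rn} u\,\nabla\psi_R\cdot\nabla\varphi\dx\dt+\int_0^{t_0}\!\!\int_{\Rn} u\,B(\psi_R,\varphi)\dx\dt,
\end{equation*}
and the desired conclusion reduces to showing that each of the last three integrals vanishes as $R\to\infty$.

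The two contributions coming from the nonlocal part of $\mathcal{L}_{\Delta,K}\psi_R$ and from $B(\psi_R,\varphi)$ are handled exactly as in the proof of Theorem~\ref{UniquenessSoria}, since the estimates used there rely only on the tail comparability~\eqref{Atotal}, the semigroup property~\eqref{eq:semigroup.property} and the time comparability~\eqref{Atotal2}, all of which transfer from $P^K$ to $P$ by Proposition~\ref{51.prop1} and Corollary~\ref{51.cor1}, combined with $u\in L^1_{\textup{loc}}([0,T);L^1_{P_1})$ and the DCT. The local piece $-\varphi\Delta\psi_R$ is easier still, because $|\Delta\psi_R|\le R^{-2}\|\Delta\psi\|_\infty$ is supported in the annulus $\{R\le|x|\le 2R\}$ and $|\varphi|\le C_{\tilde\theta,T}P_1$. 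The only genuinely new term is the first-order cross term produced by the local Leibniz rule; however, $|\nabla\psi_R|\le R^{-1}\|\nabla\psi\|_\infty$ and the gradient bound on $\varphi$ give
\begin{equation*}
|\nabla\psi_R(x)\cdot\nabla\varphi(x,t)|\le \frac{C_{\tilde\theta,T}}{R}\,P_1(x),\qquad (x,t)\in\Rn\times[0,t_0],
\end{equation*}
so that the DCT again delivers the required vanishing. Accordingly, the main (rather mild) obstacle is precisely the appearance of this new cross term, which forces one to carry along a gradient estimate for $\varphi$; the rest is bookkeeping, verifying that the kernel estimates of Section~\ref{section2} remain valid for the composite heat kernel $P=P^K*G$, which is exactly the content of Proposition~\ref{51.prop1} and its corollaries.
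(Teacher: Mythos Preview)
Your proposal is correct and follows essentially the same approach as the paper: both adapt the duality argument of Theorem~\ref{UniquenessSoria}, use the Duhamel solution $\varphi$ of the backward problem, and identify the extra first-order cross term $2\nabla\psi_R\cdot\nabla\varphi$ coming from the local Leibniz rule as the only genuinely new contribution, dispatched via $|\nabla\psi_R|\le R^{-1}\|\nabla\psi\|_\infty$, the bound $|\nabla\varphi|\le C P_1$, and the DCT. The paper packages this slightly differently by writing $B_{\Delta,K}(u,v)=B(u,v)+2\langle\nabla u,\nabla v\rangle$, but the substance of the argument and the estimates used are the same.
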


\begin{proof}
The main difference with respect to the proof of Theorem \ref{UniquenessSoria} is that the bilinear form has an extra term with respect to the one appearing there, so that now it is given by
\begin{equation*}\label{eq:bilinear.local.nonlocal}
    B_{\Delta,K}(u,v)=B(u,v)+2\langle \nabla u,\nabla v\rangle,
\end{equation*}
with $B$ as in~\eqref{51.2}. Hence, using $\psi_{R}\varphi\in C_{\textup{c}}^\infty(\Rn\times[0,T))$ as test function in~\eqref{v3.15}, where $\{\psi_{R}\}_{R\geq R_{0}}$ is the family of cutoff functions given by~\eqref{cutoffspace} and $\varphi$ is the solution to the backward problem $\partial_t\varphi-\mathcal{L}_{\Delta,K}\varphi=\tilde\theta$ in $\mathbb{R}^d\times(0,T)$ given by~\eqref{eq:solution.backward.problem}, we arrive at
\begin{align*}
	0&=\int_{0}^{T}\intr u(x,t)\tilde{\theta}(x,t)\dx\dt-\int_{0}^{T}\intr u\xt\varphi\xt\mathcal{L}_{\Delta,K}\psi_{R}\xt\dx\dt\\
	\notag&\quad+\int_{0}^{T}\intr u\xt B(\phi_{R},\varphi)\xt\dt+2\int_{0}^{T}\intr u\xt \langle\nabla\phi_{R},\nabla\varphi\rangle\xt\dt.
\end{align*}
If we check that the three last terms on the right-hand side go to 0 as $R\to\infty$ the proof will be complete. The two first of them are handled essentially as in the proof of Theorem~\ref{UniquenessSoria}, using that $P_{1}$ is comparable to the kernel $K$ far from the origin, since $P_{1}$ is comparable to $P^{K}_{1}$, and observing also that $\|\Delta\psi_{R}\|_{\infty}=R^{-2}\|\Delta\psi\|_{\infty}$.

As for the last term, it is also handled easily, since $|\nabla \varphi|\leq CP_{1}$ (see once more the proof of Theorem~\ref{UniquenessSoria}), $\|\nabla\psi_{R}\|_{\infty}=R^{-1}\|\nabla \psi\|_{\infty}$ and  $u\in L^{1}((0,\too);L^{1}_{P_{1}})$. Therefore, we may apply the DCT to arrive at
\begin{equation*}\label{z28}	
    \lim\limits_{R\to \infty}\int_{0}^{T}\intr |u(x,t)||\langle \nabla\phi_{R}(x),\nabla\varphi\xt\rangle|\dx\dt=0.\qedhere
\end{equation*}
\end{proof}
As a consequence, since by Proposition~\ref{mixequivDefvwProp} a very weak solution $u\in\li$ with initial datum a Radon measure $\mu_{0}$ satisfies~\eqref{v3.10}, we obtain the following corollary.

\begin{corollary}[Uniqueness for  very weak solutions within $\li$]\label{mixUniquenessAcotadas}
    Let $\mathcal{L}=\mathcal{L}_{\Delta,K}$ with $K$ satisfying~\eqref{A0}--\eqref{A2}. There is  at most one very weak solution to equation \eqref{a.nonlocalheatequation} belonging to $\li$ with a given initial trace.
\end{corollary}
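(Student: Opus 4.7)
The plan is to deduce this uniqueness result as a direct corollary by combining the two immediately preceding results: the uniqueness theorem for the alternative formulation, Theorem~\ref{mixUniquenessSoria}, with the equivalence of the two definitions of very weak solution in the class $\li$, Proposition~\ref{mixequivDefvwProp}. This mirrors the analogous deduction done in Section~\ref{section3} for purely nonlocal operators (Corollary~\ref{UniquenessAcotadas} from Theorem~\ref{UniquenessSoria} and Proposition~\ref{equivDefvwProp}).

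More concretely, I would start by letting $u_1, u_2 \in \li$ be two very weak solutions of~\eqref{a.nonlocalheatequation} (in the sense of Definition~\ref{defWS}) sharing the same initial trace $\mu_0$, where $\mu_0$ is a Radon measure. Applying Proposition~\ref{mixequivDefvwProp} (ii) to each $u_i$ yields that both $u_1$ and $u_2$ belong to $L^1_{\textup{loc}}([0,T);\lp)$ and satisfy the alternative weak formulation~\eqref{v3.10} with the same right-hand side determined by $\mu_0$, i.e., both satisfy~\eqref{eq:space.alternative.definition}--\eqref{v3.10}.

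At this point the proof is finished by invoking Theorem~\ref{mixUniquenessSoria}: since uniqueness holds for solutions of~\eqref{eq:space.alternative.definition}--\eqref{v3.10} with a given Radon measure $\mu_0$ on the right-hand side, we conclude that $u_1 = u_2$ almost everywhere in $\mathbb{R}^d\times(0,T)$.

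There is essentially no obstacle to overcome here, since all the work has already been done in the two cited results: Theorem~\ref{mixUniquenessSoria} handled the duality argument with the backward problem (including the two new terms arising from $B_{\Delta,K}$ with respect to $B$), and Proposition~\ref{mixequivDefvwProp} translated between the two definitions of very weak solution using the integrability $u\in\li$ together with the heat-kernel comparabilities from Proposition~\ref{51.prop1} and Corollary~\ref{51.cor1}. The sole purpose of the corollary is to package these two statements into the more usable assertion of uniqueness within $\li$ in the sense of Definition~\ref{defWS} together with Definition~\ref{deftrace}.
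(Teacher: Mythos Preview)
Your proposal is correct and follows precisely the paper's approach: the paper states the corollary as an immediate consequence of Proposition~\ref{mixequivDefvwProp} (ii) (which shows that a very weak solution in $\li$ with trace $\mu_0$ satisfies~\eqref{eq:space.alternative.definition}--\eqref{v3.10}) combined with Theorem~\ref{mixUniquenessSoria}.
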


Now, we move on to the class of nonnegative very weak solutions, following the strategy of Section~\ref{section3}.

Let $\phi\in C^{2}(\Rn)$ be the function given by Lemma~\ref{lemmaGettingPhi}. By Proposition~\ref{51.prop1}, $P_{1}$ and $P^{K}_{1}$ are comparable, and by~\eqref{a.46}, $\tilde P_{1}$ and $\phi$ are comparable. Hence,~\eqref{a.46} holds: $P_1$ and $\phi$ are comparable. Besides, by~\eqref{v3.26} and the positivity and continuity of $\phi>0$, there is some $C>0$ such that
\begin{equation*}
    |\Delta\phi| \le C\phi,\qquad |\langle\nabla\psi,\nabla\phi\rangle|\le \|\nabla\psi\|_\infty |\nabla\phi|\le C\|\nabla\psi\|_\infty\phi \quad \text{for all }\psi\in C^{\infty}_{\textup{c}}(\Rn).
\end{equation*}
Combining this with~\eqref{Lphi} (applied to $\mathcal{L}_K$), we arrive at~\eqref{Lphi} with $\mathcal{L}=\mathcal{L}_{\Delta,K}$ and $B=B_{\Delta,K}$. Therefore, arguing as in the proof of Lemma~\ref{lemmatrace}, using also Proposition~\ref{51.propequivalenceveryweak}, we have the desired smoothing effect.

\begin{lemma}\label{lemmatrace.nu}
    Let $\mathcal{L}=\mathcal{L}_{\Delta,K}$ with $K$ satisfying~\eqref{A0}--\eqref{A3}.

    \noindent\textup{(i)} If $u$ is a nonnegative very weak solution to~\eqref{a.nonlocalheatequation}, there is $c>0$ such that~\eqref{lematecnicodesigualdad} holds.

    \noindent\textup{(ii)} If $u$ is a nonnegative very weak solution to~\eqref{a.nonlocalheatequation}, then $u\in\li$.
\end{lemma}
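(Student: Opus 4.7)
The plan is to adapt the proof of Lemma~\ref{lemmatrace} almost verbatim, with the main extra work being to verify that the same function $\phi$ constructed in Lemma~\ref{lemmaGettingPhi} (for $K$ alone) also satisfies the key bounds with respect to the combined operator $\mathcal{L}_{\Delta,K}$ and its bilinear form
\begin{equation*}
    B_{\Delta,K}(u,v) = B_K(u,v) + 2\langle \nabla u,\nabla v\rangle.
\end{equation*}
Once that is done, the Gronwall-type scheme of Lemma~\ref{lemmatrace} can be rerun unchanged.

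First I would take $\phi$ from Lemma~\ref{lemmaGettingPhi} and check the three required properties. The comparability $c^{-1}P_1\le\phi\le cP_1$ follows by combining $c^{-1}P_1^K\le\phi\le cP_1^K$ (i.e.~\eqref{a.46} for $\mathcal{L}_K$) with Proposition~\ref{51.prop1}. For the upper bound $|\mathcal{L}_{\Delta,K}\phi|\le c\phi$, write $\mathcal{L}_{\Delta,K}\phi = -\Delta\phi+\mathcal{L}_K\phi$; the nonlocal piece is controlled by~\eqref{Lphi}, and the local piece by $|\Delta\phi|\le C\phi$, which follows from~\eqref{v3.26} for $|x|\ge 3$ and from the continuity and strict positivity of $\phi$ on $\overline{B_3(0)}$. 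For the bilinear form,
\begin{equation*}
    |B_{\Delta,K}(\psi,\phi)|\le |B_K(\psi,\phi)| + 2\|\nabla\psi\|_\infty|\nabla\phi|,
\end{equation*}
and the extra gradient term is bounded by $C\|\nabla\psi\|_\infty\phi$ using $|\nabla\phi|\le C\phi$ from~\eqref{v3.26} together with continuity/positivity on $\overline{B_3(0)}$.

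Next, with $\phi$ satisfying~\eqref{a.46}--\eqref{Lphi} for $\mathcal{L}_{\Delta,K}$ in hand, I would follow verbatim the approximation argument of Lemma~\ref{lemmatrace}: plug $\varphi^{\tau_0,\tau_1}_k(t)\psi_n(x)\phi(x)\in C_{\textup{c}}^\infty(\mathbb{R}^d\times(0,T))$ as test function into the very-weak formulation for $u$, where $\psi_n$ is the spatial cutoff from~\eqref{cutoffspace} and $\varphi^{\tau_0,\tau_1}_k$ is the smooth time approximation of $\mathcal{X}_{[\tau_0,\tau_1]}$ built from the summability kernel $\{\eta_k\}$. The nonlocal Leibniz formula together with $|\mathcal{L}_K\psi_n|\le C$ uniformly in $n$ (from~\eqref{z16}) gives $|\mathcal{L}_{\Delta,K}(\psi_n\phi)|\le C_2\phi$ uniformly in $n$; note that the additional $-\Delta(\psi_n\phi) = -\psi_n\Delta\phi - 2\langle\nabla\psi_n,\nabla\phi\rangle - \phi\Delta\psi_n$ is also bounded by a constant multiple of $\phi$, uniformly in $n$, thanks to the bounds $|\Delta\phi|,|\nabla\phi|\le C\phi$ and $\|\nabla\psi_n\|_\infty,\|\Delta\psi_n\|_\infty\le C$. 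Letting $k\to\infty$ at common Lebesgue points of the truncated integrals $g_n(t)=\int u(x,t)\psi_n(x)\phi(x)\,\textup{d}x$ and then sending $n\to\infty$ by monotone convergence produces the integral inequality
\begin{equation*}
    |g_\infty(\tau_1)-g_\infty(\tau_0)|\le C_2\int_{\tau_0}^{\tau_1} g_\infty(t)\,\textup{d}t\quad\text{for a.e.\ }0<\tau_0<\tau_1<T,
\end{equation*}
where $g_\infty(t)=\int u(x,t)\phi(x)\,\textup{d}x$. The induction-based Gronwall argument of Lemma~\ref{lemmatrace} then yields $e^{-C_2|t-\tau|}g_\infty(\tau)\le g_\infty(t)\le e^{C_2|t-\tau|}g_\infty(\tau)$ a.e., and comparability of $\phi$ with $P_1$ turns this into~\eqref{lematecnicodesigualdad}, proving (i).

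For (ii), since $u$ is a very weak solution, Proposition~\ref{51.propequivalenceveryweak}~(ii) gives $u\in L^1_{\textup{loc}}((0,T);L^1_{P_1})$, so $\|u(\cdot,\tau)\|_{L^1_{P_1}}<\infty$ for some $\tau\in(0,T)$; the second inequality in~\eqref{lematecnicodesigualdad} then propagates this to essential boundedness on $(0,T)$. I do not foresee a real obstacle: the construction of $\phi$ was tuned exactly to the tail of $K$, and the new local ingredients $\Delta\phi$ and $\langle\nabla\psi,\nabla\phi\rangle$ are benign because~\eqref{v3.26} already supplies pointwise control of $\nabla\phi$ and $D^2\phi$ by $\phi$ itself. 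The only mildly delicate point is ensuring that all constants in the test function computation remain uniform in the spatial cutoff parameter $n$, which is handled exactly as in the proof of Lemma~\ref{lemmatrace}.
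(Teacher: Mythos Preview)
Your proposal is correct and follows essentially the same route as the paper: take the $\phi$ from Lemma~\ref{lemmaGettingPhi}, upgrade~\eqref{a.46} to the new heat kernel via Proposition~\ref{51.prop1}, handle the extra local terms $|\Delta\phi|$ and $|\langle\nabla\psi,\nabla\phi\rangle|$ using~\eqref{v3.26} together with the continuity and positivity of $\phi$, and then rerun the Gronwall scheme of Lemma~\ref{lemmatrace}, invoking Proposition~\ref{51.propequivalenceveryweak} for part~(ii). Your write-up is in fact slightly more explicit than the paper's sketch about the uniform-in-$n$ control of $\mathcal{L}_{\Delta,K}(\psi_n\phi)$, but the argument is the same.
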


Imitating the proof of Proposition~\ref{prop:regularity.in.time} we get some regularity in time for general operators.

\begin{proposition}
    Let $\mathcal{L}=\mathcal{L}_{\Delta,K}$ with $K$ satisfying~\eqref{A0}--\eqref{A3}, and let $u$ be a nonnegative very weak solution to~\eqref{a.nonlocalheatequation}.

    \noindent\textup{(i)} Let $\phi\in C^2(\mathbb{R}^d)$ satisfying~\eqref{a.46}--\eqref{Lphi}. There is a constant $C>0$ such that~\eqref{eq:continuity.phi} holds.

    \noindent\textup{(ii)} Given $\psi\in C^2_{\textup{c}}(\mathbb{R}^d)$, there is a constant $C_\psi>0$ such that~\eqref{eq:continuity.psi} holds.
\end{proposition}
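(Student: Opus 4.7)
The plan is to transfer the proof of Proposition~\ref{prop:regularity.in.time} to the present setting, making minor adjustments to account for the extra Laplacian term in $\mathcal{L}_{\Delta,K}$. The key observation is that Lemma~\ref{lemmatrace.nu} is proved by the same approximation scheme as Lemma~\ref{lemmatrace}: one tests the very weak formulation of \eqref{a.nonlocalheatequation} against $\varphi_{k}^{\tau_{0},\tau_{1}}\psi_{n}\phi$ and lets $k,n\to\infty$. Along the way one obtains the integral inequality
\begin{equation*}
    \Big|\intr u(x,\tau_1)\phi(x)\dx-\intr u(x,\tau_2)\phi(x)\dx\Big|\leq C\int_{\tau_1\wedge\tau_2}^{\tau_1\vee\tau_2}\intr u(x,t)\phi(x)\dx\dt
\end{equation*}
for almost every $\tau_1,\tau_2\in(0,T)$, precisely as in~\eqref{eq:Gronwall.absolute.value}. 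For (i), since $\phi\asymp P_1$ by~\eqref{a.46}, the right-hand side is bounded by $C'\int_{\tau_1\wedge\tau_2}^{\tau_1\vee\tau_2}\|u(\cdot,t)\|_{L^{1}_{P_1}}\,{\rm d}t$. Lemma~\ref{lemmatrace.nu} (ii) then gives $u\in\li$, so this integral is controlled by $C\|u\|_{\li}|\tau_1-\tau_2|$, yielding~\eqref{eq:continuity.phi}.

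For (ii), the idea is to run the same argument with $\psi\in C^2_{\textup{c}}(\Rn)$ in place of $\phi$. The only item that needs to be checked is the analogue of Lemma~\ref{a.17} for $\mathcal{L}_{\Delta,K}$: namely that there exists $C_\psi>0$ with $|\mathcal{L}_{\Delta,K}\psi|\le C_\psi P_1$. For the nonlocal part this is exactly Lemma~\ref{a.17}, giving $|\mathcal{L}_K\psi|\le C_\psi P^{K}_1\le C_\psi P_1$ by Proposition~\ref{51.prop1}. For the local part, since $\psi\in C^2_{\textup{c}}(\Rn)$ is supported in some $B_R(0)$ and $P_1$ is continuous and strictly positive, $|\Delta\psi|\le \|\Delta\psi\|_\infty\mathcal{X}_{B_R(0)}\le C_\psi P_1$. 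With this bound in hand, one repeats the approximation procedure of Lemma~\ref{lemmatrace.nu}, replacing $\phi$ by $\psi$, and arrives at
\begin{equation*}
    \Big|\intr u(x,\tau_1)\psi(x)\dx-\intr u(x,\tau_2)\psi(x)\dx\Big|\leq C_\psi\int_{\tau_1\wedge\tau_2}^{\tau_1\vee\tau_2}\intr u(x,t)P_1(x)\dx\dt,
\end{equation*}
whence~\eqref{eq:continuity.psi} follows again from Lemma~\ref{lemmatrace.nu} (ii).

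The main obstacle is mainly notational: one must verify that none of the steps in the approximation argument of Lemma~\ref{lemmatrace} relied specifically on the purely nonlocal structure of $\mathcal{L}_K$ and break in the presence of $-\Delta$. In particular, the cut-off $\psi_n$ enters through $\mathcal{L}_{\Delta,K}(\psi_n\phi)$, which now picks up an additional contribution from $-\Delta(\psi_n\phi)=-\psi_n\Delta\phi-2\langle\nabla\psi_n,\nabla\phi\rangle-\phi\Delta\psi_n$. Since $|\Delta\phi|\le C\phi$, $|\nabla\phi|\le C\phi$ (by~\eqref{v3.26} and the positivity/continuity of $\phi$ near zero), and $\|\Delta\psi_n\|_\infty$, $\|\nabla\psi_n\|_\infty$ are uniformly bounded in $n$, one obtains $|\mathcal{L}_{\Delta,K}(\psi_n\phi)|\le C\phi$ uniformly in~$n$, which is precisely what the rest of the proof needs. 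Once this uniform bound is established, the Gronwall iteration and the passages to the limit go through verbatim.
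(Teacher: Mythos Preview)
Your proposal is correct and follows essentially the same approach as the paper, which simply says ``Imitating the proof of Proposition~\ref{prop:regularity.in.time} we get some regularity in time for general operators'' without further detail. You have correctly identified and spelled out the only new ingredients: the bound $|\mathcal{L}_{\Delta,K}\psi|\le C_\psi P_1$ for $\psi\in C^2_{\textup{c}}(\Rn)$ (handling the local part via compact support and positivity of $P_1$), and the uniform control of $|\mathcal{L}_{\Delta,K}(\psi_n\phi)|$ via the extra terms $|\Delta\phi|\le C\phi$ and $|\nabla\phi|\le C\phi$, exactly as the paper records just before Lemma~\ref{lemmatrace.nu}.
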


The desired uniqueness result for nonnegative very weak solutions now follows from the combination of Lemma~\ref{lemmatrace.nu} (ii) and Corollary~\ref{mixUniquenessAcotadas}.

\begin{corollary}[Uniqueness for nonnegative very weak solutions]\label{mixUniquenessNonnegative}
    Let $\mathcal{L}=\mathcal{L}_{\Delta,K}$ with $K$ satisfying~\eqref{A0}--\eqref{A3}. There is at most one nonnegative very weak solution of~\eqref{a.nonlocalheatequation} with a given initial trace.
\end{corollary}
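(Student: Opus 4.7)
The proof will be a short direct combination of the two results just proved, mirroring the analogous step for the purely nonlocal operator $\mathcal{L}_K$ in Corollary~\ref{UniquenessNonnegative}.

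The plan is as follows. Let $\mu_0$ be a Radon measure, and suppose $u_1,u_2$ are two nonnegative very weak solutions to~\eqref{a.nonlocalheatequation} having $\mu_0$ as initial trace. Since $K$ satisfies~\eqref{A0}--\eqref{A3}, I apply Lemma~\ref{lemmatrace.nu}(ii) to each $u_i$, $i=1,2$, separately. That lemma tells me that any nonnegative very weak solution to~\eqref{a.nonlocalheatequation} with $\mathcal{L}=\mathcal{L}_{\Delta,K}$ automatically belongs to the class $\li$. Consequently, both $u_1$ and $u_2$ lie in $\li$.

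Once the membership in $\li$ is secured, the rest is immediate: I invoke Corollary~\ref{mixUniquenessAcotadas}, which guarantees uniqueness of very weak solutions to~\eqref{a.nonlocalheatequation} within the class $\li$ for a prescribed initial trace. Since $u_1$ and $u_2$ share the same initial trace $\mu_0$ and both belong to $\li$, I conclude $u_1=u_2$ a.e.\ in $\mathbb{R}^d\times(0,T)$, which is the claim.

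There is essentially no obstacle here, as the two ingredients have been carefully set up precisely so that the corollary follows by concatenation; the only subtlety worth noting is that Lemma~\ref{lemmatrace.nu}(ii) genuinely requires assumption~\eqref{A3} (through the existence of the auxiliary function $\phi$ of Lemma~\ref{lemmaGettingPhi}, whose comparability and $\mathcal{L}_{\Delta,K}$-related estimates were transferred from the purely nonlocal case by exploiting the comparability $P_1\asymp P_1^K$ from Proposition~\ref{51.prop1}), whereas Corollary~\ref{mixUniquenessAcotadas} needs only~\eqref{A0}--\eqref{A2}; hence~\eqref{A0}--\eqref{A3} is the correct joint hypothesis under which to state the result.
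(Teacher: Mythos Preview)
Your proof is correct and follows exactly the approach the paper takes: it states the corollary as an immediate combination of Lemma~\ref{lemmatrace.nu}(ii) and Corollary~\ref{mixUniquenessAcotadas}, just as you do. Your additional remark on why \eqref{A3} is needed (via the auxiliary $\phi$) is accurate and in line with the paper's earlier discussion.
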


\subsubsection{Existence of initial trace}

The proof of Theorem~\ref{theoremtraces} giving the existence of an initial trace satisfying the integral growth condition~\eqref{eq:growth.initial.trace} for nonnegative very weak solutions is based on the estimates~\eqref{lematecnicodesigualdad}, and may therefore be easily adapted to cope with operators~$\mathcal{L}=\mathcal{L}_{\Delta,K}$.

\begin{theorem}[Existence of  initial trace integrable against $P_{1}$] \label{51.theoremtraces}
    Let $\mathcal{L}=\mathcal{L}_{\Delta,K}$ with $K$ satisfying~\eqref{A0}--\eqref{A3}. Let $u$ be a nonnegative very weak solution of~\eqref{a.nonlocalheatequation} in $\Rn\times(0,T)$. There is a nonnegative Radon measure $\mu_{0}$ such that~\eqref{a.1} holds; that is, $\mu_0$ is the initial trace of $u$.  Moreover, $\mu_0$ satisfies the integral growth condition~\eqref{eq:growth.initial.trace}.
\end{theorem}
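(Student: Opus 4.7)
The plan is to follow, line by line, the argument of Theorem~\ref{theoremtraces} in the purely nonlocal case, since the mixed operator $\mathcal{L}_{\Delta,K}$ now enjoys the same structural properties: Lemma~\ref{lemmatrace.nu} provides the Gronwall-type smoothing estimate~\eqref{lematecnicodesigualdad}, and the analog of Proposition~\ref{prop:regularity.in.time} for the mixed operator supplies the time-continuity of $t\mapsto\intr\psi(x)u(x,t)\dx$ needed to show that the trace is unambiguously defined.

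The argument splits into three blocks. First, I would produce a \emph{uniform} bound $\|u(\cdot,t)\|_{\lp}\le C$ for a.e.~$t\in(0,T)$. Since $u$ is a very weak solution, Proposition~\ref{51.propequivalenceveryweak}(ii) gives $u\in\luloc$, so there is some $\tilde{T}\in(0,T)$ with $\|u(\cdot,\tilde{T})\|_{\lp}<\infty$. Applying Lemma~\ref{lemmatrace.nu}(i) with $\tau=\tilde{T}$ yields
\begin{equation*}
    \|u(\cdot,t)\|_{\lp}\le e^{cT}\|u(\cdot,\tilde{T})\|_{\lp}=:M\quad\text{for all }t\in(0,\tilde{T})\setminus E,
\end{equation*}
where $E\subset(0,T)$ satisfies $|E|=0$. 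Because $P_1\in C(\Rn)$ and $P_1>0$, for every $R>0$ we have $c_R:=(\min_{\overline{B_R(0)}}P_1)^{-1}<\infty$, whence
\begin{equation*}
    \sup_{t\in(0,\tilde T)\setminus E}\int_{B_R(0)}u(x,t)\dx\le c_R M =: C_R.
\end{equation*}

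Second, I would extract the candidate trace and prove it is independent of the subsequence. The uniform bound on balls gives weak$^*$ precompactness in the sense of Radon measures (see for instance~\cite[Theorem~4.4]{Simon}): there exist $t_n\to 0^+$ with $t_n\in(0,\tilde T)\setminus E$ and a nonnegative Radon measure $\mu_0$, with $\mu_0(B_R(0))\le C_R$ for all $R>0$, such that
\begin{equation*}
    \lim_{n\to\infty}\intr\psi(x)u(x,t_n)\dx=\intr\psi\dmu_0\quad\text{for all }\psi\in C_{\textup{c}}(\Rn).
\end{equation*}
If another sequence $\tilde t_n\to 0^+$ produces some limit $\tilde\mu_0$, then the Lipschitz estimate~\eqref{eq:continuity.psi} (available in the mixed setting by the proposition stated just after Lemma~\ref{lemmatrace.nu}) applied to $\psi\in C^\infty_{\textup{c}}(\Rn)$ gives
\begin{equation*}
    \Big|\intr\psi u(\cdot,t_n)-\intr\psi u(\cdot,\tilde t_k)\Big|\le C_\psi\|u\|_{\li}|t_n-\tilde t_k|.
\end{equation*}
Letting $n,k\to\infty$ yields $\intr\psi\dmu_0=\intr\psi\,\textup{d}\tilde\mu_0$ for every $\psi\in C^\infty_{\textup{c}}(\Rn)$, and density of $C^\infty_{\textup{c}}$ in $C_{\textup{c}}$ (uniformly on compact sets) promotes this to equality as Radon measures. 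This rules out subsequence dependence and gives~\eqref{a.1}.

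Third, I would verify the growth condition~\eqref{eq:growth.initial.trace}. Using the cutoff family $\{\psi_R\}_{R\ge R_0}$ from~\eqref{cutoffspace}, $\psi_R P_1\in C_{\textup{c}}(\Rn)$, so applying~\eqref{a.1} and the uniform bound from the first step yields
\begin{equation*}
    \intr\psi_R P_1\dmu_0=\lim_{n\to\infty}\intr\psi_R(x) P_1(x) u(x,t_n)\dx\le\limsup_{n\to\infty}\|u(\cdot,t_n)\|_{\lp}\le M.
\end{equation*}
Since $\psi_R\nearrow 1$ monotonically and $P_1\dmu_0$ is nonnegative, the MCT gives $\intr P_1\dmu_0\le M<\infty$.

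The only real substance beyond bookkeeping is step one, which rests entirely on Lemma~\ref{lemmatrace.nu}; and the independence-of-subsequence argument in step two, which rests on the mixed-operator analog of Proposition~\ref{prop:regularity.in.time}. Both tools have been established earlier in Section~\ref{section5}, so no new obstacle arises: the proof is a direct transcription of that of Theorem~\ref{theoremtraces}, with $\mathcal{L}_K$ replaced by $\mathcal{L}_{\Delta,K}$ throughout.
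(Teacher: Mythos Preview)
Your proposal is correct and follows essentially the same approach as the paper, which does not give a separate proof but simply remarks that the argument of Theorem~\ref{theoremtraces} ``is based on the estimates~\eqref{lematecnicodesigualdad}, and may therefore be easily adapted to cope with operators~$\mathcal{L}=\mathcal{L}_{\Delta,K}$.'' You have carried out precisely this adaptation, correctly replacing Remark~\ref{remark3} by Proposition~\ref{51.propequivalenceveryweak}(ii), Lemma~\ref{lemmatrace} by Lemma~\ref{lemmatrace.nu}, and Proposition~\ref{prop:regularity.in.time} by its mixed-operator analog.
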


\subsubsection{Existence of very weak solutions}

The existence of very weak solutions, for suitable initial data $\mu_0$, is obtained by imitating the proof of Theorem~\ref{prop2}. They will be given by $U(\cdot,t)=P_t*\mu_0$. The news with respect to that theorem are that now $U\in C^{\infty}(\Rn\times (0,\infty))$. Indeed, because of~\eqref{51.9}, $U(\cdot,t)=G_{t}*(P^K_t*\mu_0)$ for all $t>0$, whence, since $(P^K_t*\mu_0)$ is continuous, the regularity of $U$ coincides with that of $G$.

\begin{theorem}\label{mixprop2}
    Let $\mathcal{L}=\mathcal{L}_{\Delta,K}$ with $K$ satisfying~\eqref{A0}--\eqref{A2}. Let $\mu_{0}$ be a Radon measure with the admissible growth at infinity~\eqref{q4}. Then, for each $T>0$, the function $U$ given in~\eqref{q5} is a very weak solution of \eqref{a.nonlocalheatequation} in $C^{\infty}(\Rn\times(0,T))\cap \li$ \normalcolor with initial trace~$\mu_{0}$.
\end{theorem}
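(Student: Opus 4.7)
The argument follows closely the structure of the proof of Theorem~\ref{prop2}, since the essential tools carry over: by Corollary~\ref{51.cor1} and Corollary~\ref{51.cor2} the heat kernel $P$ of $\mathcal{L}_{\Delta,K}$ satisfies the comparability property~\eqref{Atotal2} and $P_1$ is slowly changing and almost decreasing. The comparison estimate~\eqref{eq:comparison.translations} for $P_1$ is then deduced exactly as in Lemma~\ref{v3.19}: the argument there uses only that $P_1$ is continuous, positive and slowly changing, together with the decay of $K$ away from the origin, and all three properties are inherited by $P_1$ through the comparability $P_1\asymp P^K_1$ given by Proposition~\ref{51.prop1}.

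With these ingredients at hand, the proof that $U$ is well defined, belongs to $\li$, and has $\mu_0$ as initial trace proceeds verbatim as in Theorem~\ref{prop2}: one applies Tonelli's theorem and Corollary~\ref{51.cor1} to bound $\int P_t(x-y)\,d|\mu_0|(y)$ by $C_{x,t}\int P_1\,d|\mu_0|<\infty$, and then estimates $\int |U(x,t)|P_1(x)\,dx\le \int P_{1+t}\,d|\mu_0|\le C_T\int P_1\,d|\mu_0|$. The initial trace is recovered via the DCT on $L^1(\mathbb{R}^d,d|\mu_0|)$, using that for $\mathcal{L}_{\Delta,K}$ the kernel $P_t(\cdot-y)$ still converges to $\delta_y$ as $t\to 0^+$ (this follows from~\eqref{51.9} together with the fact that both $G_t$ and $P^K_t$ are probability densities converging to $\delta_0$). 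Integrability against test functions, i.e.\ condition~\eqref{a.2}, is then obtained for free from $U\in\li$ via Proposition~\ref{51.propequivalenceveryweak}.

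The weak formulation~\eqref{a.3} is handled through the same duality argument: one first shows that $P$ itself is a very weak solution by the analogue of Corollary~\ref{PisaVeryWeaksolution.cor}, which now must accommodate the local part $-\Delta$ of the operator. This amounts to verifying that $P\in C^\infty(\Rn\times(0,\infty))$ solves $\partial_t P-\Delta P+\mathcal{L}_K P=0$ classically (an immediate consequence of $P_t=G_t*P^K_t$ and the fact that both $G$ and $P^K$ solve their respective equations) and then interchanging integrals via Fubini to pass from the classical to the very weak formulation; the integrability needed to justify Fubini comes from Lemma~\ref{a.17} together with the new Laplacian term, for which $|\Delta\theta|\le C_\theta P_1$ holds trivially since $\Delta\theta\in C^\infty_{\textup{c}}$. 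Once $P$ is known to be a very weak solution, one writes, for $\theta\in C^\infty_{\textup{c}}(\Rn\times(0,T))$,
\begin{equation*}
    \int_0^T\!\!\intr U(x,t)\mathcal{L}_{\Delta,K}\theta\,dx\,dt=\intr\int_0^T\!\!\intr P_t(x-y)\mathcal{L}_{\Delta,K}\theta(x,t)\,dx\,dt\,d\mu_0(y),
\end{equation*}
which, by translation invariance and the very weak formulation for $P$, equals $\int\int\int P_t(x-y)\partial_t\theta\,dx\,dt\,d\mu_0(y)$; Fubini once more, justified by~\eqref{q4} and~\eqref{a.5} applied to $\partial_t\theta$, gives~\eqref{a.3}.

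Finally, the $C^\infty$ regularity of $U$ is the genuinely new point with respect to Theorem~\ref{prop2}. Writing $U(\cdot,t)=G_t*(P^K_t*\mu_0)$, one observes that $v(\cdot,t):=P^K_t*\mu_0$ is continuous by Theorem~\ref{prop2} and satisfies $v(\cdot,t)\le C_tP_1\in L^1_{\textup{loc}}(\Rn)$; convolving with the Gaussian $G_t$, whose derivatives of all orders decay faster than any power, allows differentiation under the integral to any order in $x$ and $t$ (using $\partial_t G_t=\Delta G_t$ together with the semigroup property for the time derivatives). The main technical obstacle is precisely this last step: one needs uniform-in-compact-set integrable majorants for $|D^\alpha_x\partial_t^k G_{t-s}(x-z)|v(z,s)$ in order to apply the DCT, which is achieved by splitting the convolution, using the Gaussian decay on one side and the bound $v(\cdot,s)\le C P_1$ on the other, together with $P_1\in L^1_{P_1}$.
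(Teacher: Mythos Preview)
Your proposal is correct and follows essentially the same approach as the paper: imitate Theorem~\ref{prop2} for the very-weak-solution part (using that $P_1\asymp P^K_1$ via Proposition~\ref{51.prop1} so that all the kernel estimates transfer), and obtain the $C^\infty$ regularity by writing $U(\cdot,t)=G_t*(P^K_t*\mu_0)$ with $P^K_t*\mu_0$ continuous. One small slip: in your final paragraph the majorant you write involves $G_{t-s}(x-z)v(z,s)$, which suggests a time-convolution; in fact $U(x,t)=\int G_t(x-z)v(z,t)\,dz$ with both factors at the \emph{same} time $t$, so for time derivatives you must either differentiate both factors (which requires controlling $\partial_t v$), or---more simply---argue directly from $P_t=\mathcal{F}^{-1}(e^{-t(m_K(\xi)+|\xi|^2)})$, whose symbol decays at least Gaussian-fast, so that $\partial_t^k D_x^\alpha P_t$ is bounded by $C_{\varepsilon,T}P_1$ on $\Rn\times(\varepsilon,T)$ and the DCT applies against $d|\mu_0|$.
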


As a consequence, thanks to the uniqueness results, Corollary~\ref{mixUniquenessAcotadas} and Corollary~\ref{mixUniquenessNonnegative}, we have a representation formula.

\begin{corollary}[Representation formula]\label{mixCorRepresentationFormulaVW}
    Let $\mathcal{L}=\mathcal{L}_{\Delta,K}$ with $K$ satisfying~\eqref{A0}--\eqref{A2} and let $\mu_{0}$ be a Radon measure satisfying~\eqref{q4}. Let $u$ be a very weak solution to~\eqref{a.nonlocalheatequation} with trace $\mu_{0}$. If either $u\in \li$, or $u$ is nonnegative and $K$ satisfies in addition~\eqref{A3}, then $u$ is given by representation formula~\eqref{representationformula}.
\end{corollary}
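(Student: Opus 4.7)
My plan is to match $u$ against the canonical candidate
\begin{equation*}
U(x,t):=\intr P_t(x-y)\dmuo(y)
\end{equation*}
and invoke the uniqueness results already established in this section for the operator $\mathcal{L}_{\Delta,K}$. Since $\mu_0$ is a Radon measure satisfying the admissibility condition~\eqref{q4}, Theorem~\ref{mixprop2} delivers that $U$ is itself a very weak solution to~\eqref{a.nonlocalheatequation} having $\mu_0$ as initial trace, and furthermore $U\in\li$. Thus $u$ and $U$ are two very weak solutions of the same equation sharing the same initial trace, and the task reduces to placing them both within a uniqueness class.

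In the first alternative, where $u\in\li$ is assumed directly, I immediately apply Corollary~\ref{mixUniquenessAcotadas} on the uniqueness of very weak solutions in $\li$: both $u$ and $U$ belong to this class and carry the same initial trace $\mu_0$, so they coincide almost everywhere in $\RN\times(0,T)$, which is exactly the representation formula~\eqref{representationformula}.

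In the second alternative, where $u\geq 0$ and $K$ additionally satisfies~\eqref{A3}, the job is to reduce to the previous case by promoting $u$ into $\li$. This reduction is carried out by Lemma~\ref{lemmatrace.nu}~(ii), which asserts that every nonnegative very weak solution of \eqref{a.nonlocalheatequation} with $\mathcal{L}=\mathcal{L}_{\Delta,K}$ already lies in $\li$ whenever~\eqref{A3} holds; hypothesis~\eqref{A3} is precisely what is needed to build the auxiliary comparison function $\phi$ underlying that smoothing estimate. Once $u\in\li$ is established, the argument of the first case applies verbatim. No genuine obstacle is encountered: the corollary is essentially a clean packaging of Theorem~\ref{mixprop2} (existence of the canonical solution $U$ from the representation formula), Lemma~\ref{lemmatrace.nu}~(ii) (smoothing into $\li$ in the nonnegative case), and Corollary~\ref{mixUniquenessAcotadas} (uniqueness within $\li$).
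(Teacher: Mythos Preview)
Your proof is correct and follows essentially the same approach as the paper, which deduces the result from Theorem~\ref{mixprop2} together with the uniqueness results Corollary~\ref{mixUniquenessAcotadas} and Corollary~\ref{mixUniquenessNonnegative}. The only cosmetic difference is that in the nonnegative case you inline the content of Corollary~\ref{mixUniquenessNonnegative} by citing Lemma~\ref{lemmatrace.nu}~(ii) and Corollary~\ref{mixUniquenessAcotadas} separately, which is exactly how that corollary is proved.
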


\subsection{Classical solutions}

The main novelty in the analysis of classical solutions is that for operators $\mathcal{L}_{\Delta,K}$ we don't need to impose the energy assumption~\eqref{energyH}, since in this case classical solutions belong by definition to $C^{2}(\Rn\times(0,T))$ and, therefore, are already known to satisfy it.

\subsubsection{Uniqueness of classical solutions.}

We can now imitate the proof of Proposition~\ref{cprop1} to prove that classical solutions are very weak solutions, the key step towards uniqueness in $\li$.

\begin{proposition}\label{52prop3}
    Let $\mathcal{L}=\mathcal{L}_{\Delta,K}$ with $K$ satisfying~\eqref{A0}--\eqref{A2}. If $u\in\luloc$ is a classical solution to~\eqref{a.nonlocalheatequation}, then it is a very weak solution to the same equation.
\end{proposition}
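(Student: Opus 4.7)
The plan is to follow closely the strategy of Proposition~\ref{cprop1}, taking advantage of the fact that classical solutions for operators containing a local part automatically enjoy the energy condition~\eqref{energyH}. Since $u\in\luloc$, Proposition~\ref{51.propequivalenceveryweak}(ii) immediately delivers the integrability requirement~\eqref{a.2}, so the real task is to establish the distributional identity~\eqref{a.3}. The natural approach is to fix $\theta\in C^{\infty}_{\textup{c}}(\mathbb{R}^d\times(0,T))$, multiply the pointwise equation $\partial_tu+\mathcal{L}_{\Delta,K}u=0$ by $\theta$, and integrate. The local summands pose no trouble: integration by parts in time handles the $\partial_t u$ term, and two spatial integrations by parts handle $-\Delta u$, both moves being justified since $u\in C^{2,1}_{x,t}$ by Definition~\ref{a.def classical} and $\theta$ is smooth and compactly supported. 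The crux is to transfer $\mathcal{L}_K$ from $u$ onto $\theta$ via Fubini, which reduces to verifying
\begin{equation*}
\mathcal{C}:=\int_0^T\intr\intr|\theta\xt|\,|\Lambda u(x,y,t)|K(y)\dy\dx\dt<\infty.
\end{equation*}

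The key observation is that because $\mathcal{L}_{\Delta,K}$ has a local part, the definition of classical solution already builds in $u\in C^{2,1}_{x,t}$, so~\eqref{energyH} holds for free: on a compact neighborhood of $\operatorname{supp}(\theta)$ the Hessian $D^2_x u$ is uniformly bounded, whence Taylor expansion yields $|\Lambda u(x,y,t)|\le C_u|y|^2$ for $|y|\le 1$ and the singular contribution to $\mathcal{C}$ is controlled by $C_{u,\theta}\int_{\{|y|\le 1\}}|y|^2K(y)\dy<\infty$. For the tail $|y|\ge 1$, I would exploit the symmetry of $K$ to split into a diagonal piece $\int_0^T\int|\theta u|\dx\dt\int_{\{|y|\ge 1\}}K(y)\dy$, trivially finite since $\theta u\in L^{1}$ on compact sets and $K$ is L\'evy, and an off-diagonal piece $\int_0^T\int|u\xt|\int_{\{|y|\ge 1\}}|\theta(x-y,t)|K(y)\dy\dx\dt$.

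For the off-diagonal term I would invoke Proposition~\ref{51.prop1}, which makes $P_1$ and $P_1^K$ comparable, together with the tail comparability $K\asymp P^K_1$ from~\eqref{Atotal}, to obtain $K(y)\le CP_1(y)$ on $\{|y|\ge 1\}$; then~\eqref{a.4} gives $|\theta|\le C_\theta P_1$, and the semigroup property~\eqref{eq:semigroup.property} together with Corollary~\ref{51.cor1} dominates the inner integral by $C_{\theta,T}P_1(x)$, which pairs integrably with $u\in\luloc$. The only place that requires genuine care is this off-diagonal estimate, but it is precisely the computation carried out in Proposition~\ref{cprop1} and carries over verbatim thanks to the heat-kernel comparability supplied by Proposition~\ref{51.prop1} and Corollary~\ref{51.cor1}. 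Once $\mathcal{C}<\infty$ is secured, Fubini legitimately moves $\mathcal{L}_K$ onto the test function, and collecting the three contributions (time derivative, Laplacian, and nonlocal part) yields~\eqref{a.3}, completing the verification that $u$ is a very weak solution.
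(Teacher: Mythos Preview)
Your proposal is correct and follows precisely the route indicated by the paper, which simply says to imitate the proof of Proposition~\ref{cprop1}; you have correctly identified the two adaptations required for $\mathcal{L}_{\Delta,K}$, namely that the $C^{2,1}_{x,t}$ regularity built into Definition~\ref{a.def classical} makes~\eqref{energyH} automatic via Taylor expansion, and that the local $-\Delta u$ term is transferred to $\theta$ by ordinary spatial integration by parts. Your handling of the off-diagonal tail integral, invoking Proposition~\ref{51.prop1} and Corollary~\ref{51.cor1} to pass from $K$ and $P_1^K$ to the mixed heat kernel $P_1$, is exactly the substitution needed to make the argument of Proposition~\ref{cprop1} go through in this setting.
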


As a byproduct of Proposition~\ref{52prop3}, Corollary~\ref{mixUniquenessAcotadas} and Corollary~\ref{mixCorRepresentationFormulaVW}, we obtain uniqueness for classical solutions in $\li$, and a representation formula if there is existence in this class.

\begin{corollary}\label{mixCorRepresentativeFormulaClassicalAcotadas}
    Let $\mathcal{L}=\mathcal{L}_{\Delta,K}$ with $K$ satisfying~\eqref{A0}--\eqref{A2}.

    \noindent{\rm (i)} There is at most one classical solution $u\in\li$ to~\eqref{a.nonlocalheatequation} with initial trace a given Radon measure.

    \noindent{\rm (ii)} If $K$ satisfies also~\eqref{A3}, if there exists a classical solution $u\in\li$ to~\eqref{a.nonlocalheatequation} with initial trace a measure $\mu_0$ verifying~\eqref{q4}, then $u$ is given by the representation formula~\eqref{representationformula}.
\end{corollary}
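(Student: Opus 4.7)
The plan is to reduce both parts of the corollary directly to the corresponding results already established for very weak solutions, using Proposition~\ref{52prop3} as the bridge between the classical and very weak frameworks. The whole proof is essentially a chain of implications, so I would keep it short and explicit rather than developing new machinery.

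For part (i), I would start by taking a classical solution $u \in \li$ to \eqref{a.nonlocalheatequation} with initial trace a Radon measure $\mu_0$. Since $\li \subset \luloc$, Proposition~\ref{52prop3} applies and yields that $u$ is in fact a very weak solution to the same equation. The initial trace is preserved, as it is taken in the sense of Definition~\ref{deftrace}, which only depends on $u$ as an $L^1_{\textup{loc}}$ function. Uniqueness within the class $\li$ is then furnished by Corollary~\ref{mixUniquenessAcotadas}, which gives at most one very weak solution in $\li$ with a prescribed initial trace. This settles (i).

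For part (ii), the additional assumption~\eqref{A3} allows us to invoke Corollary~\ref{mixCorRepresentationFormulaVW}. Indeed, by the same argument as above, the classical solution $u \in \li$ is a very weak solution in $\li$ with initial trace $\mu_0$, and by hypothesis $\mu_0$ satisfies the growth condition~\eqref{q4}. Corollary~\ref{mixCorRepresentationFormulaVW} then yields the representation formula $u(x,t) = \intr P_t(x-y)\dmuo(y)$.

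There is no real obstacle here: everything needed has already been done. The only point requiring a moment of care is verifying that the class $\li$ is indeed contained in $\luloc$, which is immediate from the definitions, and that the initial trace of $u$ as a very weak solution coincides with the initial trace of $u$ viewed as a classical solution, which again is automatic because both notions refer to the same weak* limit of $u(\cdot,t)$ as $t\to 0^+$ against $C_{\textup{c}}(\RN)$ test functions. Thus the entire proof amounts to citing Proposition~\ref{52prop3}, Corollary~\ref{mixUniquenessAcotadas} and Corollary~\ref{mixCorRepresentationFormulaVW} in sequence.
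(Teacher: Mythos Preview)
Your proposal is correct and follows exactly the route the paper indicates: the corollary is stated immediately after the sentence ``As a byproduct of Proposition~\ref{52prop3}, Corollary~\ref{mixUniquenessAcotadas} and Corollary~\ref{mixCorRepresentationFormulaVW}\ldots'', and your argument simply spells out that byproduct. One small remark: in part~(ii) your phrase ``the additional assumption~\eqref{A3} allows us to invoke Corollary~\ref{mixCorRepresentationFormulaVW}'' is slightly misleading, since that corollary applies to $u\in\li$ under~\eqref{A0}--\eqref{A2} alone; the hypothesis~\eqref{A3} is present in the statement but is not actually used in this branch of the argument.
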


To prove uniqueness of nonnegative classical solutions, we first obtain a comparison principle. This can be done following the proof of Lemma~\ref{a.mp}, observing that at a point $(x_0,t_0)$ where $v$ achieves a maximum we have $\Delta v(x_{0},t_{0})\leq 0$ and $\Lk_K v(x_{0},t_{0})> 0$, so that $\mathcal{L}_{\Delta,K}v(x_0,t_0)>0$.

\begin{lemma}[A maximum principle] \label{52mp3}
    Let $\mathcal{L}=\mathcal{L}_{\Delta,K}$ with $K$ satisfying~\eqref{A0}. Let $\Omega\subset\RN$ be non-empty, open, bounded and smooth, and $v\in C(\Rn\times[0,T))$ such that $v\in C^{2,1}_{x,t}(\Omega\times(0,T))$ and $\Lk_K v\xt$ is defined for all $\xt\in\Omega\times (0,T)$. If~\eqref{eq:ineq.MP} holds, then $v\leq 0$ for all $x\in\mathbb{R}^d\times[0,T)$.
\end{lemma}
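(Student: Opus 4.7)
The plan is to adapt the proof of Lemma~\ref{a.mp} almost verbatim, exploiting the fact that the local part $-\Delta$ and the nonlocal part $\mathcal{L}_K$ both have the correct sign at an interior space-time maximum, so that the nonlocal strict positivity is preserved when we add the Laplacian.

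I would start by fixing any $T'\in(0,T)$ and letting $(x_0,t_0)\in\bar\Omega\times[0,T']$ be a point where $v$ attains its maximum over $\bar\Omega\times[0,T']$ (which exists by continuity and compactness, since $v\in C(\Rn\times[0,T))$ and $\bar\Omega$ is compact). Arguing by contradiction, suppose $v(x_0,t_0)>0$. Because of the sign assumption in~\eqref{eq:ineq.MP} off $\Omega\times(0,T)$, the maximum cannot be attained on the parabolic-type boundary; thus $(x_0,t_0)\in\Omega\times(0,T']$, and there $v\in C^{2,1}_{x,t}$.

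The heart of the argument is then to evaluate $\partial_t v + \mathcal{L}_{\Delta,K}v$ at $(x_0,t_0)$. Since $v$ reaches its maximum in $t$ at $t_0$ from the left (either $t_0\in(0,T')$ and $\partial_t v(x_0,t_0)=0$, or $t_0=T'$ and $\partial_t v(x_0,t_0)\geq 0$), we have $\partial_t v(x_0,t_0)\geq 0$. Since $v$ attains an interior spatial maximum at $x_0$, we have $\Delta v(x_0,t_0)\leq 0$, i.e. $-\Delta v(x_0,t_0)\geq 0$, and also $\Lambda v(x_0,y,t_0)\geq 0$ for all $y\in\Rn$. In particular, if we pick $R_0>0$ such that $\Omega\subset B_{R_0}(0)$, then for $|y|>2R_0$ both $x_0\pm y$ lie outside $\Omega$ (since $|x_0\pm y|>R_0$), so $v(x_0\pm y,t_0)\leq 0$ by the boundary condition in~\eqref{eq:ineq.MP}, and hence
\begin{equation*}
\Lambda v(x_0,y,t_0)=v(x_0,t_0)-\tfrac{1}{2}\bigl(v(x_0+y,t_0)+v(x_0-y,t_0)\bigr)\geq v(x_0,t_0)>0\quad\text{for }|y|>2R_0.
\end{equation*}

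Combining these three ingredients, and using the positivity of $K$ from~\eqref{A0},
\begin{equation*}
(\partial_t v+\mathcal{L}_{\Delta,K}v)(x_0,t_0)=\partial_t v(x_0,t_0)-\Delta v(x_0,t_0)+\mathcal{L}_K v(x_0,t_0)\geq \int_{B_{2R_0}^{\textup{c}}(0)}\Lambda v(x_0,y,t_0)K(y)\dy\geq v(x_0,t_0)\int_{B_{2R_0}^{\textup{c}}(0)}K(y)\dy>0,
\end{equation*}
which contradicts the assumption $(\partial_t v+\mathcal{L}_{\Delta,K}v)(x_0,t_0)\leq 0$. Hence the supposition $v(x_0,t_0)>0$ is false, so $v\leq 0$ on $\bar\Omega\times[0,T']$; together with $v\leq 0$ off $\Omega\times(0,T)$ and the arbitrariness of $T'\in(0,T)$, this gives $v\leq 0$ on $\Rn\times[0,T)$.

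There is essentially no obstacle here beyond what was already handled in Lemma~\ref{a.mp}: the local part merely contributes a nonnegative term $-\Delta v(x_0,t_0)\geq 0$ that does not spoil (and is not needed for) the strict positivity coming from $\mathcal{L}_K v(x_0,t_0)>0$, which in turn relies only on the positivity of $K$ on a set of positive measure away from the origin. The one point worth a line of care is the verification that the maximum lies in $\Omega\times(0,T']$ rather than on its parabolic boundary, which is immediate from the two pieces of~\eqref{eq:ineq.MP} together with the assumption $v(x_0,t_0)>0$.
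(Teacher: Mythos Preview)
Your proof is correct and follows essentially the same approach as the paper, which explicitly says to follow the proof of Lemma~\ref{a.mp}, noting that at the maximum point $\Delta v(x_0,t_0)\le 0$ (so $-\Delta v(x_0,t_0)\ge 0$) while $\mathcal{L}_K v(x_0,t_0)>0$ by the same nonlocal argument as before. The only minor imprecision is that when you assert $\Lambda v(x_0,y,t_0)\ge 0$ for \emph{all} $y\in\Rn$ you are implicitly using that $(x_0,t_0)$ is a global spatial maximum (combining the maximum over $\bar\Omega$ with $v\le0$ outside $\Omega$), not merely an interior one; this is exactly what the paper does as well.
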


We can now show the validity of~\eqref{a.16}, following the proof of~Lemma~\ref{a.cor4}.

\begin{lemma}\label{52a.cor4}
    Let $\mathcal{L}=\mathcal{L}_{\Delta,K}$  with $K$ satisfying~\eqref{A0}--\eqref{A2}, and $u$ a nonnegative classical solution to~\eqref{a.nonlocalheatequation} with initial data a nonnegative Radon measure $\mu_0$. Then,~\eqref{a.16} holds.
\end{lemma}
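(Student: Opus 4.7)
The proof is a direct adaptation of the argument used for Lemma~\ref{a.cor4}, with the purely nonlocal maximum principle replaced by the one from Lemma~\ref{52mp3}. I will point out the few places where $\mathcal{L}_{\Delta,K}$ requires small adjustments.

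Fix $\delta\in(0,T)$. Since $u$ is a nonnegative classical solution, $u(\cdot,\delta)\in C(\mathbb{R}^d)$ is nonnegative, and so is $\psi_R u(\cdot,\delta)\in C_{\textup{c}}(\mathbb{R}^d)$ for every cutoff function $\psi_R$ from the family~\eqref{cutoffspace}. Set
\begin{equation*}
    v_R^\delta(x,t):=\intr P_t(x-y)\psi_R(y)u(y,\delta)\dy.
\end{equation*}
Thanks to the convolution representation $P_t=P^K_t*G_t$ given by~\eqref{51.9}, the kernel $P$ inherits from the Gaussian factor full smoothness in the space variable, so that $v_R^\delta\in C^{2,1}_{x,t}(\mathbb{R}^d\times(0,T))\cap C(\mathbb{R}^d\times[0,T))$ and solves $\partial_t v_R^\delta+\mathcal{L}_{\Delta,K}v_R^\delta=0$ pointwise with initial datum $\psi_R u(\cdot,\delta)$. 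Moreover, $v_R^\delta\geq0$.

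The key size estimate is the analogue of~\eqref{a.5} in the present setting: by Corollary~\ref{51.cor1}, the semigroup property and the fact that $|\psi_R u(\cdot,\delta)|\le \|u(\cdot,\delta)\|_{L^\infty(B_{2R}(0))}\mathcal{X}_{B_{2R}(0)}\le C_R P_1$,
\begin{equation*}
    0\le v_R^\delta(x,t)\le \|u(\cdot,\delta)\|_{L^\infty(B_{2R}(0))} (P_t*\psi_R)(x)\le C_{R,T}\|u(\cdot,\delta)\|_{L^\infty(B_{2R}(0))} P_1(x).
\end{equation*}
Next I need $P_1(x)\to0$ as $|x|\to\infty$. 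By Proposition~\ref{51.prop1}, $P_1\le C P_1^K$, and $P_1^K$ is comparable to $K$ outside $B_1(0)$ thanks to~\eqref{Atotal}; the decay of $K$ at infinity granted by~\eqref{A1}--\eqref{A2} then yields the desired vanishing. Hence, for any $\varepsilon>0$ we can pick $\rho>0$ large (depending on $R$, $T$, $\delta$ and $\varepsilon$) such that
\begin{equation*}
    C_{R,T}\|u(\cdot,\delta)\|_{L^\infty(B_{2R}(0))} P_1(x)\le\varepsilon\qquad\text{for all }|x|\geq\rho.
\end{equation*}

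With this in hand, apply the maximum principle of Lemma~\ref{52mp3} on $\Omega=B_\rho(0)$ to
$w:=v_R^\delta(x,t)-u(x,t+\delta)-\varepsilon$. It satisfies $(\partial_t+\mathcal{L}_{\Delta,K})w=0$ in $\Omega\times(0,T-\delta)$; for $|x|\ge\rho$ and $t\in(0,T-\delta)$ one has $v_R^\delta\le\varepsilon\le u(x,t+\delta)+\varepsilon$ by nonnegativity of $u$; and at $t=0$, $v_R^\delta(x,0)=\psi_R(x)u(x,\delta)\le u(x,\delta)+\varepsilon$. Therefore $w\le 0$ in $\mathbb{R}^d\times[0,T-\delta)$. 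Letting $\varepsilon\to0$ yields
\begin{equation*}
    v_R^\delta(x,t)=\intr P_t(x-y)\psi_R(y)u(y,\delta)\dy\le u(x,t+\delta)\quad\text{for all }(x,t)\in\mathbb{R}^d\times[0,T-\delta).
\end{equation*}
Since $\mu_0$ is the initial trace of $u$ and $\psi_R(\cdot)P_t(x-\cdot)\in C_{\textup{c}}(\mathbb{R}^d)$, sending $\delta\to0^+$ (and using the continuity of $u$ in time, as $u$ is classical) gives
\begin{equation*}
    \intr P_t(x-y)\psi_R(y)\dmu_0(y)\le u(x,t)\quad\text{for all }(x,t)\in\mathbb{R}^d\times(0,T).
\end{equation*}
Finally, since $P_t>0$ and $\psi_R\nearrow 1$ as $R\to\infty$, the MCT yields~\eqref{a.16}. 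The only step that required genuine adaptation is the applicability of the maximum principle; the rest carries over verbatim from the purely nonlocal case thanks to Proposition~\ref{51.prop1} and Corollary~\ref{51.cor1}.
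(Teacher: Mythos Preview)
Your proof is correct and takes essentially the same approach as the paper, which simply indicates that the result follows by mimicking the proof of Lemma~\ref{a.cor4} with the maximum principle from Lemma~\ref{52mp3} in place of Lemma~\ref{a.mp}. You have identified exactly the places where the mixed operator requires adjustment---the $C^{2,1}_{x,t}$ regularity of $v_R^\delta$ coming from the Gaussian factor in~\eqref{51.9}, and the decay of $P_1$ at infinity via Proposition~\ref{51.prop1}---and handled them appropriately.
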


As a consequence, proceeding as in the proof of Corollary~\ref{a.cor2}, we obtain that~$u\in L^{1}_{\rm loc}([0,T);\lp)$.

\begin{corollary}\label{a.cor2.Delta.K}
    Let $\mathcal{L}=\mathcal{L}_{\Delta,K}$ with $K$ satisfying~\eqref{A0}--\eqref{A2}, and $u$ a nonnegative classical solution to~\eqref{a.nonlocalheatequation} with initial data a nonnegative Radon measure $\mu_0$. Then, $u\in L^{1}_{\rm loc}([0,T);\lp)$.
\end{corollary}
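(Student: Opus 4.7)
The plan is to imitate the proof of Corollary~\ref{a.cor2} almost verbatim, substituting the purely nonlocal ingredients by their $\mathcal{L}_{\Delta,K}$ counterparts: Lemma~\ref{a.cor4} is replaced by Lemma~\ref{52a.cor4}, and the comparability~\eqref{Atotal2} is replaced by Corollary~\ref{51.cor1}. Since $u\geq 0$, it suffices to show that for every $T'\in(0,T)$ we have $\int_0^{T'}\intr P_1(y)u(y,\tau)\dy\,{\rm d}\tau<\infty$.

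First, I would fix $\tau\in(0,T')$ and set $u^\tau(x,t):=u(x,t+\tau)$ for $(x,t)\in\Rn\times[0,T-\tau)$. Since $u$ is a classical solution of~\eqref{a.nonlocalheatequation} and $u\in C(\Rn\times(0,T))$, the shift $u^\tau$ is a nonnegative classical solution to the same equation on $\Rn\times(0,T-\tau)$, with initial trace the nonnegative Radon measure $u(\cdot,\tau)\dx$. Applying Lemma~\ref{52a.cor4} to $u^\tau$ yields
\begin{equation*}
    \intr P_t(x-y)u(y,\tau)\dy\leq u(x,t+\tau)\quad\text{for all }(x,t)\in\Rn\times(0,T-\tau),
\end{equation*}
which, upon integrating in $\tau$ over $(0,T')$ and changing variables on the right, gives
\begin{equation*}
    \int_0^{T'}\intr P_t(x-y)u(y,\tau)\dy\,{\rm d}\tau\leq \int_t^{t+T'}u(x,s)\,{\rm d}s.
\end{equation*}

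Next, I would pick any $T''\in(T',T)$ and invoke Corollary~\ref{51.cor1} to obtain a constant $C>0$ such that $P_1(y)\leq C\,P_{T''-T'}(y)$ for all $y\in\Rn$. The kernel $P_t$ is symmetric, since $P_t=G_t*P^K_t$ with both factors symmetric. Choosing $x=0$ and $t=T''-T'$ in the previous estimate and using the symmetry, we obtain
\begin{equation*}
    \int_0^{T'}\intr P_1(y)u(y,\tau)\dy\,{\rm d}\tau\leq C\int_0^{T'}\intr P_{T''-T'}(-y)u(y,\tau)\dy\,{\rm d}\tau\leq C\int_{T''-T'}^{T''}u(0,s)\,{\rm d}s,
\end{equation*}
and the right-hand side is finite because $u(0,\cdot)\in C([T''-T',T''])$.

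I do not foresee any real obstacle: all the building blocks needed (the pointwise lower bound~\eqref{a.16} for classical nonnegative solutions, the two-time comparability of the heat kernel, and the symmetry of $P_t$) have already been established in the previous results of the section. The only point that requires a brief remark is the symmetry of $P_t$, which is inherited from the convolution structure $P_t=G_t*P^K_t$.
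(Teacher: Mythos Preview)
Your proposal is correct and follows exactly the approach the paper indicates: it mimics the proof of Corollary~\ref{a.cor2}, replacing Lemma~\ref{a.cor4} by Lemma~\ref{52a.cor4} and the comparability~\eqref{Atotal2} by Corollary~\ref{51.cor1}. The brief remark on the symmetry of $P_t$ via $P_t=G_t*P^K_t$ is a harmless addition; the paper already has this symmetry from the symmetry of the Fourier symbol.
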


We have now all the ingredients to get a representation formula (implying uniqueness) for nonnegative classical solutions, mimicking the proof of Theorem~\ref{thm:main.K}.

\begin{theorem}\label{mixa.maintheorem1}
    Let $\mathcal{L}=\mathcal{L}_{\Delta,K}$  with $K$ satisfying~\eqref{A0}--\eqref{A3}. Let $u$ be a nonnegative classical solution to~\eqref{a.nonlocalheatequation} with initial trace a nonnegative Radon measure $\mu_{0}$. Then $u$ is given by the representation formula~\eqref{representationformula}.
\end{theorem}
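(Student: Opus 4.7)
The plan is to follow the same strategy used in the proof of Theorem~\ref{thm:main.K}, reducing the classical-solution statement to the representation formula for very weak solutions by chaining together several results already established in this section. A pleasant simplification with respect to the purely nonlocal case is that we do not need to impose the energy assumption \eqref{energyH}: a classical solution of an equation involving the local part $-\Delta$ is already $C^{2,1}_{x,t}$ by Definition~\ref{a.def classical}, so the integrability needed in Proposition~\ref{52prop3} is automatic.

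First I would invoke Corollary~\ref{a.cor2.Delta.K}, which tells us that any nonnegative classical solution $u$ of~\eqref{a.nonlocalheatequation} whose initial trace is a nonnegative Radon measure automatically belongs to $L^{1}_{\textup{loc}}([0,T);\lp)$; in particular $u\in\luloc$. Combining this with Proposition~\ref{52prop3}, we conclude that $u$ is in fact a very weak solution of~\eqref{a.nonlocalheatequation} in the sense of Definition~\ref{defWS}. Since $u$ is nonnegative, Lemma~\ref{lemmatrace.nu} applies and yields the stronger fact $u\in\li$.

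Next I would verify the growth condition~\eqref{q4} for the initial trace. By Lemma~\ref{52a.cor4}, the pointwise comparison
\begin{equation*}
    \intr P_{t}(x-y)\dmuo(y)\leq u(x,t)\quad\textrm{for all }\xt\in\Rn\times(0,T)
\end{equation*}
holds. Specializing to $(x,t)=(0,1)$ (assuming $T>1$; otherwise use $(0,t_0)$ with $t_0\in(0,T)$ and the comparability of $P_{t_0}$ and $P_1$ from Corollary~\ref{51.cor1}) and using the symmetry of the heat kernel, we obtain
\begin{equation*}
    \intr P_{1}(y)\dmuo(y)\leq u(0,1)<\infty,
\end{equation*}
so that $\mu_0$ indeed satisfies~\eqref{eq:growth.initial.trace}, which, being nonnegative, coincides with~\eqref{q4}.

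Finally, all the hypotheses of Corollary~\ref{mixCorRepresentationFormulaVW} are met: $u$ is a very weak solution with initial trace $\mu_0$ satisfying~\eqref{q4}, and either $u\in\li$ (from the second step) or $u$ is nonnegative with $K$ satisfying~\eqref{A3}. Either branch of that corollary gives the representation formula~\eqref{representationformula}, which is the desired conclusion. The proof requires no delicate new estimate; the only mildly subtle point is ensuring that the pipeline \emph{classical solution} $\Rightarrow$ \emph{very weak solution in $\luloc$} $\Rightarrow$ \emph{very weak solution in $\li$} $\Rightarrow$ \emph{representation formula} is activated in the right order so that the growth condition on $\mu_0$ is available when Corollary~\ref{mixCorRepresentationFormulaVW} is invoked.
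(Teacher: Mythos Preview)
Your proposal is correct and follows essentially the same route as the paper, which simply says to mimic the proof of Theorem~\ref{thm:main.K}: use Corollary~\ref{a.cor2.Delta.K} to get $u\in\luloc$, then Proposition~\ref{52prop3} to promote $u$ to a very weak solution (with the energy assumption~\eqref{energyH} automatic from $C^{2,1}_{x,t}$), obtain~\eqref{eq:growth.initial.trace} from Lemma~\ref{52a.cor4}, and conclude via Corollary~\ref{mixCorRepresentationFormulaVW}. Your detour through Lemma~\ref{lemmatrace.nu} to land in $\li$ is correct but unnecessary, since the nonnegative branch of Corollary~\ref{mixCorRepresentationFormulaVW} already applies directly.
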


\subsubsection{Existence of classical solutions}

Our goal now is to prove that, if $\mathcal{L}=\mathcal{L}_{\Delta,K}$, then for any Radon measure $\mu_0$ satisfying the integral growth condition~\eqref{q4} there is a classical solution to~\eqref{a.nonlocalheatequation} having $\mu_0$ as initial trace. The natural candidate is the function $U(x,t)=P_t*\mu_0$, which is a very weak solution of the equation and has $\mu_0$ as initial trace, and moreover belongs to the class $\li$ in which there is uniqueness. In the case of operators of the form $\mathcal{L}_K$ we had to impose certain conditions on the heat kernel, namely~\eqref{eq:conditions.existence.classical}, in order for such $U$ to be a classical solution. When $\mathcal{L}=\mathcal{L}_{\Delta,K}$, the local part of the diffusion, the Laplacian, helps us, and these conditions are always satisfied, as we see next. Let us remark that the energy condition~\eqref{energyH} is not required now for uniqueness; nevertheless, $P$ also satisfies~\eqref{eq:stronger.conditions.existence.classical}. Besides, the local part of the operator does not require any extra hypothesis.

\begin{lemma}\label{52.prop1}
    Let $\mathcal{L}=\mathcal{L}_{\Delta,K}$  with $K$ satisfying~\eqref{A0}--\eqref{A2}, and let $P$ be the corresponding heat kernel.
    For each $T>0$ and $\varepsilon\in(0,T)$ there is a constant $C>0$ such that~\eqref{eq:conditions.existence.classical}--\eqref{eq:stronger.conditions.existence.classical} hold.
\end{lemma}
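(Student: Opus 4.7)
The plan is to use the convolution representation $P_t = P^K_t * G_t$ from~\eqref{51.9} together with the size and comparability properties of $P$, $P^K$ and $G$ already available (Proposition~\ref{51.prop1}, Corollary~\ref{51.cor1}, Corollary~\ref{51.cor2} and the semigroup property of~$P$). The natural order is: first the bound on $|D^2 P_t|$, which is the only one that requires a genuine computation; then the bound on the nonlocal part $\int|\Lambda P(\cdot,y,t)|K(y)\,\textup{d}y$, which follows from the $D^2$ estimate plus the arguments already used in Lemma~\ref{a.17}; and finally the time derivative bound, which is immediate from the equation $\partial_tP=-\mathcal{L}_{\Delta,K}P$.

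For $|D^2P_t|$, I would differentiate on the Gaussian factor: $\partial_i\partial_jP_t=P^K_t*\partial_i\partial_jG_t$. A direct computation gives $|\partial_i\partial_jG_t(y)|\le\big(1/(2t)+|y|^2/(4t^2)\big)G_t(y)$, and, using the elementary inequality $ae^{-a}\le c$ to absorb the factor $|y|^2/t$ into a Gaussian of doubled variance, one obtains
\begin{equation*}
|\partial_i\partial_jG_t(y)|\le (C/t)\,G_{2t}(y),\qquad y\in\Rn,\ t>0.
\end{equation*}
For $t\in(\varepsilon,T)$ the prefactor $1/t$ is bounded. Because $G_{2t}$ has Gaussian decay while $P_1$ has polynomial decay (by~\eqref{Atotal},~\eqref{A1},~\eqref{A2} and Proposition~\ref{51.prop1}), and both are continuous and strictly positive, $G_{2t}\le C_{\varepsilon,T}P_1$ uniformly in $t\in[\varepsilon,T]$. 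Similarly, $P^K_t\le C_{\varepsilon,T}P^K_1\le C P_1$ by~\eqref{Atotal2} for $P^K$ and Proposition~\ref{51.prop1}. Therefore
\begin{equation*}
|\partial_i\partial_jP_t(x)|\le C\int_{\Rn}P^K_t(x-y)G_{2t}(y)\,\textup{d}y\le C\int_{\Rn}P_1(x-y)P_1(y)\,\textup{d}y=C P_2(x)\le C' P_1(x),
\end{equation*}
where the last two steps use the semigroup property of $P$ (which it satisfies, being defined via Fourier transform) and Corollary~\ref{51.cor1}.

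For the second estimate, I split the integral at $|y|=1$. On $\{|y|\le 1\}$, Taylor's formula gives $|\Lambda P(x,y,t)|\le C|y|^2\sup_{z\in B_1(x)}|D^2P_t(z)|$; combining the $D^2$ bound just proved with the fact that $P_1$ is slowly changing (Corollary~\ref{51.cor2}, which yields $P_1(z)\le CP_1(x)$ for $z\in B_1(x)$ as in~\eqref{eq:Pz.Px}) and the L\'evy condition~\eqref{A0}, this piece is $\le CP_1(x)$. On $\{|y|\ge 1\}$, I bound $|\Lambda P(x,y,t)|$ by three terms and control them separately: the term with $P_t(x)$ uses $P_t\le CP_1$ and $\int_{|y|\ge 1}K\,\textup{d}y<\infty$; for the convolution terms I use that $K(y)\le CP_1(y)$ on $\{|y|\ge 1\}$ (since $K\asymp P^K_1\asymp P_1$ there, by~\eqref{Atotal} and Proposition~\ref{51.prop1}), then $P_t\le CP_1$, and finally the semigroup $P_1*P_1=P_2\le C P_1$. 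The third estimate then follows from $|\partial_tP_t|\le|\Delta P_t|+|\mathcal{L}_KP_t|\le\sum_i|\partial_i^2P_t|+\int|\Lambda P(\cdot,y,t)|K(y)\,\textup{d}y\le CP_1$. No step is really an obstacle; the only mildly delicate point is writing the Gaussian-vs-polynomial comparison $G_{2t}\le C_{\varepsilon,T}P_1$ in a form uniform in $t\in[\varepsilon,T]$, but this is straightforward from continuity plus the tail asymptotics.
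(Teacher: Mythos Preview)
Your proof is correct and follows essentially the same route as the paper: differentiate the Gaussian factor in $P_t=P^K_t*G_t$, compare the result to the heat kernel, and close with a semigroup identity. The only notable difference is organizational. The paper proves the $\partial_t$ bound first, which forces it to compute $|\mathcal{L}_K G_t|$ separately (splitting at $|y|=1$ and using $|D^2G_t|\le C_{\varepsilon,T}P^K_1$). You instead do $|D^2P_t|$ first, then the integral bound, and deduce $|\partial_t P_t|\le |\Delta P_t|+|\mathcal{L}_K P_t|$ from those two; this saves the intermediate estimate on $\mathcal{L}_K G_t$. A second cosmetic difference is that the paper uses the semigroup of $P^K$ (so that $P^K_t*P^K_1=P^K_{1+t}\le CP_1$), whereas you first dominate both factors by $P_1$ and then use the semigroup of $P$ itself; either works, since $P$ has its own semigroup property and Corollary~\ref{51.cor1} gives $P_2\le CP_1$.
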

\begin{proof}
We start  by proving that for all $0<\varepsilon<T$ there is $C_{\varepsilon,T}>0$ such that $\left|\partial_{t} P_t(x)\right|\leq C_{\varepsilon,T}P_{1}$ for all $t\in(\varepsilon,T)$. On the one hand, a direct computation shows that
\begin{equation*}
	\begin{aligned}
        |\partial_{i}\partial_{j}G_{t}(x)|&\leq\Big(\frac{1}{4t}+\frac{|x|^{2}}{(4t)^{2}}\Big)G_{t}(x)\leq \Big(\frac{1}{4\varepsilon}+\frac{|x|^{2}}{(4\varepsilon)^{2}}\Big)\frac{1}{(4\pi\varepsilon)^{d/2}}e^{-|x|^{2}/(4T)}\\
		&\leq C_\varepsilon(1+|x|^{2})e^{ -|x|^{2}/(4T)}\leq C_{\varepsilon,T}G_{T}(x)\quad\textrm{for all }x\in\Rn,\ t\in(\varepsilon,T).
	\end{aligned}
\end{equation*}
Moreover, thanks to~\eqref{eq:slowest.decay} and the fact that $P^{K}$ satisfies~\eqref{Atotal2}, we have $G_T\leq C_T P^{K}_1$, whence
\begin{equation}\label{eqLemma5.20.DerivadasG}
    \max_{t\in[\varepsilon,T]}	\max\limits_{i,j\in\{1,\dots,d\}}	|	\partial_{i}\partial_{j}G_{t}(x)|\leq C_{\varepsilon,T}P^{K}_1(x)\quad\textrm{for all }x\in\mathbb{R}^d.
\end{equation}
	
On the other hand, from the properties of $P^{K}$ and the MVT, we obtain that
\begin{align*}
    \left|\mathcal{L}_K G_t(x)\right|&\leq G_t(x)\intb K(y)\dy+\intb G_t(x-y)K(y)\dy+\ints|\Lambda G_t(x,y)|K(y)\dy\\
    &\leq 	C_{\varepsilon,T}\Big(P^{K}_1(x)+\intb P^{K}_{1}(x-y)P^{K}_{1}(y)\dy\Big)+\ints|\Lambda G_t(x,y)|K(y)\dy\\
    &\leq C_{\varepsilon,T}\Big( P^{K}_1(x)+P^{K}_2(x)\Big)+\ints|D^{2}G_t(z)||y|^2K(y)\dy,
\end{align*}
for some $z=z(x,y)\in B_{1}(x)$. Now, using~\eqref{eqLemma5.20.DerivadasG} we conclude that
\begin{equation}\label{800}	
    \left|\mathcal{L}_K G_t(x)\right|\leq C_{\varepsilon,T} \Big(P^{K}_1(x)+\ints P^{K}_1(z)|y|^2K(y)\dy\Big)\leq C_{\varepsilon,T}P^{K}_1(x),
\end{equation}
where, for the last inequality, we have used that $P^K_1(z)\le C P^K_1(x)$, due to Lemma~\ref{v3.19}, and the fact that $|x-z|\le1$. As a consequence of~\eqref{800}, since $P_t=P^K_t*G_t$ is the fundamental solution to~\eqref{a.nonlocalheatequation} with $\mathcal{L}=\mathcal{L}_{\Delta,K}$, using~\eqref{eqLemma5.20.DerivadasG}, the semigroup property of $P^{K}$ and Proposition~\ref{51.prop1}, we have
\begin{align*}
    \left|\partial_{t} P_t(x)\right|&=\left|\mathcal{L}_{\Delta,K} P_t(x)\right|=\left|P^{K}_t*\mathcal{L}_{\Delta,K} G_t(x)\right|
    \leq P^{K}_t*\left|\mathcal{L}_K G_t\right|(x)+P^K_t*\left|\Delta G_t\right|(x)\\&\leq C_{\varepsilon,T}P^{K}_t*P^{K}_1(x)\leq C_{\varepsilon,T}P^{K}_{1+t}(x)\leq C_{\varepsilon,T}P_{1}(x)\quad\textrm{for all }x\in\Rn,
\end{align*}
which is the first inequality in~\eqref{eq:conditions.existence.classical}.

Second, by~\eqref{eqLemma5.20.DerivadasG}, the semigroup property of $P^{K}_t$  and Proposition~\ref{51.prop1}, we get
\begin{equation}\label{eq.DerivadasPt}	
    |D^{2}P_{t}|=\max\limits_{i,j\in\{1,\dots,d\}}	|	\partial_{i}\partial_{j}P_{t}|\leq P^{K}_{t}*\max\limits_{i,j\in\{1,\dots,d\}}	|\partial_{i}\partial_{j}G_{t}|\leq C_{\varepsilon,T}P^{K}_{t}*P^{K}_{1}=C_{\varepsilon,T}P^{K}_{1+t}\leq C_{\varepsilon,T}P_{1}
\end{equation}
for all $t\in(\varepsilon,T)$, which is  estimate~\eqref{eq:stronger.conditions.existence.classical}.

Finally, we will obtain the second inequality in~\eqref{eq:conditions.existence.classical}. Procceding as before, using  the  MVT and~\eqref{eq.DerivadasPt}, we get
\begin{align*}
	\intr |\Lambda P_{t}(x,t)|K(y)\dy&\leq\intb \big(P_{t}(x)+P_{t}(x-y)\big)K(y)\dy+\ints |D^{2}P_{t}(z)|y|^{2}K(y)\dy\\
	&\leq CP_t(x)+\intb P_{t}(x-y)K(y)\dy+C_{\varepsilon,T}\ints P_1(z)|y|^{2}K(y)\dy,
\end{align*}
where $z=z(x,y,t)\in B_{1}(x)$. We estimate the second term as follows
	
\begin{equation*}
    \intb  P_{t}(x-y)K(y)\dy\leq C\intb  P_{t}(x-y)P_1(y)\dy\leq CP_{1+t}(x)\leq C_{T}P_1(x),
\end{equation*}
whereas for the third term we use that 	$P_1$ is {slowly changing} (see Corollary~\ref{51.cor2}), that $z\in B_1(x)$ and that $K$ is a L\'evy kernel, to obtain
\begin{equation*}
	\ints P_1(z)|y|^{2}K(y)\dy\leq CP_1(x)\ints |y|^2 K(y)\dy=CP_1(x).
\end{equation*}
Putting all together we have that
\begin{equation*}
	\intr |\Lambda P_{t}(x,t)|K(y)\dy\leq CP_1(x)\quad\textrm{for all }x\in\Rn.\qedhere
\end{equation*}
\end{proof}

We can now reproduce the proof of Theorem~\ref{Uclassical} to arrive at our last result.
\begin{theorem}\label{mixUclassical}
    Let $\mathcal{L}=\mathcal{L}_{\Delta,K}$ with $K$ satisfying~\eqref{A0}--\eqref{A2}, $\mu_{0}$ a Radon measure satisfying the integral growth condition~\eqref{q4}, and $U$ the very weak solution to~\eqref{a.nonlocalheatequation} given in~\eqref{q5}. Then, for each $T>0$, $U$ is the unique classical solution to~\eqref{a.nonlocalheatequation} within  $\li$  with initial datum $\mu_{0}$. If in addition $K$ satisfies~\eqref{A3} and $\mu_0$ is nonnegative, then $U$ is the unique nonnegative classical solution to~\eqref{a.nonlocalheatequation} with initial trace $\mu_0$.
\end{theorem}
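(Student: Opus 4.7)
The plan is to follow the blueprint of Theorem~\ref{Uclassical}, with the simplification that now the heat kernel estimates \eqref{eq:conditions.existence.classical}--\eqref{eq:stronger.conditions.existence.classical} come \emph{for free} from Lemma~\ref{52.prop1}, without any extra assumption on $K$. First, I would invoke Theorem~\ref{mixprop2} to obtain that $U \in C^{\infty}(\mathbb{R}^d\times(0,T))\cap\li$ and is a very weak solution with initial trace $\mu_0$. In particular, the regularity $U\in C^{2,1}_{x,t}(\mathbb{R}^d\times(0,T))$ required by Definition~\ref{a.def classical} is automatic, so the only remaining task for showing that $U$ is classical is to verify the equation $\partial_t U + \mathcal{L}_{\Delta,K} U = 0$ pointwise.

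For the pointwise identity, I would proceed in two steps mimicking Lemmas~\ref{clemma2} and \ref{clemma1}. Step one: differentiate under the integral sign to obtain $\partial_t U(x,t)=\int_{\mathbb{R}^d}\partial_t P(x-y,t)\dmu_0(y)$. This is justified by the MVT and the first inequality in \eqref{eq:conditions.existence.classical} (which holds by Lemma~\ref{52.prop1}), yielding a dominating function $C P_1(x-y)$; integrability then follows from~\eqref{q4} together with Lemma~\ref{v3.19} adapted via Corollary~\ref{51.cor2} (which grants that $P_1$ is {slowly changing}). Step two: since $P$ is a classical solution of the equation (this follows for $\mathcal{L}_{\Delta,K}$ by the same argument as in Proposition~\ref{pprop2}, using \eqref{51.9}), we can write $\partial_t P(x-y,t)=-\mathcal{L}_{\Delta,K}P(x-y,t)=\Delta P(x-y,t)-\int_{\mathbb{R}^d}\Lambda P(x-y,z,t)K(z)\,\textup{d}z$. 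Inserting this into the expression for $\partial_t U$ and invoking Fubini--Tonelli (whose hypotheses are met thanks to the second inequality in \eqref{eq:conditions.existence.classical} for the nonlocal term, and \eqref{eq:stronger.conditions.existence.classical} for the Laplacian, both furnished by Lemma~\ref{52.prop1}), one recognizes $\Delta U(x,t)-\mathcal{L}_K U(x,t)=-\mathcal{L}_{\Delta,K}U(x,t)$, concluding the PDE verification.

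Uniqueness is then a direct consequence of the previously established machinery. Since $U\in\li$, uniqueness within this class (and the representation formula) is immediate from Corollary~\ref{mixCorRepresentativeFormulaClassicalAcotadas}(i); no energy assumption \eqref{energyH} is needed because for operators of the form $\mathcal{L}_{\Delta,K}$ classical solutions are automatically very weak solutions whenever they lie in $\luloc$ (see Proposition~\ref{52prop3}). Under the additional hypotheses \eqref{A3} and $\mu_0\ge 0$, the solution $U$ is nonnegative, and uniqueness among nonnegative classical solutions follows from Theorem~\ref{mixa.maintheorem1}, whose hypotheses are precisely met.

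The main obstacle, as in the purely nonlocal analogue, lies in rigorously justifying the passage of $\mathcal{L}_{\Delta,K}$ under the integral defining $U$. The delicate point is the nonlocal part: one needs a uniform bound of the form $\int_{\mathbb{R}^d}|\Lambda P(x-y,z,t)|K(z)\,\textup{d}z\le C P_1(x-y)$ that is integrable against $\dmu_0(y)$ after one additional integration. Luckily this is exactly what Lemma~\ref{52.prop1} provides; so the difficulty, rather than being a new one, is absorbed into that lemma, whose proof exploits the convolution structure $P_t=P^K_t*G_t$ and the smoothing properties of the Gaussian factor. The rest is a routine chain of applications of the DCT and Fubini--Tonelli.
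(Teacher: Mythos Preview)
Your proposal is correct and follows essentially the same approach as the paper, which simply states that one can reproduce the proof of Theorem~\ref{Uclassical} using the kernel estimates supplied by Lemma~\ref{52.prop1}. You have spelled out precisely that reproduction: Theorem~\ref{mixprop2} for regularity and membership in $\li$, the analogues of Lemmas~\ref{clemma2}--\ref{clemma1} for the pointwise equation (with the extra $\Delta$ term handled via~\eqref{eq:stronger.conditions.existence.classical}), and then Corollary~\ref{mixCorRepresentativeFormulaClassicalAcotadas}(i) and Theorem~\ref{mixa.maintheorem1} for the two uniqueness claims; your observation that the energy hypothesis~\eqref{energyH} is superfluous here, being absorbed into the $C^{2,1}_{x,t}$ regularity via Proposition~\ref{52prop3}, is exactly the point the paper makes.
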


\section*{Acknowledgments}

\noindent I. Gonz\'alvez and F. Quir\'os were supported by grants PID2023-146931NB-I00, RED2022-134784-T and CEX2023-001347-S, and I. Gonz\'alvez also by grant PID2019-110712GB-I00, all of them funded by MICIU/AEI/10.13039/501100011033. Both of them were also supported by the Madrid Government
(Comunidad de Madrid – Spain) under the multiannual Agreement with UAM in the line for the Excellence of the University Research Staff in the
context of the V PRICIT (Regional Programme of Research and Technological Innovation).

\noindent F. Soria was supported by grants PID2019-110712GB-I00 and PID2022-142202NB-I00, both of them funded by MICIU/AEI/10.13039/501100011033.

\noindent Z. Vondra\v{c}ek was supported in part by the Croatian Science Foundation under the project IP-2022-10-2277.


\end{document}